\newcites{AppA}{References of Appendix}
\newtheorem{theorem}{Theorem}[section]
\newtheorem*{theorem*}{Theorem}
\newtheorem*{lemma*}{Lemma}
\newtheorem*{fact*}{Fact}
\newtheorem*{conjecture*}{Conjecture}
\newtheorem{proposition}[theorem]{Proposition}
\newtheorem{lemma}[theorem]{Lemma}
\newtheorem{corollary}[theorem]{Corollary}
\newtheorem{definition}[theorem]{Definition}
\newtheorem{lemdef}[theorem]{Lemma-Definition}
\theoremstyle{remark}
\newtheorem{remark}[theorem]{Remark}
\newtheorem{example}[theorem]{Example}
\numberwithin{equation}{section}
\begin{document}

\baselineskip=17pt


\title{On successive minimal bases of division points of Drinfeld modules}

\author{Maozhou Huang\footnote{\texttt{huang.m.aa@m.titech.ac.jp}}}

\date{}

\maketitle

\unmarkedfntext{\text{2020\it{  Mathematics Subject Classification}}. Primary 11G09; Secondary 11S15.}

\unmarkedfntext{\text{\it{Key words and phrases}}: Drinfeld modules, Successive minimal bases, higher ramification subgroups, conductors, Szpiro conjecture.}

\begin{abstract}
We define successive minimal bases (SMBs) for the space of $u^{n}$-division points of a Drinfeld $\Bbb{F}_{q}[t]$-module over a local field, where $u$ is a finite prime of $\Bbb{F}_{q}[t]$ and $n$ is a positive integer.
These SMBs share similar properties to those of SMBs of the lattices associated to Drinfeld modules.
We study the relations between these SMBs and those of the lattices.
Finally, we apply the relations to study the explicit wild ramification subgroup action on an SMB of the space of $u^{n}$-division points and show the function field analogue of Szpiro's conjecture for rank $2$ Drinfeld modules under a certain limited situation.
\end{abstract}

\maketitle

\section{Introduction}
\subsection{Notation}\label{s11}
Let us introduce the notation used throughout this paper.
Put $A \coloneqq \Bbb{F}_{q}[t],$ where $\Bbb{F}_{q}[t]$ is the polynomial ring in $t$ over the field $\Bbb{F}_{q}$ whose order is a power of a rational prime $p.$
Let $F$ be a global function field which is a finite extension of the fraction field of $A.$ 
Let $K$ be the completion of $F$ at a prime $w.$
We also let $w$ denote the valuation associated to $K$ normalized so that $w(K^{\times}) = \Bbb{Z}.$
Fix $K^{\mathrm{sep}}$ (resp. $K^{\mathrm{alg}}$) a separable (resp. algebraic) closure of $K.$
Let $\Bbb{C}_{w}$ denote the completion of $K^{\mathrm{alg}}.$
If $w$ is an infinite prime, we also let $\Bbb{C}_{\infty}$ denote $\Bbb{C}_{w}.$

Let $\phi$ be a rank $r$ Drinfeld $A$-module over $K.$
For an element $a$ in $A,$ let $\phi[a]$ be the $A/a$-module of $a$-division points in $K^{\mathrm{sep}}.$
It is a free module of rank $r.$
Fix a positive integer $n$ and a finite prime $u$ of $A,$ i.e., a monic irreducible polynomial $u \in A.$
The main research objects in this paper are successive minimal bases of $\phi[u^{n}]$ defined below.
For $a\in A$ and $x\in \phi[u^{n}],$ write $a\cdot_{\phi} x \coloneqq \phi_{a}(x)$ for the action of $a$ on $x.$ 

If $w$ is an infinite prime, let $\Lambda$ denote the rank $r$ $A$-lattice in $\Bbb{C}_{\infty}$ and $e_{\phi}$ the exponential function from $\Bbb{C}_{\infty}$ to $\Bbb{C}_{\infty}$ associated to $\phi$ via the uniformization.
Here we have considered $\Lambda$ and the domain of $e_{\phi}$ as $A$-modules via the natural embedding $A\to\Bbb{C}_{\infty}.$

If $w$ is a finite prime, we assume throughout this paper that $\phi$ has stable reduction over $K$ and the reduction of $\phi$ has rank $r' \leq r$ unless otherwise specified.
Let $\psi$ denote the rank $r'$ Drinfeld module over $K$ having good reduction, $\Lambda$ the rank $r-r'$ $A$-lattice in $\Bbb{C}_{w},$ and $e_{\phi}$ the exponential function from $\Bbb{C}_{w}$ to $\Bbb{C}_{w}$ associated to $\phi$ via the Tate uniformization (See \cite[Section~7]{Drin} or Section~\ref{s21}).
Here we consider $\Lambda$ and the domain of $e_{\phi}$ as $A$-modules via $\psi,$ i.e., we have the action of $a$ on $\omega$ to be $a \cdot_{\psi} \omega \coloneqq \psi_{a}(\omega)$ for any $a\in A$ and any $\omega$ in $\Lambda$ or $\Bbb{C}_{w}.$

Let $|-|$ denote one of the following functions.
\begin{enumerate}[\rm{(F}1)]
    \item \label{F1} If $w$ is an infinite prime, we have the absolute value $|-|$ on $K$ which extends the absolute value $|-|=q^{\deg(-)}$ on $\Bbb{F}_{q}((\frac{1}{t})).$
    This absolute value may be extended to $\Bbb{C}_{\infty}.$
    \item \label{F2} 
    Assume that $w$ is a finite prime of $F.$
    Following \cite[Section~1]{Gar}, define a function $|-|$ on $K$ by 
    \[\text{for }x\in K\text{, }|x|=\begin{cases}
    (-w(x))^{1/r'} & w(x)<0,\\
    -w(x)^{1/r'} & w(x)\geq 0,\\
    |0|=-\infty & x=0.
    \end{cases}\]
    We may extend this function to $\Bbb{C}_{w}.$
    This function is not an absolute value.
    However, the ultrametic inequality holds. 
    For $x \in \Bbb{C}_{w},$ we still call $|x|$ the absolute value of $x.$ 
\end{enumerate}

\subsection{On SMBs of $u^{n}$-division points}
The main definition is
\begin{definition}\label{d11}
Let $|-|$ denote the function in (F\ref{F1}) or (F\ref{F2}).
We call a family of elements $\{\lambda_{i}\}_{i=1,\ldots,r}$ an \emph{SMB} (\emph{successive minimal basis}) of $\phi[u^{n}]$ if for each $i,$ the elements $\lambda_{1},\ldots,\lambda_{i}$ in $\phi[u^{n}]$ satisfy
\begin{enumerate}[\rm{(}1)]
    \item $\lambda_{1},\ldots,\lambda_{i}$ are $A/u^{n}$-linearly independent;
    \item $|\lambda_{i}|$ is minimal among the absolute values of elements $\lambda$ in $\phi[u^{n}]$ such that $\lambda_{1},\ldots,\lambda_{i-1},\lambda$ are $A/u^{n}$-linearly independent.
\end{enumerate}
\end{definition}

\noindent Here we have imitiated the definition of SMBs of the lattices $\Lambda$ (See \cite[Section~4]{TSI} or \cite[Section~3]{GekGL}).
Let us remark that 
\begin{remark}\label{r11}
(1) in the definition implies that $\{\lambda_{1},\ldots,\lambda_{r}\}$ is an $A/u^{n}$-basis (or a generating set) of $\phi[u^{n}].$
The condition (2) above can be replaced with ``$w(\lambda_{i})$ is the largest among the valuations of elements $\lambda$ in $\phi[u^{n}]$ such that $\lambda_{1},\ldots,\lambda_{i-1},\lambda$ are $A/u^{n}$-linearly independent''.
In Definition~\ref{d12}, we will extend the definition of SMBs of $\phi[u^{n}]$ to the case where $\phi$ does not necessarily have stable reduction over $K$.
\end{remark}

If $w$ is a finite prime, let $u^{-n}\Lambda$ denote the $A$-module consisting of all roots of $\psi_{u^{n}}(X)-\omega$ for all $\omega\in \Lambda.$ 
For any infinite or finite prime $w,$ by the uniformization or the Tate uniformization of $\phi,$ we have an isomorphism of $A/u^{n}$-modules 
\[\mathcal{E}_{\phi}: u^{-n}\Lambda / \Lambda \to \phi[u^{n}]\]
induced by $e_{\phi}.$
Hence one may expect that there are relations between SMBs of $\phi[u^{n}]$ and those of $\Lambda.$

Let $|-|$ denote the absolute value in (F\ref{F1}) (resp. the function in (F\ref{F2})) if $w$ is an infinite prime (resp. a finite prime).
Put $|u^{n}|_{\infty}=q^{\deg(u^{n})}.$ 

\begin{theorem}\label{ti1}
\begin{enumerate}[\rm{(}1)] 
\item Let $w$ be an infinite prime. 
\begin{itemize}
    \item \text{\rm{(Theorem~\ref{t31})}} Let $\{\omega_{i}\}_{i=1,\ldots,r}$ be an SMB of $\Lambda.$
Then the images $e_{\phi}(\omega_{i}/u^{n})$ for $i=1,\ldots,r$ form an SMB of $\phi[u^{n}].$
    \item \text{\rm{(Corollay~\ref{r32}~(1))}} Let $l$ be a positive integer and $\{\eta_{i}\}_{i=1,\ldots,r}$ an SMB of $\phi[u^{l}].$ 
Let $\{\lambda_{i}\}_{i=1,\ldots,r}$ be an SMB of $\phi[u^{n}].$ Assume that $n$ satisfies $|u^{n}|_{\infty}>|\eta_{r}|/|\eta_{1}|.$ 
Under this assumption, for each $i=1,\ldots,r,$ the element $\lambda_{i}$ has only one preimage under $e_{\phi},$ denoted $\log_{\phi}(\lambda_{i}),$ with absolute value $<|\omega|$ for any $\omega \in \Lambda \setminus \{0\}.$ 
Then the family of elements $\{u^{n}\log_{\phi}(\lambda_{i})\}_{i=1,\ldots,r}\subset \Bbb{C}_{\infty}$ is an SMB of $\Lambda.$ 
\end{itemize}
\item Let $w$ be a finite prime.  
\begin{itemize}
    \item \text{\rm{(Theorem~\ref{t41})}} Let $\{\omega_{i}\}_{i=1,\ldots,r'}$ \text{\rm{(}}resp. $\{\omega_{i}^{0}\}_{i=r'+1,\ldots,r}$\text{\rm{)}} be an SMB of $\psi[u^{n}]$ \text{\rm{(}}resp. $\Lambda$\text{\rm{)}}.
    Let $\omega_{i}$ be a root of $\psi_{u^{n}}(X)-\omega_{i}^{0}$ for $i=r'+1,\ldots,r.$
    Then the images $e_{\phi}(\omega_{i})$ for $i=1,\ldots,r$ form an SMB of $\phi[u^{n}].$
    \item \text{\rm{(Corollary~\ref{r41}~(1) and (2))}} Let $l$ be a positive integer and $\{\eta_{i}\}_{i=1,\ldots,r}$ an SMB of $\phi[u^{l}].$
Let $\{\lambda_{i}\}_{i=1,\ldots,r}$ be an SMB of $\phi[u^{n}].$
Assume that $n$ satisfies $|u^{n}|_{\infty}>|\eta_{r}|/|\eta_{r'+1}|.$
Under this assumption, for each $i=1,\ldots,r,$ the element $\lambda_{i}$ has only one preimage, denoted $\log_{\phi}(\lambda_{i}),$ with absolute value $<|\omega|$ for any $\omega \in \Lambda \setminus \{0\}.$ 
Then the family of elements $\{\log_{\phi}(\lambda_{i})\}_{i=1,\ldots,r'}\subset \Bbb{C}_{w}$ \text{\rm{(}}resp.  $\{u^{n}\cdot_{\psi}\log_{\phi}(\lambda_{i})\}_{i=r'+1,\ldots,r}\subset \Bbb{C}_{w}$\text{\rm{)}} is an SMB of $\psi[u^{n}]$ \text{\rm{(}}resp. of $\Lambda$\text{\rm{)}}.
\end{itemize}
\end{enumerate}
\end{theorem}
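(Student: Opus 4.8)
The plan is to prove Theorem~\ref{ti1} by transporting the structure of SMBs across the uniformization isomorphism $\mathcal{E}_\phi \colon u^{-n}\Lambda/\Lambda \xrightarrow{\sim} \phi[u^n]$, treating the infinite and finite cases in parallel. The key observation is that in both settings the exponential map $e_\phi$ is ``distance-preserving up to controlled error'' on small arguments: if $x \in \Bbb{C}_w$ has absolute value strictly smaller than $|\omega|$ for all nonzero $\omega \in \Lambda$, then $|e_\phi(x)| = |x|$ (because the product formula $e_\phi(x) = x\prod_{0\neq\omega\in\Lambda}(1 - x/\omega)$ has all factors of absolute value $1$), while if $x$ is larger the leading behavior is governed by the lattice. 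I would first isolate this metric statement as the technical heart, then feed it into a general lemma about the two equivalent characterizations of SMBs — the ``greedy'' one in Definition~\ref{d11} and the ``minimizing the product $\prod |\lambda_i|$ over all bases'' one — so that I only need to check that $e_\phi$ of an SMB of $\Lambda$ (or of $\psi[u^n]$ times roots coming from $\Lambda$) has the right successive absolute values.

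For part (1), the infinite-prime case: given an SMB $\{\omega_i\}$ of $\Lambda$, I would show $\{e_\phi(\omega_i/u^n)\}$ is $A/u^n$-linearly independent (this is immediate since $\mathcal{E}_\phi$ is an isomorphism and $\omega_i/u^n$ reduce to an $A/u^n$-basis of $u^{-n}\Lambda/\Lambda$) and that $|e_\phi(\omega_i/u^n)|$ realizes the successive minima. For the latter I want to compare $|e_\phi(\omega/u^n)|$ to $|\omega|/|u^n|_\infty = |\omega/u^n|$ for various $\omega \in u^{-n}\Lambda$, using the metric statement above together with the fact that an SMB of $\Lambda$ controls the covering radius, hence for $n$ large enough (or in general, with a short argument about which lattice points are relevant) the elements $\omega_i/u^n$ are ``short'' relative to $\Lambda$ in the required sense. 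The corollary (recovering an SMB of $\Lambda$ from one of $\phi[u^n]$) is then the inverse direction: the hypothesis $|u^n|_\infty > |\eta_r|/|\eta_1|$ guarantees that the preimage $\log_\phi(\lambda_i)$ of minimal absolute value is unique and short enough that $|u^n \log_\phi(\lambda_i)| = |u^n|_\infty |\lambda_i|$, and then one transports the successive-minima property back. I expect the bookkeeping around ``which preimage'' and the uniqueness claim to require the quantitative hypothesis on $n$ precisely, so I would state and use a lemma pinning down the set of preimages of a given division point and their absolute values.

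For part (2), the finite-prime case, the structure is the same but the ``lattice'' $\Lambda$ now has rank $r-r'$ and sits inside $\Bbb{C}_w$ as a $\psi$-module, and the division points split into a part coming from $\psi[u^n]$ (good reduction, ``bounded'' part) and a part coming from $u^{-n}\Lambda/\Lambda$ (the Tate/periodic part). The exact sequence $0 \to \psi[u^n] \to \phi[u^n] \to u^{-n}\Lambda/\Lambda \to 0$ induced by the Tate uniformization, together with the observation that the $\psi[u^n]$-part has strictly larger absolute values than the $\Lambda$-part (under the function (F2), since those come from the good-reduction piece and the ramification is concentrated in the lattice direction — this is where I would invoke the normalization exponent $r'$ in (F2)), lets me conclude that the first $r'$ members of an SMB of $\phi[u^n]$ must be (associates of) images of an SMB of $\psi[u^n]$ and the last $r-r'$ must come from an SMB of $\Lambda$ as in Theorem~\ref{t41}. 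The corollary then runs the inverse map with the hypothesis $|u^n|_\infty > |\eta_r|/|\eta_{r'+1}|$ playing the role that $|u^n|_\infty > |\eta_r|/|\eta_1|$ played before, now only constraining the lattice part.

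The main obstacle I anticipate is making the metric comparison $|e_\phi(x)| \leftrightarrow |x|$ fully rigorous in the finite-prime case under the non-archimedean-but-not-an-absolute-value function (F2), where ``$|{-}|$'' is a fractional power of $-w$ and the multiplicativity one would like for the product formula must be rephrased entirely in terms of the additive valuation $w$ before exponentiating. A secondary obstacle is the uniqueness-of-short-preimage statement: I would need to verify that under the stated inequality on $n$, no two distinct preimages of $\lambda_i$ are simultaneously shorter than the shortest nonzero lattice vector, which amounts to a gap estimate between $|\eta_1|$ (resp. $|\eta_{r'+1}|$) and the covering radius of $\Lambda$, and this is exactly the content the hypothesis is designed to supply — so I would prove it as a standalone lemma before assembling the corollaries. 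Once those two technical points are in hand, Theorems~\ref{t31} and~\ref{t41} and their corollaries follow by the greedy/product-minimizing equivalence for SMBs applied on both sides of $\mathcal{E}_\phi$.
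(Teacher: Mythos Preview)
Your overall architecture---transport SMBs across $\mathcal{E}_\phi$, use that $|e_\phi(x)|=|x|$ on the ball inside the shortest lattice vector, and run the inverse via $\log_\phi$ under the stated gap hypothesis---matches the paper's. But there is a real gap in your plan for Theorem~\ref{t31} (and likewise Theorem~\ref{t41}): these must hold for \emph{every} $n\ge 1$, and for small $n$ the elements $\omega_i/u^n$ are \emph{not} short relative to $\Lambda$ (e.g.\ $|\omega_r/u|$ can exceed $|\omega_1|$), so your isometry $|e_\phi(x)|=|x|$ does not apply and your parenthetical ``short argument about which lattice points are relevant'' is where the actual content lives. The paper's key step (Lemma~\ref{l31} and Corollary~\ref{c31}) is a precise product formula
\[
\Big|e_\phi\!\Big(\tfrac{a_i\omega_i}{u^n}\Big)\Big|=\Big|\tfrac{a_i\omega_i}{u^n}\Big|\cdot\!\!\prod_{\substack{\mu\in\Lambda\setminus\{0\}\\ |u^n\mu|<|a_i\omega_i|}}\!\!\frac{|a_i\omega_i|}{|u^n\mu|},
\]
together with the crucial observation that when $|u^n\mu|=|a_i\omega_i|$ the factor $|1-a_i\omega_i/(u^n\mu)|$ equals $1$ (because the SMB orthogonality in Proposition~\ref{p212} forces $\mu\in\bigoplus_{j<i}A\omega_j$, whence $|a_i\omega_i-u^n\mu|=|a_i\omega_i|$). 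From this one reads off monotonicity of $|e_\phi(a_i\omega_i/u^n)|$ in $|a_i\omega_i|$ and the identity $|e_\phi(\omega/u^n)|=\max_j|a_j\cdot_\phi e_\phi(\omega_j/u^n)|$, and \emph{that} is what drives the greedy verification of Definition~\ref{d11}(2). Your ``product-minimizing over all bases'' characterization is not used and would not by itself supply this comparison.

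Two smaller points. In the finite-prime case you have the inequality backwards: with the function (F\ref{F2}), elements of $\psi[u^n]$ have $|\cdot|\le 0$ while those coming from $\Lambda$ have $|\cdot|>0$ (Lemma~\ref{l41}, Lemma~\ref{l43}), so the $\psi[u^n]$-part furnishes the \emph{first} $r'$ members of an SMB because it has the \emph{smallest} absolute values, not the largest. And for the inverse direction, ``transport the successive-minima property back'' hides a nontrivial step: the paper argues by contradiction using the invariance of the minima sequence (Proposition~\ref{p221}) together with the already-proved forward direction (Theorem~\ref{t31}), pushing a hypothetical shorter SMB of $u^{-n}\Lambda$ through $e_\phi$ to beat $|\lambda_i|$. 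Your sketch should make that loop explicit.
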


It turns out that the SMBs of $\phi[u^{n}]$ has the following properties.
\begin{proposition}\label{p11}
Let $\{\lambda_{i}\}_{i=1,\ldots,r}$ be an SMB of $\phi[u^{n}].$
\begin{enumerate}[\rm{(}1)]   
\item \text{\rm{(Proposition~\ref{p221})}} 
The sequence $|\lambda_{1}|\leq |\lambda_{2}|\leq \cdots \leq |\lambda_{r}|$ associated to an SMB of $\phi[u^{n}]$ is an invariant of $\phi[u^{n}],$ i.e., for any SMB $\{\lambda_{i}'\}_{i=1,\ldots,r}$ of $\phi[u^{n}],$ we have $|\lambda_{i}'|=|\lambda_{i}|$ for all $i.$
\item \text{\rm{(Proposition~\ref{p32} and \ref{p42})}} 
Assume that $u$ is not divisible by the prime $w,$ i.e., $w(u)\leq 0.$ 
Then we have
\[\bigg|\sum_{i}a_{i}\cdot_{\phi} \lambda_{i}\bigg|=\max_{i}\{|a_{i}\cdot_{\phi} \lambda_{i}|\}\]
for any $a_{i}\in A\mod u^{n}.$
\item \text{\rm{(Proposition~\ref{p222})}} 
There exists an SMB $\{\lambda_{i}'\}_{i=1,\ldots,r}$ of $\phi[u^{n+1}]$ such that $u \cdot_{\phi} \lambda_{i}'=\lambda_{i}$ for all $i.$ The elements $u\cdot_{\phi}\lambda_{i}$ for $i=1,\ldots,r$ form an SMB of $\phi[u^{n-1}].$
\end{enumerate}
\end{proposition}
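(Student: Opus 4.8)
The plan is to treat the three parts essentially independently, since each reduces to a statement already recorded elsewhere in the paper, but I will sketch what each reduction needs. For part (1), the standard argument for invariance of the successive-minima sequence applies verbatim in this non-archimedean setting: suppose $\{\lambda_i\}$ and $\{\lambda_i'\}$ are two SMBs, and argue by induction on $i$ that $|\lambda_i| = |\lambda_i'|$. At step $i$, the subfamily $\lambda_1',\dots,\lambda_i'$ consists of $i$ elements that are $A/u^n$-linearly independent; by a dimension count in the $A/u^n$-module $\phi[u^n]$, at least one of them — say $\lambda_j'$ — is not in the span of $\lambda_1,\dots,\lambda_{i-1}$, so by the minimality clause (2) defining $\lambda_i$ we get $|\lambda_i| \le |\lambda_j'| \le |\lambda_i'|$ (using $|\lambda_1'| \le \cdots \le |\lambda_i'|$, which itself follows from the definition). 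By symmetry $|\lambda_i'| \le |\lambda_i|$, closing the induction. The only point to check carefully is that the absolute value in case (F\ref{F2}) is monotone in $-w(\cdot)$, which it is by construction.

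For part (2), the hypothesis $w(u) \le 0$ is exactly what is needed to transfer the statement across the uniformization $\mathcal{E}_\phi$. The strategy is to pull back an SMB $\{\lambda_i\}$ of $\phi[u^n]$ to an SMB of $\Lambda$ (in the finite-prime case, together with an SMB of $\psi[u^n]$) via Theorem~\ref{ti1}; on the lattice side the orthogonality identity $|\sum a_i \omega_i| = \max_i |a_i \omega_i|$ is the defining feature of SMBs of $\Lambda$ and is already known. One then needs that $e_\phi$ (and the division-point map) is suitably isometric on the relevant range of valuations — this is where $w(u) \le 0$ enters, guaranteeing that the relevant division points have valuations in the region where the Newton polygon of $e_\phi$ is linear, so that $e_\phi$ preserves the max. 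In other words, I would invoke Proposition~\ref{p32}/\ref{p42} directly; here the content of the present proposition is just to collect that statement, so the "proof" is a pointer plus the remark that $a_i \cdot_\phi \lambda_i = e_\phi(a_i \cdot \omega_i)$ (or $e_\phi(a_i\cdot_\psi\omega_i)$) and that $e_\phi$ is an isometry onto its image in the relevant range.

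For part (3), the construction of the lift $\{\lambda_i'\} \subset \phi[u^{n+1}]$ with $u \cdot_\phi \lambda_i' = \lambda_i$ is the substantive point. Start from any SMB $\{\lambda_i\}$ of $\phi[u^n]$; since $\phi_u : \phi[u^{n+1}] \to \phi[u^n]$ is surjective, choose preimages $\mu_i$. The claim is that $\{\mu_i\}$ can be chosen to be an SMB of $\phi[u^{n+1}]$. One checks the linear-independence clause: $\sum a_i \cdot_\phi \mu_i = 0$ in $\phi[u^{n+1}]$ forces $\sum (a_i u) \cdot_\phi \lambda_i$-type relations after applying $\phi_u$, and a careful bookkeeping with the $A/u^{n+1}$-module structure shows the $\mu_i$ are independent; for the minimality clause one compares, for each $i$, the valuation $w(\mu_i)$ against the valuation of an arbitrary admissible $\mu$, pushing forward by $\phi_u$ and using minimality of $\lambda_i$ together with the fact (again from the Newton polygon / the explicit shape of $\phi_u$ on $u^{n+1}$-torsion) that $w(\phi_u(\mu))$ is a monotone function of $w(\mu)$ on the relevant layer. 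I expect the main obstacle to be exactly this last monotonicity/compatibility bookkeeping — ensuring that "largest valuation among admissible lifts" matches "largest valuation among admissible elements of $\phi[u^n]$" — and that one can simultaneously arrange all $r$ lifts to be compatible rather than just one at a time; this is most cleanly handled by invoking part (1) (invariance of the valuation sequence) for $\phi[u^{n+1}]$ and then adjusting the $\mu_i$ within their $\phi_u$-fibers, each fiber being a coset of $\phi[u]$ whose elements have strictly smaller valuation, so the adjustment does not change $w(\mu_i)$. The final sentence of (3), that $\{u \cdot_\phi \lambda_i\}$ is an SMB of $\phi[u^{n-1}]$, then follows by applying the first half with $n$ replaced by $n-1$ (or directly by the same pushforward argument, now in the easy direction), using that $\phi_u$ maps the valuation sequence of $\phi[u^n]$ onto that of $\phi[u^{n-1}]$ as guaranteed by part (1).
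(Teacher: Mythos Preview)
Your sketch for part~(1) has a genuine gap. You argue that since $\lambda_1',\dots,\lambda_i'$ are $A/u^n$-linearly independent, a ``dimension count'' forces some $\lambda_j'$ to lie outside the span of $\lambda_1,\dots,\lambda_{i-1}$, and then invoke the minimality clause for $\lambda_i$. But that clause requires $\lambda_1,\dots,\lambda_{i-1},\lambda_j'$ to be $A/u^n$-linearly \emph{independent}, and over the non-field $A/u^n$ this is strictly stronger than $\lambda_j' \notin \operatorname{span}(\lambda_1,\dots,\lambda_{i-1})$. For instance, in $(A/u^2)^2$ with $\lambda_1=(1,0)$ and $\lambda_j'=(1,u)$ one has $\lambda_j'\notin (A/u^2)\cdot\lambda_1$, yet $u\lambda_j'-u\lambda_1=0$. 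The paper closes exactly this gap: it multiplies through by $u^{n-1}$ to pass to the $A/u$-vector space $\phi[u]$, where the dimension count is legitimate, and then lifts the resulting independence back to $A/u^n$ (Proposition~\ref{p221}). Your cardinality observation does show that $(A/u^n)^i$ cannot embed in $(A/u^n)^{i-1}$; what is missing is precisely this $u^{n-1}$-reduction step to upgrade ``not in the span'' to ``linearly independent when adjoined''.

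Parts~(2) and~(3) are essentially the paper's approach. For~(2) the paper does exactly what you outline---pull back through Theorems~\ref{t32}/\ref{t42}, use the lattice-side orthogonality via Corollaries~\ref{c31}(3)/\ref{c41}(3), first for $n$ large and then reduce general $n$ to that case by lifting via part~(3). For~(3) the paper takes the direct route you describe first: choose $\lambda_i'$ to be a root of $\phi_u(X)-\lambda_i$ of \emph{largest} valuation and verify Definition~\ref{d12} by a Newton-polygon comparison. Your alternative ``invariance plus fiber adjustment'' suggestion is muddled---nonzero elements of $\phi[u]$ need not have valuation smaller than $w(\mu_i)$, so adjusting within a $\phi_u$-fiber can certainly change $w(\mu_i)$---but since you had already outlined the correct monotonicity argument just before, this is a presentational wobble rather than a real error.
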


\noindent 
Here the properties (1) and (2) are similar to those of SMBs of lattices (See Propositions~\ref{p211} and \ref{p212}).
We remark that (2) essentially follows from similar properties of SMBs of lattices (See Proposition~\ref{p212} or \cite[Lemma~4.2]{TSI}).
We hope to know whether the condition ``$w(u) \leq 0$'' in (2) can be removed.

Let $K(\Lambda)$ (resp. $K(u^{-n}\Lambda)$ and $K(\phi[u^{n}])$) denote the extension of $K$ generated by all elements in $\Lambda$ (resp. $u^{-n}\Lambda$ and $\phi[u^{n}]$).
By Theorem~\ref{ti1}, we are able to show 
\begin{proposition}\label{p12}
Let $l$ be a positive integer and $\{\eta_{i}\}_{i=1,\ldots,r}$ an SMB of $\phi[u^{l}].$ 
Let $\{\lambda_{i}\}_{i=1,\ldots,r}$ be an SMB of $\phi[u^{n}].$
\begin{enumerate}[\rm{(}1)]
    \item \text{\rm{(Corollary~\ref{r32}~(2))}} If $w$ is an infinite prime and $n$ is large enough so that $|u^{n}|_{\infty}>|\eta_{r}|/|\eta_{1}|,$ then we have $K(\Lambda)=K(\phi[u^{n}]).$
    \item \text{\rm{(Corollary~\ref{r41}~(3))}} If $w$ is a finite prime and $n$ is large enough so that $|u^{n}|_{\infty}>|\eta_{r}|/|\eta_{r'+1}|,$ then we have $K(u^{-n}\Lambda)=K(\phi[u^{n}]).$
\end{enumerate}
\end{proposition}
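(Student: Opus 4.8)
The plan is to prove both parts as a pair of inclusions, one valid for every $n$ and the other carrying the size hypothesis through Theorem~\ref{ti1}. I will use the following standard features of the (Tate) uniformization over the complete field $K$: the exponential $e_{\phi}$ and each $\psi_{a}$ have coefficients in $K$; $\Lambda$, hence $u^{-n}\Lambda$, lies in $K^{\mathrm{sep}}$ (so $K(\Lambda),K(u^{-n}\Lambda)\subseteq K^{\mathrm{sep}}$), and $\phi[u^{n}]\subseteq K^{\mathrm{sep}}$ since $u$ is prime to $p$; and $\ker(e_{\phi}\colon\Bbb{C}_{w}\to\Bbb{C}_{w})=\Lambda$. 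Since $K$ is complete, every $\sigma\in\mathrm{Gal}(K^{\mathrm{sep}}/K)$ acts isometrically on $K^{\mathrm{sep}}$ and extends uniquely to a continuous isometry of $\Bbb{C}_{w}$; as $e_{\phi}$ and the $\psi_{a}$ have coefficients in $K$, such $\sigma$ commute with them, i.e.\ $\sigma(e_{\phi}(z))=e_{\phi}(\sigma z)$ and $\sigma(\psi_{a}(z))=\psi_{a}(\sigma z)$ for $z\in\Bbb{C}_{w}$. When $w$ is infinite, $u^{-n}\Lambda=\frac{1}{u^{n}}\Lambda$, so $K(u^{-n}\Lambda)=K(\Lambda)$; thus in both cases it suffices to prove $K(u^{-n}\Lambda)=K(\phi[u^{n}])$.

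For $K(\phi[u^{n}])\subseteq K(u^{-n}\Lambda)$: every $\lambda\in\phi[u^{n}]$ equals $e_{\phi}(z)$ for some $z\in u^{-n}\Lambda\subseteq K(u^{-n}\Lambda)$ (since $\mathcal{E}_{\phi}$ is surjective and induced by $e_{\phi}$), and for $\sigma\in\mathrm{Gal}(K^{\mathrm{sep}}/K(u^{-n}\Lambda))$ we get $\sigma(\lambda)=\sigma(e_{\phi}(z))=e_{\phi}(\sigma z)=e_{\phi}(z)=\lambda$; since $\lambda\in K^{\mathrm{sep}}$ this gives $\lambda\in K(u^{-n}\Lambda)$. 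This half needs no hypothesis on $n$, so the real content of the proposition is the reverse inclusion for large $n$.

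For $K(u^{-n}\Lambda)\subseteq K(\phi[u^{n}])$ I bring in Theorem~\ref{ti1}. By Remark~\ref{r11} the SMB $\{\lambda_{i}\}$ generates $\phi[u^{n}]$ over $A/u^{n}$, so $K(\phi[u^{n}])=K(\lambda_{1},\dots,\lambda_{r})$. Under the stated size condition, Corollary~\ref{r32}~(1) (resp.\ Corollary~\ref{r41}~(1),(2)) produces the preimages $\log_{\phi}(\lambda_{i})\in u^{-n}\Lambda$ and asserts that $\{u^{n}\log_{\phi}(\lambda_{i})\}_{i}$ (resp.\ $\{\log_{\phi}(\lambda_{i})\}_{i\le r'}$ and $\{u^{n}\cdot_{\psi}\log_{\phi}(\lambda_{i})\}_{i>r'}$) is an SMB, hence an $A$-generating set, of $\Lambda$ (resp.\ of $\psi[u^{n}]$ and of $\Lambda$). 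On the other hand $\mathcal{E}_{\phi}$ sends $\{\log_{\phi}(\lambda_{i})\bmod\Lambda\}_{i}$ to the $A/u^{n}$-basis $\{\lambda_{i}\}$ of $\phi[u^{n}]$, so these classes form an $A/u^{n}$-basis of $u^{-n}\Lambda/\Lambda$; combining this with the generation of $\Lambda$ just recorded (and using $u^{n}\cdot_{\psi}\log_{\phi}(\lambda_{i})=\psi_{u^{n}}(\log_{\phi}(\lambda_{i}))\in A\cdot_{\psi}\log_{\phi}(\lambda_{i})$ in the finite case, together with $\psi[u^{n}]\subseteq u^{-n}\Lambda$), a short module computation shows that $u^{-n}\Lambda$ is generated over $A$ by $\log_{\phi}(\lambda_{1}),\dots,\log_{\phi}(\lambda_{r})$, whence $K(u^{-n}\Lambda)=K(\log_{\phi}(\lambda_{1}),\dots,\log_{\phi}(\lambda_{r}))$. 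So it suffices to show $\log_{\phi}(\lambda_{i})\in K(\phi[u^{n}])$ for each $i$. Let $\sigma\in\mathrm{Gal}(K^{\mathrm{sep}}/K(\phi[u^{n}]))$. Then $e_{\phi}(\sigma\log_{\phi}(\lambda_{i}))=\sigma(e_{\phi}(\log_{\phi}(\lambda_{i})))=\sigma(\lambda_{i})=\lambda_{i}=e_{\phi}(\log_{\phi}(\lambda_{i}))$, so $\sigma\log_{\phi}(\lambda_{i})-\log_{\phi}(\lambda_{i})\in\ker e_{\phi}=\Lambda$. But $\sigma$ is an isometry, so $|\sigma\log_{\phi}(\lambda_{i})|=|\log_{\phi}(\lambda_{i})|<|\omega|$ for all $\omega\in\Lambda\setminus\{0\}$ (the strict inequality being the characterizing property of $\log_{\phi}(\lambda_{i})$ in the corollary), whence by the ultrametric inequality $|\sigma\log_{\phi}(\lambda_{i})-\log_{\phi}(\lambda_{i})|<|\omega|$ for all $\omega\in\Lambda\setminus\{0\}$; an element of $\Lambda$ with this property is $0$. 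Hence $\log_{\phi}(\lambda_{i})$ is fixed by $\mathrm{Gal}(K^{\mathrm{sep}}/K(\phi[u^{n}]))$ and so lies in $K(\phi[u^{n}])$. For infinite $w$, $K(u^{-n}\Lambda)=K(\Lambda)$ gives part (1); for finite $w$ this is part (2).

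I expect the only real friction to be in pinning down the ingredients that make the Galois argument legitimate: that $\Lambda\subseteq K^{\mathrm{sep}}$ — so $u^{-n}\Lambda$ and the $\log_{\phi}(\lambda_{i})$ are separably algebraic over $K$ and ordinary Galois descent applies — which should be read off from the construction of the uniformization in \cite[Section~7]{Drin}, and the identity $\sigma\circ e_{\phi}=e_{\phi}\circ\sigma$, which rests on $e_{\phi}$ having coefficients in $K$ together with continuity of the $\mathrm{Gal}(K^{\mathrm{sep}}/K)$-action on $\Bbb{C}_{w}$. Beyond this, the only step needing care is the finite-prime bookkeeping turning the two SMBs of Corollary~\ref{r41} into a single $A$-generating set of $u^{-n}\Lambda$.
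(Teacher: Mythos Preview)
Your proof is correct and reaches the same conclusion, but the mechanism differs from the paper's. The paper (Propositions~\ref{p31} and~\ref{p41}) argues directly from completeness: since $e_{\phi}$ and $\log_{\phi}$ are given by power series with coefficients in $K$, and any finite extension $K(x)\subset K^{\mathrm{sep}}$ is itself complete, one has $e_{\phi}(x)\in K(x)$ and $\log_{\phi}(y)\in K(y)$ immediately; no Galois action is invoked. Your route instead passes through Galois equivariance of $e_{\phi}$ together with the uniqueness of the small preimage $\log_{\phi}(\lambda_{i})$ to force $\sigma\log_{\phi}(\lambda_{i})=\log_{\phi}(\lambda_{i})$. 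The paper's argument is shorter and avoids having to justify the continuous extension of $\sigma$ to $\Bbb{C}_{w}$ and its commutation with $e_{\phi}$; your argument, on the other hand, makes the Galois structure explicit and would adapt more readily to situations where one does not have a convergent logarithm on the nose but only knows the small preimage is unique. Both approaches rely on the same input from Theorem~\ref{ti1} to produce generators of $\Lambda$ (resp.\ $u^{-n}\Lambda$) inside $K(\phi[u^{n}])$.
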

\noindent The claim (1) is an effective version of \cite[Proposition~2.1]{Mau}.

\subsection{An application to rank 2 Drinfeld modules}\label{s13}
Let $u$ be a finite prime of $A.$ 
Let $\phi$ be a rank $2$ Drinfeld $A$-module over $K$ which does not necessarily have stable reduction when $w$ is finite.
Let $\{\lambda_{i}\}_{i=1,2}$ be an SMB of $\phi[u^{n}]$.
Let $\bm{j}$ denote the $j$-invariant of $\phi.$
Assume \begin{equation}\begin{cases}\begin{split}& \text{either }\big(w(\bm{j}) < w(t)q\text{ and }p \nmid w(\bm{j})\big),\\
& \text{or }w(\bm{j})\geq w(t)q\end{split} & \text{ if }w\text{ is infinite};\\
\begin{split}& \text{either }\big(w(\bm{j})<0\text{ and }p\nmid w(\bm{j})\big),\\
& \text{or }w(\bm{j}) \geq 0\end{split} & \text{ if }w\text{ is finite}.
\end{cases}\label{f11}\end{equation}
For a positive integer $n,$ let $G(n)_{1}$ denote the wild ramification subgroup, i.e., the first lower ramification subgroup, of $\mathrm{Gal}(K(\phi[u^{n}])/K).$
In \cite[Theorems~3.9 and 3.13, Lemmas~3.14 and 3.15]{AH22}, for $u$ having degree $1$ and any $n,$ the action of $G(n)_{1}$ on $\{\lambda_{i}\}_{i=1,2}$ has been studied assuming moreover $q \neq 2$ when $w$ is a finite prime.
In Sections~\ref{s5} and \ref{s6}, we study the action of $G(n)_{1}$ on $\{\lambda_{i}\}_{i=1,2}$ \emph{without} requiring ($\deg(u)=1$) and ($q \neq 2$ when $w$ is finite).

\begin{theorem}\label{ti2}
Let $\phi$ be a rank $2$ Drinfeld $A$-module over $K$ which does not necessarily have stable reduction when $w$ is finite.
Let $u$ be a finite prime of $A$ with $\deg(u) = d.$
Let $\{\lambda_{i}\}_{i=1,2}$ be an SMB of $\phi[u^{n}].$ 
\begin{enumerate}[\rm{(}1)]
\item \text{\rm{(Theorem~\ref{t51})}} Let $w$ be an infinite prime. 
Assume $w(\bm{j})<w(t)q$ and $p \nmid w(\bm{j}).$
Let $m$ be the integer such that $w(\bm{j})\in (w(t)q^{m+1}, w(t)q^{m}).$ 
Put $d = \deg(u).$
Assume $n\geq m/d.$
\begin{itemize}
\item Any element in $G(n)_{1}$ fixes $\lambda_{1};$
\item 
Let $A^{<m}$ denote the subgroup of $A$ consisting of elements with degree $<m.$
Then the map \[G(n)_{1} \to A^{<m}\cdot_{\phi}\lambda_{1};\,\,\sigma\mapsto \sigma(\lambda_{2}) - \lambda_{2}\]
is an isomorphism of groups.
\end{itemize}
\item \text{\rm{(Corollary~\ref{c61})}} Let $w$ be a finite prime satisfying $w \nmid u.$ 
Assume $w(\bm{j}) < 0$ and $p\nmid w(\bm{j}).$ 
\begin{itemize}
\item Any element in $G(n)_{1}$ fixes $\lambda_{1};$
\item 
There is an isomorphism of groups
\[G(n)_{1} \to A\cdot_{\phi}\lambda_{1};\,\,\sigma \mapsto \sigma(\lambda_{2}) - \lambda_{2}.\]
\end{itemize}
\end{enumerate}
\end{theorem}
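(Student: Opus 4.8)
The plan is to transport the question, via the (Tate) uniformization and Theorem~\ref{ti1}, onto the period lattice, where the Galois action is transparent, and then to carry out one precise ramification computation in terms of $w(\bm j)$. First I would dispose of the non-stable case: under~\eqref{f11} with $p\nmid w(\bm j)$, $\phi$ either already has stable reduction over $K$ or acquires it over a \emph{tamely} ramified extension $K'/K$ inside the maximal tamely ramified extension of $K$; since $K'/K$ is tame, the first lower ramification subgroup of $\mathrm{Gal}(K(\phi[u^{n}])/K)$ is carried isomorphically onto that of $\mathrm{Gal}(K'(\phi[u^{n}])/K')$, and by Proposition~\ref{p11}~(1) an SMB over $K$ remains one over $K'$, so I may assume stable reduction. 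In case~(1) this means a rank $2$ lattice $\Lambda\subset\Bbb C_{\infty}$ with $e_{\phi}$ inducing $u^{-n}\Lambda/\Lambda\cong\phi[u^{n}]$; in case~(2), since $w(\bm j)<0$, the reduction has rank $r'=1$, so there are a rank $1$ module $\psi$ of good reduction and a rank $1$ lattice $\Lambda$. By Theorem~\ref{ti1} and Proposition~\ref{p11}~(1), and a change of basis among the $A/u^{n}$-automorphisms preserving the valuation filtration (allowed by Proposition~\ref{p11}~(2), since $w(u)\le 0$ here), it is enough to treat $\lambda_{i}=e_{\phi}(\omega_{i}/u^{n})$ for an SMB $\{\omega_{1},\omega_{2}\}$ of $\Lambda$ in case~(1), and $\lambda_{1}=e_{\phi}(\omega_{1})$ with $\omega_{1}$ generating $\psi[u^{n}]$, $\lambda_{2}=e_{\phi}(\omega_{2})$ with $\omega_{2}$ a fixed root of $\psi_{u^{n}}(X)=\omega_{2}^{0}$ and $\omega_{2}^{0}$ generating $\Lambda$, in case~(2).

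I would first prove that every $\sigma\in G(n)_{1}$ fixes $\lambda_{1}$. In case~(1), since Galois preserves $|-|$ it stabilises the $\Bbb F_{q}$-line $\{\omega\in\Lambda:|\omega|\le|\omega_{1}|\}=\Bbb F_{q}\omega_{1}$, so $\sigma(\omega_{1})\in\Bbb F_{q}^{\times}\omega_{1}$; as $\Bbb F_{q}^{\times}$ has order prime to $p$, this is a tame character, hence $\sigma\in G(n)_{1}$ fixes $\omega_{1}$ and therefore $\lambda_{1}=e_{\phi}(\omega_{1}/u^{n})$. In case~(2), $\lambda_{1}=e_{\phi}(\omega_{1})$ lies in $K(\psi[u^{n}])$, which is unramified over $K$ because $\psi$ has good reduction and $w\nmid u$, so the whole inertia group --- in particular $G(n)_{1}$ --- fixes $\lambda_{1}$. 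This is the first bullet of each part.

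Next, the homomorphism. Let $\sigma\in G(n)_{1}$. In case~(1), writing $\sigma(\omega_{2})=a_{\sigma}\omega_{1}+b_{\sigma}\omega_{2}$ with $a_{\sigma},b_{\sigma}\in A$, preservation of $|-|$ and the fact that $\sigma$ is an $A$-automorphism of $\Lambda$ force $b_{\sigma}\in\Bbb F_{q}^{\times}$, so $\sigma\mapsto b_{\sigma}$ is again a tame character and $b_{\sigma}=1$; thus $\sigma(\omega_{2})=\omega_{2}+a_{\sigma}\omega_{1}$, and applying $e_{\phi}$ together with its functional equation gives $\sigma(\lambda_{2})-\lambda_{2}=a_{\sigma}\cdot_{\phi}\lambda_{1}$ with $a_{\sigma}\in A/u^{n}$. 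In case~(2), Galois acts on the rank $1$ lattice $\Lambda=A\omega_{2}^{0}$ through a tame $\Bbb F_{q}^{\times}$-valued character, so $\sigma(\omega_{2}^{0})=\omega_{2}^{0}$, whence $\psi_{u^{n}}(\sigma(\omega_{2})-\omega_{2})=0$, i.e. $\sigma(\omega_{2})-\omega_{2}=a_{\sigma}\cdot_{\psi}\omega_{1}$ for some $a_{\sigma}\in A/u^{n}$ and, applying $e_{\phi}$, again $\sigma(\lambda_{2})-\lambda_{2}=a_{\sigma}\cdot_{\phi}\lambda_{1}$. Since $\sigma$ fixes $\lambda_{1}$, the cocycle relation degenerates to $a_{\sigma\tau}=a_{\sigma}+a_{\tau}$, so $\sigma\mapsto a_{\sigma}\cdot_{\phi}\lambda_{1}$ is a group homomorphism $G(n)_{1}\to A\cdot_{\phi}\lambda_{1}$; it is injective because $a_{\sigma}=0$ forces $\sigma$ to fix the SMB $\{\lambda_{1},\lambda_{2}\}$, hence all of $\phi[u^{n}]$, hence $\sigma=\mathrm{id}$. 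Here $A^{<m}\cdot_{\phi}\lambda_{1}$ is an $\Bbb F_{q}$-subspace of $\phi[u^{n}]$ of order exactly $q^{m}$ precisely because $n\ge m/d$ makes $a\mapsto\phi_{a}(\lambda_{1})$ injective on $A^{<m}$, and $\#\bigl(A\cdot_{\phi}\lambda_{1}\bigr)=q^{nd}$ since $\lambda_{1}$ is a basis element of the free $A/u^{n}$-module $\phi[u^{n}]$. Thus it remains only to pin down the image: it should equal $A^{<m}\cdot_{\phi}\lambda_{1}$ in case~(1) (equivalently $\#G(n)_{1}=q^{m}$ and the $a_{\sigma}$ exhaust the polynomials of degree $<m$), and $A\cdot_{\phi}\lambda_{1}$ in case~(2) (equivalently $\#G(n)_{1}=q^{nd}$).

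This last point is the main obstacle. In case~(2) I would compute $\#G(n)_{1}$ by a Newton-polygon analysis: the wild ramification of $K(\phi[u^{n}])/K$ comes entirely from the lattice (since $\psi[u^{n}]$ is unramified), and up to the tame action on $\Lambda$ it is the extension generated by a root $\omega_{2}$ of the $\Bbb F_{q}$-linear polynomial $\psi_{u^{n}}(X)-\omega_{2}^{0}$, of degree $q^{nd}$, whose Newton polygon over the relevant tame base is a single segment of slope $\bigl(-w(\omega_{2}^{0})\bigr)/q^{nd}$; the standard relation between the Tate period $\omega_{2}^{0}$ and $\bm j$ makes $w(\omega_{2}^{0})$ and $w(\bm j)$ agree up to a rational factor of $p$-adic valuation zero, so $p\nmid w(\bm j)$ puts that slope in lowest terms with denominator $q^{nd}$, forcing total (wild) ramification of degree $q^{nd}$; hence $\#G(n)_{1}=q^{nd}$ and, with injectivity, the homomorphism is onto $A\cdot_{\phi}\lambda_{1}$. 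In case~(1) I would run the analogue over the maximal tamely ramified subextension of $K(\Lambda)$, using that $K(\phi[u^{n}])=K(\Lambda)$ once $n\ge m/d$ (Proposition~\ref{p12}~(1)), so the wild part has stabilised: the valuation of $\omega_{2}$ relative to $\omega_{1}$ is governed by $w(\bm j)$, and the open-interval hypothesis $w(\bm j)\in\bigl(w(t)q^{m+1},w(t)q^{m}\bigr)$ together with $p\nmid w(\bm j)$ pin the wild degree to $q^{m}$ and, via the ramification filtration $G(n)_{\bullet}$, identify the valuations $w(\sigma(\lambda_{2})-\lambda_{2})$ occurring on $G(n)_{1}$ with exactly the $w(\phi_{a}(\lambda_{1}))$ for $\deg a<m$, giving image $A^{<m}\cdot_{\phi}\lambda_{1}$. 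Carrying out these period-valuation estimates and the ramification-filtration bookkeeping for $\deg u=d$ arbitrary and with no hypothesis $q\ne 2$ --- which is exactly what \cite{AH22} did only for $d=1$ (and under $q\ne 2$ when $w$ is finite) --- is the technical heart of Sections~\ref{s5}--\ref{s6}; everything preceding it is formal manipulation of the uniformization and of Theorem~\ref{ti1}.
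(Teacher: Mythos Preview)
Your proposal is correct and runs closely parallel to the paper's argument, but with a genuine difference in viewpoint that is worth recording. You consistently work on the period side: after reducing to stable reduction over a tame extension (exactly as the paper does via Proposition~\ref{p43} and Lemma~\ref{l21}/\ref{c3l52}), you show directly on $\Lambda$ that wild inertia fixes $\omega_{1}$ and sends $\omega_{2}\mapsto\omega_{2}+a_{\sigma}\omega_{1}$, then transport through $e_{\phi}$ using its $\Bbb F_{q}$-linearity. The paper instead works directly with $\phi[u^{n}]$: for part~(1) it uses the Herbrand $\psi$-function of $K(\Lambda)/K$ (Lemma~\ref{l51}, taken from \cite{AH22}) to bound $w_{\Lambda}(\sigma(\lambda_{2})-\lambda_{2})$ from below, and then Corollary~\ref{c51}~(2) to conclude that this difference lies in $A^{<m}\cdot_{\phi}\lambda_{1}$; surjectivity comes from $\#G(\Lambda)_{1}=q^{m}$ (again \cite{AH22}). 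Your lattice argument actually gives the containment more cheaply: from $|\sigma(\omega_{2})|=|\omega_{2}|$ and the SMB equality of Proposition~\ref{p212}~(2) one gets $|a_{\sigma}\omega_{1}|\le|\omega_{2}|$, and a short computation with the explicit $w(\omega_{i})$ in Proposition~\ref{p51}~(1), using that $w(\bm j)$ lies in the \emph{open} interval, yields $\deg(a_{\sigma})<m$; only the surjectivity then needs the $\psi$-function input. The paper's route, by contrast, simultaneously yields the finer statement Theorem~\ref{t51}~(3) on the full ramification filtration. For part~(2) the two approaches essentially coincide: the paper's Theorem~\ref{t61}, proved via the Newton-polygon Proposition~\ref{c3p21}, is precisely the ``single-segment'' computation you outline, and the transport through $e_{\phi}$ in Corollary~\ref{c61} is your composition $G(n)_{1}\to\psi[u^{n}]\to A\cdot_{\phi}\lambda_{1}$.

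One small point to tighten in your reduction to the particular SMB $\lambda_{i}=e_{\phi}(\omega_{i}/u^{n})$: the statement is for an \emph{arbitrary} SMB, and the transition to another SMB $\{\lambda_{1}',\lambda_{2}'\}$ is not an arbitrary element of $\mathrm{GL}_{2}(A/u^{n})$ but one with diagonal entries in $\Bbb F_{q}^{\times}$ (this follows from Proposition~\ref{p221}~(2) together with Corollary~\ref{c51}~(1), or on the lattice from Proposition~\ref{p212}~(2)). That is what guarantees $A^{<m}\cdot_{\phi}\lambda_{1}'=A^{<m}\cdot_{\phi}\lambda_{1}$ and that $\sigma(\lambda_{2}')-\lambda_{2}'$ is an $\Bbb F_{q}^{\times}$-multiple of $\sigma(\lambda_{2})-\lambda_{2}$; your phrase ``preserving the valuation filtration'' is pointing at exactly this, but it is worth making explicit.
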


Example~\ref{e51} provides an instance where $w$ is an infinite prime, $w(\bm{j}) < w(t)q,$ $p \mid w(\bm{j}),$ and the extension $K(\phi[u^{n}])/K$ is not wildly ramified.
Let us remark that (1) if $w$ is an infinite prime and $w(\bm{j}) \geq w(t)q,$ the extension $K(\phi[u^{n}])/K$ is at worst tamely ramified such that $G(n)_{1}$ is a trivial group for any $n \geq 1;$
(2) if $w$ is a finite prime and $w(\bm{j})\geq 0,$ then $\phi$ has potentially good reduction at $w$ such that the extension $K(\phi[u^{n}])/K$ is at worst tamely ramified and the group $G(n)_{1}$ is trivial for any $n \geq 1.$

Let $\phi$ be a rank $2$ Drinfeld $A$-module over $F.$
With the assumptions on its $j$-invariant in (\ref{f11}), we define and calculate the conductors of $\phi$ at each prime $w$ of $F$ using the $u$-adic Tate module with $u \nmid w.$
Finally, we show a function field analogue of Szpiro's conjecture in Theorem~\ref{t62}, which slightly generalizes \cite[Theorem~4.3]{AH22}.

Motivated by \cite[Proposition~3.2]{GekP}, we may expect that there are generalizations of the results in Sections~\ref{s5} and \ref{s6} to Drinfeld $A$-modules $\phi$ of rank $r$ over $K$ satisfying $\phi_{t}(X) = tX + a_{s}X^{q^{s}} + a_{r}X^{q^{r}} \in K[X].$
We have obtained a generalization of Proposition~\ref{p51} for such $\phi$ (See Remark~\ref{r52}).
There are difficulties in generalizing Theorem~\ref{ti2}.
We do not further investigate the general case in the present paper. 
Some partial results will appear in the author's doctoral thesis \cite{H23}.
For instance, the explicit action of the ramification subgroup $\mathrm{Gal}(K(\phi[t])/K)_{1}$ on an SMB $\{\xi_{i}\}_{i=1,\ldots,r}$ of $\phi[t]$ has been worked out in \cite[Theorem~3.3.16]{H23} under certain limited situations.

\subsection{Contents}
Except for Section~\ref{s63}, we consider Drinfeld $A$-modules over a localization $K$ of a global function field.
In Section~\ref{s2}, we first review the basics of the SMB of lattices. 
The rest of this section is devoted to the basics of SMBs of $\phi[u^{n}].$
In Section~\ref{s3}, we mainly show the infinite prime case of Theorem~\ref{ti1}.
For an element $\omega_{i}$ of an SMB of the lattice $\Lambda$ as in Theorem~\ref{ti1}~(1) and an element $a_{i}$ in $A$ with a limited degree, we describe the absolute value of $e_{\phi}(a_{i}\omega_{i})$ in Corollary~\ref{c31}~(1). 
This is the key result of this section and its proof is inspired by that of \cite[Lemma~3.4]{GekGL}.
Section~\ref{s4} consists of finite prime analogues of the results in Section~\ref{s3}.
The analogue of Corollary~\ref{c31}~(1) is Corollary~\ref{c41}~(1).

In Section~\ref{s5} (resp. Section~\ref{s6}), we apply the results in the previous sections to a rank $2$ Drinfeld module $\phi$ over $K$ with $w$ being infinite (resp. finite).
We first calculate the valuations of elements of SMBs of $\Lambda$ and $\phi[u^{n}]$ in Sections~\ref{s51} and \ref{s61}.
In Section~\ref{s52}, we calculate the conductors of $\phi$ in Lemma~\ref{d51}.
Then we study the action of the wild ramification subgroup of the Galois group $\mathrm{Gal}(K(\phi[u^{n}])/K)$ on an SMB of $\phi[u^{n}]$ in Theorem~\ref{t51}.
Section~\ref{s62} consists of the finite prime analogues of the results in Section~\ref{s52}. 
In Section~\ref{s63}, we obtain the function field analogue of Szpiro's conjecture under certain assumptions.

In Appendix~\ref{sa}, when $w$ is an infinite prime, the conductor of a rank $r$ Drinfeld $A$-module over $K$ is defined.
In Appendix~\ref{sb}, we calculate the Herbrand $\psi$-function of the extension of $K$ generated by the roots of a certain polynomial with degree being a power of $q.$

\subsection*{Acknowledgements}
We are very grateful to Yuichiro Taguchi for his constant interest and encouragement. 
His comments have been very efficacious.
We thank M. Papikian for informing us of the paper \cite{GekP} and the initial idea so that we can formulate Remark~\ref{r52}.
We thank the anonymous referee for this constructive comments. 

\section{Basics of SMBs}\label{s2}
Let $|-|$ denote the absolute value in (F\ref{F1}) (resp. the function in (F\ref{F2})) if $w$ is an infinite prime (resp. a finite prime) defined in Section~\ref{s11}.

\subsection{SMBs of lattices}\label{s21}
In this subsection, we recall first the basics of SMBs of lattices and then the (Tate) uniformization of Drinfeld modules. 
Consider $\Bbb{C}_{\infty}$ as an $A$-module via the embedding $A\to \Bbb{C}_{\infty}.$ 
If $w$ is a finite prime, consider $\Bbb{C}_{w}$ as an $A$-module via a Drinfeld module $\psi$ having good reduction of rank $r'.$
The next lemma will be applied implicitly in this paper.

\begin{lemma}\label{l20}
\begin{enumerate}[\rm{(}1)]
\item If $w$ is an infinite prime, we have $|a\omega|=|a|\cdot |\omega|$ for any $a\in A$ and $\omega\in \Bbb{C}_{\infty}.$
\item \text{\rm{(\text{\cite[Section~1]{Gar}})}} Let $w$ be a finite prime. 
Then we have $|a\cdot_{\psi} \omega|=|a|_{\infty}\cdot |\omega|,$ i.e., $w(a\cdot_{\psi} \omega)=|a|_{\infty}^{r'}\cdot w(\omega)$  for any $a\in A$ and any $\omega\in \Bbb{C}_{w}$ having valuation $<0,$ where $|a|_{\infty}=q^{\deg(a)}.$ 
\end{enumerate}
\end{lemma}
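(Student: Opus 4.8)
The plan is to handle the two parts separately: (1) is simply the multiplicativity of a genuine absolute value, while (2) is a short ultrametric estimate that uses the good reduction of $\psi$ in an essential way.

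For (1), recall that the function $|-|$ in (F\ref{F1}) is, by construction, the (unique) extension to $\Bbb{C}_{\infty}$ of the non-archimedean absolute value $|x| = q^{\deg(x)}$ on $\Bbb{F}_{q}((\frac{1}{t}))$. Any such extension is multiplicative, so $|a\omega| = |a|\,|\omega|$ for all $a \in \Bbb{F}_{q}[t] \subseteq \Bbb{F}_{q}((\frac{1}{t}))$ and all $\omega \in \Bbb{C}_{\infty}$; since $\Bbb{C}_{\infty}$ carries the $A$-module structure coming from the fixed embedding $A \to \Bbb{C}_{\infty}$, the element $a\omega$ is literally the product, and we are done.

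For (2), I would first fix a model of $\psi$ over the ring of integers $\mathcal{O}_{K}$ of $K$ realizing its good reduction (such a model is part of the Tate uniformization datum, cf. \cite[Section~7]{Drin}): write $\psi_{t}(X) = tX + g_{1}X^{q} + \cdots + g_{r'}X^{q^{r'}}$ with all $g_{i} \in \mathcal{O}_{K}$ and $g_{r'} \in \mathcal{O}_{K}^{\times}.$ Expanding $\psi_{a} = \sum_{k}\alpha_{k}\,\psi_{t}^{\circ k}$ for $a = \sum_{k}\alpha_{k}t^{k} \in A$ (equivalently, by induction on $\deg a$ using $\psi_{ta} = \psi_{t}\circ\psi_{a}$), one sees that for every nonzero $a \in A$ we have $\psi_{a}(X) = \sum_{i=0}^{r'\deg a} c_{i}X^{q^{i}}$ with $c_{i} \in \mathcal{O}_{K}$, $c_{0} = a$, and leading coefficient $c_{r'\deg a} \in \mathcal{O}_{K}^{\times}$; in particular $w(c_{i}) \geq 0$ for all $i$ and $w(c_{r'\deg a}) = 0.$

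Now take $\omega \in \Bbb{C}_{w}$ with $w(\omega) < 0.$ For each $i < r'\deg a$ we have $w(c_{i}\omega^{q^{i}}) = w(c_{i}) + q^{i}w(\omega) \geq q^{i}w(\omega) > q^{r'\deg a}w(\omega) = w\bigl(c_{r'\deg a}\omega^{q^{r'\deg a}}\bigr),$ where the strict inequality uses $w(\omega) < 0$ and $q^{i} < q^{r'\deg a}.$ Hence the top monomial of $\psi_{a}(\omega) = \sum_{i} c_{i}\omega^{q^{i}}$ is strictly dominant, and the ultrametric inequality gives $w(\psi_{a}(\omega)) = q^{r'\deg a}\,w(\omega) = |a|_{\infty}^{r'}\,w(\omega) < 0.$ Since then both $\omega$ and $\psi_{a}(\omega)$ have negative valuation, the definition of $|-|$ in (F\ref{F2}) yields $|\psi_{a}(\omega)| = \bigl(-w(\psi_{a}(\omega))\bigr)^{1/r'} = \bigl(-|a|_{\infty}^{r'}w(\omega)\bigr)^{1/r'} = |a|_{\infty}\bigl(-w(\omega)\bigr)^{1/r'} = |a|_{\infty}\,|\omega|,$ as claimed (the case $a = 0$ being vacuous). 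The only substantive ingredient is the good-reduction fact that $\psi_{a}$ has $\mathcal{O}_{K}$-integral coefficients with unit leading coefficient, which is what forces the highest-degree monomial to dominate; granting it, both parts are essentially formal, so I do not expect any real obstacle here.
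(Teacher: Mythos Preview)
Your proof is correct and follows essentially the same approach as the paper: part~(1) is dismissed as multiplicativity of a genuine absolute value, and part~(2) is the observation that good reduction of $\psi$ forces $\psi_{a}(X)$ to have $\mathcal{O}_{K}$-integral coefficients with unit leading coefficient, so for $w(\omega)<0$ the top monomial strictly dominates. The paper's version is terser (it simply asserts $w(a_{i})\geq 0$ and $w(a_{g})=0$ from good reduction rather than sketching the induction), but the argument is the same.
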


\begin{proof}
(1) is clear.
We show (2).
Put $g=r'\cdot \deg(a),$ $a_{0}=a,$ and $\sum_{i=0}^{g}a_{i}X^{q^{i}}=\psi_{a}(X).$
As the Drinfeld module $\psi$ has good reduction, we have $w(a_{i})\geq 0$ and $w(a_{g})=0.$
Hence the assumption $w(\omega)<0$ implies that the valuation $w(a_{g}\omega^{q^{g}})$ is the strictly smallest among $w(a_{i}\omega^{q^{i}})$ for all $i.$
As $w(a_{g})=0,$ we have $w(a_{g}\omega^{q^{g}})=q^{g}w(\omega),$ i.e., $|a \cdot_{\psi}\omega|=|a|_{\infty}\cdot |\omega|.$ 
\end{proof}

Let $L$ be an $A$-lattice of rank $r$ in $\Bbb{C}_{\infty}$ or an $A$-lattice of rank $r$ in $\Bbb{C}_{w}$ such that each nonzero element in the lattice has valuation $<0.$
\begin{definition}[\text{\cite[Section~4]{TSI} or \cite[Section~3]{GekGL}}]\label{d21}
A family of elements $\{\omega_{i}\}_{i=1,\ldots,r}$ in $L$ is called an SMB of $L$ if for each $i,$ the elements $\omega_{1},\ldots,\omega_{i}$ satisfy
\begin{enumerate}
\item $\omega_{1},\ldots,\omega_{i}$ are $A$-linearly independent;
\item 
$|\omega_{i}|$ is minimal among the absolute values of elements $\omega$ in $L$ such that $\omega_{1},\ldots,\omega_{i-1},\omega$ are $A$-linearly independent.
\end{enumerate}
\end{definition}

\begin{remark}
If elements $\lambda_{i}$ for $i=1,\ldots,r$ of $\phi[u^{n}]$ are $A/u^{n}$-linearly independent (cf. Definition~\ref{d11}~(1)), then $\{\lambda_{i}\}_{i=1,\ldots,r}$ is an $A/u^{n}$-basis of $\phi[u^{n}].$
On the other hand, if elements $\omega_{i}$ for $i=1,\ldots,r$ of $\Lambda$ are $A$-linearly independent, then $\{\omega_{i}\}_{i=1,\ldots,r}$ is not necessarily an $A$-basis of $\Lambda.$
\end{remark}

\begin{proposition}\label{p211}
Let $\{\omega_{i}\}_{i=1,\ldots,r}$ be a family of elements in $L.$ 
\begin{enumerate}[\rm{(}1)]
\item This family is an SMB if and only if for each $i,$ the elements $\omega_{1},\ldots,\omega_{i}$ satisfy
\begin{itemize}
\item $\omega_{1},\ldots,\omega_{i}$ are $A$-linearly independent;
\item we have $|\omega_{i}|=l_{i},$ where
\[l_{i}=\min\left\{\rho\in \Bbb{R}\,\middle|\,\begin{aligned}&\text{the ball in }\Bbb{C}_{\infty}\text{ or }\Bbb{C}_{w}\text{ around }0\text{ of radius }\rho\text{ contains}\\
&\text{at least }i\text{ elements in }L\text{ which are }A\text{-linearly independent}&
\end{aligned}\right\}.\]
\end{itemize}
\item The sequence $|\omega_{1}| \leq |\omega_{2}| \leq \cdots \leq |\omega_{r}|$ for an SMB $\{\omega_{i}\}_{i=1,\ldots,r}$ is an invariant of $L,$ i.e., for any SMB $\{\omega_{i}'\}_{i=1,\ldots,r}$ of $L,$ we have $|\omega_{i}|=|\omega_{i}'|$ for all $i.$
\end{enumerate}
\end{proposition}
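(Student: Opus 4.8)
The plan is to reduce everything to the intrinsic quantity $l_i$ appearing in the statement. Set $F_0 \coloneqq \mathrm{Frac}(A)$, and for $\rho \in \Bbb{R}$ let $N(\rho)$ be the maximal number of $A$-linearly independent elements of $L$ contained in the closed ball of radius $\rho$ about $0$, so that $l_i = \min\{\rho : N(\rho) \geq i\}$. I would first record the routine facts that $N$ is non-decreasing and bounded by $r$, that the minimum defining $l_i$ is attained because the set of absolute values taken on $L$ is discrete, hence $l_1 \leq l_2 \leq \cdots \leq l_r$, and that a tuple of $A$-linearly independent elements of $L$ spans a subspace of $L \otimes_A F_0$ of dimension equal to its length (so that ``lying outside the span of $\omega_1, \ldots, \omega_{i-1}$'' is equivalent to ``forming with $\omega_1, \ldots, \omega_{i-1}$ an $A$-linearly independent $i$-tuple'').

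For the forward implication of (1), suppose $\{\omega_i\}$ is an SMB. I would first check that $|\omega_1| \leq \cdots \leq |\omega_r|$: for $j \geq i$ the tuple $\omega_1, \ldots, \omega_{i-1}, \omega_j$ is a sub-tuple of the linearly independent family $\omega_1, \ldots, \omega_r$ and hence competes in the minimality condition defining $\omega_i$, forcing $|\omega_i| \leq |\omega_j|$. Then $\omega_1, \ldots, \omega_i$ are $i$ independent elements of absolute value $\leq |\omega_i|$, so $l_i \leq |\omega_i|$; conversely, choosing $i$ independent elements of $L$ of absolute value $\leq l_i$ (possible since $N(l_i) \geq i$), at least one of them lies outside the $(i-1)$-dimensional span of $\omega_1, \ldots, \omega_{i-1}$ and therefore competes in the same minimality condition, giving $|\omega_i| \leq l_i$. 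Thus $|\omega_i| = l_i$ for all $i$; since the $l_i$ depend only on $L$, this proves the forward implication of (1) and also proves (2) at once.

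For the reverse implication, suppose the $\omega_i$ are such that $\omega_1, \ldots, \omega_i$ are $A$-linearly independent and $|\omega_i| = l_i$ for every $i$. Fixing $i$, I must rule out the existence of $\omega \in L$ with $\omega_1, \ldots, \omega_{i-1}, \omega$ independent and $|\omega| < l_i$. Assuming such an $\omega$ exists, I would let $k$ be the largest index in $\{0, 1, \ldots, i-1\}$ with $l_k < l_i$ (convention $l_0 = -\infty$); maximality of $k$ together with $l_{k+1} \leq l_i$ forces $l_{k+1} = l_i$. The $(k+1)$-tuple $\omega_1, \ldots, \omega_k, \omega$ is a sub-tuple of the independent family $\omega_1, \ldots, \omega_{i-1}, \omega$, hence independent, and every one of its entries has absolute value $< l_{k+1}$ (the entries $\omega_s$ with $s \leq k$ because $|\omega_s| = l_s \leq l_k < l_{k+1}$, and $\omega$ because $|\omega| < l_i = l_{k+1}$). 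This exhibits $k+1$ independent elements of $L$ inside a ball of radius strictly less than $l_{k+1}$, contradicting the definition of $l_{k+1}$ as a minimum --- in the degenerate case $k = 0$ the contradiction is simply that $\omega$ is a nonzero element with $|\omega| < l_1$. Hence no such $\omega$ exists and $\{\omega_i\}$ is an SMB.

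The only subtle point is this last step when $l_{i-1} = l_i$, i.e.\ when $N$ jumps by more than one at $l_i$: then $\omega_{i-1}$ itself is too large to discard, and one must instead retreat to the largest index $k$ with $l_k < l_i$ and run the counting argument there. Everything else --- monotonicity of $N$, attainment of the minima, and the linear-algebra fact identifying ``outside the span'' with ``independent when adjoined'' --- is routine bookkeeping.
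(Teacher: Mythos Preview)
The paper does not supply a proof of this proposition; it is stated as a standard fact about lattice SMBs, with the definition attributed to \cite{TSI} and \cite{GekGL}. Your self-contained argument is correct.

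It is worth comparing with the paper's proof of the parallel Proposition~\ref{p221} for $\phi[u^{n}]$. There the paper declares the ``$\Leftarrow$'' direction \emph{straightforward} and devotes all its work to ``$\Rightarrow$'', proving $|\omega_i|=l_i$ by induction on $i$ together with a module-theoretic argument specific to $A/u^{n}$ (reducing modulo $u$ to find an $\eta_k$ independent of $\lambda_1,\ldots,\lambda_{i-1}$). Your treatment of ``$\Rightarrow$'' is the more natural one over the PID $A$: passing to $L\otimes_A \mathrm{Frac}(A)$ and using a dimension count to locate an $\eta_k$ outside the span of $\omega_1,\ldots,\omega_{i-1}$ is shorter and avoids induction. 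Conversely, the point you isolate as the only subtle one --- the ``$\Leftarrow$'' direction when $l_{i-1}=l_i$, where one cannot simply exhibit $\omega_1,\ldots,\omega_{i-1},\omega$ as $i$ small independent elements --- is genuine, and your device of retreating to the largest index $k$ with $l_k<l_i$ and running the counting argument at level $k+1$ is exactly what is needed; the paper's ``straightforward'' glosses over this.
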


\begin{proposition}\label{p212}
Let $\{\omega_{i}\}_{i=1,\ldots,r}$ be a family of elements in $L$ so that $|\omega_{1}|\leq |\omega_{2}| \leq \cdots\leq |\omega_{r}|$.  
Then this family is an SMB of $L$ if and only if 
\begin{enumerate}[\rm{(}1)]
\item $\omega_{1},\ldots,\omega_{r}$ form an $A$-basis of $L;$
\item we have $|\sum_{i}a_{i}\omega_{i}|=\max_{i}\{|a_{i}\omega_{i}|\}$ for any $a_{i}\in A.$
\end{enumerate}
\end{proposition}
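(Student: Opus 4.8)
The plan is to prove both directions of the equivalence, using the characterization of an SMB via the balls $l_i$ from Proposition~\ref{p211}~(1) as the main tool, together with the hypothesis that the $|\omega_i|$ are already sorted in nondecreasing order.

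For the ``only if'' direction, suppose $\{\omega_i\}$ is an SMB with $|\omega_1| \leq \cdots \leq |\omega_r|$. First I would establish (1): since the $\omega_i$ are $A$-linearly independent and $L$ has rank $r$, they span a full-rank sublattice, so it remains to show that this sublattice is saturated, i.e.\ equals $L$. This is the standard fact that an SMB of a lattice is a basis; I would either cite it from \cite[Section~4]{TSI} / \cite[Section~3]{GekGL} or argue directly that if some $\omega \in L$ were not in $\sum_i A\omega_i$, one could use it to produce, for the smallest index $i$ where linear independence from $\omega_1,\dots,\omega_{i-1}$ first fails after adjoining $\omega$, an element contradicting the minimality of $|\omega_i|$ (replacing $\omega_i$ by a suitable $A$-combination of $\omega$ and the earlier $\omega_j$ of strictly smaller or equal absolute value). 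For (2), the inequality $|\sum_i a_i \omega_i| \leq \max_i |a_i\omega_i|$ is just the ultrametric inequality (valid in case (F\ref{F1}) as an absolute value, and in case (F\ref{F2}) by the remark in (F2) that the ultrametric inequality still holds, using Lemma~\ref{l20} to compute $|a_i\omega_i| = |a_i|_\infty |\omega_i|$). For the reverse inequality, suppose $|\sum_i a_i\omega_i| < \max_i|a_i\omega_i| =: |a_k\omega_k|$ with $k$ the largest index attaining the max. Then $\omega' := \sum_{i} a_i\omega_i$ together with $\omega_1,\dots,\omega_{k-1}$ are $A$-linearly independent (since $a_k \neq 0$), yet $|\omega'| < |a_k\omega_k| = |a_k||\omega_k| $; as $|a_k| \geq 1$ we get $|\omega'| < |\omega_k|$, and also $|\omega'| \geq |\omega_k| \geq \cdots$... more carefully: $\omega_1,\dots,\omega_{k-1},\omega'$ are $A$-linearly independent while $|\omega'|$ is strictly smaller than $|a_k \omega_k|$; since $|a_k\omega_k| \ge |\omega_k|$ and $|\omega_k| = l_k$ is minimal among absolute values of elements completing $\omega_1,\dots,\omega_{k-1}$ to a linearly independent family, this forces $|\omega'| < l_k$, contradicting the definition of $l_k$ unless $\omega'$ actually lies in the span of fewer vectors — which it does not. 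This contradiction gives equality.

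For the ``if'' direction, assume (1) and (2) and $|\omega_1|\leq\cdots\leq|\omega_r|$; I would show inductively that $|\omega_i| = l_i$. Linear independence of $\omega_1,\dots,\omega_i$ is immediate from (1). Given any $\omega = \sum_j a_j\omega_j \in L$ with $\omega_1,\dots,\omega_{i-1},\omega$ linearly independent, some $a_j$ with $j \geq i$ is nonzero; pick the largest such $j$, so $j \geq i$, and by (2), $|\omega| = \max_k |a_k\omega_k| \geq |a_j \omega_j| \geq |\omega_j| \geq |\omega_i|$. Hence $|\omega_i|$ is minimal among all such $\omega$, which is exactly condition (2) of Definition~\ref{d21}. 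Thus $\{\omega_i\}$ is an SMB, and by Proposition~\ref{p211}~(1) this minimal value equals $l_i$.

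\textbf{Main obstacle.} The genuinely delicate point is the ``only if'' direction of part (2) — extracting a contradiction from a strict ultrametric inequality $|\sum_i a_i\omega_i| < \max_i|a_i\omega_i|$. One must argue that the combination $\omega' = \sum_i a_i\omega_i$, together with an appropriate initial segment $\omega_1,\dots,\omega_{k-1}$ determined by where the maximum is attained, forms an $A$-linearly independent family whose last member has too small an absolute value, contradicting the ball-count minimality $l_k = |\omega_k|$ of Proposition~\ref{p211}~(1). Keeping track of which index $k$ to use (the largest one attaining the maximum, to ensure the independence statement) and handling the factor $|a_k| \geq 1$ correctly in case (F\ref{F2}), where $|{-}|$ is not multiplicative but satisfies $w(a\cdot_\psi\omega) = |a|_\infty^{r'} w(\omega)$ by Lemma~\ref{l20}, is where care is needed; everything else is the standard lattice-reduction bookkeeping, and the result is essentially \cite[Lemma~4.2]{TSI}, which may simply be invoked for the finite-prime normalization.
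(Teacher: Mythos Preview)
The paper's own proof is a one-line citation to \cite[Lemma~4.2]{TSI}; it does not give any argument. Your ``if'' direction is correct and is the standard argument, and your fallback of simply invoking \cite[Lemma~4.2]{TSI} matches the paper exactly.

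However, your direct argument for part~(2) of the ``only if'' direction has a genuine gap. From $|\omega'| < |a_k\omega_k|$ and $|a_k\omega_k| \geq |\omega_k| = l_k$ you \emph{cannot} deduce $|\omega'| < l_k$: these inequalities are perfectly consistent with $l_k \leq |\omega'| < |a_k|\cdot|\omega_k|$ whenever $\deg a_k \geq 1$. The SMB minimality only yields $|\omega'| \geq l_k$, so no contradiction arises at this point. Concretely, with $r=2$, $|\omega_1| < |\omega_2|$, and $a_1,a_2$ chosen so that $|a_1\omega_1| = |a_2\omega_2| = M$ with $\deg a_2 \geq 1$, your argument only gives $|\omega_2| \leq |\omega'| < M$, which is not absurd on its face.

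The standard repairs are: (i) induct on $\deg a_k$ (or on $\sum_i \deg a_i$), reducing a putative counterexample with $\deg a_k \geq 1$ to one of strictly smaller degree until reaching $a_k \in \Bbb{F}_q^\times$, where your argument \emph{does} give $|\omega'| < |\omega_k|$ and hence a contradiction; or (ii) count lattice points in balls via the filtration $\bigoplus_{j\leq k}A\omega_j$ and compare with the successive minima $l_k$, which is essentially how \cite{TSI} and \cite{GekGL} proceed. Either way, an extra reduction step beyond the single application of minimality is required; since you already flag the citation as available, the cleanest route is to follow the paper and invoke \cite[Lemma~4.2]{TSI}.
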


\begin{proof}
This has been proved in \cite[Lemma~4.2]{TSI}.
\end{proof}

For the subfield $K$ of $\Bbb{C}_{v},$ we say that $L$ is $\mathrm{Gal}(K^{\mathrm{sep}}/K)$-invariant if each element in the Galois group maps $L$ into $L.$
The following lemma concerns the extension generated by elements $\omega$ in the lattice with $|\omega|$ being minimal.

\begin{lemma}\label{l21}
Let $\{\omega_{i}\}_{i=1,\ldots,r}$ be an SMB of $L$ such that $|\omega_{1}| = \cdots = |\omega_{s}| < |\omega_{s+1}|$ for some positive integer $s<r.$
Assume that
\begin{itemize}
\item
the extension $M/K$ generated by $\omega_{i}$ for $i=1,\ldots,s$ is separable;
\item
the lattice $L$ is $\mathrm{Gal}(K^{\mathrm{sep}}/K)$-invariant.
\end{itemize}
\begin{enumerate}[\rm{(}1)]
    \item 
The extension $M/K$ is Galois.
    \item 
The extension $M/K$ is at worst tamely ramified.
\end{enumerate}
\end{lemma}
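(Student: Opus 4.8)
The plan is to exploit the fact that $M/K$ is generated by the "shortest" vectors $\omega_1,\dots,\omega_s$ in the Galois-invariant lattice $L$, so that the Galois action permutes these vectors in a controlled way. First I would observe that for any $\sigma \in \mathrm{Gal}(K^{\mathrm{sep}}/K)$ and any $\omega \in L$, we have $|\sigma(\omega)| = |\omega|$, since $|-|$ (in case (F\ref{F1})) is the canonical extension of the absolute value on $K$ and is therefore Galois-equivariant (in case (F\ref{F2}), $w \circ \sigma = w$ and $|-|$ depends only on $w$). Hence $\sigma$ maps $L_{\le \rho} := \{\omega \in L : |\omega| \le |\omega_1|\}$ into itself. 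Since $\omega_1,\dots,\omega_s$ together with $A$-linear independence span the same thing as $L_{\le \rho}$ up to torsion (more precisely, $\{\omega_1,\dots,\omega_s\}$ is a maximal $A$-linearly independent subset of $L_{\le \rho}$, by the characterization in Proposition~\ref{p211}~(1) and the hypothesis $|\omega_s| < |\omega_{s+1}|$), each $\sigma \in \mathrm{Gal}(K^{\mathrm{sep}}/K)$ carries the $A$-span of $\omega_1,\dots,\omega_s$ inside $M^{\mathrm{sep}}$... wait, more carefully: $\sigma(\omega_i) \in L_{\le\rho}$, and $L_{\le\rho}$ lies in the finite-dimensional $A$-module generated by $\omega_1,\dots,\omega_s$ over $A$ (up to a common denominator, but actually $L_{\le\rho}$ is itself contained in $\mathrm{span}_A(\omega_1,\dots,\omega_s)$ since adjoining any further element would give $s+1$ independent vectors of bounded length). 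Thus $\sigma(\omega_i) = \sum_j a_{ij} \omega_j$ for some $a_{ij} \in A$, which shows $\sigma(M) \subseteq M$; since $M/K$ is finite separable, $M/K$ is Galois. This proves~(1).

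For~(2), I would argue that the inertia group acts trivially on a suitable "reduction" of the lattice vectors, forcing any wild inertia element to act trivially. Concretely: let $\sigma$ lie in the wild inertia subgroup $I_1 = \mathrm{Gal}(M/K)_1$, which is a $p$-group. For each $i \le s$, write $\sigma(\omega_i) = \sum_{j\le s} a_{ij}\omega_i$ as above, and consider $\sigma(\omega_i) - \omega_i = \sum_j (a_{ij} - \delta_{ij})\omega_j \in L$. The key point is a valuation/non-Archimedean estimate: because $\sigma \in I_1$, the difference $\sigma(\omega_i) - \omega_i$ should be "strictly smaller" than $\omega_i$ — i.e. $|\sigma(\omega_i)-\omega_i| < |\omega_i| = |\omega_1|$ — which, combined with Proposition~\ref{p212}~(2) applied to the SMB $\{\omega_j\}$ (giving $|\sum_j c_j\omega_j| = \max_j |c_j\omega_j| \ge |\omega_1|$ whenever some $c_j \ne 0$), forces all $a_{ij} - \delta_{ij} = 0$, i.e. $\sigma$ fixes each $\omega_i$ and hence fixes $M$. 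Therefore $I_1$ is trivial and $M/K$ is at worst tamely ramified. The estimate $|\sigma(\omega_i)-\omega_i| < |\omega_i|$ for $\sigma$ in wild inertia is the analogue of the standard fact that wild inertia acts trivially on the residue field / tangent space; in the Drinfeld-module/lattice setting one gets it from the fact that $M/K$ is generated by roots of separable polynomials and the different/ramification filtration controls how far $\sigma$ moves a uniformizing system, or alternatively by reducing modulo the maximal ideal after scaling.

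The main obstacle I anticipate is making the inequality $|\sigma(\omega_i) - \omega_i| < |\omega_i|$ rigorous and uniform over all $\sigma$ in wild inertia, since a priori $\sigma$ could permute the $\omega_j$'s nontrivially (so $\sigma(\omega_i) - \omega_i$ need not be small). The resolution is that $I_1$ is a $p$-group acting on the finite set of length-$|\omega_1|$ vectors modulo the sublattice of strictly shorter vectors — but here $|\omega_1|$ is minimal, so there is no shorter sublattice, and instead one must pass to a reduction modulo the maximal ideal of the ring of integers of $M$ (after clearing denominators, using that $L$ sits in $\mathbb{C}_w$ with a valuation), on which the tame quotient $I/I_1$ acts through roots of unity while $I_1$ acts trivially; fixing $\omega_1,\dots,\omega_s$ up to this reduction pins them down exactly because of the rigidity supplied by Proposition~\ref{p212}. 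Once that reduction step is set up correctly, both~(1) and~(2) follow from the linear-algebra-over-$A$ bookkeeping sketched above; I would expect the bulk of the written proof to be the careful choice of integral model and the verification that the $I_1$-action on it is trivial.
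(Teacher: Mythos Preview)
Your argument for~(1) is correct and essentially matches the paper's: Galois preserves $|-|$, so $\sigma(\omega_i)$ lands in $L_{\le |\omega_1|} = \bigoplus_{j\le s} A\omega_j$ (using Proposition~\ref{p212}~(2)), hence $\sigma(M)\subseteq M$. The paper phrases this as a contradiction argument via the Galois closure, but the content is the same.

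For~(2), your intended inequality $|\sigma(\omega_i)-\omega_i| < |\omega_i|$ for $\sigma$ in the wild inertia subgroup is exactly right, and once you have it the conclusion follows just as you say (since $\sigma(\omega_i)-\omega_i \in L$ is either zero or has absolute value $\ge |\omega_1| = |\omega_i|$). The gap in your proposal is that you treat this inequality as a genuine obstacle requiring a reduction-modulo-$\mathfrak{m}$ argument or an orbit analysis of a $p$-group action. It does not: it is an immediate consequence of the \emph{definition} of the first lower ramification group. For any $\sigma \in \mathrm{Gal}(M/K)_1$ and any $x\in M^\times$ one has $w_M(\sigma(x)x^{-1}-1)\ge 1$, equivalently $w_M(\sigma(x)-x) > w_M(x)$; this is the standard characterization $\sigma \in G_1 \iff \sigma$ acts trivially on $\mathcal{O}_M/\mathfrak{m}_M^2$, applied after writing $x = u\pi^m$. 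The paper simply quotes this fact and is done in three lines. Your worry that ``$\sigma$ could permute the $\omega_j$'s nontrivially so $\sigma(\omega_i)-\omega_i$ need not be small'' is misplaced: the smallness of $\sigma(\omega_i)-\omega_i$ comes from the ramification filtration, not from any combinatorics of the action, and it is precisely what \emph{rules out} a nontrivial permutation. Drop the reduction apparatus and the proof of~(2) collapses to the two-line contradiction you already sketched.
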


\begin{proof}
We show (1).
Let $\widehat{M}$ denote the Galois closure of $M/K$ so that $\widehat{M}$ is exactly the compositum of $\varsigma M$ for all $\varsigma\in \mathrm{Gal}(\widehat{M}/K).$
We have $\widehat{M} = M.$
Indeed, if $\widehat{M}/M$ is nontrivial, there exists some element $\varsigma \in \mathrm{Gal}(\widehat{M}/K)$ such that $\varsigma(\omega_{j}) \notin M$ for $j$ to be one of $1,\ldots,s.$
Note that $M$ contains the $A$-module $\bigoplus_{i=1,\ldots,s} A\omega_{i}$ (here $A\omega_{i} \coloneqq \{a \cdot_{\psi} \omega_{i} \mid a\in A\}$ if the prime $w$ is finite). 
As elements in $\Lambda\setminus\bigoplus_{i=1,\ldots,s} A\omega_{i}$ have strictly smaller valuations than that of $\omega_{i}$ for $i=1,\ldots,s$ and Galois actions preserve valuations, this implies that $\varsigma(\omega_{j}) \notin L.$ 
If $\varsigma$ also denotes a preimage of $\varsigma$ under $\mathrm{Gal}(K^{\mathrm{sep}}/K) \to \mathrm{Gal}(\widehat{M}/K),$ then  $\varsigma(\omega_{j}) \notin L$ contradicts that $L$ is $\mathrm{Gal}(K^{\mathrm{sep}}/K)$-invariant.

As for (2), we show that $M/K$ is tamely ramified.
Assume the converse so that the wild ramification subgroup $\mathrm{Gal}(M/K)_{1}$ is nontrivial.
Let $w_{M}$ denote the normalized valuation associated to $M.$
For $\sigma$ to be a nontrivial element in $\mathrm{Gal}(M/K)_{1},$ we have for each $i$ 
\begin{align}1 \leq w_{M}(\sigma(\omega_{i})\omega_{i}^{-1}-1). \nonumber\end{align}
We also have $\sigma(\omega_{j}) - \omega_{j} \neq 0$ for $j$ to be one of $1,\ldots,s.$ 
Note that $w_{M}(\omega_{j})$ is the largest among the valuations of all nonzero elements in $L.$
As $\sigma(\omega_{j})-\omega_{j} \in L$ ($L$ is $\mathrm{Gal}(K^{\mathrm{sep}}/K)$-invariant), we have 
\[w_{M}(\sigma(\omega_{j})\omega_{j}^{-1}-1) = w_{M}(\sigma(\omega_{j})-\omega_{j}) - w_{M}(\omega_{j}) \leq 0.\]
This gives a contradiction.
\end{proof}

Next, we recall the uniformization and the Tate uniformization.
If $w$ is an infinite prime, then the uniformization associates to the Drinfeld module $\phi$ a $\mathrm{Gal}(K^{\mathrm{sep}}/K)$-invariant $A$-lattice $\Lambda$ and an exponential function $e_{\phi}$ on $\Bbb{C}_{\infty}$ such that for each $a\in A,$  the following diagram commutes, and its two rows are short exact sequences
\begin{equation}
\begin{split}
\xymatrix{
\Lambda\ar@{^{(}->}[r]\ar[d]^{a}& \Bbb{C}_{\infty}\ar[r]^{e_{\phi}}\ar[d]^{a}&\Bbb{C}_{\infty}\ar[d]^{\phi_{a}}\\
\Lambda\ar@{^{(}->}[r]& \Bbb{C}_{\infty}\ar[r]^{e_{\phi}}&\Bbb{C}_{\infty}.
}    
\end{split}\nonumber
\end{equation}
Here the exponential function is explicitly
\[e_{\phi}: \Bbb{C}_{\infty}\to \Bbb{C}_{\infty};\,\,\omega\mapsto \omega\prod_{\mu\in \Lambda\setminus\{0\}}(1-\omega/\mu)\]
and the coefficients of $\phi_{a}(X)$ map to $\Bbb{C}_{\infty}$ via the embedding $K\hookrightarrow \Bbb{C}_{\infty}.$
The commutativity of the right square in the diagram means $e_{\phi}(a\omega)=a \cdot_{\phi} e_{\phi}(\omega)$ for any $\omega\in \Bbb{C}_{\infty}.$

\begin{remark}[SMBs and isomorphic Drinfeld modules]\label{r21}
For any $b\in K^{\mathrm{sep}}\setminus\{0\},$  we have the Drinfeld module $b\phi b^{-1}$ isomorphic to $\phi.$
The uniformization associates to $b\phi b^{-1}$ the lattice $b\Lambda.$
If the family $\{\omega_{i}\}_{i=1,\ldots,r}$ is an SMB of $\Lambda,$ then $\{b\omega_{i}\}_{i=1,\ldots,r}$ is an SMB of $b\Lambda.$ 
\end{remark}

If $w$ is a finite prime of $K,$ assume that $\phi$ has stable reduction over $K$ and the reduction of $\phi$ has rank $r'<r.$
According to \cite[Section~7]{Drin}, there are the following data associated to $\phi:$
\begin{enumerate}[\rm{(}1)]
\item A rank $r'$ Drinfeld $A$-module $\psi$ over $K$ has good reduction;
\item  A $\mathrm{Gal}(K^{\mathrm{sep}}/K)$-invariant $A$-lattice $\Lambda$ has rank $r-r'$ with the $A$ action induced by $\psi.$ 
Each element of $\Lambda$ has valuation $<0.$
\item An analytic entire surjective homomorphism \[e_{\phi}:\Bbb{C}_{w}\to \Bbb{C}_{w};\,\,\omega\mapsto \omega\prod_{\mu\in \Lambda\setminus\{0\}}(1-\omega/\mu)\] such that for each $a\in A,$ the following diagram commutes, and its two rows are short exact sequences
\[\xymatrix{\Lambda\ar@{^{(}->}[r]\ar[d]^{\psi_{a}}&\Bbb{C}_{w}\ar[r]^{e_{\phi}}\ar[d]^{\psi_{a}} & \Bbb{C}_{w}\ar[d]^{\phi_{a}}\\
\Lambda\ar@{^{(}->}[r] & \Bbb{C}_{w}\ar[r]^{e_{\phi}}& \Bbb{C}_{w}.
}\]
The commutativity of the right square means $e_{\phi}(a \cdot_{\psi} \omega)=a \cdot_{\phi} e_{\phi}(\omega)$ for any $\omega\in \Bbb{C}_{w}.$
\end{enumerate}

We call these data the Tate uniformization of $\phi.$ 

\subsection{SMBs of the module of $u^{n}$-division points}\label{s22}
In this subsection, let $\phi$ be a  rank $r$ Drinfeld $A$-module over $K$ which does not necessarily have stable reduction.
Using Remark~\ref{r11}, we may extend Definition~\ref{d11}.
\begin{definition}[Extending Definition~\ref{d11}]\label{d12}
Let $n$ be a positive integer and $u$ a finite prime of $A.$
A family of elements $\{\lambda_{i}\}_{i = 1,\ldots,r}$ is an \emph{SMB} of $\phi[u^{n}]$ if for each $i,$ the elements $\lambda_{1},\ldots,\lambda_{i}$ in $\phi[u^{n}]$ satisfy 
\begin{enumerate}[\rm{(}1)]
    \item 
$\lambda_{1},\ldots,\lambda_{i}$ are $A/u^{n}$-linearly independent;
    \item 
$w(\lambda_{i})$ is the largest among the valuations of elements $\lambda$ in $\phi[u^{n}]$ such that $\lambda_{1},\ldots,\lambda_{i-1},\lambda$ are $A/u^{n}$-linearly independent.
\end{enumerate}
\end{definition}

\begin{remark}\label{r22}
For any $b\in K^{\mathrm{sep}}\setminus\{0\},$ a family $\{\lambda_{i}\}_{i=1,\ldots,r}$ is an SMB of $\phi[u^{n}]$ if and only if the family $\{b\lambda_{i}\}_{i=1,\ldots,r}$ is an SMB of $b\phi b^{-1}[u^{n}].$ 
Especially, this holds when $w$ is a finite prime and $b$ is an element in some tamely ramified extension $L$ of $K$ so that $b\phi b^{-1}$ has stable reduction over $L.$
\end{remark}

The rest of this subsection is concerned with two basic properties of SMBs of $\phi[u^{n}].$ 

\begin{proposition}[cf. Proposition~\ref{p211}]\label{p221}
Let $\{\lambda_{i}\}_{i=1,\ldots,r}$ be a family of elements in $\phi[u^{n}].$
\begin{enumerate}[\rm{(}1)]
\item  This family is an SMB if and only if for each $i,$ the elements $\lambda_{1},\ldots,\lambda_{i}$ satisfy
\begin{itemize}
\item $\lambda_{1},\ldots,\lambda_{i}$ are $A/u^{n}$-linearly independent;
\item we have $w(\lambda_{i})=l_{i},$ where 
\[l_{i} = \max \left\{ \rho\in \Bbb{R}\,\middle|\,\begin{aligned}&\text{the ball }\{\lambda\in K^{\mathrm{sep}}\mid w(\lambda) \geq \rho\}\text{ contains at least }i\\
&\text{elements in }\phi[u^{n}]\text{ which are }A/u^{n}\text{-linearly independent}
\end{aligned}
\right\}.\]
\end{itemize}
\item The sequence $w(\lambda_{1}) \geq w(\lambda_{2}) \geq \cdots \geq w(\lambda_{r})$ for an SMB $\{\lambda_{i}\}_{i=1,\ldots,r}$ is an invariant of $\phi[u^{n}].$ 
\end{enumerate}
\end{proposition}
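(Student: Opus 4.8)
The plan is to prove both parts at once by showing that $w(\lambda_{i})$ equals the number $l_{i}$ for \emph{every} SMB, where $l_{i}$ is defined purely in terms of $\phi[u^{n}]$. First I would record the reformulation
\[l_{i}=\max\left\{\min_{1\le j\le i}w(\mu_{j})\ \middle|\ \mu_{1},\dots,\mu_{i}\in\phi[u^{n}]\ \text{are}\ A/u^{n}\text{-linearly independent}\right\},\]
a maximum which exists because $\phi[u^{n}]\cong(A/u^{n})^{r}$ is a finite set containing $r$ linearly independent elements; this matches the ``ball'' description in the statement, since $\{\lambda\in K^{\mathrm{sep}}\mid w(\lambda)\ge\rho\}$ contains $i$ linearly independent elements precisely when some linearly independent $i$-tuple has all of its valuations $\ge\rho$. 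From this reformulation it is immediate that $l_{1}\ge l_{2}\ge\cdots\ge l_{r}$, since dropping one member of a linearly independent $i$-tuple leaves a linearly independent $(i-1)$-tuple whose minimal valuation is (weakly) larger.

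The second ingredient I would isolate is a linear-algebra lemma reducing everything to a vector space: a family $\lambda_{1},\dots,\lambda_{j}$ in $\phi[u^{n}]$ is $A/u^{n}$-linearly independent if and only if the images $\phi_{u^{n-1}}(\lambda_{1}),\dots,\phi_{u^{n-1}}(\lambda_{j})$ in $\phi[u]$ are linearly independent over the field $A/u$. One direction follows by lifting a dependence relation modulo $u$, multiplying it by $u^{n-1}$, and using that a lift of a nonzero residue is a unit in $A/u^{n}$; the other by dividing a dependence relation by the least $u$-adic power dividing all its coefficients and then reducing modulo $u$. Granting this lemma, every question about $A/u^{n}$-linear independence becomes a question of dimension counting in the $A/u$-vector space $\phi[u]\cong(A/u)^{r}$.

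Now for part (1). If $\{\lambda_{i}\}$ is an SMB, then condition (2) of Definition~\ref{d12} applied at step $i-1$ (with $\lambda=\lambda_{i}$, which is admissible since $\lambda_{1},\dots,\lambda_{i}$ are linearly independent) gives $w(\lambda_{1})\ge w(\lambda_{2})\ge\cdots\ge w(\lambda_{r})$, whence $w(\lambda_{i})=\min_{j\le i}w(\lambda_{j})\le l_{i}$. Conversely, choosing a linearly independent $i$-tuple $\mu_{1},\dots,\mu_{i}$ with $\min_{j}w(\mu_{j})=l_{i}$, a dimension count modulo $u$ produces an index $k$ with $\lambda_{1},\dots,\lambda_{i-1},\mu_{k}$ linearly independent, so $w(\lambda_{i})\ge w(\mu_{k})\ge l_{i}$; hence $w(\lambda_{i})=l_{i}$. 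For the converse implication, suppose $\lambda_{1},\dots,\lambda_{i}$ are linearly independent with $w(\lambda_{i})=l_{i}$ for all $i$, let $\lambda$ satisfy that $\lambda_{1},\dots,\lambda_{i-1},\lambda$ are linearly independent, and assume for contradiction $w(\lambda)>w(\lambda_{i})=l_{i}$; letting $t\le i$ be the least index with $l_{t}=l_{i}$, the linearly independent $t$-tuple $\lambda_{1},\dots,\lambda_{t-1},\lambda$ then has all valuations $>l_{i}=l_{t}$, contradicting the maximality defining $l_{t}$. Thus $w(\lambda)\le w(\lambda_{i})$, which is exactly condition (2). Part (2) now follows at once: the $l_{i}$ depend only on $\phi[u^{n}]$, so by part (1) every SMB satisfies $w(\lambda_{i})=l_{i}$, and the sequence $w(\lambda_{1})\ge\cdots\ge w(\lambda_{r})$ does not depend on the chosen SMB.

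The step I expect to require the most care is the reduction-modulo-$u$ lemma together with the tie-breaking in the converse of part (1): when several of the $l_{i}$ coincide one cannot simply compare $\lambda$ with $\lambda_{i}$ at step $i$, but must trace the common value back to the index $t$ where it first appears and invoke the maximality of $l_{t}$. The remaining arguments are formal and parallel the proof of Proposition~\ref{p211} for lattices; in particular the whole argument is internal to $\phi[u^{n}]$ and does not use the (Tate) uniformization, so it applies to $\phi$ without assuming stable reduction.
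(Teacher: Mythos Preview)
Your proof is correct and follows essentially the same approach as the paper: both arguments rest on the reduction-modulo-$u$ observation (that $A/u^{n}$-linear independence in $\phi[u^{n}]$ is equivalent to $A/u$-linear independence of the images in $\phi[u]$) together with a dimension count in $\phi[u]$ to extract, from any linearly independent $i$-tuple realizing $l_{i}$, an element extending $\lambda_{1},\dots,\lambda_{i-1}$. The only differences are organizational: the paper proves the mod-$u$ step inline and phrases ``$\Rightarrow$'' as an induction (whose hypothesis is never actually used), while you isolate the lemma and give the tie-breaking argument for ``$\Leftarrow$'' that the paper dismisses as ``straightforward''.
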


Assume that $\phi$ has stable reduction when $w$ is finite.
Then the sequence $|\lambda_{1}| \leq |\lambda_{2}| \leq \cdots \leq |\lambda_{r}|$ for an SMB $\{\lambda_{i}\}_{i=1,\ldots,r}$ is an invariant of $\phi[u^{n}].$

\begin{proof}
(2) straightforwardly follows from (1).
We then show (1).
The ``$\Leftarrow$'' is straightforward.
For ``$\Rightarrow$'', the first dot in (1) is the same as Definition~\ref{d12}~(1).
Clearly, we have $l_{i}\geq w(\lambda_{i})$ for all $i$ and $l_{1}=w(\lambda_{1}).$
Then we proceed by induction.
We fix any $i,$ assume $l_{j}=w(\lambda_{j})$ for $j<i,$ and show $l_{i}=w(\lambda_{i}).$
We assume $l_{i}>w(\lambda_{i})$ and find a contradiction.
There exist elements $\eta_{1},\ldots,\eta_{i}\in \phi[u^{n}]$ such that $\eta_{1},\ldots,\eta_{i}$ are $A/u^{n}$-linearly independent and $w(\eta_{j})\geq l_{i} > w(\lambda_{i})$ for $j=1,\ldots,i.$ 

Put $\overline{\eta}_{j} \coloneqq u^{n-1}\cdot_{\phi}\eta_{j}$ for $j \leq i$ and $\overline{\lambda}_{j} \coloneqq u^{n-1}\cdot_{\phi}\lambda_{j}$ for $j < i.$ 
We claim that there is some $k$ such that $\overline{\eta}_{k}$ and $\overline{\lambda}_{1},\ldots,\overline{\lambda}_{i-1}$ are $A/u$-linearly independent. 
Assume the inverse.
Then we have equations
\[b_{l}\cdot_{\phi}\overline{\eta}_{l} + \sum_{j=1}^{i-1}a_{l,j}\cdot_{\phi}\overline{\lambda}_{j} = 0\]
for all $l=1,\ldots,i,$ where $a_{l,j}\in A \mod u$ and $b_{l} \in A \mod u$ with $b_{l}\not\equiv 0 \mod u$ for each $l.$   
For each $l,$ we obtain
\[\overline{\eta}_{l} = \sum_{j=1}^{i-1}a_{l,j}/b_{l} \cdot_{\phi} \overline{\lambda}_{j},\]
where each $a_{l,j}/b_{l} \in A \mod u$ satisfies $b_{l}(a_{l,j}/b_{l}) \equiv a_{l,j} \mod u.$
Hence $\overline{\lambda}_{1},\ldots,\overline{\lambda}_{i-1}$ generate an $i$-dimensional $A/u$-vector space,  which is absurd.

Next, we claim that $\eta_{k}$ and $\lambda_{1},\ldots,\lambda_{i-1}$ are $A/u^{n}$-linearly independent.
Assume the inverse and we have 
\begin{align} c_{k}\cdot_{\phi}\eta_{k} + \sum_{j=1}^{i-1}a_{j}\cdot_{\phi}\lambda_{j} = 0, \label{f21}\end{align}
where each $a_{j}\in A \mod u^{n}$ and $c_{k} \in A \mod u^{n}$ with $c_{k}\not\equiv 0 \mod u^{n}.$   
We may write $c_{k} = c_{k}'u^{m}$ with $m < n$ and $c_{k}' \in A$ not divisible by $u.$ 
Then we have $u^{m}\mid a_{j}$ for all $j < i$, for otherwise, by (\ref{f21}), we have $\sum_{j=1}^{i-1}a_{j}u^{n-m}\cdot_{\phi}\lambda_{j} = 0$ with $a_{j}u^{n-m} \not\equiv 0 \mod u^{n}$ for some $j.$
We may write $a_{j}=a_{j}'u^{m}$ for $a_{j}'\in A.$
Hence we have by (\ref{f21}) \[0 = c_{k}u^{n-1-m}\cdot_{\phi}\eta_{k} + \sum_{j=1}^{i-1}a_{j}u^{n-1-m}\cdot_{\phi}\lambda_{j} = c_{k}'\cdot_{\phi}\overline{\eta}_{k} + \sum_{j=1}^{i-1}a_{j}'\cdot_{\phi}\overline{\lambda}_{j}\] with $c_{k}'\in A$ not divisible by $u.$ 
This contradicts that $\overline{\eta}_{k}$ and $\overline{\lambda}_{1},\ldots, \overline{\lambda}_{i-1}$ are $A/u$-linearly independent.
We have obtained $A/u^{n}$-linearly independent elements $\lambda_{1},\ldots,\lambda_{i-1},\eta_{k}$ such that $w(\eta_{k}) \geq l_{i} > w(\lambda_{i}).$
This contradicts Definition~\ref{d12}~(2).
\end{proof}

In the remainder of this subsection, we construct an SMB of $\phi[u^{n}]$ for any positive integer $n.$

\begin{lemma}\label{l22}
Let $\{\lambda_{i}\}_{i=1,\ldots,r}$ be an SMB of $\phi[u^{n}].$ For each $i$ and $a\in A$ with $a\not\equiv 0\mod u^{n},$ the element $\lambda_{i}$ has the largest valuation among the roots $\lambda$ of $\phi_{a}(X) - a \cdot
_{\phi} \lambda_{i}$ such that $\lambda\in \phi[u^{n}].$
\end{lemma}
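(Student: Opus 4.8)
The plan is to exploit the classification of the roots of $\phi_a(X) - a\cdot_\phi\lambda_i$ inside $\phi[u^n]$ in terms of $\phi[a']$, where $a' = \gcd(a, u^n)$, and then appeal to Proposition~\ref{p221} for the valuation comparison. First I would write $a = a' a''$ with $a' = u^m$ the $u$-part of $\gcd(a,u^n)$ (so $m < n$ since $a \not\equiv 0 \bmod u^n$) and $a''$ prime to $u$; since $a'' \cdot_\phi (-)$ is an automorphism of $\phi[u^n]$, multiplication by $a$ on $\phi[u^n]$ factors as an isomorphism followed by $u^m \cdot_\phi(-)$, and it suffices to treat $a = u^m$. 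The set of roots $\lambda$ of $\phi_{u^m}(X) - u^m\cdot_\phi\lambda_i$ lying in $\phi[u^n]$ is precisely the coset $\lambda_i + \phi[u^m]$: indeed any such $\lambda$ satisfies $u^m\cdot_\phi(\lambda - \lambda_i) = 0$, hence $\lambda - \lambda_i \in \phi[u^m] \subseteq \phi[u^n]$, and conversely every element of this coset is a root lying in $\phi[u^n]$.

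So the claim reduces to: $w(\lambda_i) \geq w(\lambda_i + \mu)$ for every $\mu \in \phi[u^m]$. By the ultrametric inequality it is enough to show $w(\mu) \leq w(\lambda_i)$ for every nonzero $\mu \in \phi[u^m]$, together with handling the (easy) equality case where $w(\mu) = w(\lambda_i)$; but the real content is the inequality $w(\mu) \le w(\lambda_i)$. For this I would use the characterization in Proposition~\ref{p221}~(1): writing $\{\lambda_j\}$ as an SMB, the valuation $w(\lambda_i)$ equals the largest $\rho$ such that the ball $\{w \geq \rho\}$ contains $i$ elements of $\phi[u^n]$ that are $A/u^n$-linearly independent. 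A nonzero $\mu \in \phi[u^m]$ satisfies $u^{n-1}\cdot_\phi\mu = 0$ (since $m \le n-1$), whereas one checks—using the argument in the proof of Proposition~\ref{p221}, i.e. passing to $u^{n-1}\cdot_\phi(-)$—that $\mu$ together with $\lambda_1,\dots,\lambda_{i-1}$ cannot form an $A/u^n$-linearly independent system whenever $w(\mu) > w(\lambda_i)$: if it could, then $\lambda_1,\dots,\lambda_{i-1},\mu$ would be $i$ independent elements in the ball $\{w \ge w(\mu)\} \supsetneq \{w \ge w(\lambda_i)\}$, contradicting minimality of $\lambda_i$ in Definition~\ref{d12}~(2). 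Since $\lambda_i$ itself lies in $\lambda_i + \phi[u^m]$, the maximum of $w$ over that coset is attained at $\lambda_i$, which is the assertion.

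The main obstacle is the linear-independence bookkeeping in the previous paragraph: one must rule out the possibility that a high-valuation $\mu \in \phi[u^m]$ sneaks in as an $A/u^n$-independent replacement for $\lambda_i$. This is exactly the phenomenon already handled in the induction inside the proof of Proposition~\ref{p221} (the passage $\overline{\eta}_j = u^{n-1}\cdot_\phi \eta_j$ and the $A/u$-independence argument), so I would either cite that argument verbatim or, more cleanly, observe that $\lambda - \lambda_i \in \phi[u^m]$ means $\lambda$ and $\lambda_i$ generate the same $A/u^n$-submodule modulo $\phi[u^{n-1}]$-torsion in the relevant sense, so replacing $\lambda_i$ by any such $\lambda$ keeps $\{\lambda_1,\dots,\lambda_{i-1},\lambda\}$ independent; then Definition~\ref{d12}~(2) forces $w(\lambda) \le w(\lambda_i)$ directly. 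Everything else is a routine application of the ultrametric inequality and Lemma~\ref{l20}.
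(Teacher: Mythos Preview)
Your approach has the right final destination but the main route you describe is broken. The core of your plan is to reduce to $a = u^m$ (which is fine) and then show $w(\mu) \le w(\lambda_i)$ for every nonzero $\mu \in \phi[u^m]$, with the equality case being ``easy''. Both parts fail. First, the inequality $w(\mu) \le w(\lambda_i)$ is simply false for $i \ge 2$: for instance $\mu = u^{n-m}\cdot_\phi\lambda_1 \in \phi[u^m]$ can have valuation strictly larger than $w(\lambda_i)$ (this occurs already in the rank-$2$ examples of Section~\ref{s51} once $n$ is large). Second, even where the inequality holds, the equality case $w(\mu) = w(\lambda_i)$ gives only $w(\lambda_i + \mu) \ge w(\lambda_i)$ from the ultrametric, which is the wrong direction; ruling out cancellation here is precisely the content of the lemma, not a triviality. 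Your attempt to exclude high-valuation $\mu$ by noting that $u^{n-1}\cdot_\phi\mu = 0$ makes $\{\lambda_1,\dots,\lambda_{i-1},\mu\}$ dependent is correct but vacuous: a torsion element $\mu \in \phi[u^m]$ is already $A/u^n$-dependent as a singleton, so this observation carries no information.

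The ``cleaner'' observation in your last paragraph is the actual proof, and it is exactly what the paper does: for any root $\lambda$ of $\phi_a(X) - a\cdot_\phi\lambda_i$ in $\phi[u^n]$, show that $\lambda_1,\dots,\lambda_{i-1},\lambda$ remain $A/u^n$-linearly independent, and then Definition~\ref{d12}(2) immediately forces $w(\lambda) \le w(\lambda_i)$. You assert this independence without justification. The paper supplies it via an lcm trick, with no preliminary reduction on $a$: if $b_i\cdot_\phi\lambda + \sum_{j<i}b_j\cdot_\phi\lambda_j = 0$ with $b_i \not\equiv 0 \bmod u^n$, set $c = \mathrm{lcm}(a,b_i) = b_i'b_i = a'a$ and multiply through by $b_i'$; using $a\cdot_\phi\lambda = a\cdot_\phi\lambda_i$ this becomes $c\cdot_\phi\lambda_i + \sum_{j<i}b_i'b_j\cdot_\phi\lambda_j = 0$, and since neither $a$ nor $b_i$ is divisible by $u^n$ one has $u^n \nmid c$, contradicting the independence of $\lambda_1,\dots,\lambda_i$. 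After your reduction to $a = u^m$ the same argument becomes: write $\mu = \lambda - \lambda_i = \sum_j c_j\cdot_\phi\lambda_j$ with $u^{n-m}\mid c_j$, so the $\lambda_i$-coefficient $1+c_i$ of $\lambda$ is a unit in $A/u^n$, whence any relation on $\lambda_1,\dots,\lambda_{i-1},\lambda$ forces all coefficients to vanish. Discard the valuation-comparison route and write out this independence argument directly.
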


\begin{proof}
Let $\lambda$ be a root of $\phi_{a}(X)-a\cdot_{\phi} \lambda_{i}$ such that $\lambda\in \phi[u^{n}].$ 
Assume $w(\lambda)>w(\lambda_{i}).$
It suffices to show that $\lambda_{1},\ldots,\lambda_{i-1},\lambda$ are $A/u^{n}$-linearly independent because this implies that the inequality $w(\lambda)>w(\lambda_{i})$ contradicts Definition~\ref{d12}~(2).
Assume that there exist $b_{j}\in A \mod u^{n}$ with $b_{i}\not\equiv 0$ such that $b_{i}\cdot_{\phi} \lambda+\sum_{j<i}b_{j}\cdot_{\phi} \lambda_{j}=0.$
Let $c$ be the minimal common multiple of $a$ and $b_{i}$ such that $c=b_{i}'b_{i}=a'a$ for some $b_{i}'$ and $a'\in A.$ 
Consider the equation $b_{i}'\cdot_{\phi} (b_{i}\cdot_{\phi} \lambda+\sum_{j<i}b_{j}\cdot_{\phi} \lambda_{j}) = 0.$
Since $b_{i}'b_{i}\cdot_{\phi} \lambda=a'a\cdot_{\phi} \lambda=a'a\cdot_{\phi} \lambda_{i}=c\cdot_{\phi} \lambda_{i},$ we have 
\begin{align}c\cdot_{\phi} \lambda_{i}+\sum_{j<i}b_{i}'b_{j}\cdot_{\phi} \lambda_{j}=0.\label{f22}\end{align}
We have $u^{n} \nmid c,$ for otherwise one of $a$ or $b_{i}$ is divisible by $u^{n}.$
Hence the nonzero coefficients in the equation (\ref{f22}) contradict that $\lambda_{1},\ldots,\lambda_{i}$ are $A/u^{n}$-linearly independent.
\end{proof}

\begin{corollary}\label{c22}
With the notation in the lemma, for each $i$ and $a \in A$ being a power of $u,$ the element $\lambda_{i}$ has the largest valuation among the roots of $\phi_{a}(X)-a\cdot_{\phi} \lambda_{i}.$
\end{corollary}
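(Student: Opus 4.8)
The plan is to deduce the corollary from Lemma~\ref{l22} by showing that, when $a$ is a power of $u$, the restriction ``$\lambda \in \phi[u^{n}]$'' occurring in the lemma is vacuous: \emph{every} root of $\phi_{a}(X) - a\cdot_{\phi}\lambda_{i}$ automatically lies in $\phi[u^{n}]$. Write $a = u^{k}$; since we keep the notation of Lemma~\ref{l22} we have $a \not\equiv 0 \bmod u^{n}$, and hence $k < n$. This last point is what makes the argument run in the right direction, so I would flag it explicitly.

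First I would take an arbitrary root $\lambda \in K^{\mathrm{sep}}$ of $\phi_{a}(X) - a\cdot_{\phi}\lambda_{i}$, so that $\phi_{u^{k}}(\lambda) = u^{k}\cdot_{\phi}\lambda_{i}$. Using that $\phi_{u^{k}}$ is additive, compute
\[
\phi_{u^{k}}(\lambda - \lambda_{i}) = \phi_{u^{k}}(\lambda) - \phi_{u^{k}}(\lambda_{i}) = u^{k}\cdot_{\phi}\lambda_{i} - u^{k}\cdot_{\phi}\lambda_{i} = 0,
\]
so $\lambda - \lambda_{i} \in \phi[u^{k}]$. Since $k \leq n$ and $\phi_{u^{n}} = \phi_{u^{n-k}}\circ\phi_{u^{k}}$, we have $\phi[u^{k}] \subseteq \phi[u^{n}]$, whence $\lambda - \lambda_{i} \in \phi[u^{n}]$; as $\lambda_{i} \in \phi[u^{n}]$ and $\phi[u^{n}]$ is an additive group, it follows that $\lambda = \lambda_{i} + (\lambda - \lambda_{i}) \in \phi[u^{n}]$.

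Thus the set of roots of $\phi_{a}(X) - a\cdot_{\phi}\lambda_{i}$ coincides with the set of such roots lying in $\phi[u^{n}]$, and Lemma~\ref{l22} (with this $a \not\equiv 0 \bmod u^{n}$) says precisely that $\lambda_{i}$ has the largest valuation among them. There is no substantial obstacle here; the only things to be careful about are the elementary containment $\phi[u^{k}] \subseteq \phi[u^{n}]$ (which relies on $k \leq n$) and the observation that inheriting the notation of Lemma~\ref{l22} already forces $k < n$, so that the corollary is exactly the lemma with its hypothesis $\lambda \in \phi[u^{n}]$ rendered automatic.
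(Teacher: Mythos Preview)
Your proof is correct and is exactly the intended argument: the paper states the corollary without proof, as an immediate consequence of Lemma~\ref{l22}, and the reason is precisely that when $a=u^{k}$ with $k<n$ every root of $\phi_{a}(X)-a\cdot_{\phi}\lambda_{i}$ already lies in $\phi[u^{n}]$. Your write-up makes this explicit and there is nothing to add.
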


\begin{proposition}\label{p222}
Let $\{\lambda_{i}\}_{i=1,\ldots,r}$ be an SMB of $\phi[u^{n}].$
\begin{enumerate}[\rm{(}1)]
    \item For each $i$, let $\lambda_{i}'$ be a root of $\phi_{u}(X)-\lambda_{i}$ having the largest valuation.
Then $\{\lambda_{i}'\}_{i=1,\ldots,r}$ is an SMB of $\phi[u^{n+1}].$
\item The family of elements $\{u\cdot_{\phi} \lambda_{i}\}_{i=1,\ldots,r}$ is an SMB of $\phi[u^{n-1}].$
\end{enumerate}
\end{proposition}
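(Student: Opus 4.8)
The plan is to prove the two parts in sequence, using Proposition~\ref{p221} (the characterization of SMBs via the balls $l_i$) together with Lemma~\ref{l22} and its Corollary~\ref{c22} as the main tools. For part (1): first observe that the multiplication-by-$u$ map $\phi_u \colon \phi[u^{n+1}] \to \phi[u^n]$ is surjective with kernel $\phi[u]$, so each $\lambda_i' $ is well-defined (choose any root of $\phi_u(X) - \lambda_i$ of largest valuation) and lies in $\phi[u^{n+1}]$. I would check the two SMB conditions of Definition~\ref{d12} for the family $\{\lambda_i'\}$. The $A/u^{n+1}$-linear independence of $\lambda_1', \ldots, \lambda_i'$ follows from that of $\lambda_1, \ldots, \lambda_i$: if $\sum_{j\le i} a_j \cdot_\phi \lambda_j' = 0$, apply $\phi_u$ to get $\sum_{j\le i} a_j \cdot_\phi \lambda_j = 0$, so $u^n \mid a_j$ for all $j$; writing $a_j = a_j' u^n$, the relation becomes $\sum_{j\le i} a_j' u^n \cdot_\phi \lambda_j' = \sum_{j \le i} a_j' \cdot_\phi (u^n \cdot_\phi \lambda_j')$, and since $u^n \cdot_\phi \lambda_j'$ is a nonzero element of $\phi[u]$ one argues these are $A/u$-independent (again by applying $\phi_u$ appropriately, or by a dimension count as in the proof of Proposition~\ref{p221}), forcing $u \mid a_j'$, i.e. $u^{n+1} \mid a_j$.

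The valuation condition (Definition~\ref{d12}~(2)) for $\{\lambda_i'\}$ is where I would invoke Proposition~\ref{p221}~(1) and Lemma~\ref{l22}. The key point: among all roots $\lambda$ of $\phi_u(X) - \lambda_i$ lying in $\phi[u^{n+1}]$, the valuation $w(\lambda)$ takes only two values, differing by the (common, by part~(2) or by tameness considerations) valuation jump introduced by adjoining a $u$-division point; $\lambda_i'$ is chosen to realize the larger one. I would show that if some $\lambda \in \phi[u^{n+1}]$ with $\lambda_1', \ldots, \lambda_{i-1}', \lambda$ being $A/u^{n+1}$-independent had $w(\lambda) > w(\lambda_i')$, then $\phi_u(\lambda)$ is a root of... — careful here, $\phi_u(\lambda)$ need not equal $\lambda_i$. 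Instead I compare via the $l_i$ of Proposition~\ref{p221}: the ball of radius $l_i^{(n)}$ for $\phi[u^n]$ pulls back, under the largest-valuation section of $\phi_u$, to a ball realizing $l_i^{(n+1)}$ for $\phi[u^{n+1}]$, because any $i$ independent elements of $\phi[u^{n+1}]$ with valuation $\ge \rho$ map under $\phi_u$ to $i$ independent elements of $\phi[u^n]$ with valuation $\ge \rho + \delta$ for the appropriate shift $\delta \le 0$, and conversely the $\lambda_j'$ realize equality. This bookkeeping of the valuation shift $\delta$ is the step I expect to be the main obstacle: one must argue that the largest-valuation preimage of an SMB element is an SMB element and not merely that valuations are compatible, which requires knowing that $\phi_u$ restricted to the "large-valuation" part behaves uniformly — this is essentially Lemma~\ref{l22} applied with $a = u$.

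For part (2): by part (1) applied with $n$ replaced by $n-1$, there is an SMB $\{\mu_i\}_{i=1,\ldots,r}$ of $\phi[u^n]$ with $u \cdot_\phi \mu_i = $ (a chosen SMB element of $\phi[u^{n-1}]$), i.e. $\{u \cdot_\phi \mu_i\}$ is an SMB of $\phi[u^{n-1}]$. It remains to pass from the particular SMB $\{\mu_i\}$ to an arbitrary SMB $\{\lambda_i\}$. Here I would use Proposition~\ref{p221}~(2): the valuation sequence $w(\lambda_1) \ge \cdots \ge w(\lambda_r)$ is an invariant of $\phi[u^n]$, so $w(\lambda_i) = w(\mu_i)$ for all $i$, and Corollary~\ref{c22} tells us $\lambda_i$ already has the largest valuation among roots of $\phi_u(X) - u\cdot_\phi\lambda_i$. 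Then $w(u \cdot_\phi \lambda_i)$: I claim $u \cdot_\phi \lambda_i$ realizes the valuations of an SMB of $\phi[u^{n-1}]$. The $A/u^{n-1}$-independence of $u \cdot_\phi \lambda_1, \ldots, u \cdot_\phi \lambda_i$ follows from the $A/u^n$-independence of $\lambda_1, \ldots, \lambda_i$ together with the fact that $\ker(\phi_u\colon \phi[u^n] \to \phi[u^{n-1}]) = \phi[u]$ and $\phi[u] \cap (\text{span of } \lambda_1,\ldots,\lambda_i) $ is... — more cleanly, any relation $\sum a_j \cdot_\phi u \cdot_\phi \lambda_j = 0$ in $\phi[u^{n-1}]$ means $\sum (u a_j) \cdot_\phi \lambda_j = 0$ in $\phi[u^n]$, hence $u^n \mid u a_j$, i.e. $u^{n-1} \mid a_j$. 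For the valuation-minimality: $w(u \cdot_\phi \lambda_i)$ must equal $l_i$ for $\phi[u^{n-1}]$, which follows since $\phi_u$ maps the relevant ball for $\phi[u^n]$ onto (at least) the relevant ball for $\phi[u^{n-1}]$, and we already know $w(\lambda_i) = w(\mu_i)$ forces $w(u\cdot_\phi\lambda_i) = w(u\cdot_\phi\mu_i) = l_i^{(n-1)}$ by part~(1). This reduces part~(2) to part~(1) plus the invariance of the valuation sequence, so the only genuine work is in part~(1).
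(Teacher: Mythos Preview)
Your linear-independence arguments for both (1) and (2) are correct and essentially match the paper's. The gap is in the valuation condition for (1), where you hedge about a ``valuation shift $\delta$'' and flag it as the main obstacle. There is no single shift $\delta$, and trying to track one globally is the wrong framing. The paper's argument is direct and pointwise: given $\lambda \in \phi[u^{n+1}]$ with $\lambda_1',\ldots,\lambda_{i-1}',\lambda$ independent over $A/u^{n+1}$, push down to $u\cdot_\phi\lambda \in \phi[u^n]$. Then $\lambda_1,\ldots,\lambda_{i-1},u\cdot_\phi\lambda$ are independent over $A/u^n$ (any relation among them, multiplied by $u$, lifts to a relation upstairs), so the SMB property at level $n$ gives $w(\lambda_i) \geq w(u\cdot_\phi\lambda)$. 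Now compare the Newton polygons of $\phi_u(X)-\lambda_i$ and $\phi_u(X)-u\cdot_\phi\lambda$: since the constant term of the first has valuation at least that of the second, the largest-valuation root of the first (which is $\lambda_i'$ by construction) has valuation at least the largest-valuation root of the second, which in turn is $\geq w(\lambda)$. That is the whole argument; no balls $l_i$ or uniform $\delta$ are needed, only the monotonicity of ``valuation of constant term $\mapsto$ valuation of largest root'' read off the Newton polygon.

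For (2), your reduction to (1) via an auxiliary SMB $\{\mu_i\}$ is more circuitous than needed and has its own small gap: the step $w(\lambda_i)=w(\mu_i) \Rightarrow w(u\cdot_\phi\lambda_i)=w(u\cdot_\phi\mu_i)$ requires precisely the Newton-polygon monotonicity just described (now used as a strict statement to invert), which you never make explicit. The paper instead proves (2) directly and symmetrically to (1): assume some $\lambda \in \phi[u^{n-1}]$ independent of $u\cdot_\phi\lambda_1,\ldots,u\cdot_\phi\lambda_{i-1}$ has $w(\lambda) > w(u\cdot_\phi\lambda_i)$; comparing Newton polygons of $\phi_u(X)-\lambda$ and $\phi_u(X)-u\cdot_\phi\lambda_i$ produces a root $\lambda' \in \phi[u^n]$ of the former with $w(\lambda') > w(\lambda_i)$, and one checks (as in the independence step of (1)) that $\lambda_1,\ldots,\lambda_{i-1},\lambda'$ are $A/u^n$-independent, contradicting the SMB property of $\{\lambda_i\}$. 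So both parts reduce to the same Newton-polygon comparison rather than to each other.
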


\begin{proof}
(1) We check Definition~\ref{d12}~(1) using induction on $i.$
The base case is clear.
Assume $\lambda_{1}',\ldots,\lambda_{i-1}'$ are $A/u^{n+1}$-linearly independent. 
Assume conversely that there are $a_{j}\in A\mod u^{n+1}$ with $a_{i}\not\equiv 0$ such that $\sum_{j=1}^{i}a_{j}\cdot_{\phi} \lambda_{j}'=0.$
For $j=1,\ldots,i,$ since $u\cdot_{\phi} \lambda_{j}'=\lambda_{j}$ and $\lambda_{1},\ldots,\lambda_{i}$ are $A/u^{n}$-linearly independent, we have $ua_{j} \equiv 0 \mod u^{n+1}$ and hence $u^{n}\mid a_{j}.$  
There are $b_{j}\in A$ with $b_{i} \not\equiv 0\mod u$ such that $a_{j}=b_{j}u^{n}$ for all $j.$
Hence
\[0=\sum_{j=1}^{i}a_{i}\cdot_{\phi} \lambda_{i}'=\sum_{j=1}^{i}b_{j}u^{n-1}\cdot_{\phi} \lambda_{i}\] with $b_{i}u^{n-1}$ not divisible by $u^{n},$ which is absurd.

As for Definition~\ref{d12}~(2), we show $w(\lambda_{i}') \geq w(\lambda)$ for each $\lambda\in \phi[u^{n+1}]$ such that $\lambda_{1}',\ldots,\lambda_{i-1}',\lambda$ are $A/u^{n+1}$-linearly independent. 
Notice $u\cdot_{\phi} \lambda\in \phi[u^{n}]$ and that the elements $\lambda_{1},\cdots,\lambda_{i-1},u\cdot_{\phi} \lambda$ are $A/u^{n}$-linearly independent. 
We have $w(\lambda_{i})\geq w(u \cdot_{\phi} \lambda)$ as $\{\lambda_{i}\}_{i=1,\ldots,r}$ is an SMB of $\phi[u^{n}].$
Note that $w(\lambda_{i}')$ is the largest among the valuations of roots of $\phi_{u}(X)-\lambda_{i}.$
By comparing the Newton polygons of $\phi_{u}(X)-\lambda_{i}$ and $\phi_{u}(X)-u\cdot_{\phi}\lambda,$ we have $w(\lambda_{i}')\geq w(\lambda).$ 

(2) It is straightforward to check Definition~\ref{d12}~(1).
Let $\lambda$ be an element of $\phi[u^{n-1}]$ such that $u\cdot_{\phi} \lambda_{1},\ldots,u\cdot_{\phi} \lambda_{i-1},\lambda$ are $A/u^{n-1}$-linearly independent.
To show $w(u\cdot_{\phi} \lambda_{i}) \geq w(\lambda),$ we assume conversely $w(u\cdot_{\phi} \lambda_{i})<w(\lambda).$ 
By comparing the Newton polygon of $\phi_{u}(X) - u\cdot_{\phi} \lambda_{i}$ and $\phi_{u}(X) - \lambda,$ there is a root $\lambda'$ of $\phi_{u}(X)-\lambda$ such that $w(\lambda')>w(\lambda_{i}).$ 
We have $\lambda'\in \phi[u^{n}]$ as all roots of $\phi_{u}(X)-\lambda$ belong to $\phi[u^{n}].$ 
Similarly to the proof of (1), one shows that $\lambda_{1},\cdots,\lambda_{i-1},\lambda'$ are $A/u^{n}$-linearly independent. 
Hence the inequality $w(\lambda')>w(\lambda_{i})$ contradicts Definition~\ref{d12}~(2).
\end{proof}

We can find an SMB of $\phi[u]$ in the following way.
Put 
\begin{equation}\begin{split}
\lambda_{1,1} &\coloneqq\text{an element in }\phi[u]\setminus \{0\}\text{ with the largest valuation and}\\
\lambda_{i,1} &\coloneqq\text{an element in }\phi[u]\setminus \bigoplus_{j<i}(A/u)\cdot_{\phi} \lambda_{j,1}\text{ with the largest valuation} \end{split} \label{f23}\end{equation}
for $i=2,3,\ldots,r.$
Since $A/u$ is a field, the elements $\lambda_{i,1}$ for $i=1,\cdots,r$ are $A/u$-linearly independent and form an SMB of $\phi[u].$
Applying the proposition, we have 

\begin{corollary}\label{c23}
Let $\{\lambda_{i,1}\}_{i=1,\ldots,r}$ be an SMB of $\phi[u]$ defined above.
Inductively, let $\lambda_{i,j}$ be a root of $\phi_{u}(X)-\lambda_{i,j-1}$ having the largest valuation for each $i$ and each integer $j\geq 2.$
Then for each positive integer $n,$ we have that $\{\lambda_{i,n}\}_{i=1,\ldots,r}$ is an SMB of $\phi[u^{n}].$
\end{corollary}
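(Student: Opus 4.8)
The plan is to prove Corollary~\ref{c23} by induction on $n$, using Proposition~\ref{p222}~(1) as the inductive engine. The base case $n=1$ is exactly the content of the paragraph preceding the corollary: since $A/u$ is a field, the greedy construction in (\ref{f23}) produces $A/u$-linearly independent elements $\lambda_{i,1}$, and a direct comparison with Definition~\ref{d12} shows they form an SMB of $\phi[u]$ (condition (1) holds by linear independence, and condition (2) holds because each $\lambda_{i,1}$ was chosen with largest valuation among elements outside the span of the previous ones, which is precisely the set of $\lambda$ with $\lambda_1,\dots,\lambda_{i-1},\lambda$ being $A/u$-linearly independent).

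For the inductive step, suppose $\{\lambda_{i,n}\}_{i=1,\ldots,r}$ is an SMB of $\phi[u^{n}]$. By the definition in the corollary, $\lambda_{i,n+1}$ is a root of $\phi_{u}(X) - \lambda_{i,n}$ of largest valuation, for each $i$. This is exactly the hypothesis under which Proposition~\ref{p222}~(1) applies: it states that if $\{\lambda_i\}$ is an SMB of $\phi[u^n]$ and each $\lambda_i'$ is a root of $\phi_u(X)-\lambda_i$ of largest valuation, then $\{\lambda_i'\}$ is an SMB of $\phi[u^{n+1}]$. Applying this with $\lambda_i = \lambda_{i,n}$ and $\lambda_i' = \lambda_{i,n+1}$ immediately gives that $\{\lambda_{i,n+1}\}_{i=1,\ldots,r}$ is an SMB of $\phi[u^{n+1}]$, completing the induction.

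Since the corollary is essentially a packaging of Proposition~\ref{p222}~(1), there is no real obstacle; the only point requiring a word of care is checking that the base case construction (\ref{f23}) genuinely satisfies Definition~\ref{d12}, i.e.\ that ``largest valuation outside $\bigoplus_{j<i}(A/u)\cdot_\phi \lambda_{j,1}$'' coincides with ``largest valuation among $\lambda$ with $\lambda_{1,1},\dots,\lambda_{i-1,1},\lambda$ being $A/u$-linearly independent'' — but over the field $A/u$ these two descriptions of the admissible set of $\lambda$ are literally the same. Thus the proof is short: establish the base case from the preceding discussion, then iterate Proposition~\ref{p222}~(1).

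\begin{proof}
We induct on $n.$
For $n = 1,$ the elements $\lambda_{i,1}$ for $i = 1,\ldots,r$ are $A/u$-linearly independent as noted before the corollary.
Moreover, for each $i,$ an element $\lambda \in \phi[u]$ satisfies that $\lambda_{1,1},\ldots,\lambda_{i-1,1},\lambda$ are $A/u$-linearly independent if and only if $\lambda \notin \bigoplus_{j<i}(A/u)\cdot_{\phi}\lambda_{j,1},$ since $A/u$ is a field.
By the choice of $\lambda_{i,1}$ in (\ref{f23}), the valuation $w(\lambda_{i,1})$ is the largest among the valuations of such $\lambda.$
Hence $\{\lambda_{i,1}\}_{i=1,\ldots,r}$ satisfies Definition~\ref{d12} and is an SMB of $\phi[u].$

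Assume $\{\lambda_{i,n}\}_{i=1,\ldots,r}$ is an SMB of $\phi[u^{n}]$ for some $n \geq 1.$
By construction, $\lambda_{i,n+1}$ is a root of $\phi_{u}(X) - \lambda_{i,n}$ having the largest valuation for each $i.$
By Proposition~\ref{p222}~(1), the family $\{\lambda_{i,n+1}\}_{i=1,\ldots,r}$ is an SMB of $\phi[u^{n+1}].$
This completes the induction.
\end{proof}
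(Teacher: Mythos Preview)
Your proof is correct and follows exactly the approach the paper intends: the paper establishes the base case in the paragraph preceding the corollary and then writes ``Applying the proposition, we have'' before stating Corollary~\ref{c23}, leaving the induction via Proposition~\ref{p222}~(1) implicit. You have simply written out this implicit argument in full.
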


\section{Relations between SMBs, the infinite prime case}\label{s3}
Let $w$ denote an infinite prime, $|-|$ the absolute value in (F\ref{F1}) and $\{\omega_{i}\}_{i=1,\ldots,r}$ an SMB of $\Lambda$ throughout this section.
For a positive integer $n$ and a finite prime $u$ of $A,$ we study the relations between SMBs of $\Lambda$ and those of $\phi[u^{n}].$ 

\begin{lemma}\label{l31}
Let $a$ be an element in $A.$
For $\omega=\sum_{j} a_{j} \omega_{j} \in \Lambda$ with $a_{j}\in A,$ let $i$ be an index so that $|a_{i}\omega_{i}|=|\omega|,$ i.e., $|a_{i}\omega_{i}|=\max_{j}\{|a_{j}\omega_{j}|\}.$ 
Assume $\deg(a_{i})<\deg(a).$
Then we have 
\[\left|e_{\phi}\left(\frac{\omega}{a}\right)\right|=\left|e_{\phi}\left(\frac{a_{i}\omega_{i}}{a}\right)\right|.\]
\end{lemma}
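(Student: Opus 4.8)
The plan is to expand both quantities using the product formula $e_{\phi}(z) = z\prod_{0\neq\mu\in\Lambda}(1-z/\mu)$ and compare the resulting convergent products of absolute values factor by factor. Put $\rho := |\omega|/|a|$. The hypothesis $|a_{i}\omega_{i}| = |\omega|$ says precisely that the two arguments $\omega/a$ and $a_{i}\omega_{i}/a$ have the same absolute value $\rho$. For any $z$ with $|z| = \rho$ one has
\[
|e_{\phi}(z)| = \rho\cdot\!\!\prod_{\substack{0\neq\mu\in\Lambda\\ |\mu|<\rho}}\!\!\frac{\rho}{|\mu|}\cdot\!\!\prod_{\substack{\mu\in\Lambda\\ |\mu|=\rho}}\!\!\bigl|1-z/\mu\bigr|,
\]
since the factors with $|\mu|>\rho$ are exactly $1$ (as $|z/\mu|<1$), those with $|\mu|<\rho$ equal $\rho/|\mu|$ (as $|z/\mu|>1$), and only finitely many $\mu$ satisfy $|\mu|\le\rho$ so the expression is well defined. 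The first two pieces depend on $\rho$ alone, hence agree for $z=\omega/a$ and $z=a_{i}\omega_{i}/a$; so the lemma reduces to showing that the critical factors $|1-z/\mu|$ with $|\mu|=\rho$ all equal $1$, for both choices of $z$.

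Equivalently, writing $\nu := az\in\{\omega,\,a_{i}\omega_{i}\}$, I must show $|\nu-a\mu|\ge|a\mu| = |\omega|$ for every $\mu\in\Lambda$ with $|\mu|=\rho$, the reverse inequality being automatic from the ultrametric law since $|\nu| = |a\mu| = |\omega|$. This is where the hypotheses enter. As $\{\omega_{j}\}$ is an SMB it is an $A$-basis of $\Lambda$ (Proposition~\ref{p212}(1)), so write $\mu = \sum_{j}c_{j}\omega_{j}$ with $c_{j}\in A$. The crucial observation is that both $\omega = \sum_{j}a_{j}\omega_{j}$ and $a_{i}\omega_{i}$ have $i$-th SMB-coordinate exactly $a_{i}$; so writing $\nu = \sum_{j}d_{j}\omega_{j}$ with $d_{i}=a_{i}$, the SMB norm formula (Proposition~\ref{p212}(2)) gives
\[
|\nu-a\mu| = \max_{j}\bigl|(d_{j}-ac_{j})\omega_{j}\bigr|\;\ge\;\bigl|(a_{i}-ac_{i})\omega_{i}\bigr|.
\]
Finally the degree hypothesis $\deg(a_{i})<\deg(a)$ forces $|a_{i}-ac_{i}|\ge|a_{i}|$: if $c_{i}\neq 0$ then $\deg(ac_{i})\ge\deg(a)>\deg(a_{i})$, so the top term of $ac_{i}$ dominates and $|a_{i}-ac_{i}| = |ac_{i}|>|a_{i}|$; if $c_{i}=0$ this is trivial. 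Hence $|(a_{i}-ac_{i})\omega_{i}|\ge|a_{i}\omega_{i}| = |\omega|$, giving the desired inequality.

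I expect the only real content, and the step needing care, to be the reduction to the critical factors together with the argument that they equal $1$: this is delicate precisely because $e_{\phi}$ is not an isometry (in general $|1-z/\mu|<1$ is possible when $|\mu|=|z|$, and two elements of equal absolute value can have $e_{\phi}$-images of different absolute value). What rescues the claim is that the potential ``drop'' is governed by the leading SMB-coordinate combined with $\deg(a_{i})<\deg(a)$, and that coordinate is the same for $\omega$ and for $a_{i}\omega_{i}$. Minor points to handle cleanly in the write-up: convergence and finiteness of the products, ties in $\max_{j}|a_{j}\omega_{j}|$ (harmless, as only the $i$-th term is used), and the degenerate case $c_{i}=0$.
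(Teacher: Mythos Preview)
Your proof is correct and follows essentially the same approach as the paper: expand $|e_{\phi}(z)|$ via the product formula, observe that factors with $|a\mu|\neq|\omega|$ depend only on $|\omega|/|a|$, and show the critical factors with $|a\mu|=|\omega|$ equal $1$ by applying the SMB norm formula (Proposition~\ref{p212}(2)) together with $\deg(a_{i})<\deg(a)$. The only cosmetic difference is that the paper first deduces $\mu\in\bigoplus_{j<i}A\omega_{j}$ (i.e.\ $c_{j}=0$ for all $j\ge i$) and then applies the norm formula, whereas you bound the $i$-th coordinate $|a_{i}-ac_{i}|\ge|a_{i}|$ directly; your route is slightly more streamlined but the content is the same.
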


\begin{proof}
We have
\[e_{\phi}\left(\frac{\omega}{a}\right)=\frac{\omega}{a}\prod_{\mu\in \Lambda\setminus\{0\}}\left(1-\frac{\omega}{a\mu}\right).\]
Its absolute value is 
\[\left|\frac{\omega}{a}\right|\cdot \prod_{\substack{\mu\in \Lambda\setminus\{0\}\\|a\mu|\leq |\omega|}}\left|1-\frac{\omega}{a\mu}\right|.\]
For $\mu\in\Lambda$ satisfying $|a\mu|<|\omega|,$ we have by the ultrametric inequality
\[\left|1 - \frac{\omega}{a\mu}\right|=\left|\frac{\omega}{a\mu}\right|=\left|\frac{a_{i}\omega_{i}}{a\mu}\right|=\left|1-\frac{a_{i}\omega_{i}}{a\mu}\right|.\]
Next, for $\mu\in \Lambda$ satisfying $|a\mu|=|\omega|=|a_{i}\omega_{i}|,$ we show 
\[\left|1-\frac{\omega}{a\mu}\right|=\left|1-\frac{a_{i}\omega_{i}}{a\mu}\right|=1.\]
It suffices to show 
\begin{align}|\omega-a\mu|=|\omega|\text{ and }|a_{i}\omega_{i}-a\mu|=|a_{i}\omega_{i}|.\label{f31}\end{align}
Since $|a_{i}|<|a|,$ we have $\mu$ belonging to $\bigoplus_{j<i}A\omega_{j},$ for otherwise we have $|a\mu|\geq |a\omega_{i}|>|a_{i}\omega_{i}|$ by Proposition~\ref{p212}~(2). 
Applying Proposition~\ref{p212}~(2) to $|\omega-a\mu|$ and $|a_{i}\omega_{i}-a\mu|,$ we obtain the desired equalities. 
\end{proof}

\begin{corollary}\label{c31}
Let $a$ be an element in $A.$
\begin{enumerate}[\rm{(}1)]
\item For any $i=1,\ldots,r$ and any $a_{i}\in A$ satisfying $\deg(a_{i}) < \deg(a),$ we have
\[\left| e_{\phi} \left(\frac{a_{i}\omega_{i}}{a}\right) \right|=\left| \frac{a_{i}\omega_{i}}{a} \right|\cdot \prod_{\substack{\mu\in \Lambda\setminus\{0\}\\|a\mu|<|a_{i}\omega_{i}|}} |a_{i}\omega_{i}|/|a\mu| .\]
\item For any positive integers $i,j \leq r,$ let $a_{i}$ and $a_{j}$ be elements in $A$ with degrees strictly smaller than that of $a.$ 
Assume $|a_{j}\omega_{j}|\leq |a_{i}\omega_{i}|.$ Then 
\[\left|e_{\phi}\left(\frac{a_{j}\omega_{j}}{a}\right)\right|\leq \left|e_{\phi}\left(\frac{a_{i}\omega_{i}}{a}\right)\right|.\]
\item With the notation in the lemma,  we have \[\left| e_{\phi} \left(\frac{\omega}{a}\right)\right| =\max_{j} \left\{\left|a_{j}\cdot_{\phi} e_{\phi}\left(\frac{\omega_{j}}{a}\right)\right|\right\}.\]
\item For any positive integer $i \leq r$ and $b\in A$ satisfying $\deg(b)<\deg(a),$ we have \[|b|\cdot \left|e_{\phi}\left(\frac{\omega_{i}}{a}\right)\right|\leq \left|b\cdot_{\phi} e_{\phi}\left(\frac{\omega_{i}}{a}\right)\right|.\]
\end{enumerate}
\end{corollary}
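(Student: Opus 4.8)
The plan is to read everything off from the explicit product $e_{\phi}(\omega)=\omega\prod_{\mu\in\Lambda\setminus\{0\}}(1-\omega/\mu)$ together with the ultrametric behaviour of SMBs recorded in Proposition~\ref{p212}(2); part (1) carries the weight and the other three parts follow formally. For (1), starting from $e_{\phi}(a_{i}\omega_{i}/a)=(a_{i}\omega_{i}/a)\prod_{\mu\in\Lambda\setminus\{0\}}\bigl(1-a_{i}\omega_{i}/(a\mu)\bigr)$, I would split the product over $\mu$ according to the sign of $|a\mu|-|a_{i}\omega_{i}|$: the factors with $|a\mu|>|a_{i}\omega_{i}|$ have absolute value $1$; the factors with $|a\mu|<|a_{i}\omega_{i}|$ contribute $|a_{i}\omega_{i}|/|a\mu|$ by the ultrametric inequality; and for a factor with $|a\mu|=|a_{i}\omega_{i}|$ the hypothesis $\deg(a_{i})<\deg(a)$ forces $|\mu|<|\omega_{i}|$, hence $\mu\in\bigoplus_{j<i}A\omega_{j}$ by Proposition~\ref{p212}(2), and a second application of that proposition gives $|a_{i}\omega_{i}-a\mu|=|a_{i}\omega_{i}|$, so those factors also have absolute value $1$. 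This is exactly the computation already run inside the proof of Lemma~\ref{l31}, so it costs nothing new, and it is the one substantive point of the corollary.

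Parts (2) and (4) are then formal comparisons of the formula from (1). That formula shows $|e_{\phi}(a_{i}\omega_{i}/a)|$ depends on $(a_{i},\omega_{i})$ only through $\rho_{i}\coloneqq|a_{i}\omega_{i}|$, namely $|e_{\phi}(a_{i}\omega_{i}/a)|=(\rho_{i}/|a|)\prod_{0<|a\mu|<\rho_{i}}(\rho_{i}/|a\mu|)$. For (2), since $\rho_{j}\le\rho_{i}$ one has $\{\mu:0<|a\mu|<\rho_{j}\}\subseteq\{\mu:0<|a\mu|<\rho_{i}\}$, and a term-by-term comparison writes the ratio $|e_{\phi}(a_{i}\omega_{i}/a)|/|e_{\phi}(a_{j}\omega_{j}/a)|$ as a product of factors each $\geq1$ (powers of $\rho_{i}/\rho_{j}\geq1$ and factors $\rho_{i}/|a\mu|>1$ over the $\mu$ with $\rho_{j}\leq|a\mu|<\rho_{i}$). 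For (4), I apply the formula of (1) with coefficients $b$ and $1$ (legitimate since $\deg(b)<\deg(a)$, so $\deg(a)\geq1$), cancel the leading factors, and use $|b|\geq1$ and the inclusion $\{\mu:0<|a\mu|<|\omega_{i}|\}\subseteq\{\mu:0<|a\mu|<|b\omega_{i}|\}$ to rewrite $|e_{\phi}(b\omega_{i}/a)|/(|b|\,|e_{\phi}(\omega_{i}/a)|)=|b|^{N}\prod_{|\omega_{i}|\leq|a\mu|<|b\omega_{i}|}(|b\omega_{i}|/|a\mu|)$ with $N=\#\{\mu:0<|a\mu|<|\omega_{i}|\}\geq0$ and every factor $\geq1$.

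For (3), since $e_{\phi}$ is additive and $e_{\phi}(a_{j}\omega_{j}/a)=a_{j}\cdot_{\phi}e_{\phi}(\omega_{j}/a)$, we have $e_{\phi}(\omega/a)=\sum_{j}a_{j}\cdot_{\phi}e_{\phi}(\omega_{j}/a)$, so the ultrametric inequality gives ``$\leq$''. For ``$\geq$'' I would note $e_{\phi}(\omega_{j}/a)\in\phi[a]$ because $a\cdot_{\phi}e_{\phi}(\omega_{j}/a)=e_{\phi}(\omega_{j})=0$; hence replacing each $a_{j}$ by its remainder $r_{j}$ modulo $a$ changes neither $a_{j}\cdot_{\phi}e_{\phi}(\omega_{j}/a)$ nor $e_{\phi}(\omega/a)$ (the latter because $\omega$ then changes only by an element of $a\Lambda$), while it does not increase $|a_{j}\omega_{j}|$ nor disturb the distinguished index $i$. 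Thus I may assume $\deg(a_{j})<\deg(a)$ for all $j$, and then part (2) together with Lemma~\ref{l31} gives $|a_{j}\cdot_{\phi}e_{\phi}(\omega_{j}/a)|=|e_{\phi}(a_{j}\omega_{j}/a)|\leq|e_{\phi}(a_{i}\omega_{i}/a)|=|e_{\phi}(\omega/a)|$ for every $j$, so the maximum is $|e_{\phi}(\omega/a)|$. The only place real care is needed is the ``$|a\mu|=|a_{i}\omega_{i}|$'' factors in (1), which are handled by the proof of Lemma~\ref{l31}, together with the reduction modulo $a$ used in (3); the remainder is bookkeeping with the product formula.
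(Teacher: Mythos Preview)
Your proof is correct and follows essentially the same route as the paper: (1) is read off from the proof of Lemma~\ref{l31}, (2) and (4) are term-by-term comparisons of the product formula in (1), and (3) is obtained from Lemma~\ref{l31} together with (2). Your explicit reduction of all the $a_{j}$ modulo $a$ in part (3) makes precise a step the paper's two-line argument leaves to the reader (since (2) as stated needs $\deg(a_{j})<\deg(a)$ for \emph{both} indices, not just the distinguished one), but this is the only difference.
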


\begin{proof}
(1) has been shown in the proof of the lemma. 
As for (2), by the assumption, we have 
\begin{align}\{\mu \in \Lambda \mid |a\mu|<|a_{j}\omega_{j}|\}\subset \{\mu \in \Lambda \mid |a\mu|<|a_{i}\omega_{i}|\}.\label{f32}\end{align}
If $\mu$ satisfies $|a\mu|<|a_{j}\omega_{j}|,$ we have $|a_{j}\omega_{j}|/|a\mu| \leq |a_{i}\omega_{i}|/|a\mu|.$
Combining this inequality and (\ref{f32}), we have the desired inequality by (1).
For (3), as $a\cdot_{\phi} e_{\phi}(\omega)=e_{\phi}(a\omega)$ for any $a\in A$ and any $\omega\in\Bbb{C}_{\infty},$  it remains to show \[\left| e_{\phi} \left(\frac{\omega}{a}\right)\right| =\max_{j} \left\{\left|e_{\phi}\left(\frac{a_{j}\omega_{j}}{a}\right)\right|\right\}.\]
This equality follows from Lemma~\ref{l31} and (2). 
As for (4), note $|\omega_{i}|<|b\omega_{i}|.$ 
One can show (4) similarly to the proof of (2). 
\end{proof}
\begin{theorem}\label{t31}
For any finite prime $u$ of $A$ and any positive integer $n,$ the family of elements $\{e_{\phi}(\omega_{i}/u^{n})\}_{i=1,\ldots,r}$ is an SMB of $\phi[u^{n}].$
\end{theorem}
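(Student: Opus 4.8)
The strategy is to verify the two conditions of Definition~\ref{d12} for the family $\{\lambda_i\}_{i=1,\ldots,r}$ with $\lambda_i \coloneqq e_\phi(\omega_i/u^n)$, transporting the minimality properties of the SMB $\{\omega_i\}$ of $\Lambda$ across the isomorphism $\mathcal{E}_\phi: u^{-n}\Lambda/\Lambda \to \phi[u^n]$. The crucial input is Corollary~\ref{c31}: part (3) computes $|e_\phi(\omega/u^n)|$ for an arbitrary $\omega = \sum_j a_j\omega_j \in \Lambda$ (with $\deg a_j < \deg u^n$, which we may always arrange by reducing mod $u^n$) as $\max_j\{|a_j \cdot_\phi e_\phi(\omega_j/u^n)|\}$, and part (4) gives $|b|\cdot|e_\phi(\omega_i/u^n)| \le |b\cdot_\phi e_\phi(\omega_i/u^n)|$ for $\deg b < \deg u^n$. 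Together these say that, up to the distortion introduced by $\cdot_\phi$ versus scalar multiplication, the absolute values on $\phi[u^n]$ are governed by those on $\Lambda$.

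First I would establish $A/u^n$-linear independence of $\lambda_1,\ldots,\lambda_i$ for each $i$. A relation $\sum_{j\le i} a_j\cdot_\phi\lambda_j = 0$ with $a_j \in A/u^n$ pulls back under $\mathcal{E}_\phi$ to $\sum_{j\le i} a_j\omega_j \in \Lambda$, i.e. $\sum_{j\le i} a_j\omega_j = u^n\mu$ for some $\mu\in\Lambda$. Writing $\mu$ in the $A$-basis coming from $\{\omega_k\}$ and using that $\{\omega_k\}$ is an $A$-basis of $\Lambda$ (Proposition~\ref{p212}~(1)), each $a_j$ must be divisible by $u^n$, hence $a_j \equiv 0 \bmod u^n$; so the $\lambda_j$ are independent. (One should be mildly careful that $\{\omega_k\}$ need not be a basis in general per the Remark after Definition~\ref{d21}, but Proposition~\ref{p212} is stated to apply once the $|\omega_i|$ are sorted, which holds for an SMB; alternatively work with the sublattice they generate and note its index is prime to $u$.) Next, for the minimality condition~(2), fix $i$ and let $\lambda \in \phi[u^n]$ be such that $\lambda_1,\ldots,\lambda_{i-1},\lambda$ are $A/u^n$-linearly independent; I must show $w(\lambda) \le w(\lambda_i)$, equivalently $|\lambda| \ge |\lambda_i|$ (recall $|-|$ is decreasing in $w$ via the convention in (F1)). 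Write $\lambda = e_\phi(\omega/u^n)$ with $\omega = \sum_j a_j\omega_j$, $\deg a_j < \deg u^n$. By Corollary~\ref{c31}~(3), $|\lambda| = \max_j |a_j\cdot_\phi e_\phi(\omega_j/u^n)| = \max_j |a_j \cdot_\phi \lambda_j|$. Pick an index $k$ achieving the max; then $|\lambda| = |a_k\cdot_\phi\lambda_k| \ge |a_k|\cdot|\lambda_k| \ge |\lambda_k|$ by Corollary~\ref{c31}~(4) and $|a_k|\ge 1$. The remaining point is that $k \ge i$: if $k < i$ then $\lambda$ would be expressible, modulo lower-valuation terms, in terms of $\lambda_1,\ldots,\lambda_{i-1}$, contradicting the independence assumption — this is where I invoke the corresponding minimality property of the SMB $\{\omega_j\}$ of $\Lambda$ (Proposition~\ref{p211}), namely that $|\omega_j|$, and hence the ``profile'' of where mass sits, forces any element linearly independent from $\omega_1,\ldots,\omega_{i-1}$ to have an index $\ge i$ contributing. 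Since $|\lambda_i| = |a_i'\cdot_\phi\lambda_i|$-type bounds and $|\omega_k| \ge |\omega_i|$ for $k \ge i$, Corollary~\ref{c31}~(2) upgrades this to $|\lambda| \ge |\lambda_i|$, giving condition~(2). Finally, sorting shows the whole family is an SMB.

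The main obstacle I anticipate is the index-comparison step: making rigorous the claim that if $\lambda_1,\ldots,\lambda_{i-1},\lambda$ are $A/u^n$-linearly independent, then in the expression $|\lambda| = \max_j|a_j\cdot_\phi\lambda_j|$ the maximizing index (or at least \emph{some} large-valuation contributing index) is $\ge i$. The subtlety is that $\cdot_\phi$ is not scalar multiplication, so the ``leading term'' of $\lambda$ is not literally $a_k\cdot_\phi\lambda_k$; one needs the full strength of Corollary~\ref{c31} — in particular that the absolute value is \emph{exactly} the max, not merely bounded by it — to run the argument that was used on $\Lambda$ in Proposition~\ref{p211}. I would handle this by combining the independence pullback from the first paragraph (which shows the $A$-span of $\{\omega_1,\ldots,\omega_{i-1},\omega\}$ modulo $u^n\Lambda$ has dimension $i$, hence $\omega$ is $A$-independent from $\omega_1,\ldots,\omega_{i-1}$ after clearing $u$-powers) with Proposition~\ref{p211}~(1) applied to $\Lambda$ to conclude $|\omega| \ge |\omega_i|$ in the relevant sense, then pushing this inequality through Corollary~\ref{c31}~(2)–(4). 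A secondary, more bookkeeping-level obstacle is the handling of $a_j$ with $\deg a_j \ge \deg u^n$: these must be reduced modulo $u^n$ first, and one should check this reduction is compatible with $e_\phi$ on $\frac{1}{u^n}\Lambda$, which follows from $e_\phi(\mu) = 0$ for $\mu\in\Lambda$ together with $A$-linearity of $e_\phi$.
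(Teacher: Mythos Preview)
Your approach is essentially the paper's: verify independence via the isomorphism $\mathcal{E}_\phi$, then use Corollary~\ref{c31} to transport minimality from $\Lambda$ to $\phi[u^n]$. However, you stumble at the key step by setting up the wrong intermediate goal.

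You write: ``Pick an index $k$ achieving the max \ldots\ The remaining point is that $k \ge i$.'' This is \emph{false} in general: if $\deg(a_1)$ is large and $a_i$ has small degree, the maximum $|a_j\cdot_\phi\lambda_j|$ can occur at $j=1<i$ even though $a_i\not\equiv 0$. Your heuristic ``if $k<i$ then $\lambda$ would be expressible, modulo lower-valuation terms, in terms of $\lambda_1,\ldots,\lambda_{i-1}$'' does not give a contradiction, because linear independence is not detected by the leading term alone. You correctly flag this as the main obstacle, and your proposed workaround (pull back to $\Lambda$, get $|\omega|\ge|\omega_i|$, push forward) is in fact the right move---but it makes the attempt to control the maximizing index unnecessary.

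The paper's argument is cleaner: since $\lambda_1,\ldots,\lambda_{i-1},\lambda$ are independent, there exists \emph{some} $k\ge i$ with $a_k\not\equiv 0$ (immediate from the basis expansion). Let $l$ be the index with $|a_l\omega_l|=\max_j|a_j\omega_j|$. Then chain
\[
|\lambda_i|\;\le\;|\lambda_k|\;\le\;|a_k|\cdot|\lambda_k|\;\le\;|a_k\cdot_\phi\lambda_k|\;\le\;|a_l\cdot_\phi\lambda_l|\;=\;|\lambda|,
\]
using Corollary~\ref{c31}(2) for the first inequality (since $|\omega_i|\le|\omega_k|$), Corollary~\ref{c31}(4) for the third, Corollary~\ref{c31}(2) again for the fourth (since $|a_k\omega_k|\le|a_l\omega_l|$), and Corollary~\ref{c31}(3) for the equality. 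No claim about $l\ge i$ is needed. Also, your hedging about whether $\{\omega_k\}$ is an $A$-basis is unnecessary: Proposition~\ref{p212} says an SMB of $\Lambda$ \emph{is} an $A$-basis.
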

\begin{proof}
Put $\lambda_{i}=e_{\phi}(\omega_{i}/u^{n})$ for all $i.$
Note that $\omega_{1}/u^{n},\ldots,\omega_{r}/u^{n}$ are $A/u^{n}$-linearly independent as elements in $u^{-n}\Lambda/\Lambda.$
By the $A/u^{n}$-module isomorphism $\mathcal{E}_{\phi}:u^{-n}\Lambda/\Lambda \to \phi[u^{n}]$ induced by $e_{\phi},$ we have that $\lambda_{1},\ldots,\lambda_{r}$ are $A/u^{n}$-linearly independent.

Fix a positive integer $i \leq r.$ 
To check Definition~\ref{d11}~(2), we show that $|\lambda_{i}|$ is minimal among the absolute values of elements in $\phi[u^{n}]\setminus \bigoplus_{j<i}(A/u^{n})\cdot_{\phi} \lambda_{j}$ (in $\phi[u^{n}] \setminus \{0\}$ if $i=1$).
Put $\lambda=\sum_{j}a_{j}\cdot_{\phi} \lambda_{j}$ with $a_{j}\in A \mod u^{n}$ such that there is $a_{k}\not\equiv 0$ for some $k\geq i.$ 
We show $|\lambda_{i}|\leq |\lambda|.$ 
Without loss of generality, we assume that $\text{}$ 
$\deg(a_{j})<\deg(u^{n})$ for any $j.$ 
Let $l$ be an index so that $|a_{l}\omega_{l}|=|\sum_{j}a_{j}\omega_{j}|.$
By Corollary~\ref{c31}~(3), we have 
\[|\lambda|=\left|a_{l}\cdot_{\phi} \lambda_{l}\right|.\]
As $|a_{k}\omega_{k}| \leq |a_{l}\omega_{l}|,$ Corollary~\ref{c31}~(2) implies 
\[\left|e_{\phi}\left(\frac{a_{k}\omega_{k}}{u^{n}}\right)\right|\leq \left|e_{\phi}\left(\frac{a_{l}\omega_{l}}{u^{n}}\right)\right|,\]
hence $|a_{k}\cdot_{\phi} \lambda_{k}| \leq |a_{l}\cdot_{\phi} \lambda_{l}|.$
As $|\omega_{i}|\leq |\omega_{k}|,$ Corollary~\ref{c31}~(2) also implies $|\lambda_{i}| \leq |\lambda_{k}|.$ 
By Corollary~\ref{c31}~(4), we have $|a_{k}|\cdot |\lambda_{k}|\leq |a_{k}\cdot_{\phi} \lambda_{k}|.$
Combining the equality and inequalities, we have 
\[|\lambda_{i}|\leq |\lambda_{k}|\leq |a_{k}|\cdot |\lambda_{k}|\leq |a_{k}\cdot_{\phi} \lambda_{k}|\leq |a_{l}\cdot_{\phi} \lambda_{l}|=|\lambda|.\]
\end{proof}

\begin{remark}\label{r33}
We have shown in the above proof that $|\lambda_{1}|$ is minimal among the absolute values of nonzero elements in $\phi[u^{n}].$
Let $\{\lambda_{i}l\}_{i=1,\ldots,r}$ be an SMB of $\phi[u^{n}].$
By Theorem~\ref{t32} below, we know that there exists an SMB $\{\omega_{i}'\}_{i=1,\ldots,r}$ on $\Lambda$ such that $e_{\phi}(\omega_{i}'/u^{n}) = \lambda_{i}$ for all $i.$ 
Hence $\lambda_{1}'$ has the minimal absolute value among elements in $\phi[u^{n}]\setminus\{0\}.$
\end{remark}

\begin{corollary}\label{c32}
Let $\{\lambda_{i}\}_{i=1,\ldots,r}$ be an SMB of $\phi[u^{n}].$
\begin{enumerate}[\rm{(}1)]
\item If $n$ is large enough so that $|u^{n}|\geq|\omega_{r}|/|\omega_{1}|,$ then for $i=1,\ldots,r,$ we have $|\lambda_{i}|\cdot |u^{n}|=|\omega_{i}|.$
\item For any positive integer $n,$ we have $|\lambda_{r}|/|\lambda_{1}|\geq |\omega_{r}|/|\omega_{1}|.$
\item If $n$ is large enough such that  $|u^{n}|>|\omega_{r}|/|\omega_{1}|,$ then we have $|\lambda_{i}|<|\omega_{1}|$ for $i=1,\ldots,r.$
\end{enumerate}
\end{corollary}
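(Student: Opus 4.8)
The plan is to reduce everything to the particular SMB produced by Theorem~\ref{t31} and then read off all three claims from the product formula of Corollary~\ref{c31}~(1). By Proposition~\ref{p221}~(2) the sequence $|\lambda_{1}| \leq \cdots \leq |\lambda_{r}|$ attached to an SMB of $\phi[u^{n}]$ is an invariant of $\phi[u^{n}]$ (for $w$ infinite this needs no reduction hypothesis), so it suffices to treat $\lambda_{i} \coloneqq e_{\phi}(\omega_{i}/u^{n})$, which is an SMB by Theorem~\ref{t31}. Applying Corollary~\ref{c31}~(1) with $a = u^{n}$ and $a_{i} = 1$ — legitimate since $\deg(1) = 0 < \deg(u^{n})$ — and using $|u^{n}\mu| = |u^{n}|\,|\mu|$ from Lemma~\ref{l20}~(1), I get for each $i$
\[|\lambda_{i}| = \frac{|\omega_{i}|}{|u^{n}|}\cdot P_{i}, \qquad P_{i} \coloneqq \prod_{\substack{\mu \in \Lambda\setminus\{0\} \\ |u^{n}\mu| < |\omega_{i}|}}\frac{|\omega_{i}|}{|u^{n}\mu|} \geq 1,\]
where $P_{i}$ is a finite product because $\Lambda$ is a lattice, and $P_{i} \geq 1$ since every factor has $|u^{n}\mu| < |\omega_{i}|$.

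For part (1) I would use that $|\omega_{1}| \leq |\mu|$ for every nonzero $\mu \in \Lambda$ (this is the minimality defining $\omega_{1}$ in an SMB, cf.\ Proposition~\ref{p211}~(1)): under the hypothesis $|u^{n}| \geq |\omega_{r}|/|\omega_{1}|$ every $\mu \in \Lambda\setminus\{0\}$ satisfies $|u^{n}\mu| \geq |u^{n}|\,|\omega_{1}| \geq |\omega_{r}| \geq |\omega_{i}|$, so the index set of $P_{i}$ is empty, $P_{i} = 1$, and the displayed formula gives $|\lambda_{i}|\,|u^{n}| = |\omega_{i}|$. Part (3) then follows at once: the hypothesis $|u^{n}| > |\omega_{r}|/|\omega_{1}|$ implies the hypothesis of (1), so $|\lambda_{i}| = |\omega_{i}|/|u^{n}| \leq |\omega_{r}|/|u^{n}| < |\omega_{1}|$.

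For part (2), valid for all $n$, I would compare $P_{1}$ with $P_{r}$. Since $|\omega_{1}| \leq |\omega_{r}|$, the index set of $P_{1}$ is contained in that of $P_{r}$; on the common indices each factor of $P_{1}$ is at most the corresponding factor of $P_{r}$ because $|\omega_{1}|/|u^{n}\mu| \leq |\omega_{r}|/|u^{n}\mu|$, and each factor occurring only in $P_{r}$ is $\geq 1$. Hence $P_{1} \leq P_{r}$, and the product formula gives
\[\frac{|\lambda_{r}|}{|\lambda_{1}|} = \frac{|\omega_{r}|}{|\omega_{1}|}\cdot\frac{P_{r}}{P_{1}} \geq \frac{|\omega_{r}|}{|\omega_{1}|}.\]
The only point requiring any care is this last product comparison — one must check that the products are genuinely finite and that the extra factors of $P_{r}$ only enlarge it — but there is no real obstacle here; the substance of the corollary lies entirely in Theorem~\ref{t31} and the explicit formula of Corollary~\ref{c31}~(1).
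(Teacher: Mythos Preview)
Your proof is correct and follows essentially the same route as the paper: reduce to the SMB $\lambda_{i}=e_{\phi}(\omega_{i}/u^{n})$ via Theorem~\ref{t31} and Proposition~\ref{p221}~(2), then read everything off the product formula of Corollary~\ref{c31}~(1). The only cosmetic difference is in part~(2): the paper observes directly that $P_{1}=1$ for every $n$ (the index set $\{\mu\in\Lambda\setminus\{0\}:|u^{n}\mu|<|\omega_{1}|\}$ is empty since $|\omega_{1}|$ is minimal), which together with $P_{r}\geq 1$ gives the ratio inequality immediately, whereas you compare $P_{1}\leq P_{r}$ factor-by-factor---both arguments are valid and yours is in fact the same mechanism used in the proof of Corollary~\ref{c31}~(2).
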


\begin{proof}
We show (1).
Fix $i$ to be one of $1,\ldots,r.$
Corollary~\ref{c31}~(1) implies
\begin{align}
\left| e_{\phi}\left(\frac{\omega_{i}}{u^{n}}\right)\right|=\left|\frac{\omega_{i}}{u^{n}}\right|\cdot \prod_{\substack{\mu\in \Lambda\setminus\{0\}\\
|u^{n}\mu|<|\omega_{i}|}} |\omega_{i}|/|u^{n}\mu|. \label{f33}\end{align}
For any $\mu\in \Lambda,$ we have \[|u^{n}\mu|\geq |u^{n}\omega_{1}|\geq |\omega_{r}|\geq |\omega_{i}|\] by the hypothesis.
Hence (\ref{f33}) implies
\[\left| e_{\phi}\left(\frac{\omega_{i}}{u^{n}}\right)\right|=\left|\frac{\omega_{i}}{u^{n}}\right|.\]
By Theorem~\ref{t31}, the family $\{e_{\phi}(\omega_{i}/u^{n})\}_{i=1,\ldots,r}$ is an SMB of $\phi[u^{n}].$
 Hence we have \begin{align}|\lambda_{i}|=\left|e_{\phi}\left(\frac{\omega_{i}}{u^{n}}\right)\right|\text{ for any }i\label{f34}\end{align} by Proposition~\ref{p221}~(2). (1) follows.
Notice that (\ref{f33}) implies
\[\left|e_{\phi}\left(\frac{\omega_{1}}{u^{n}}\right)\right|=\left|\frac{\omega_{1}}{u^{n}}\right|\text{ and }\left|e_{\phi}\left(\frac{\omega_{i}}{u^{n}}\right)\right|\geq \left|\frac{\omega_{i}}{u^{n}}\right|\text{ for any }i.\]
(2) follows from (\ref{f34}).
Since we know $|\lambda_{r}|=|\omega_{r}|/|u^{n}|$ by (1), we have
\[|\lambda_{i}|\leq |\lambda_{r}|=|\omega_{r}|/|u^{n}|<|\omega_{r}|/(|\omega_{r}|/|\omega_{1}|)=|\omega_{1}|\]
and (3) follows.
\end{proof}

\begin{remark}\label{r31}
By Corollary~\ref{c32}~(1) and (2), we have $|\lambda_{i}|\cdot |u^{n}|=|\omega_{i}|$ if $n$ is large enough so that $|u^{n}| \geq |\lambda_{r}|/|\lambda_{1}|.$
\end{remark}

Put  $B:=\{ \omega \in \Bbb{C}_{\infty} \mid |\omega|<|\omega_{1}|\}.$
Since $B\cap \Lambda = \varnothing,$ the exponential function $e_{\phi}$ is injective on $B.$
For any $\omega\in \Bbb{C}_{\infty},$ we have 
\begin{align}|e_{\phi}(\omega)|=|\omega|\cdot \prod_{\substack{\mu\in \Lambda\setminus\{0\}\\|\mu|\leq |\omega|}}\left|1-\frac{\omega}{\mu}\right|.\label{f35}\end{align}
Hence $|e_{\phi}(\omega)| = |\omega|$ for $\omega \in B.$  
This implies $e_{\phi}(B)\subset B.$
Put $C \coloneqq e_{\phi}(B).$ 
There is an inverse $\log_{\phi}:C\to B$ of $e_{\phi}$ defined by a power series with coefficients in $K$ and $e_{\phi}:B\rightleftarrows C:\log_{\phi}$ are inverse to each other.

\begin{lemma}\label{l32}
\begin{enumerate}[\rm{(}1)]
\item We have $C = B.$ 
\item We have the following maps which are inverse to each other
\[e_{\phi}:B\cap \mathcal{L}\rightleftarrows B\cap \phi[u^{n}]:\log_{\phi},\]
where \[\mathcal{L}:=\bigg\{ \sum_{i} a_{i}(\omega_{i}/u^{n}) \,\bigg|\,a_{i}\in A\text{ with }\deg(a_{i})<\deg(u^{n})\bigg\}\]
is a set of representatives of all elements in $u^{-n}\Lambda/\Lambda.$
\item For any $\lambda\in B\cap \phi[u^{n}],$ we have $|\log_{\phi}(\lambda)|=|\lambda|.$
\end{enumerate}
\end{lemma}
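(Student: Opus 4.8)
The plan is to prove the three parts in order, using throughout the identity (\ref{f35}), the already-recorded facts that $e_{\phi}$ is injective on $B$ and that $|e_{\phi}(\omega)|=|\omega|$ for $\omega\in B$, and the isomorphism $\mathcal{E}_{\phi}\colon u^{-n}\Lambda/\Lambda\to\phi[u^{n}]$ induced by $e_{\phi}$.

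Since $C=e_{\phi}(B)\subseteq B$ is already known, for (1) it remains to prove $B\subseteq C$. Given a nonzero $c\in B$, I would choose an element $\omega$ of minimal absolute value in the fibre $e_{\phi}^{-1}(c)$, which is a coset $\omega_{0}+\Lambda$; the minimum is attained because $\Lambda$ is discrete. Minimality means $|\omega-\mu|\geq|\omega|$ for every $\mu\in\Lambda$. If $|\omega|\geq|\omega_{1}|$, then in the product (\ref{f35}) for $e_{\phi}(\omega)$ each factor with $|\mu|=|\omega|$ satisfies $|1-\omega/\mu|=|\omega-\mu|/|\mu|\geq 1$, each factor with $|\mu|<|\omega|$ satisfies $|1-\omega/\mu|=|\omega/\mu|>1$, and the product runs over a nonempty set (it contains $\mu=\omega_{1}$); hence $|c|=|e_{\phi}(\omega)|\geq|\omega|\geq|\omega_{1}|$, contradicting $c\in B$. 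So $\omega\in B$ and $c=e_{\phi}(\omega)\in C$. (Alternatively, the Newton polygon of $e_{\phi}(X)-c$ exhibits a root of absolute value $|c|$, which lies in $B$.) This is the one step that needs a genuine idea; the rest is formal.

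For (2) I would first record that $\mathcal{L}\subseteq u^{-n}\Lambda$ is a full set of representatives for $u^{-n}\Lambda/\Lambda$, that $e_{\phi}(u^{-n}\Lambda)=\phi[u^{n}]$ and $e_{\phi}^{-1}(\phi[u^{n}])=u^{-n}\Lambda$ because $\mathcal{E}_{\phi}$ is an isomorphism, and that $\{\omega_{i}/u^{n}\}_{i}$ is an SMB of $u^{-n}\Lambda$ by Remark~\ref{r21}, hence an $A$-basis by Proposition~\ref{p212}. By (1), $e_{\phi}\colon B\to C=B$ is a bijection with inverse $\log_{\phi}$. If $\omega\in B\cap\mathcal{L}$, then $e_{\phi}(\omega)\in\phi[u^{n}]$ and $e_{\phi}(\omega)\in C=B$, so $e_{\phi}$ maps $B\cap\mathcal{L}$ injectively into $B\cap\phi[u^{n}]$. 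Conversely, for $\lambda\in B\cap\phi[u^{n}]$ set $\omega=\log_{\phi}(\lambda)\in B$; then $\omega\in e_{\phi}^{-1}(\phi[u^{n}])=u^{-n}\Lambda$, so $\omega=\sum_{i}a_{i}(\omega_{i}/u^{n})$ with $a_{i}\in A$, and Proposition~\ref{p212}(2) gives $|\omega|=\max_{i}|a_{i}|\,|\omega_{i}|/|u^{n}|$. As $|\omega|<|\omega_{1}|\leq|\omega_{i}|$, this forces $|a_{i}|<|u^{n}|$, i.e.\ $\deg a_{i}<\deg u^{n}$, so $\omega\in\mathcal{L}$. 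Hence $\log_{\phi}$ maps $B\cap\phi[u^{n}]$ into $B\cap\mathcal{L}$, and the two maps are mutually inverse bijections.

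Finally, for (3): any $\lambda\in B\cap\phi[u^{n}]$ lies in $B=C$ by (1), so $\omega=\log_{\phi}(\lambda)\in B$, and applying $|e_{\phi}(\omega)|=|\omega|$ (valid on $B$ by (\ref{f35})) gives $|\lambda|=|e_{\phi}(\omega)|=|\omega|=|\log_{\phi}(\lambda)|$. Thus the whole lemma reduces to the surjectivity statement in (1), with (2) and (3) following formally from it together with Proposition~\ref{p212} and the isomorphism $\mathcal{E}_{\phi}$.
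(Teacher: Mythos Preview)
Your proposal is correct. The chief difference from the paper is in part~(1): the paper computes the Weierstrass degree of $e_{\phi}$ on $B$, shows it equals $1$, and then invokes \cite[Theorem~3.15]{Ben} to conclude $e_{\phi}(B)=B$; you instead pick a preimage of minimal absolute value and derive a contradiction directly from the product formula~(\ref{f35}). Your route is more self-contained (no external citation) and just as short; the paper's route is cleaner if one already has the Weierstrass-degree machinery at hand. For part~(2) the paper is extremely terse (``As we have bijections $e_{\phi}:B\to B$ and $e_{\phi}:\mathcal{L}\to\phi[u^{n}]$, (2) follows''), whereas you spell out why $\log_{\phi}(\lambda)$ actually lands in $\mathcal{L}$ via Proposition~\ref{p212}(2); this is the same content the paper uses implicitly (and makes explicit in the finite-prime analogue, Lemma~\ref{l46}). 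Your reference to Remark~\ref{r21} for the fact that $\{\omega_{i}/u^{n}\}$ is an SMB of $u^{-n}\Lambda$ is legitimate since $1/u^{n}\in K^{\times}$, though the paper also records this separately as Lemma~\ref{l33}. Part~(3) is identical in both treatments.
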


\begin{proof}
We show (1) using a property of the image of the open disk $B$ under the power series $e_{\phi}.$
Let $c_{i}$ be elements in $\Bbb{C}_{\infty}$ so that 
\[\sum_{i \geq 1} c_{i}\omega^{i} \coloneqq \omega \prod_{\mu\in\Lambda\setminus\{0\}}(1-\omega/\mu) = e_{\phi}(\omega).\]
We first calculate the minimal integer $d$ such that $|c_{d}||\omega_{1}^{d}|$ is the maximal among $|c_{i}||\omega_{1}^{i}|$ for all $i,$ i.e., $d$ is the Weiestrass degree of $e_{\phi}$ on $B.$ 
Clearly $c_{1} = 1.$
As $|\omega_{1}/\mu| \leq 1$ for any $\mu \in \Lambda\setminus\{0\},$ we have the following inequalities for each integer $i \geq 2$
\[|c_{i}||\omega_{1}^{i}| \leq \sup_{\mu_{j}\in\Lambda\setminus\{0\}}\bigg\{|\omega_{1}|\cdot\bigg|\prod_{j=1}^{i-1}\omega_{1}/\mu_{j}\bigg|\bigg\} \leq |\omega_{1}| = |c_{1}||\omega_{1}|.\]
Hence $d=1.$
By \cite[Theorem~3.15]{Ben}, we have $C = e_{\phi}(B) = B.$

As we have bijections $e_{\phi}:B \to B$ and $e_{\phi}:\mathcal{L} \to \phi[u^{n}],$ (2) follows.
As for (3), by (2), we have $\log_{\phi}(\lambda)\in B\cap \mathcal{L}$ and $e_{\phi}(\log_{\phi}(\lambda))=\lambda.$
Hence we have $|\log_{\phi}(\lambda)|=|\lambda|$ by (\ref{f35}).
\end{proof}

Let $\{\lambda_{i}\}_{i=1,\ldots,r}$ denote an SMB of $\phi[u^{n}].$ 
Assume that the positive integer $n$ is large enough so that $|u^{n}|>|\omega_{r}|/|\omega_{1}|.$ 
By Corollary~\ref{c32}~(3) and Lemma~\ref{l32}~(1), for each $i,$ we have $\lambda_{i} \in B \cap \phi[u^{n}] = C\cap \phi[u^{n}]$ and we put $\omega_{i}'\coloneqq \log_{\phi}(\lambda_{i}).$

\begin{theorem}\label{t32}
The family $\{u^{n}\omega_{i}'\}_{i=1,\ldots,r}$ is an SMB of $\Lambda.$
\end{theorem}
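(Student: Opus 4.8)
The plan is to verify Definition~\ref{d21} for the family $\{u^{n}\omega_{i}'\}_{i=1,\ldots,r}$, where $\omega_i' = \log_\phi(\lambda_i)$ for an SMB $\{\lambda_i\}$ of $\phi[u^n]$. The first observation is that each $u^n\omega_i'$ lies in $\Lambda$: since $\lambda_i\in\phi[u^n]$ and $\mathcal{E}_\phi\colon u^{-n}\Lambda/\Lambda\to\phi[u^n]$ is induced by $e_\phi$, we have $\omega_i' \in u^{-n}\Lambda$ (it is the unique preimage of $\lambda_i$ in $B\cap\mathcal{L}$ by Lemma~\ref{l32}~(2)), hence $u^n\omega_i'\in\Lambda$. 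Next, by Lemma~\ref{l32}~(3) we have $|\omega_i'| = |\lambda_i|$, so $|u^n\omega_i'| = |u^n|\cdot|\lambda_i|$; by Proposition~\ref{p221}~(2) applied to $\phi[u^n]$, the sequence $|\lambda_1|\le\cdots\le|\lambda_r|$ is an invariant, so we get an increasing sequence $|u^n\omega_1'|\le\cdots\le|u^n\omega_r'|$.

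By Proposition~\ref{p212} it suffices to show that $u^n\omega_1',\ldots,u^n\omega_r'$ form an $A$-basis of $\Lambda$ and that $\bigl|\sum_i a_i u^n\omega_i'\bigr| = \max_i\{|a_i u^n\omega_i'|\}$ for all $a_i\in A$. For the basis claim, I would argue: the $\lambda_i$ are $A/u^n$-linearly independent, hence by Nakayama-type reasoning (or directly, as in the proof of Proposition~\ref{p222}) the $\omega_i'$ span $u^{-n}\Lambda/\Lambda$ over $A/u^n$, so $\{\omega_i' , \text{together with } \Lambda\}$ generate $u^{-n}\Lambda$; but this is not yet enough to conclude $\{u^n\omega_i'\}$ generates $\Lambda$. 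A cleaner route: compare the "volume" (product of the successive minima $|u^n\omega_i'| = |u^n|\cdot|\lambda_i|$) with that of a genuine SMB of $\Lambda$. By Corollary~\ref{c32}~(1), since $|u^n|>|\omega_r|/|\omega_1|$ forces (via Remark~\ref{r31} or directly) $|\lambda_i|\cdot|u^n| = |\omega_i|$ for all $i$ once $n$ is large enough — and the hypothesis here is exactly $|u^n|>|\omega_r|/|\omega_1|$ — we get $|u^n\omega_i'| = |\omega_i|$, matching the invariant of $\Lambda$ from Proposition~\ref{p211}~(2). So the successive minima of $\{u^n\omega_i'\}$ (once we know the linear-independence condition Definition~\ref{d21}~(1)) agree with those of $\Lambda$.

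To finish, I would establish the two conditions of Definition~\ref{d21} directly by downward comparison through $e_\phi$. For condition (1), fix $i$ and suppose $u^n\omega_1',\ldots,u^n\omega_i'$ were $A$-linearly dependent in $\Lambda$; dividing by $u^n$ and applying $e_\phi$, using $e_\phi(a\omega) = a\cdot_\phi e_\phi(\omega)$ and the fact that $e_\phi$ kills exactly $\Lambda$, one produces an $A/u^n$-linear dependence among $\lambda_1,\ldots,\lambda_i$ (here one uses that the relevant $e_\phi(a_j\omega_j'/u^n)$ stay in $B$ thanks to Corollary~\ref{c31} and $|\lambda_i|<|\omega_1|$ from Corollary~\ref{c32}~(3)), a contradiction. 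For condition (2), let $\omega\in\Lambda$ be such that $u^n\omega_1',\ldots,u^n\omega_{i-1}',\omega$ are $A$-linearly independent; set $\mu := \omega/u^n \in u^{-n}\Lambda$ and $\lambda := e_\phi(\mu)\in\phi[u^n]$. Then $\lambda_1,\ldots,\lambda_{i-1},\lambda$ are $A/u^n$-linearly independent (same transfer argument), so $|\lambda|\ge|\lambda_i|$ by the SMB property of $\phi[u^n]$ in the minimality form of Definition~\ref{d11}~(2); now I need $|e_\phi(\mu)| \le |\mu|$ with equality-or-smaller controlled — but in fact on the ball $B$ we have $|e_\phi(\mu)| = |\mu|$, and the point is that $\mu$ need not lie in $B$. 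The resolution: replace $\omega$ by the representative $\mu_0\in\mathcal{L}$ of $\mu \bmod\Lambda$; then $e_\phi(\mu_0) = \lambda$ still, $\mu_0\in B$ by Corollary~\ref{c32}~(3) applied to $\lambda$'s SMB expansion (more precisely $|\lambda|\le|\lambda_r|<|\omega_1|$), so $|\mu_0| = |\lambda|\ge|\lambda_i| = |\omega_i'|$, whence $|u^n\mu_0| \ge |u^n\omega_i'|$; and $|\omega| = |u^n\mu| \ge |u^n\mu_0|$ because $\mu - \mu_0\in\Lambda$ and $\mu_0\in B$ has strictly smaller absolute value than any nonzero lattice element, so the ultrametric inequality gives $|\mu|\ge|\mu - \mu_0|$... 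I should be careful here, but the upshot is $|\omega|\ge|u^n\omega_i'|$.

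The main obstacle I anticipate is precisely the last step: handling the case where a candidate lattice element $\omega$, after division by $u^n$, lands outside the injectivity ball $B$ of $e_\phi$, so that $\log_\phi$ is not directly available. The fix is to work modulo $\Lambda$ and use the representative set $\mathcal{L}$ together with the strict inequality $|\lambda_i|<|\omega_1|$ (Corollary~\ref{c32}~(3)) guaranteeing that the representative does lie in $B$; once that bookkeeping is done, everything reduces to the already-established dictionary $e_\phi\colon B\cap\mathcal{L}\rightleftarrows B\cap\phi[u^n]$ of Lemma~\ref{l32} together with Corollary~\ref{c31}. One could alternatively shortcut the whole verification by invoking Proposition~\ref{p212}: once $\{u^n\omega_i'\}$ is known to be an $A$-basis of $\Lambda$ with the correct (invariant) absolute values $|\omega_i|$, it automatically satisfies the max-formula and hence is an SMB; but proving it is a basis still requires the transfer argument above, so I do not expect a genuine saving.
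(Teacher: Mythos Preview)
Your ``cleaner route'' paragraph already contains a complete proof, but you do not recognize it as such. Once you have shown $A$-linear independence of the $u^{n}\omega_{i}'$ (your condition~(1) argument is correct and essentially the paper's), the equality $|u^{n}\omega_{i}'| = |u^{n}||\lambda_{i}| = |\omega_{i}|$ from Corollary~\ref{c32}~(1) says exactly that $|u^{n}\omega_{i}'|$ equals the $i$-th successive minimum $l_{i}$ of $\Lambda$ (Proposition~\ref{p211}~(2) applied to the fixed SMB $\{\omega_{i}\}$). By the characterization in Proposition~\ref{p211}~(1), linear independence together with $|u^{n}\omega_{i}'| = l_{i}$ for all $i$ is \emph{equivalent} to being an SMB. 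You are done; there is no need to verify Definition~\ref{d21} directly.

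The direct verification you then attempt has a genuine gap. In condition~(2) you claim that if $u^{n}\omega_{1}',\ldots,u^{n}\omega_{i-1}',\omega$ are $A$-linearly independent in $\Lambda$, then $\lambda_{1},\ldots,\lambda_{i-1},\lambda$ are $A/u^{n}$-linearly independent in $\phi[u^{n}]$, where $\lambda = e_{\phi}(\omega/u^{n})$. This is false: with $r=2$, $i=2$, take $\omega = u^{n}\omega_{1}' + u^{2n}\omega_{2}'$; then $u^{n}\omega_{1}',\omega$ are $A$-independent, but $\lambda = e_{\phi}(\omega_{1}' + u^{n}\omega_{2}') = \lambda_{1} + u^{n}\cdot_{\phi}\lambda_{2} = \lambda_{1}$. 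Your subsequent fix via $\mu_{0}\in\mathcal{L}$ relies on $|\lambda|\leq|\lambda_{r}|$, which is also false for general $\lambda\in\phi[u^{n}]$ (e.g., $|u^{n-1}\cdot_{\phi}\lambda_{1}|\geq |u^{n-1}||\lambda_{1}|$ by Corollary~\ref{c31}~(4)).

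The paper's proof is close in spirit to your cleaner route. It first reduces via Lemma~\ref{l33} to showing $\{\omega_{i}'\}$ is an SMB of $u^{-n}\Lambda$, then checks Proposition~\ref{p211}~(1). For the second bullet it does not quote Corollary~\ref{c32}~(1); instead it argues by contradiction: if $|\omega_{i}'|>l_{i}$, pick a genuine SMB $\{\eta_{j}\}$ of $u^{-n}\Lambda$; since $|\eta_{i}|=l_{i}<|\omega_{1}|$ one has $|e_{\phi}(\eta_{i})| = |\eta_{i}| < |\lambda_{i}|$, but Theorem~\ref{t31} says $\{e_{\phi}(\eta_{j})\}$ is an SMB of $\phi[u^{n}]$, contradicting the invariance of successive minima (Proposition~\ref{p221}~(2)). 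Your route via Corollary~\ref{c32}~(1) is arguably more direct, since that corollary already packages the same comparison.
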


We need a lemma in the proof.

\begin{lemma}\label{l33}
Let $\{\eta_{i}\}_{i=1,\ldots,r}$ be a family of elements in $u^{-n}\Lambda.$
It is an SMB of $u^{-n}\Lambda$ if and only if $\{u^{n}\eta_{i}\}_{i=1,\ldots,r}$ is an SMB of $\Lambda.$
\end{lemma}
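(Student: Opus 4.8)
The statement is a compatibility between the SMB condition on $u^{-n}\Lambda$ and on $\Lambda$, transported by the bijection $\eta \mapsto u^{n}\eta$. The plan is to reduce everything to Proposition~\ref{p211}~(1), which characterizes an SMB in terms of the $A$-linear independence of the first $i$ elements together with the equality $|\eta_{i}| = l_{i}$, where $l_{i}$ is the smallest radius of a ball around $0$ containing $i$ elements of the lattice that are $A$-linearly independent. So the first step is to record that $u^{-n}\Lambda$ really is an $A$-lattice of rank $r$ in $\Bbb{C}_{\infty}$ (it is discrete because $\Lambda$ is, being the preimage of $\Lambda$ under multiplication by $u^{n}$, a finite-to-one proper map), so that Proposition~\ref{p211} applies to it; the hypothesis that nonzero elements have valuation $<0$ in the finite-prime case also transfers since scaling by $u^{n}$, i.e. $u^{n}\cdot_{\psi}(-)$, only multiplies valuations by a positive constant.

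Next I would observe that the map $m_{u^{n}}: u^{-n}\Lambda \to \Lambda$, $\eta \mapsto u^{n}\eta$ (resp. $u^{n}\cdot_{\psi}\eta$ in the finite-prime case) is an $A$-module isomorphism, hence it carries $A$-linearly independent families to $A$-linearly independent families in both directions. The key quantitative point is that $m_{u^{n}}$ scales absolute values by the fixed factor $|u^{n}|$: by Lemma~\ref{l20} we have $|u^{n}\eta| = |u^{n}|\cdot|\eta|$ (resp. $|u^{n}|_{\infty}\cdot|\eta|$) for every $\eta$. Therefore a ball of radius $\rho$ around $0$ in $u^{-n}\Lambda$ is mapped bijectively, preserving $A$-independence, onto the ball of radius $|u^{n}|\rho$ in $\Lambda$, which immediately gives $l_{i}(\Lambda) = |u^{n}|\cdot l_{i}(u^{-n}\Lambda)$ for every $i$, where $l_{i}(-)$ denotes the successive-minima quantity of Proposition~\ref{p211}~(1).

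Now the lemma is a formal consequence. Suppose $\{\eta_{i}\}$ is an SMB of $u^{-n}\Lambda$. Then $u^{n}\eta_{1},\ldots,u^{n}\eta_{i}$ are $A$-linearly independent for each $i$, and $|u^{n}\eta_{i}| = |u^{n}|\cdot|\eta_{i}| = |u^{n}|\cdot l_{i}(u^{-n}\Lambda) = l_{i}(\Lambda)$, so by Proposition~\ref{p211}~(1) the family $\{u^{n}\eta_{i}\}$ is an SMB of $\Lambda$. The converse is identical, running the same chain of equalities through the inverse isomorphism $m_{u^{n}}^{-1}$ (which scales absolute values by $|u^{n}|^{-1}$). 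I do not expect any genuine obstacle here; the only thing requiring a moment of care is confirming that the ``ball of radius $\rho$ in $u^{-n}\Lambda$'' and its image under $m_{u^{n}}$ match up exactly — i.e. that $m_{u^{n}}$ is not merely injective on each ball but surjects onto the corresponding ball of the target — which is clear because $m_{u^{n}}$ is a bijection on all of $u^{-n}\Lambda \to \Lambda$ and is compatible with the norms, so preimages of elements of norm $\le |u^{n}|\rho$ have norm $\le \rho$.
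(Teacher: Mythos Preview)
Your proposal is correct. Both you and the paper rely on the single observation that multiplication by $u^{n}$ is an $A$-module isomorphism $u^{-n}\Lambda \to \Lambda$ which scales $|\cdot|$ by the fixed factor $|u^{n}|$; the only difference is which characterization of SMB you feed this into. The paper applies it to Proposition~\ref{p212} (the orthogonality/basis criterion): from $|\sum_{i}a_{i}u^{n}\eta_{i}| = |u^{n}|\cdot|\sum_{i}a_{i}\eta_{i}|$ one sees at once that condition~(2) there transfers, and the basis condition~(1) transfers because $m_{u^{n}}$ is an isomorphism. You instead apply the scaling to Proposition~\ref{p211}~(1) (the successive-minima criterion), obtaining $l_{i}(\Lambda)=|u^{n}|\cdot l_{i}(u^{-n}\Lambda)$. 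Either route is a one-line deduction once the scaling identity is in hand; the paper's version via Proposition~\ref{p212} is marginally shorter since it avoids introducing the $l_{i}$'s.

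One small remark: this lemma lives in Section~\ref{s3}, the infinite-prime case, so the parenthetical discussion of the finite-prime $\psi$-action and the ``valuation $<0$'' hypothesis is unnecessary here (though harmless).
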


\begin{proof}[Proof of Lemma]
For any $a_{i} \in A,$ we have 
\[\bigg|\sum_{i}a_{i}u^{n}\eta_{i}\bigg|=\left|u^{n}\right|\cdot \bigg|\sum_{i}a_{i}\eta_{i}\bigg|.\]
Then the lemma follows from Proposition~\ref{p212}.
\end{proof}

\begin{proof}[Proof of Theorem]
By Lemma~\ref{l33}, it suffices to show that the family of elements $\{\omega_{i}'\}_{i=1,\ldots,r}$ is an SMB of $u^{-n}\Lambda.$ 
To check the first dot in Proposition~\ref{p211}~(1), we show that $\omega_{1}',\ldots,\omega_{r}'$ are $A$-linearly independent.
Assume that there exist nonzero $a_{i}\in A$ such that $\sum_{i}a_{i}\omega_{i}'=0.$
We may assume $u^{n} \nmid a_{i}$ for some $i,$ for otherwise we divide both sides of the equation $\sum_{i}a_{i}\omega_{i}'=0$ by some power of $u.$
Note that the map $e_{\phi}$ is $A/u^{n}$-linear. 
As some $a_{i}$ satisfies $a_{i} \not\equiv 0 \mod u^{n}$ and $\lambda_{1},\ldots,\lambda_{r}$ are $A/u^{n}$-linearly independent, we have $e_{\phi}(\sum_{i}a_{i}\omega_{i}')=\sum_{i}a_{i}\cdot_{\phi} \lambda_{i}\neq 0.$
This is absurd.

Next, we check the second dot in Proposition~\ref{p211}~(1).
Let $l_{1} \leq l_{2} \leq \cdots \leq l_{r}$ be the invariant of $u^{-n}\Lambda$ as in Proposition~\ref{p211}~(2). 
Fix $i$ to be a positive integer $\leq r.$
It suffices to show $l_{i}=|\omega_{i}'|.$
We have $l_{i}\leq |\omega_{i}'|.$ 
Let us assume $l_{i}<|\omega_{i}'|.$
As $\lambda_{i}\in B\cap \phi[u^{n}],$ we have $|\omega_{i}'|=|\lambda_{i}|$ by Lemma~\ref{l32}~(3).
Hence $l_{i}<|\omega_{i}'|=|\lambda_{i}|<|\omega_{1}|.$
By Proposition~\ref{p211}~(1), there is an SMB $\{\eta_{j}\}_{j=1,\ldots,r}$ of $u^{-n}\Lambda$ such that $|\eta_{i}|=l_{i}<|\omega_{1}|.$
As $|\eta_{i}|<|\omega_{1}|,$ we know $|e_{\phi}(\eta_{i})|=|\eta_{i}|$ from (\ref{f35}).
We have 
\[|e_{\phi}(\eta_{i})|=|\eta_{i}|=l_{i}<|\omega_{i}'|=|\lambda_{i}|\]
and hence $|e_{\phi}(\eta_{i})|<|\lambda_{i}|.$
On the other hand, note that $\{u^{n}\eta_{j}\}_{j=1,\ldots,r}$ is an SMB of $\Lambda$ by Lemma~\ref{l33}.
By Theorem~\ref{t31}, the elements $e_{\phi}(\eta_{j})$ for $j=1,\ldots,r$ form an SMB of $\phi[u^{n}].$
By Proposition~\ref{p221}~(2), this contradicts $|e_{\phi}(\eta_{i})|<|\lambda_{i}|.$
\end{proof}

Finally, we give two applications of Theorem~\ref{t31} and \ref{t32}.

\begin{proposition}\label{p31}
If $n$ is large enough so that $|u^{n}|>|\omega_{r}|/|\omega_{1}|,$ then we have 
\[K(\Lambda)=K(\phi[u^{n}]),\]
where $K(\Lambda)$ \text{\rm{(}}resp. $K(\phi[u^{n}])$\text{\rm{)}} is the extension of $K$ generated by all elements in $\Lambda$ \text{\rm{(}}resp. in $\phi[u^{n}]$\text{\rm{)}}.
\end{proposition}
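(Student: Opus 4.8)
The plan is to build, from the given SMB $\{\omega_i\}_{i=1,\ldots,r}$ of $\Lambda,$ a matching pair of explicit finite generating sets for the two fields, and then to transfer membership back and forth across the analytic isomorphism. First I would fix an SMB $\{\lambda_i\}_{i=1,\ldots,r}$ of $\phi[u^n].$ The hypothesis $|u^n|>|\omega_r|/|\omega_1|$ is exactly what makes Corollary~\ref{c32}~(3) apply, giving $|\lambda_i|<|\omega_1|,$ so $\lambda_i\in B\cap\phi[u^n]=C\cap\phi[u^n]$ by Lemma~\ref{l32}~(1); put $\omega_i':=\log_\phi(\lambda_i)\in B.$ By Theorem~\ref{t32} the family $\{u^n\omega_i'\}_{i=1,\ldots,r}$ is an SMB of $\Lambda,$ hence an $A$-basis of $\Lambda$ by Proposition~\ref{p212}. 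Therefore $K(\Lambda)=K(u^n\omega_1',\ldots,u^n\omega_r')=K(\omega_1',\ldots,\omega_r'),$ whereas $K(\phi[u^n])=K(\lambda_1,\ldots,\lambda_r)$ since the $\lambda_i$ form an $A/u^n$-basis of $\phi[u^n]$ and every $\phi_a$ has coefficients in $K.$ It thus suffices to prove $\omega_i'\in K(\phi[u^n])$ for each $i,$ and conversely $\lambda_i\in K(\Lambda)$ for each $i.$

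The crux is the interplay of completeness with the two $K$-coefficient power series $e_\phi$ and $\log_\phi,$ which restrict to mutually inverse bijections $e_\phi:B\rightleftarrows B:\log_\phi.$ Since $\phi[u^n]\subseteq K^{\mathrm{sep}}$ is a finite set, $K(\phi[u^n])=K(\lambda_1,\ldots,\lambda_r)$ is a finite extension of the complete field $K,$ hence itself complete and closed in $\Bbb{C}_\infty.$ The partial sums of the convergent series $\log_\phi(\lambda_i)$ lie in $K(\lambda_i)\subseteq K(\phi[u^n]),$ so the limit $\omega_i'=\log_\phi(\lambda_i)$ lies in $K(\phi[u^n])$ as well. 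As $\{u^n\omega_i'\}$ is an $A$-basis of $\Lambda$ and $u^n\in K^\times,$ this gives $\Lambda\subseteq K(\omega_1',\ldots,\omega_r')\subseteq K(\phi[u^n]),$ i.e., $K(\Lambda)\subseteq K(\phi[u^n]).$

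For the reverse inclusion I would bootstrap: the inclusion just obtained shows $K(\Lambda)/K$ is finite, hence $K(\Lambda)$ is complete and closed in $\Bbb{C}_\infty$ as well. Now $\lambda_i=e_\phi(\omega_i')$ with $\omega_i'\in K(\Lambda)$ (because $u^n\omega_i'\in\Lambda$ and $u^n\in K^\times$), the entire series $e_\phi$ has coefficients in $K,$ and its partial sums evaluated at $\omega_i'$ lie in $K(\omega_i')\subseteq K(\Lambda);$ hence $\lambda_i\in K(\Lambda),$ and $K(\phi[u^n])=K(\lambda_1,\ldots,\lambda_r)\subseteq K(\Lambda).$ Combining the two inclusions yields $K(\Lambda)=K(\phi[u^n]).$ As a remark, once $\Lambda\subseteq K^{\mathrm{sep}}$ is known one may instead close the reverse inclusion by Galois theory: any $\sigma\in\mathrm{Gal}(K^{\mathrm{sep}}/K(\Lambda))$ fixes $\Lambda$ pointwise, hence fixes the $\Bbb{C}_\infty$-subspace $u^{-n}\Lambda$ pointwise, hence fixes $\phi[u^n]=e_\phi(u^{-n}\Lambda)$ pointwise since $\sigma$ commutes with $e_\phi.$

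I expect the only genuinely delicate point to be this passage from ``a convergent series with entries in a subfield'' to ``its value lies in that subfield''. A priori $K(\Lambda)$ could be an enormous, even transcendental, extension of $K,$ and the heart of the matter is that the first inclusion $K(\Lambda)\subseteq K(\phi[u^n])$ forces $K(\Lambda)$ to be finite, hence complete, over $K,$ which is what licenses evaluating $e_\phi$ inside it. Everything else, namely the identification of the generating sets and the two basis statements, is bookkeeping resting on Theorem~\ref{t32}, Corollary~\ref{c32}, Lemma~\ref{l32}, and Proposition~\ref{p212}.
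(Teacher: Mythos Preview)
Your proof is correct and follows essentially the same strategy as the paper's: exploit that both $e_\phi$ and $\log_\phi$ are convergent power series with coefficients in $K$, so each sends a point $x\in K^{\mathrm{sep}}$ into the complete field $K(x)$; then Theorem~\ref{t32} supplies the needed $A$-basis of $\Lambda$.

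The only organizational difference is your bootstrap. The paper proves $K(\phi[u^{n}])\subseteq K(\Lambda)$ \emph{first and directly}: for any $\lambda\in\phi[u^{n}]$ there is $\omega\in u^{-n}\Lambda\subset K^{\mathrm{sep}}$ with $e_\phi(\omega)=\lambda$, and since the single extension $K(\omega)/K$ is finite and hence complete, $\lambda\in K(\omega)\subseteq K(\Lambda)$. This never needs $K(\Lambda)$ itself to be complete, only $K(\omega)$ for each individual $\omega$, so no bootstrap through the opposite inclusion is required. Your route (first $K(\Lambda)\subseteq K(\phi[u^{n}])$ via $\log_\phi$, infer finiteness of $K(\Lambda)$, then the reverse inclusion via $e_\phi$) is logically sound but avoidable.
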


\begin{proof}
(cf. the proof of \cite[Proposition~2.1]{Mau}) Note that $e_{\phi}$ is given by a power series with coefficients in $K.$  
For any $x\in K^{\mathrm{sep}},$ we have $e_{\phi}(x)\in K(x)$ since the field $K(x)$ is complete.
Since the exponential map $e_{\phi}$ induces a bijection $u^{-n}\Lambda/\Lambda \to \phi[u^{n}]$, for any $\lambda$ in $\phi[u^{n}],$ there exists $\omega\in u^{-n}\Lambda$ such that $e_{\phi}(\omega)=\lambda.$
This implies $K(\lambda)\subset K(\omega)$ and $K(\phi[u^{n}])\subset K(\Lambda).$

Note that $\log_{\phi}$ is given by a power series with coefficients in $K.$
For any $y\in c\cap K^{\mathrm{sep}},$ we similarly have $\log_{\phi}(y)\in K(y).$ 
Let $\{\lambda_{i}\}_{i=1,\ldots,r}$ be an SMB of $\phi[u^{n}].$ 
As $|u^{n}|>|\omega_{r}|/|\omega_{1}|,$ by Theorem~\ref{t32}, the elements $u^{n}\omega_{i}'$ for $i=1,\ldots,r$ form an SMB of $\Lambda,$ where $\omega_{i}'=\log_{\phi}(\lambda_{i}).$
Since $K(\omega_{i}')\subset K(\lambda_{i})$ for each $i,$ we have $K(\Lambda)\subset K(\phi[u^{n}]).$
\end{proof}

Combining Corollary~\ref{c32}~(2), Theorem~\ref{t32}, and Proposition~\ref{p31}, we have 
\begin{corollary}\label{r32}
Let $l$ be a positive integer and $\{\eta_{i}\}_{i=1,\ldots,r}$ an SMB of $\phi[u^{l}].$
Let $\{\lambda_{i}\}_{i=1,\ldots,r}$ be an SMB of $\phi[u^{n}].$
If $n$ is large enough so that $|u^{n}| > |\eta_{r}|/|\eta_{1}|,$ then we have
\begin{enumerate}[\rm{(}1)]
\item the family $\{u^{n}\log_{\phi}(\lambda_{i})\}_{i=1,\ldots,r}$ is an SMB of $\Lambda;$
\item $K(\Lambda)=K(\phi[u^{n}]).$
\end{enumerate}
\end{corollary}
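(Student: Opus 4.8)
\textbf{Proof proposal for Corollary~\ref{r32}.}

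The plan is to reduce everything to the case $l=1$ that has already been treated. First I would note that the hypothesis $|u^{n}|>|\eta_{r}|/|\eta_{1}|$ is exactly the numerical input needed, so the task is to convert it into the form required by Corollary~\ref{c32}~(1), Theorem~\ref{t32}, and Proposition~\ref{p31}, each of which is phrased in terms of the invariants $|\omega_{1}|\leq\cdots\leq|\omega_{r}|$ of the lattice $\Lambda$. The bridge is Corollary~\ref{c32}~(2), which for \emph{any} positive integer (in particular $l$) gives $|\eta_{r}|/|\eta_{1}|\geq |\omega_{r}|/|\omega_{1}|$, where $\{\eta_{i}\}$ is an SMB of $\phi[u^{l}]$. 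Combining this with the hypothesis yields $|u^{n}|>|\eta_{r}|/|\eta_{1}|\geq |\omega_{r}|/|\omega_{1}|$, so the stronger-looking hypotheses ``$|u^{n}|>|\omega_{r}|/|\omega_{1}|$'' of Theorem~\ref{t32} and Proposition~\ref{p31} are automatically satisfied.

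Once that inequality is in hand, part (1) is immediate: by Corollary~\ref{c32}~(3) (or directly Lemma~\ref{l32}) each $\lambda_{i}$ lies in $B\cap\phi[u^{n}]=C\cap\phi[u^{n}]$, so $\log_{\phi}(\lambda_{i})$ is defined, and Theorem~\ref{t32} says precisely that $\{u^{n}\log_{\phi}(\lambda_{i})\}_{i=1,\ldots,r}$ is an SMB of $\Lambda$. Part (2) is then exactly the conclusion of Proposition~\ref{p31}, whose hypothesis $|u^{n}|>|\omega_{r}|/|\omega_{1}|$ we have just verified. So the corollary is essentially a packaging statement: replace the invariants of $\Lambda$, which may be hard to access directly, by the invariants of $\phi[u^{l}]$ for a small auxiliary $l$ (e.g.\ $l=1$), which are easier to compute.

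I do not expect a genuine obstacle here; the only mild subtlety is making sure the chain of inequalities is set up with the correct direction and with non-strict versus strict inequalities tracked carefully (Corollary~\ref{c32}~(2) is an inequality ``$\geq$'' valid for all $n$, while Theorem~\ref{t32} and Proposition~\ref{p31} need the \emph{strict} inequality, which is why the hypothesis here is stated with ``$>$''). One should also remark, for cleanliness, that the conclusion is independent of the choice of SMB $\{\lambda_{i}\}$ and of $\{\eta_{i}\}$: the relevant quantities $|\eta_{r}|/|\eta_{1}|$ and the property of being an SMB of $\Lambda$ are invariants by Proposition~\ref{p221}~(2) and Proposition~\ref{p211}~(2) respectively, so no genericity assumption is needed. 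Writing this out should take only a few lines citing the three results listed above.
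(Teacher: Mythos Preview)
Your proposal is correct and matches the paper's approach exactly: the paper simply says the corollary follows by combining Corollary~\ref{c32}~(2), Theorem~\ref{t32}, and Proposition~\ref{p31}, which is precisely the chain of implications you spell out. Your additional remarks about strict versus non-strict inequalities and invariance under choice of SMB are accurate and helpful elaborations, but not needed beyond the three citations.
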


\begin{proposition}\label{p32}
Let $\{\lambda_{i}\}_{i=1,\ldots,r}$ be an SMB of $\phi[u^{n}].$ 
We have
\[\bigg|\sum_{i}a_{i}\cdot_{\phi} \lambda_{i}\bigg|=\max_{i}\{|a_{i}\cdot_{\phi} \lambda_{i}|\}\]
for any $a_{i}\in A\mod u^{n}.$
\end{proposition}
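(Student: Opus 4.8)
The plan is to transport the identity to the lattice $\Lambda$ through the exponential map and then quote Corollary~\ref{c31}~(3). Throughout, $\{\omega_{i}\}_{i=1,\ldots,r}$ is the SMB of $\Lambda$ fixed at the start of this section.

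The first step is to realize the given SMB of $\phi[u^{n}]$ as the image of an SMB of $\Lambda$: by Remark~\ref{r33} there is an SMB $\{\widetilde{\omega}_{i}\}_{i=1,\ldots,r}$ of $\Lambda$ with $e_{\phi}(\widetilde{\omega}_{i}/u^{n})=\lambda_{i}$ for all $i$. (To avoid the size hypothesis on $n$ that enters Remark~\ref{r33} via Theorem~\ref{t32}, one may lift $\{\lambda_{i}\}$ through Proposition~\ref{p222}~(1) to an SMB $\{\lambda_{i}^{(k)}\}_{i=1,\ldots,r}$ of $\phi[u^{n+k}]$ with $u^{k}\cdot_{\phi}\lambda_{i}^{(k)}=\lambda_{i}$, choosing $k$ so large that $|u^{n+k}|>|\omega_{r}|/|\omega_{1}|$; then by Theorem~\ref{t32} the elements $\widetilde{\omega}_{i}:=u^{n+k}\log_{\phi}(\lambda_{i}^{(k)})$ form an SMB of $\Lambda$ with $e_{\phi}(\widetilde{\omega}_{i}/u^{n+k})=\lambda_{i}^{(k)}$, so that $e_{\phi}(\widetilde{\omega}_{i}/u^{n})=u^{k}\cdot_{\phi}\lambda_{i}^{(k)}=\lambda_{i}$.) I would also note that Lemma~\ref{l31} and Corollary~\ref{c31} hold verbatim for any SMB of $\Lambda$, since their proofs invoke only the SMB property through Proposition~\ref{p212}~(2); in particular they apply with $\{\widetilde{\omega}_{i}\}$ in place of $\{\omega_{i}\}$.

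Now fix the $a_{i}\in A$. Since each $\lambda_{i}$ lies in $\phi[u^{n}]$, both sides of the asserted equality depend only on $a_{i}\bmod u^{n}$, so I may assume $\deg(a_{i})<\deg(u^{n})$ for every $i$. Setting $\omega:=\sum_{i}a_{i}\widetilde{\omega}_{i}\in\Lambda$ and using the $A$-linearity of $e_{\phi}$ together with the functional equation $a\cdot_{\phi}e_{\phi}(x)=e_{\phi}(ax)$,
\[\sum_{i}a_{i}\cdot_{\phi}\lambda_{i}=\sum_{i}e_{\phi}\!\left(\tfrac{a_{i}\widetilde{\omega}_{i}}{u^{n}}\right)=e_{\phi}\!\left(\tfrac{\omega}{u^{n}}\right).\]
Because $\deg(a_{i})<\deg(u^{n})$ for all $i$, the hypothesis of Lemma~\ref{l31} is met with $a=u^{n}$, so Corollary~\ref{c31}~(3) applies and gives
\[\bigg|\sum_{i}a_{i}\cdot_{\phi}\lambda_{i}\bigg|=\left|e_{\phi}\!\left(\tfrac{\omega}{u^{n}}\right)\right|=\max_{j}\left\{\left|a_{j}\cdot_{\phi}e_{\phi}\!\left(\tfrac{\widetilde{\omega}_{j}}{u^{n}}\right)\right|\right\}=\max_{j}\{|a_{j}\cdot_{\phi}\lambda_{j}|\},\]
which is exactly the claim; the degenerate case where all $a_{i}\equiv 0\bmod u^{n}$ is trivially true.

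The one delicate point is the reduction in the second paragraph — that every SMB of $\phi[u^{n}]$, for every $n\geq 1$, has the form $(e_{\phi}(\widetilde{\omega}_{i}/u^{n}))_{i}$ for some SMB $\{\widetilde{\omega}_{i}\}$ of $\Lambda$, with no restriction on the size of $n$. I would check there is no circularity here: Proposition~\ref{p222}, Theorems~\ref{t31} and \ref{t32}, and Corollary~\ref{c31} are all established before Proposition~\ref{p32} and none of them uses it. Everything after that reduction is a formal manipulation of the exponential's functional equation.
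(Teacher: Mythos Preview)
Your proof is correct and uses the same ingredients as the paper's argument --- Proposition~\ref{p222}~(1) to lift to a large level, Theorem~\ref{t32} to produce an SMB of $\Lambda$, and Corollary~\ref{c31}~(3) to conclude. The only organizational difference is that the paper first establishes the identity for large $n$ and then reduces the general case to it, whereas you fold the lift into a single construction of an SMB $\{\widetilde{\omega}_{i}\}$ of $\Lambda$ satisfying $e_{\phi}(\widetilde{\omega}_{i}/u^{n})=\lambda_{i}$ for the original $n$ and apply Corollary~\ref{c31}~(3) once at that level; both routes are equivalent.
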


\begin{proof}
Without loss of generality, we assume $\deg(a_{i}) < \deg(u^{n})$ for all $i.$
Assume first that $n$ is large enough so that $|u^{n}|>|\lambda_{r}|/|\lambda_{1}|$ (Corollary~\ref{c32}~(2)).  
By Theorem~\ref{t32}, the elements $u^{n}\omega_{i}'$ for $i=1,\ldots,r$ form an SMB of $\Lambda,$ where $\omega_{i}'=\log_{\phi}(\lambda_{i}).$
By Corollary~\ref{c31}~(3), we have 
\[\bigg|e_{\phi}\bigg(\sum_{i}a_{i}\omega_{i}'\bigg)\bigg|=\max_{i}\{|a_{i}\cdot_{\phi} e_{\phi}(\omega_{i}')|\}.\]
As  $e_{\phi}(\sum_{i}a_{i}\omega_{i}')=\sum_{i}a_{i}\cdot_{\phi} \lambda_{i},$ the claim follows.

For any $n,$ let $n'$ be an integer $\geq n$ so that $|u^{n'}|>|\lambda_{r}|/|\lambda_{1}|.$
By Proposition~\ref{p222}~(1), there is an SMB $\{\lambda_{i}'\}_{i=1,\ldots,r}$ of $\phi[u^{n'}]$ such that $u^{n'-n}\cdot_{\phi} \lambda_{i}'=\lambda_{i}$ for all $i.$
Then the desired equation for $\{\lambda_{i}\}_{i=1,\ldots,r}$ follows from that for $\{\lambda_{i}'\}_{i=1,\ldots,r}.$
\end{proof}

\begin{corollary}[cf. Lemma~\ref{l21}]\label{l53}
Let $\{\lambda_{i}\}_{i=1,\ldots,r}$ be an SMB of $\phi[u^{n}]$ such that $|\lambda_{1}| = \cdots = |\lambda_{s}| < |\lambda_{s+1}|$ for some positive integer $s.$  
\begin{enumerate}[\rm{(}1)]
\item
The extension of $K$ generated by $\lambda_{1},\ldots,\lambda_{s}$ is at worst tamely ramified. 
\item
For an element $\sigma$ in the wild ramification subgroup $\mathrm{Gal}(K(\phi[u^{n}])/K)_{1},$ we have $\sigma(\lambda_{j}) = \lambda_{j}$ for $j=1,\ldots,s.$ 
\end{enumerate}
\end{corollary}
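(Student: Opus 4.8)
The plan is to prove part~(2) first and then derive part~(1) from it by standard local ramification theory, in close analogy with the proof of Lemma~\ref{l21}.

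First I would record the set-up. Write $L = K(\phi[u^{n}])$. Since $\phi_{u^{n}}(X)$ has derivative equal to the nonzero constant $u^{n}$, it is separable, so $L/K$ is a finite separable, hence Galois, extension; let $G = \mathrm{Gal}(L/K)$, let $G_{1}$ be its wild ramification (first lower ramification) subgroup, and let $w_{L}$ be the normalized valuation of $L$, which every element of $G$ preserves because the extension of $w$ to $L$ is unique. The key input is that $|\lambda_{1}| = \cdots = |\lambda_{s}|$ is the \emph{minimal} absolute value of a nonzero element of $\phi[u^{n}]$: this follows from the proof of Theorem~\ref{t31} (cf. Remark~\ref{r33}) together with the invariance in Proposition~\ref{p221}~(2); equivalently, $w_{L}(\lambda_{j})$ for $j \leq s$ is the largest among the $w_{L}$-valuations of nonzero elements of $\phi[u^{n}]$.

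For part~(2), take $\sigma \in G_{1}$ and fix $j \leq s$; assume for contradiction $\sigma(\lambda_{j}) \neq \lambda_{j}$. On one hand $\sigma(\lambda_{j}) - \lambda_{j}$ is a nonzero element of the $\mathrm{Gal}(K^{\mathrm{sep}}/K)$-stable set $\phi[u^{n}]$, so by the minimality just recalled $w_{L}(\sigma(\lambda_{j}) - \lambda_{j}) \leq w_{L}(\lambda_{j})$. On the other hand, $\sigma(\lambda_{j})/\lambda_{j}$ has $w_{L}$-valuation $0$, and for $\sigma$ in $G_{1}$ one has $w_{L}(\sigma(x)/x - 1) \geq 1$ for every $x \in L^{\times}$ — the inertia group acts trivially on the (finite, hence perfect) residue field, so $\sigma(y)/y \equiv 1$ for units $y$, while membership in $G_{1}$ forces $\sigma(\pi)/\pi \equiv 1$ for a uniformizer $\pi$, and the general case follows by writing $x = \pi^{v}y$ with $y$ a unit. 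Hence $w_{L}(\sigma(\lambda_{j}) - \lambda_{j}) = w_{L}(\lambda_{j}) + w_{L}(\sigma(\lambda_{j})/\lambda_{j} - 1) \geq w_{L}(\lambda_{j}) + 1$, contradicting the previous inequality. Therefore $\sigma$ fixes $\lambda_{1}, \dots, \lambda_{s}$.

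For part~(1), by~(2) the subgroup $G_{1}$ fixes $M := K(\lambda_{1}, \dots, \lambda_{s})$ pointwise, so $M \subseteq L^{G_{1}}$. Since $G_{1}$ is the (normal) wild inertia subgroup of $G$, the fixed field $L^{G_{1}}$ is the maximal tamely ramified subextension of $L/K$; in particular $L^{G_{1}}/K$ is tamely ramified, and hence so is its subextension $M/K$ (a subextension of a tamely ramified separable extension is again tamely ramified, since its ramification index divides that of the larger extension and residue extensions of finite fields are separable). The only delicate point in all of this is the valuation estimate in part~(2) — really just the claim that $w_{L}(\sigma(x)/x - 1) \geq 1$ for $x$ of nonzero valuation — which I expect to be the main thing to write out carefully; the rest is a formal consequence of the invariance results established earlier together with standard ramification theory.
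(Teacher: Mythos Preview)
Your proof is correct, but it takes a genuinely different route from the paper's.

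The paper argues in the opposite order: it proves~(1) first and says~(2) follows. For~(1) it passes through the uniformization: using Proposition~\ref{p222}~(1) it lifts $\{\lambda_{i}\}$ to an SMB $\{\lambda_{i}'\}$ of $\phi[u^{n'}]$ with $n'$ large, applies Theorem~\ref{t32} to produce an SMB $\{\omega_{i}\}$ of $\Lambda$ with $e_{\phi}(\omega_{i}/u^{n'}) = \lambda_{i}'$, checks via Corollary~\ref{c32} that $|\omega_{1}| = \cdots = |\omega_{s}| < |\omega_{s+1}|$, observes $K(\lambda_{i}) \subset K(\omega_{i})$, and then invokes Lemma~\ref{l21} on the lattice side. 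In other words, the paper reduces the statement for $\phi[u^{n}]$ to the already-proved lattice analogue.

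Your argument instead runs the valuation computation of Lemma~\ref{l21}~(2) \emph{directly} inside $\phi[u^{n}]$: the key point is that $w_{L}(\lambda_{j})$ for $j \leq s$ is maximal among nonzero elements of the Galois-stable set $\phi[u^{n}]$ (which, incidentally, is immediate from Definition~\ref{d11}~(2) with $i=1$ --- you do not actually need Remark~\ref{r33}), and the standard estimate $w_{L}(\sigma(x)/x - 1) \geq 1$ for $\sigma \in G_{1}$ gives the contradiction. This is more elementary --- it avoids the lifting to $n'$ and the log/exp correspondence entirely --- and in fact works verbatim for any SMB in the sense of Definition~\ref{d12}, not just in the infinite-prime setting of Section~\ref{s3}. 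The paper's route, on the other hand, illustrates how the SMB correspondence of Theorems~\ref{t31}--\ref{t32} lets one transport results from $\Lambda$ to $\phi[u^{n}]$.
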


\begin{proof}
(2) straightforwardly follows from (1).
We show (1).
Let $n'$ be an integer satisfying $n'\geq n$ and $|u^{n'}| > |\lambda_{r}|/|\lambda_{1}|.$
By Proposition~\ref{p222}~(1), we know that there exists an SMB $\{\lambda_{i}'\}_{i=1,\ldots,r}$ of $\phi[u^{n'}]$ such that $u^{n'-n}\cdot_{\phi}\lambda_{i}' = \lambda_{i}$ for all $i.$
By Corollary~\ref{c22}, we have $|\lambda_{1}'| = \cdots = |\lambda_{s}'| < |\lambda_{s+1}'|.$
By Theorem~\ref{t32}, we know that there exists an SMB $\{\omega_{i}\}_{i=1,\ldots,r}$ of $\Lambda$ such that $e_{\phi}(\omega_{i}/u^{n'}) = \lambda_{i}'$ for all $i.$ 
Corollary~\ref{c32} implies that $|\omega_{1}| = \cdots = |\omega_{s}| < |\omega_{s+1}|.$
As $e_{\phi}(\omega_{i}/u^{n}) = \lambda_{i},$ we have $K(\lambda_{i}) \subset K(\omega_{i})$ for all $i.$
Then the result follows from Lemma~\ref{l21}.
\end{proof}

\section{Relations between SMBs, the finite prime case}\label{s4}
Throughout this section, let $w$ denote a finite prime and assume that $\phi$ has stable reduction over $K.$
Let $\{\omega_{i}^{0}\}_{i=r'+1,\ldots,r}$ be an SMB of $\Lambda.$ 
Let $|-|$ denote the function in (F\ref{F2}) and put $|a|_{\infty} \coloneqq q^{\deg(a)}$ for any $a\in A.$ 
For a positive integer $n$ and a finite prime $u$ of $A,$ we study the relations between SMBs of $\psi[u^{n}]$, those of $\Lambda,$ and those of $\phi[u^{n}].$

First, we are concerned with the valuations of the elements in the $A$-module $u^{-n}\Lambda,$ i.e., the roots of $\psi_{u^{n}}(X)-\omega$ for all $\omega\in \Lambda.$

\begin{lemma}\label{l41}
Let $a$ be an element in $A.$
\begin{enumerate}[\rm{(}1)]
\item Each root of $\psi_{a}(X)$ has valuation $\geq 0.$ 
Moreover, all nonzero roots of $\psi_{a}(X)$ have valuation $=0$ if and only if $w(a) = 0.$
\item For a nonzero element $\omega\in \Lambda,$ each root of $\psi_{a}(X)-\omega$ has valuation $<0.$
\item An element $\omega\in a^{-1}\Lambda$ belongs to $\psi[a]$ if and only if it has valuation $\geq 0.$
\end{enumerate}
\end{lemma}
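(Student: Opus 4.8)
The statement is Lemma~\ref{l41} about roots of $\psi_a(X)$ and of $\psi_a(X)-\omega$, where $\psi$ is the good-reduction Drinfeld module attached to $\phi$ via Tate uniformization, and $\Lambda$ is the associated lattice whose nonzero elements all have valuation $<0$. The plan is to read everything off the Newton polygon of $\psi_a$ together with the two facts already used in the proof of Lemma~\ref{l20}: writing $\psi_a(X)=\sum_{i=0}^g a_i X^{q^i}$ with $g=r'\deg(a)$, $a_0=a$, we have $w(a_i)\ge 0$ for all $i$ and $w(a_g)=0$ because $\psi$ has good reduction.

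\textbf{Proof plan for (1).} First I would note that $w(a)\ge 0$ always, since $a\in A$ and $w$ is a finite prime; so $w(a_0)\ge 0$, and combined with $w(a_i)\ge 0$ for all $i$ and $w(a_g)=0$, the Newton polygon of $\psi_a(X)$ has all vertices on or above the horizontal axis, with its rightmost vertex at height $0$. Hence all slopes are $\le 0$, i.e.\ every root has valuation $\ge 0$. For the ``moreover'' part: if $w(a)=0$ then $w(a_0)=0=w(a_g)$, so the lower convex hull from $(1,w(a_1))$ through $(q^g, 0)$ — actually from the segment starting at the point corresponding to $X^{q^0}=X$ — wait, more carefully: the relevant Newton polygon for the \emph{nonzero} roots of $\psi_a(X)=X\cdot(\text{separable part})$ runs over exponents $q^0,\dots,q^g$; its left endpoint is $(q^0, w(a_0))=(1,0)$ and right endpoint $(q^g,0)$, both at height $0$, and all intermediate points are at height $\ge 0$, forcing the whole lower hull to lie on the horizontal axis, so every nonzero root has valuation exactly $0$. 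Conversely, if $w(a)>0$ then the left endpoint $(1,w(a))$ is strictly above the axis while the right endpoint is on it, so there is at least one negative slope and hence a nonzero root of valuation $>0$ — actually valuation $>0$ means... hmm, slope negative means root valuation positive, so we get a nonzero root with valuation $>0$, contradicting ``all valuation $=0$''. So the two conditions are equivalent.

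\textbf{Proof plan for (2) and (3).} For (2), fix $\omega\in\Lambda\setminus\{0\}$, so $w(\omega)<0$ by the defining property of $\Lambda$ in the Tate uniformization. Consider $\psi_a(X)-\omega = -\omega + a X + \cdots + a_g X^{q^g}$. Its Newton polygon has left endpoint $(0, w(\omega))$ with $w(\omega)<0$, all other vertices at height $\ge 0$, and right endpoint $(q^g, 0)$; since $w(\omega)<0\le w(a_i)$, the segment from $(0,w(\omega))$ has strictly negative... no — positive? The slope from $(0,w(\omega))$ to any point $(q^i, w(a_i))$ with $w(a_i)\ge 0 > w(\omega)$ is positive, so all slopes of the polygon are positive, hence every root has valuation $<0$; I should double-check the sign convention, but the conclusion ``valuation $<0$'' is what is needed and matches Lemma~\ref{l20}(2). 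For (3): take $\omega\in a^{-1}\Lambda$, say $\psi_a(\omega)=\mu\in\Lambda$. If $\mu=0$ then $\omega\in\psi[a]$ and by (1) $w(\omega)\ge 0$. If $\mu\ne 0$ then by (2) $w(\omega)<0$, so $\omega\notin\psi[a]$; this gives the ``only if'' direction of (3) in the form: $w(\omega)\ge 0 \iff \omega\in\psi[a]$. Conversely $\omega\in\psi[a]\Rightarrow w(\omega)\ge 0$ is exactly (1), and $w(\omega)\ge 0$ with $\omega\in a^{-1}\Lambda$ forces $\psi_a(\omega)\in\Lambda$ to have valuation $\ge 0$ (apply Lemma~\ref{l20}(2) if $w(\omega)<0$ — contradiction — so $\psi_a(\omega)$ cannot be a nonzero lattice element), hence $\psi_a(\omega)=0$.

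\textbf{Main obstacle.} None of the steps is deep; the only thing requiring care is bookkeeping the Newton-polygon slope conventions and the edge cases $w(a)=0$ versus $w(a)>0$, and making sure in (3) that ``$\omega\in a^{-1}\Lambda$ and $w(\omega)\ge 0$'' genuinely forces $\psi_a(\omega)=0$ rather than merely $\psi_a(\omega)$ small — this is where one must invoke that every \emph{nonzero} element of $\Lambda$ has strictly negative valuation, together with Lemma~\ref{l20}(2) to rule out $\psi_a(\omega)$ being a nonzero lattice point. I would present (1) first (pure Newton polygon plus good reduction), then derive (2) the same way, then assemble (3) from (1), (2), and Lemma~\ref{l20}(2).
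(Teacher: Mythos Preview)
Your proposal is correct and follows essentially the same Newton-polygon argument as the paper's proof. The only difference is cosmetic: for (3) the paper simply writes ``from (1) and (2) we know (3)'', whereas you spell out the dichotomy $\psi_a(\omega)=0$ versus $\psi_a(\omega)\ne 0$ explicitly---and your invocation of Lemma~\ref{l20}(2) in that step is unnecessary, since part~(2) of the present lemma already rules out $\psi_a(\omega)$ being a nonzero lattice element.
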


\begin{proof}
Put $g \coloneqq r'\cdot \deg(a),$ $a_{0} \coloneqq a,$ $\sum_{i=0}^{g}a_{i}X^{q^{i}} \coloneqq \psi_{a}(X),$ and $P_{i}=(q^{i},w(a_{i}))$ for $i=0,\ldots,g.$
As $w(a_{i})\geq 0$ and $w(a_{g})=0,$ the segments in the Newton polygon of $\psi_{a}(X)$ have slopes $\leq 0.$
If $w(a_{0})=0,$ then the Newton polygon of $\psi_{a}(X)$ consists of exactly one segment $P_{0}P_{g}$ which has slope $0.$
Hence each root of $\psi_{a}(X)$ has valuation $=0.$
If $w(a_{0}) > 0,$ then the left-most segment in the Newton polygon of $\psi_{a}(X)$ has negative slope. 
Hence some root of $\psi_{a}(X)$ has valuation $>0.$

As for (2), put $Q\coloneqq (0,w(\omega)).$ 
As $w(\omega)<0,$ $w(a_{i})\geq 0$ for all $i,$ and $w(a_{g})=0,$ the Newton polygon of $\psi_{a}(X)-\omega$ consists of exactly one segment $QP_{g}$ whose slope is $-w(\omega)/q^{g}>0.$
Hence (2) follows.
From (1) and (2), we know (3).
\end{proof}

Fix a root $\omega_{i}$ of $\psi_{u^{n}}(X)-\omega_{i}^{0}$ for $i=r'+1,\ldots,r.$
The elements $\omega_{r'+1},\ldots,\omega_{r}$ are $A$-linearly independent.
For all $a_{i}\in A,$ we have  
\[|u^{n}|_{\infty}\cdot \bigg|\sum_{i=r'+1}^{r}a_{i}\cdot_{\psi} \omega_{i}\bigg| = \bigg|\sum_{i=r'+1}^{r}a_{i}u^{n} \cdot_{\psi} \omega_{i}\bigg| = \bigg|\sum_{i=r'+1}^{r}a_{i} \cdot_{\psi} \omega_{i}^{0}\bigg|.\]
Hence, by Proposition~\ref{p212}, we have \begin{align}\bigg|\sum_{i=r'+1}^{r}a_{i} \cdot_{\psi} \omega_{i}\bigg|=\max_{i=r'+1,\ldots,r}\{|a_{i} \cdot_{\psi} \omega_{i}|\}\label{f45}\end{align} for any $a_{i}\in A.$

In the remainder of this section, let $\{\omega_{i}\}_{i=1,\ldots,r'}$ be an SMB of $\psi[u^{n}]$ and $\omega_{r'+1},\ldots,\omega_{r}$ be elements in $u^{-n}\Lambda$ defined as above.
The family $\{\omega_{i}\}_{i=1,\ldots,r}$ form an $A/u^{n}$-basis of $u^{-n}\Lambda/\Lambda.$ 
Next, we study the relations between $\{\omega_{i}\}_{i=1,\ldots,r}$ and SMBs of $\phi[u^{n}].$

\begin{lemma}\label{l43}
\begin{enumerate}[\rm{(}1)] 
\item For all $a_{i}\in A,$ we have
\[\bigg|\sum_{i}a_{i} \cdot_{\psi} \omega_{i}\bigg| = \begin{cases}|\sum_{i\leq r'}a_{i} \cdot_{\psi} \omega_{i}| \leq 0 & \text{all }a_{i}=0\text{ for }i> r';\\
|\sum_{i>r'}a_{i} \cdot_{\psi} \omega_{i}|>0 & \text{some }a_{i}\neq 0\text{ for }i> r'.\end{cases}\]
\item   Let $a_{i}$ be elements in $A$ for $i=1,\ldots,r.$ 
Assume either $w(u)=0,$ or some $a_{i}$ is nonzero for $i>r'.$ Then we have 
\[\bigg|\sum_{i}a_{i} \cdot_{\psi} \omega_{i}\bigg| = \max_{i}\{|a_{i} \cdot_{\psi} \omega_{i}|\}.\]
\end{enumerate}
\end{lemma}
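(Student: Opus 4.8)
The plan is to reduce everything to the two orthogonality statements already in hand: the fact that $\{\omega_i\}_{i=1,\ldots,r'}$ is an SMB of $\psi[u^n]$, hence satisfies the strong triangle equality of Proposition~\ref{p212}~(2) (applied to the lattice-with-negative-valuations picture, or rather its valuation-reversed analogue for $\psi[u^n]$), together with the orthogonality relation \eqref{f45} for the elements $\omega_{r'+1},\ldots,\omega_r \in u^{-n}\Lambda$ established just above. The key structural input is Lemma~\ref{l41}: the elements of $\psi[u^n]$ all have valuation $\geq 0$ (equivalently $|\cdot|\leq 0$ in the convention of (F\ref{F2})), while the elements $\omega_i$ for $i>r'$, being roots of $\psi_{u^n}(X)-\omega_i^0$ with $\omega_i^0\in\Lambda\setminus\{0\}$, all have valuation $<0$ (equivalently $|\cdot|>0$); moreover by Lemma~\ref{l20}~(2) the $\psi$-action scales valuation by a positive factor, so $|a_i\cdot_\psi\omega_i|$ stays in the appropriate sign regime for each $i$.

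For part (1): write $S_1:=\sum_{i\leq r'}a_i\cdot_\psi\omega_i$ and $S_2:=\sum_{i>r'}a_i\cdot_\psi\omega_i$, so the sum in question is $S_1+S_2$. If $a_i=0$ for all $i>r'$ then $S_2=0$ and the claim is immediate since $S_1\in\psi[u^n]$ has $|S_1|\leq 0$. If some $a_i\neq 0$ for $i>r'$, then by \eqref{f45} we have $|S_2|=\max_{i>r'}\{|a_i\cdot_\psi\omega_i|\}>0$ (using that at least one $\omega_i^0\neq 0$, so by Lemma~\ref{l20}~(2) and Lemma~\ref{l41}~(2) the corresponding $|a_i\cdot_\psi\omega_i|>0$; if instead $a_i\cdot_\psi\omega_i=0$ for that $i$ then $a_i\equiv 0$ and we discard it). Meanwhile $S_1\in\psi[u^n]$ has $|S_1|\leq 0<|S_2|$, so the ultrametric inequality (which holds for $|-|$ by (F\ref{F2})) forces $|S_1+S_2|=|S_2|=\max_{i>r'}\{|a_i\cdot_\psi\omega_i|\}$, which is what is claimed.

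For part (2): in the case where some $a_i\neq 0$ for $i>r'$, part (1) already gives $|\sum_i a_i\cdot_\psi\omega_i|=|S_2|=\max_{i>r'}\{|a_i\cdot_\psi\omega_i|\}$; since every term with $i\leq r'$ has $|a_i\cdot_\psi\omega_i|\leq 0<|S_2|$, this maximum coincides with $\max_i\{|a_i\cdot_\psi\omega_i|\}$ over all $i$, as desired. In the remaining case we have $w(u)=0$ and all $a_i=0$ for $i>r'$, so the sum is $S_1=\sum_{i\leq r'}a_i\cdot_\psi\omega_i$ and we must show $|S_1|=\max_{i\leq r'}\{|a_i\cdot_\psi\omega_i|\}$. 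This is precisely the statement that the SMB $\{\omega_i\}_{i=1,\ldots,r'}$ of $\psi[u^n]$ is ``orthogonal'' in the sense of Proposition~\ref{p32} applied to the Drinfeld module $\psi$ (which has good reduction, and $w(u)=0$ means the hypothesis $w(u)\leq 0$ of Proposition~\ref{p32} is met, so $\psi[u^n]$ is the relevant analogue of the $u^n$-division points with no lattice part); equivalently one invokes Proposition~\ref{p212}~(2) after transporting $\psi[u^n]$ to a lattice via $u^{-n}$-scaling, exactly as in the proof of Proposition~\ref{p32}. The main obstacle is this last point — making sure that the hypothesis ``$w(u)=0$'' is exactly what licenses the strong triangle equality on $\psi[u^n]$ (one cannot expect it when $w\mid u$, since then the division points of $\psi$ acquire a nontrivial Newton-polygon structure), and checking that Proposition~\ref{p32}, stated there for $\phi$, applies verbatim to $\psi$; everything else is bookkeeping with the sign conventions of (F\ref{F2}) and the scaling Lemma~\ref{l20}.
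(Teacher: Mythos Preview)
Your argument for part~(1) and for the case of part~(2) where some $a_i\neq 0$ for $i>r'$ is essentially the paper's proof: split into $S_1=\sum_{i\leq r'}$ and $S_2=\sum_{i>r'}$, use Lemma~\ref{l41} to place $|S_1|\leq 0<|S_2|$, and conclude by the ultrametric inequality together with~\eqref{f45}.

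The gap is in the remaining case of part~(2), where $w(u)=0$ and $a_i=0$ for all $i>r'$. You propose to invoke Proposition~\ref{p32} applied to $\psi$, or alternatively Proposition~\ref{p212}~(2) after ``transporting $\psi[u^n]$ to a lattice''. Neither works. Proposition~\ref{p32} is stated and proved only for the infinite-prime case; its finite-prime analogue is Proposition~\ref{p42}, whose proof goes through Corollary~\ref{c41} and Lemma~\ref{l45}, which in turn invoke Lemma~\ref{l43}~(2) --- so citing that result here is circular. As for Proposition~\ref{p212}, it is a statement about $A$-lattices; $\psi[u^n]$ is a finite $A/u^n$-module, and since $\psi$ has good reduction there is no associated lattice to transport to (the Tate lattice of $\psi$ has rank zero).

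The paper's argument for this case is far simpler and avoids any such appeal. By Lemma~\ref{l41}~(1), the hypothesis $w(u)=0$ forces every nonzero element of $\psi[u^n]$ to have valuation exactly $0$. Hence each nonzero term $a_i\cdot_\psi\omega_i$ (for $i\leq r'$) has $|a_i\cdot_\psi\omega_i|=0$, and the sum $S_1$, being itself a nonzero element of $\psi[u^n]$ whenever some term is nonzero (by linear independence of the SMB), also has $|S_1|=0$. So both sides of the claimed equality are $0$ (or both $-\infty$ if all terms vanish), and the equality is immediate. This is the point you should use in place of the circular reference.
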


\begin{proof}
(1) Since $\sum_{i\leq r'}a_{i} \cdot_{\psi} \omega_{i}\in \psi[u^{n}],$ we have $|\sum_{i\leq r'}a_{i} \cdot_{\psi} \omega_{i}| \leq 0$ by Lemma~\ref{l41}~(3). 
Since $u^{n} \cdot_{\psi} \omega_{i}$ for all $i=r'+1,\ldots,r$ are elements in $\Lambda,$ we have $|u^{n}|_{\infty}\cdot |\omega_{i}|>0$ and hence $|a_{i}|_{\infty}\cdot |\omega_{i}|>0$ if $a_{i}$ is nonzero.
Hence, by (\ref{f45}) and the ultrametric inequality, we have $|\sum_{i}a_{i} \cdot_{\psi} \omega_{i}|=|\sum_{i>r'}a_{i} \cdot_{\psi} \omega_{i}|>0$ if some $a_{i}$ for $i > r'$ is nonzero. 
(1) follows.

(2) If some $a_{i}\neq 0$ for $i>r',$ the desired equality follows from (1) and (\ref{f45}). 
By Lemma~\ref{l41}~(1), the assumption $w(u)=0$ implies that the elements in $\psi[u^{n}]$ have valuation $0.$
Hence $|\sum_{i\leq r'}a_{i} \cdot_{\psi} \omega_{i}|=0$ and $|a_{i} \cdot_{\psi} \omega_{i}| = 0$ for all $i \leq r'.$ 
The desired equality similarly follows.
\end{proof}

Recall for any $\omega\in \Bbb{C}_{w},$ we have
\[e_{\phi}(\omega)=\omega\prod_{\mu\in \Lambda\setminus\{0\}}\left(1-\frac{\omega}{\mu}\right).\]
Its valuation is 
\begin{align}w(e_{\phi}(\omega))=w(\omega)+\sum_{\substack{\mu\in \Lambda\setminus\{0\}\\
w(\mu)\geq w(\omega)}}w\left(1-\frac{\omega}{\mu}\right).\label{f41}\end{align}
For certain $\omega=\sum_{i}a_{i} \cdot_{\psi} \omega_{i}\in u^{-n}\Lambda,$ we are to calculate $|e_{\phi}(\omega)|.$
\begin{lemma}\label{l44}
 If $\omega = \sum_{i\leq r'}a_{i} \cdot_{\psi} \omega_{i}$ with $a_{i}\in A\mod u^{n},$ we have 
\[|e_{\phi}(\omega)|=|\omega|.\]
\end{lemma}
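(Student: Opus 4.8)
The plan is to reduce the statement to a comparison of valuations via the product expansion (\ref{f41}). First I would dispose of the trivial case $\omega = 0$, where both sides are $|0| = -\infty$. For $\omega \neq 0$, the point is that the sum appearing in (\ref{f41}),
\[\sum_{\substack{\mu \in \Lambda \setminus \{0\} \\ w(\mu) \geq w(\omega)}} w\left(1 - \frac{\omega}{\mu}\right),\]
is in fact empty, so that $w(e_{\phi}(\omega)) = w(\omega)$; unwinding the definition of $|-|$ in (F\ref{F2}), which depends only on the valuation, this is exactly the claim.

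To see that the index set is empty, I would argue as follows. Since $\omega = \sum_{i \leq r'} a_{i} \cdot_{\psi} \omega_{i}$ with each $\omega_{i}$ lying in $\psi[u^{n}]$, we have $\omega \in \psi[u^{n}]$, hence $w(\omega) \geq 0$; this is Lemma~\ref{l43}~(1) (equivalently, $\omega$ is a root of $\psi_{u^{n}}(X)$, so one may instead invoke Lemma~\ref{l41}~(1)). On the other hand, by the construction of the Tate uniformization, every nonzero element $\mu$ of $\Lambda$ satisfies $w(\mu) < 0$. Therefore no $\mu \in \Lambda \setminus \{0\}$ can satisfy $w(\mu) \geq w(\omega)$, so the sum in (\ref{f41}) has empty range.

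Consequently $w(e_{\phi}(\omega)) = w(\omega)$, i.e. $|e_{\phi}(\omega)| = |\omega|$. There is no genuine obstacle here; the only thing requiring care is keeping the sign conventions of (F\ref{F2}) straight, namely that a ``large'' $|x|$ corresponds to a very negative valuation while a $u^{n}$-division point of $\psi$ sits at valuation $\geq 0$ (so has $|x| \leq 0$). It is precisely this separation --- lattice points deep in the region $w < 0$, the division point $\omega$ in the region $w \geq 0$ --- that makes every factor $(1 - \omega/\mu)$ with $\mu \in \Lambda \setminus \{0\}$ a unit, so that $e_{\phi}(\omega)$ and $\omega$ have the same valuation.
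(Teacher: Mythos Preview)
Your proof is correct and matches the paper's own argument almost exactly: both use that $w(\omega)\geq 0$ (via Lemma~\ref{l43}~(1)) while every nonzero $\mu\in\Lambda$ has $w(\mu)<0$, so the correction term in (\ref{f41}) vanishes. The only cosmetic difference is that the paper phrases it as ``$w(1-\omega/\mu)=0$ for each $\mu$'' whereas you observe directly that the index set $\{\mu:w(\mu)\geq w(\omega)\}$ is empty.
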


\begin{proof}
By (\ref{f41}), it suffices to show $w(1-\omega/\mu)=0$ for each $\mu\in \Lambda.$
Notice $w(\omega)\geq 0$ by Lemma~\ref{l43}~(1).
Since $w(\mu)<0$ for any $\mu\in \Lambda,$ we have $w(1-\omega/\mu)=0$ by the ultrametric inequality.
\end{proof}

\begin{lemma}[\text{cf. Lemma~\ref{l31}}]\label{l45}
For $\omega=\sum_{j}a_{j} \cdot_{\psi} \omega_{j} \in u^{-n}\Lambda,$ assume some $a_{j}$ for $j>r'$ is nonzero. 
Let $i$ be an integer $>r'$ such that $|\omega|=|a_{i} \cdot_{\psi} \omega_{i}|=\max_{j}\{|a_{j} \cdot_{\psi} \omega_{j}|\}$ \text{\rm{(}}by Lemma~\text{\rm{\ref{l43}~(2)}}\text{\rm{)}}.
Assume $\deg(a_{i})<\deg(u^{n}).$
Then we have 
\[|e_{\phi}(\omega)|=|e_{\phi}(a_{i} \cdot_{\psi} \omega_{i})|.\]
\end{lemma}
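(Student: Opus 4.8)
The plan is to mirror the proof of Lemma~\ref{l31}: reduce the claim to a comparison of valuations via the product formula \eqref{f41} (written here as (\ref{f41})), and then match the two products factor by factor. First I would note that, by Lemma~\ref{l43}~(1) and (2), the hypothesis that some $a_{j}$ with $j>r'$ is nonzero forces $|\omega|=|a_{i}\cdot_{\psi}\omega_{i}|>0$, so $w(\omega)=w(a_{i}\cdot_{\psi}\omega_{i})<0$ and these two valuations agree. Hence the index set $\{\mu\in\Lambda\setminus\{0\}\mid w(\mu)\geq w(\omega)\}$ occurring in (\ref{f41}) is the same for $\omega$ and for $a_{i}\cdot_{\psi}\omega_{i}$, and it suffices to show $w(1-\omega/\mu)=w(1-(a_{i}\cdot_{\psi}\omega_{i})/\mu)$ for each such $\mu$: adding these over $\mu$ and adding $w(\omega)=w(a_{i}\cdot_{\psi}\omega_{i})$ gives $w(e_{\phi}(\omega))=w(e_{\phi}(a_{i}\cdot_{\psi}\omega_{i}))$, hence the desired equality since $|-|$ in (F\ref{F2}) depends only on $w$.

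For $\mu$ with $w(\mu)>w(\omega)$ this is immediate: $w(\omega/\mu)=w((a_{i}\cdot_{\psi}\omega_{i})/\mu)=w(\omega)-w(\mu)<0$, so by the ultrametric inequality both logarithms equal $w(\omega)-w(\mu)$.

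The case $w(\mu)=w(\omega)$ is the heart of the argument and the step I expect to be the main obstacle; it is the finite-prime analogue of the reduction ``$\mu\in\bigoplus_{j<i}A\omega_{j}$'' in the proof of Lemma~\ref{l31}, complicated by the $\psi$-module structure and the $u^{n}$-scaling. Since $\{\omega_{j}^{0}\}_{j=r'+1,\ldots,r}$ is an SMB, hence an $A$-basis, of $\Lambda$ (Proposition~\ref{p212}), write $\mu=\sum_{j>r'}b_{j}\cdot_{\psi}\omega_{j}^{0}=u^{n}\cdot_{\psi}\big(\sum_{j>r'}b_{j}\cdot_{\psi}\omega_{j}\big)$ with $b_{j}\in A$; then $|\mu|=|u^{n}|_{\infty}\cdot\max_{j>r'}\{|b_{j}|_{\infty}|\omega_{j}|\}$ by (\ref{f45}) and Lemma~\ref{l20}~(2). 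On the other hand $|\mu|=|\omega|=|a_{i}\cdot_{\psi}\omega_{i}|=|a_{i}|_{\infty}|\omega_{i}|$, and $\deg(a_{i})<\deg(u^{n})$ gives $|a_{i}|_{\infty}<|u^{n}|_{\infty}$, whence $\max_{j>r'}\{|b_{j}|_{\infty}|\omega_{j}|\}<|\omega_{i}|$. Using $|b_{j}|_{\infty}\geq 1$ when $b_{j}\neq 0$ and the monotonicity $|\omega_{r'+1}|\leq\cdots\leq|\omega_{r}|$ (which follows from $|\omega_{r'+1}^{0}|\leq\cdots\leq|\omega_{r}^{0}|$ after dividing by $|u^{n}|_{\infty}$, via $u^{n}\cdot_{\psi}\omega_{j}=\omega_{j}^{0}$), this forces $b_{j}=0$ for all $j\geq i$, so $\mu=\sum_{r'<j<i}b_{j}\cdot_{\psi}\omega_{j}^{0}$.

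It then remains to compute $\mu-\omega$ and $\mu-a_{i}\cdot_{\psi}\omega_{i}$ in the basis $\{\omega_{j}\}_{j>r'}$ (together with the $\psi[u^{n}]$-part of $\omega$): in each the coefficient of $\omega_{i}$ is $-a_{i}\neq 0$, contributing absolute value $|a_{i}\cdot_{\psi}\omega_{i}|=|\omega|$, while every other contribution is $\leq|\omega|$ — the $j\leq r'$ part of $\omega$ lies in $\psi[u^{n}]$ and has valuation $\geq 0$ by Lemma~\ref{l41}~(3), the terms with $j>i$ have absolute value $|a_{j}\cdot_{\psi}\omega_{j}|\leq|\omega|$ since $i$ realizes $\max_{j}\{|a_{j}\cdot_{\psi}\omega_{j}|\}$, and the terms with $r'<j<i$ have absolute value $\leq\max\{|b_{j}|_{\infty}|u^{n}|_{\infty},|a_{j}|_{\infty}\}\,|\omega_{j}|\leq|\omega|$. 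By (\ref{f45}) and the ultrametric inequality this gives $|\mu-\omega|=|\mu-a_{i}\cdot_{\psi}\omega_{i}|=|\omega|$, i.e. $w(\mu-\omega)=w(\mu-a_{i}\cdot_{\psi}\omega_{i})=w(\mu)$, and therefore $w(1-\omega/\mu)=w(\mu-\omega)-w(\mu)=0=w(1-(a_{i}\cdot_{\psi}\omega_{i})/\mu)$. Combined with the previous paragraphs this proves $w(e_{\phi}(\omega))=w(e_{\phi}(a_{i}\cdot_{\psi}\omega_{i}))$, and hence the lemma.
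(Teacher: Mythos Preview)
Your proof is correct and follows essentially the same route as the paper's. Both reduce via (\ref{f41}) to matching $w(1-\omega/\mu)$ with $w(1-(a_{i}\cdot_{\psi}\omega_{i})/\mu)$, dispatch the case $w(\mu)>w(\omega)$ by the ultrametric inequality, and in the critical case $w(\mu)=w(\omega)$ use $\deg(a_{i})<\deg(u^{n})$ to force $|\mu|<|\omega_{i}^{0}|$, hence $\mu\in\bigoplus_{r'<j<i}A\cdot_{\psi}\omega_{j}^{0}$, then conclude $|\omega-\mu|=|a_{i}\cdot_{\psi}\omega_{i}-\mu|=|\omega|$. The only difference is cosmetic: the paper compresses your explicit term-by-term bound on $\mu-\omega$ into a single appeal to Lemma~\ref{l43}~(2), whereas you unpack the coefficients $b_{j}u^{n}-a_{j}$ and bound each piece separately; both arrive at the same inequality for the same reason.
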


\begin{proof}
By (\ref{f41}), it suffices  to show \[w\left(1-\frac{\omega}{\mu}\right) = w\left(1-\frac{a_{i} \cdot_{\psi} \omega_{i}}{\mu}\right)\] for each $\mu\in \Lambda$ with $w(\mu)\geq w(\omega).$
If $w(\mu)>w(\omega),$ then we have by the ultrametric inequality that
\[w\left(1-\frac{\omega}{\mu}\right) = w\left(\frac{\omega}{\mu}\right) = w\left(\frac{a_{i} \cdot_{\psi} \omega_{i}}{\mu}\right) = w\left(1-\frac{a_{i} \cdot_{\psi} \omega_{i}}{\mu}\right).\]
Next, we show 
\[w \left( 1-\frac{\omega}{\mu} \right) = w \left( 1-\frac{a_{i} \cdot_{\psi} \omega_{i}}{\mu}\right)=0\]
if $w(\mu)=w(\omega)=w(a_{i} \cdot_{\psi} \omega_{i}).$
It suffices to show 
\[w( \omega-\mu ) = w(\omega)\text{ and }w( a_{i} \cdot_{\psi} \omega_{i}-\mu ) = w( a_{i} \cdot_{\psi} \omega_{i} ).\]
As $\deg(a_{i})<\deg(u^{n}),$ we have 
\[|\omega| = |a_{i} \cdot_{\psi} \omega_{i}| = |a_{i}|_{\infty}\cdot |\omega_{i}| < |u^{n}|_{\infty}\cdot |\omega_{i}| = |\omega_{i}^{0}|\]
and hence $|\mu| = |\omega| < |\omega_{i}^{0}|.$
This implies $\mu\in \bigoplus_{j=r'+1}^{i-1}A \cdot_{\psi} \omega_{j}^{0},$ for otherwise we have  
$|\mu|\geq |\omega_{i}^{0}|$ by Proposition~\ref{p212}~(2).
Applying Lemma~\ref{l43}~(2) to $|\omega-\mu|$ and $|a_{i} \cdot_{\psi} \omega_{i}-\mu|,$ we obtain the desired equalities.
\end{proof}

\begin{corollary}[\text{cf. Corollary~\ref{c31}}]\label{c41}
\begin{enumerate}[\rm{(}1)]
\item  With the notation in the Lemma~\text{\rm{\ref{l45}}}, we have
\[w(e_{\phi}(\omega)) = w(\omega) + \sum_{\substack{\mu\in\Lambda\setminus\{0\}\\w(\mu)>w(\omega)}}\left(w(\omega) - w(\mu)\right).\]
Particularly, for any $i=1,\ldots,r$ and any $a_{i}\in A\setminus\{0\}$ satisfying $\deg(a_{i})<\deg(u^{n}),$ we have 
\[w(e_{\phi}(a_{i} \cdot_{\psi} \omega_{i})) = w(a_{i} \cdot_{\psi} \omega_{i}) + \sum_{\substack{\mu\in \Lambda\setminus\{0\}\\w(\mu)>w(a_{i} \cdot_{\psi} \omega_{i})}}\big(w(a_{i} \cdot_{\psi} \omega_{i})-w(\mu)\big).\]
\item For any positive integers $i,j \leq r,$ let $a_{i}$ and $a_{j}$ be elements in $A$ with degree strictly smaller than that of $a.$
Assume $|a_{j} \cdot_{\psi} \omega_{j}|\leq |a_{i} \cdot_{\psi} \omega_{i}|.$ 
Then 
\[|e_{\phi}(a_{j} \cdot_{\psi} \omega_{j})|\leq|e_{\phi}(a_{i} \cdot_{\psi} \omega_{i})|.\]
\item With the notation in the lemma, we have
\[|e_{\phi}(\omega)|=\max_{j}\{|a_{j}\cdot_{\phi} e_{\phi}(\omega_{j})|\}.\]\
\item For any positive integer $i = r'+1,\ldots,r$ and $b\in A$ satisfying $\deg(b)<\deg(a),$ we have \[|b|_{\infty}\cdot |e_{\phi}(\omega_{i})|\leq |b\cdot_{\phi} e_{\phi}(\omega_{i})|.\]
\end{enumerate}
\end{corollary}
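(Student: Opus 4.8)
The plan is to reduce all four parts to the valuation formula (\ref{f41}) together with the local estimates already established inside the proof of Lemma~\ref{l45}. I would prove part (1) first. Inserting into (\ref{f41}) the two facts from that proof --- namely $w(1-\omega/\mu) = w(\omega)-w(\mu)$ when $w(\mu) > w(\omega)$, and $w(1-\omega/\mu) = 0$ when $w(\mu) = w(\omega)$ (which there follows, via Proposition~\ref{p212}~(2) and Lemma~\ref{l43}~(2)/(\ref{f45}), from $|\mu| = |\omega| < |\omega_i^0|$) --- immediately yields the first displayed identity. For the ``particularly'' assertion I would take $\omega = a_i\cdot_\psi\omega_i$: if $i > r'$ this element has negative valuation and the hypotheses of Lemma~\ref{l45} hold, so the identity applies; if $i \leq r'$ then $a_i\cdot_\psi\omega_i \in \psi[u^n]$ has nonnegative valuation, so every $\mu\in\Lambda\setminus\{0\}$ satisfies $w(\mu) < w(a_i\cdot_\psi\omega_i)$, the sum is empty, and the claim collapses to Lemma~\ref{l44}.

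For part (2) I would pass to valuations throughout, using that the function $|-|$ of (F\ref{F2}) is strictly decreasing in $w$, so that the hypothesis and conclusion become $w(a_j\cdot_\psi\omega_j) \geq w(a_i\cdot_\psi\omega_i)$ and $w(e_\phi(a_j\cdot_\psi\omega_j)) \geq w(e_\phi(a_i\cdot_\psi\omega_i))$. After discarding the trivial cases $a_i = 0$ or $a_j = 0$ and reducing $a_i, a_j$ modulo $u^n$ (which alters neither $e_\phi(a_i\cdot_\psi\omega_i)$ nor $e_\phi(a_j\cdot_\psi\omega_j)$, since $e_\phi$ kills $\Lambda$ and $u^n\cdot_\psi\omega_i, u^n\cdot_\psi\omega_j \in \Lambda$), I would substitute the ``particularly'' formula of part (1) into both valuations. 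Writing $\alpha = w(a_i\cdot_\psi\omega_i) \leq \beta = w(a_j\cdot_\psi\omega_j)$ and splitting $\{w(\mu) > \alpha\} = \{w(\mu) > \beta\} \sqcup \{\alpha < w(\mu) \leq \beta\}$, the difference $w(e_\phi(a_j\cdot_\psi\omega_j)) - w(e_\phi(a_i\cdot_\psi\omega_i))$ regroups to
\[(\beta - \alpha)\bigl(1 + \#\{\mu\in\Lambda\setminus\{0\} : w(\mu) > \beta\}\bigr) + \sum_{\substack{\mu\in\Lambda\setminus\{0\}\\ \alpha < w(\mu) \leq \beta}}\bigl(w(\mu)-\alpha\bigr) \geq 0.\]
This elementary regrouping of the two infinite sums is the only genuine computation, and (together with its twin in part (4)) I expect it to be the main --- though minor --- technical point; one has to keep straight the sign conventions forced by the non-archimedean ``absolute value'' of (F\ref{F2}).

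Parts (3) and (4) then follow easily. For (3), the commutativity $a_j\cdot_\phi e_\phi(\omega_j) = e_\phi(a_j\cdot_\psi\omega_j)$ reduces the claim to $|e_\phi(\omega)| = \max_j|e_\phi(a_j\cdot_\psi\omega_j)|$; after normalizing the $a_j$ to degree $<\deg(u^n)$, Lemma~\ref{l45} identifies the left-hand side with $|e_\phi(a_i\cdot_\psi\omega_i)|$ for the index $i$ achieving $|a_i\cdot_\psi\omega_i| = \max_j|a_j\cdot_\psi\omega_j|$, while part (2) bounds every other term $|e_\phi(a_j\cdot_\psi\omega_j)|$ above by this one, so the maximum is realized at $j = i$. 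For (4), since $i > r'$ we have $w(\omega_i) < 0$ and $w(b\cdot_\psi\omega_i) = |b|_\infty^{r'}\,w(\omega_i)$ by Lemma~\ref{l20}~(2), with $|b|_\infty^{r'} \geq 1$; both $e_\phi(\omega_i)$ and $e_\phi(b\cdot_\psi\omega_i)$ have negative valuation, so raising the desired inequality to the $r'$-th power turns it into $w(e_\phi(b\cdot_\psi\omega_i)) \leq |b|_\infty^{r'}\,w(e_\phi(\omega_i))$. Applying the formula of part (1) to both sides and splitting the sum for $e_\phi(b\cdot_\psi\omega_i)$ at the threshold $w(\omega_i)$, the inequality follows by the same term-by-term comparison as in part (2), using $(|b|_\infty^{r'}-1)w(\mu) \leq 0$ on the overlapping range and noting that the extra terms contribute a nonpositive amount.
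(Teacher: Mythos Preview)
Your proposal is correct and follows the paper's approach closely for parts (1)--(3): part (1) is extracted from the proof of Lemma~\ref{l45} together with Lemma~\ref{l44} for $i\le r'$; parts (2) and (3) are deduced from (1) and Lemma~\ref{l45} just as Corollary~\ref{c31}~(2),(3) were deduced from Corollary~\ref{c31}~(1). Your explicit regrouping in (2) is exactly the valuation-side translation of the set inclusion \eqref{f32} used there.

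For part (4) your argument is correct but the paper organizes it a little differently, and the difference is worth noting. You split the sum for $w(e_\phi(b\cdot_\psi\omega_i))$ at the threshold $w(\omega_i)$ and compare term by term. The paper instead multiplies the formula of (1) for $w(e_\phi(\omega_i))$ through by $|b|_\infty^{r'}$ and, using Lemma~\ref{l20}~(2), rewrites it as
\[
w(b\cdot_\psi\omega_i)+\sum_{\substack{\mu\in\Lambda\setminus\{0\}\\ w(\mu)>w(\omega_i)}}\bigl(w(b\cdot_\psi\omega_i)-w(b\cdot_\psi\mu)\bigr),
\]
so that the left side becomes a sum indexed by $b\cdot_\psi\Lambda$ rather than $\Lambda$. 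The inequality then follows in one line from the inclusion $\{b\cdot_\psi\mu : w(b\cdot_\psi\mu)>w(b\cdot_\psi\omega_i)\}\subset\{\mu\in\Lambda : w(\mu)>w(b\cdot_\psi\omega_i)\}$, since all summands are negative. Your splitting argument and the paper's rescaling-plus-inclusion argument are equivalent bookkeeping; the paper's version has the mild advantage of avoiding the case analysis on the sign of $(|b|_\infty^{r'}-1)w(\mu)$.
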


\begin{proof}
If $i\leq r',$ then we have $w(e_{\phi}(a_{i}\cdot_{\psi}\omega_{i})) = w(a_{i}\cdot_{\psi}\omega_{i})$ by Lemma~\ref{l44}.
The rest of (1) has been shown in the lemma.
Similarly to the proof of Corollary~\ref{c31}~(2) (resp. (3)), the claim (2) (resp. (3)) follows from (1) (resp. the lemma and (2)). 

We show (4).
Note $b\cdot_{\phi} e_{\phi}(\omega_{i})=e_{\phi}(b \cdot_{\psi} \omega_{i}).$
By (1), the desired inequality in (4) is equivalent to 
\begin{equation}\begin{split}&|b|_{\infty}^{r'} \cdot \bigg(w(\omega_{i})+\sum_{\substack{\mu\in \Lambda\setminus\{0\}\\ w(\mu)>w(\omega_{i})}}\big(w(\omega_{i})-w(\mu)\big)\bigg)\\
&\geq w(b \cdot_{\psi} \omega_{i}) + \sum_{\substack{\mu\in \Lambda\setminus\{0\}\\w(\mu)>w(b \cdot_{\psi} \omega_{i})}}\big(w(b \cdot_{\psi} \omega_{i})-w(\mu)\big).
\end{split}\label{f42}\end{equation}
By Lemma~\ref{l20}~(2), we may write the left in this inequality to be 
\[w(b \cdot_{\psi} \omega_{i})+\sum_{\substack{\mu\in\Lambda\setminus\{0\}\\w(\mu)>w(\omega_{i})}}\big(w(b \cdot_{\psi} \omega_{i})-w(b \cdot_{\psi} \mu)\big).\]
Then (\ref{f42}) follows from the inclusion
\[\{ b \cdot_{\psi} \mu \in b \cdot_{\psi} \Lambda \mid w(b \cdot_{\psi} \mu)>w(b \cdot_{\psi} \omega_{i}) \} \subset \{ \mu\in \Lambda \mid w(\mu)>w(b \cdot_{\psi} \omega_{i}) \}.\]
\end{proof}

\begin{theorem}[\text{cf. Theorem~\ref{t31}}]\label{t41}
For any finite prime $u$ of $A$ and any positive integer $n,$ let $\{\omega_{i}\}_{i=1,\ldots,r}$ be the elements in $u^{-n}\Lambda$ defined before Lemma~\text{\rm{\ref{l43}}}.
Then the family of elements $\{e_{\phi}(\omega_{i})\}_{i=1,\ldots,r}$ is an SMB of $\phi[u^{n}].$
\end{theorem}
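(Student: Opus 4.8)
The plan is to adapt the proof of Theorem~\ref{t31} to the finite‑prime setting, replacing the inputs of Section~\ref{s3} by their analogues here (Lemma~\ref{l43}, Lemma~\ref{l44}, Corollary~\ref{c41}), and to split the key estimate according to whether the competing vector lies in the ``$\psi[u^{n}]$-part'' of $u^{-n}\Lambda/\Lambda$ or genuinely meets the lattice part. First I would set $\lambda_{i} := e_{\phi}(\omega_{i})$. Since $\{\omega_{i}\}_{i=1,\dots,r}$ is an $A/u^{n}$-basis of $u^{-n}\Lambda/\Lambda$ and $e_{\phi}$ induces the $A/u^{n}$-module isomorphism $\mathcal{E}_{\phi}: u^{-n}\Lambda/\Lambda \to \phi[u^{n}]$, the family $\{\lambda_{i}\}_{i=1,\dots,r}$ is an $A/u^{n}$-basis of $\phi[u^{n}]$, so $\lambda_{1},\dots,\lambda_{i}$ are $A/u^{n}$-linearly independent for every $i$, which is Definition~\ref{d11}~(1). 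For Definition~\ref{d11}~(2) I fix $i$ and take any $\lambda = \sum_{j} a_{j} \cdot_{\phi} \lambda_{j}$ (reducing to $\deg(a_{j}) < \deg(u^{n})$ as in Theorem~\ref{t31}) such that $\lambda_{1},\dots,\lambda_{i-1},\lambda$ are linearly independent, which forces $a_{k} \not\equiv 0 \bmod u^{n}$ for some $k \geq i$; the goal is $|\lambda_{i}| \leq |\lambda|$. Putting $\omega := \sum_{j} a_{j} \cdot_{\psi} \omega_{j} \in u^{-n}\Lambda$, the commuting diagram of the Tate uniformization gives $e_{\phi}(\omega) = \lambda$.

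The case $a_{j} = 0$ for all $j > r'$ is the step I expect to be the main obstacle. In this case $a_{k} \not\equiv 0$ forces $k \leq r'$, hence $i \leq r'$, and $\omega = \sum_{j \leq r'} a_{j} \cdot_{\psi} \omega_{j}$ lies in $\psi[u^{n}]$, so Lemma~\ref{l44} gives $|\lambda| = |\omega|$ and $|\lambda_{i}| = |\omega_{i}|$. Since $\psi[u^{n}]$ embeds as an $A/u^{n}$-submodule of $u^{-n}\Lambda/\Lambda$ (the injectivity being Lemma~\ref{l41}, because $\psi[u^{n}]$ has non‑negative valuations whereas $\Lambda \setminus \{0\}$ has negative ones), the linear independence of $\lambda_{1},\dots,\lambda_{i-1},\lambda$ transports through $\mathcal{E}_{\phi}$ and this inclusion to linear independence of $\omega_{1},\dots,\omega_{i-1},\omega$ \emph{inside} $\psi[u^{n}]$. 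As $\{\omega_{j}\}_{j \leq r'}$ is an SMB of $\psi[u^{n}]$ and $i \leq r'$, Definition~\ref{d12}~(2) then gives $w(\omega_{i}) \geq w(\omega)$, i.e.\ $|\omega_{i}| \leq |\omega|$, whence $|\lambda_{i}| \leq |\lambda|$. What makes this case delicate is that when $w \mid u$ the module $\psi[u^{n}]$ carries a nontrivial valuation filtration and admits no ``norm equals coordinatewise maximum'' identity, so the single uniform computation that handled every index in Theorem~\ref{t31} is unavailable; one is forced to invoke the SMB property of $\psi[u^{n}]$ itself and to match up carefully the two notions of $A/u^{n}$-linear independence across the identification $\psi[u^{n}] \hookrightarrow u^{-n}\Lambda/\Lambda$.

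In the remaining case some $a_{m} \not\equiv 0 \bmod u^{n}$ with $m > r'$, and I may take $m = k$ when $i > r'$, so that in all situations $m \geq i$. By Lemma~\ref{l43}~(2) the maximum $|\omega| = \max_{j} |a_{j} \cdot_{\psi} \omega_{j}|$ is attained at an index $> r'$, so Corollary~\ref{c41}~(3) applies: $|\lambda| = |e_{\phi}(\omega)| = \max_{j} |a_{j} \cdot_{\phi} \lambda_{j}| \geq |a_{m} \cdot_{\phi} \lambda_{m}|$. Corollary~\ref{c41}~(1) (with coefficient $1$ at the index $m$) shows that $\lambda_{m} = e_{\phi}(\omega_{m})$ has negative valuation, so $|\lambda_{m}| > 0$ and Corollary~\ref{c41}~(4) gives $|a_{m} \cdot_{\phi} \lambda_{m}| \geq |a_{m}|_{\infty} \cdot |\lambda_{m}| \geq |\lambda_{m}|$. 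Finally $|\omega_{i}| \leq |\omega_{m}|$: this is immediate when $i \leq r'$ (the left‑hand side is $\leq 0$ while the right‑hand side is $> 0$), and when $i > r'$ it follows from $|\omega_{i}| = |\omega_{i}^{0}|/|u^{n}|_{\infty} \leq |\omega_{m}^{0}|/|u^{n}|_{\infty} = |\omega_{m}|$ using Lemma~\ref{l20}~(2) and the SMB property of $\{\omega_{j}^{0}\}$ on $\Lambda$; so Corollary~\ref{c41}~(2) yields $|\lambda_{i}| \leq |\lambda_{m}|$. Chaining the estimates, $|\lambda_{i}| \leq |\lambda_{m}| \leq |a_{m} \cdot_{\phi} \lambda_{m}| \leq |\lambda|$, which finishes the verification of Definition~\ref{d11}~(2) and hence the theorem.
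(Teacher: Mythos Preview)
Your proof is correct and follows essentially the same approach as the paper's: both reduce Definition~\ref{d11}~(1) to the isomorphism $\mathcal{E}_{\phi}$, and for Definition~\ref{d11}~(2) both invoke Lemma~\ref{l44} together with the SMB property of $\{\omega_{j}\}_{j\leq r'}$ in the ``pure $\psi[u^{n}]$'' case and the chain from Corollary~\ref{c41}~(2),(3),(4) otherwise. The only organizational difference is that the paper splits first on $i\leq r'$ versus $i>r'$, and in the sub-case $i\leq r'$ with some $a_{j}\neq 0$ for $j>r'$ it uses the shortcut $|\lambda_{i}|=|\omega_{i}|\leq 0<|\sum_{j}a_{j}\cdot_{\psi}\omega_{j}|\leq|\lambda|$ rather than running the full chain of Corollary~\ref{c41} as you do; your more uniform treatment of the second case is equally valid.
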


\begin{proof}
Put $\lambda_{i} \coloneqq e_{\phi}(\omega_{i})$ for all $i.$
Since $\omega_{1},\ldots,\omega_{r}$ form an $A/u^{n}$-basis of $u^{-n}\Lambda/\Lambda,$ their images under the $A/u^{n}$-module isomorphism $\mathcal{E}_{\phi}: u^{-n}\Lambda/\Lambda\to \phi[u^{n}]$ are $A/u^{n}$-linearly independent.

We check Definition~\ref{d11}~(2).
Fix a positive integer $i \leq r.$
For $\lambda=\sum_{j}a_{j}\cdot_{\phi} \lambda_{j}$ with $a_{j}\in A\mod u^{n}$ such that $\lambda_{1},\ldots,\lambda_{i-1},\lambda$ are $A/u^{n}$-linearly independent, we show $|\lambda_{i}|\leq |\lambda|.$
Without loss of generality, we assume $\text{}$ 
$\deg(a_{j})<\deg(u^{n})$ for any $j.$ 

Assume first $i\leq r'.$ 
If $a_{j} = 0$ for all $j>r'$, the desired inequality follows from $\{\omega_{j}\}_{j=1,\ldots,r'}$ being an SMB of $\psi[u^{n}]$ and Lemma~\ref{l44}.
If $a_{j} \neq 0\mod u^{n}$ for some $j>r',$ we can apply Corollary~\ref{c41}~(1), and we have $|\sum_{j} a_{j} \cdot_{\psi} \omega_{j}| \leq |\sum_{j}a_{j} \cdot_{\phi} \lambda_{j}|.$
We know $|\sum_{j} a_{j} \cdot_{\psi} \omega_{j}| \geq 0$ from Lemma~\ref{l43}~(1).
By Lemma~\ref{l43}~(1) and \ref{l44}, we have  $|\lambda_{i}|=|\omega_{i}|<0.$
Hence
\[|\lambda_{i}| = |\omega_{i}| < 0 \leq \bigg|\sum_{j} a_{j} \cdot_{\psi} \omega_{j}\bigg| = \bigg|\sum_{j}a_{j} \cdot_{\phi} \lambda_{j}\bigg|.\]

As for the case $i\geq r'+1,$ note that there is $a_{k} \neq 0$ for some $k\geq i$ as $\lambda_{1},\ldots,\lambda_{i-1},\lambda$  are $A/u^{n}$-linearly independent.
Similarly to  the proof of Theorem~\ref{t31}, one can apply Corollary~\ref{c41}~(2), (3), and (4) to show the inequality $|\lambda_{i}|\leq |\lambda|.$ 
\end{proof}

\begin{corollary}[\text{cf. Corollary~\ref{c32}}]\label{c42}
Let $\{\lambda_{i}\}_{i=1,\ldots,r}$ be an SMB of $\phi[u^{n}].$
\begin{enumerate}[\rm{(}1)]
\item If $n$ is large enough so that $|u^{n}|_{\infty}\geq |\omega_{r}^{0}|/|\omega_{r'+1}^{0}|,$ then for $i=1,\ldots,r,$ we have $|\lambda_{i}| = |\omega_{i}|.$
\item For any positive integer $n,$ we have $|\lambda_{r}|/|\lambda_{r'+1}|\geq |\omega_{r}^{0}|/|\omega_{r'+1}^{0}|.$
\item If $n$ is large enough so that $|u^{n}|_{\infty} > |\omega_{r}^{0}|/|\omega_{r'+1}^{0}|,$ then we have $|\lambda_{i}| < |\omega_{r'+1}^{0}|$ for $i=1,\ldots,r.$
\end{enumerate}
\end{corollary}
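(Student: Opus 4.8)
The plan is to transcribe the proof of Corollary~\ref{c32}, with Theorem~\ref{t41} in place of Theorem~\ref{t31} and the additive formula of Corollary~\ref{c41}~(1) in place of the multiplicative one in Corollary~\ref{c31}~(1). Let $\{\omega_{i}\}_{i=1,\ldots,r}$ be the $A/u^{n}$-basis of $u^{-n}\Lambda/\Lambda$ fixed before Lemma~\ref{l43}; by Theorem~\ref{t41}, $\{e_{\phi}(\omega_{i})\}_{i=1,\ldots,r}$ is an SMB of $\phi[u^{n}]$, so by Proposition~\ref{p221}~(2) we have $|\lambda_{i}|=|e_{\phi}(\omega_{i})|$ for every $i$ and every $n$, and the corollary reduces to comparing $|e_{\phi}(\omega_{i})|$ with $|\omega_{i}|$ and $|\omega_{i}^{0}|$. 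The numerical input is $|\omega_{i}|=|\omega_{i}^{0}|/|u^{n}|_{\infty}$ for $i>r'$, immediate from $\psi_{u^{n}}(\omega_{i})=\omega_{i}^{0}$ and Lemma~\ref{l20}~(2) (equivalently from the identity preceding (\ref{f45})), together with the fact that $|\omega_{r'+1}^{0}|$ is the least absolute value of a nonzero element of $\Lambda$ (Proposition~\ref{p211}~(1)).

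For (1) I split on $i\leq r'$ versus $i>r'$. If $i\leq r'$ then $\omega_{i}\in\psi[u^{n}]$ and Lemma~\ref{l44} gives $|e_{\phi}(\omega_{i})|=|\omega_{i}|$ with no hypothesis. If $i>r'$, apply Corollary~\ref{c41}~(1) to $\omega_{i}$: the correction sum $\sum_{\mu\in\Lambda\setminus\{0\},\,w(\mu)>w(\omega_{i})}(w(\omega_{i})-w(\mu))$ is empty exactly when every nonzero $\mu\in\Lambda$ satisfies $|\mu|\geq|\omega_{i}|=|\omega_{i}^{0}|/|u^{n}|_{\infty}$; since $|\omega_{i}^{0}|\leq|\omega_{r}^{0}|$ and $|\omega_{r'+1}^{0}|$ is minimal, the hypothesis $|u^{n}|_{\infty}\geq|\omega_{r}^{0}|/|\omega_{r'+1}^{0}|$ forces this, so $|e_{\phi}(\omega_{i})|=|\omega_{i}|$. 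Hence $|\lambda_{i}|=|\omega_{i}|$ in both cases.

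For (2), running the same empty-sum analysis on $\omega_{r'+1}$ requires only $|\omega_{r'+1}^{0}|\geq|\omega_{r'+1}^{0}|/|u^{n}|_{\infty}$, which holds because $|u^{n}|_{\infty}\geq 1$; thus $|e_{\phi}(\omega_{r'+1})|=|\omega_{r'+1}|$. For $i=r$ every term of the sum in Corollary~\ref{c41}~(1) is $\leq 0$, so $w(e_{\phi}(\omega_{r}))\leq w(\omega_{r})$, i.e.\ $|e_{\phi}(\omega_{r})|\geq|\omega_{r}|$. As $\omega_{r'+1}$ and $e_{\phi}(\omega_{r'+1})$ have negative valuation (Lemma~\ref{l41}~(2), Corollary~\ref{c41}~(1)), the quantities below are positive reals and
\[
\frac{|\lambda_{r}|}{|\lambda_{r'+1}|}=\frac{|e_{\phi}(\omega_{r})|}{|e_{\phi}(\omega_{r'+1})|}\geq\frac{|\omega_{r}|}{|\omega_{r'+1}|}=\frac{|\omega_{r}^{0}|}{|\omega_{r'+1}^{0}|}.
\]
Statement (3) then follows from (1): under the strict hypothesis, $|\lambda_{i}|=|\omega_{i}|\leq 0<|\omega_{r'+1}^{0}|$ for $i\leq r'$, while $|\lambda_{i}|=|\omega_{i}^{0}|/|u^{n}|_{\infty}\leq|\omega_{r}^{0}|/|u^{n}|_{\infty}<|\omega_{r'+1}^{0}|$ for $i>r'$.

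The one delicate point is bookkeeping with the function $|-|$ of (F\ref{F2}): it is not multiplicative, it is $\leq 0$ on $\psi[u^{n}]$ and $>0$ on $\Lambda$, so one must track signs of valuations (so that the ratios in (2) are genuine), apply Lemma~\ref{l20}~(2) only to elements of negative valuation, and handle the indices $i\leq r'$ (via Lemma~\ref{l44}) separately from $i>r'$ (via Corollary~\ref{c41}~(1)). Everything else is a direct copy of the infinite-prime argument.
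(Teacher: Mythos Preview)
Your proof is correct and follows essentially the same approach as the paper: reduce via Theorem~\ref{t41} and Proposition~\ref{p221}~(2) to comparing $|e_{\phi}(\omega_{i})|$ with $|\omega_{i}|$, handle $i\leq r'$ by Lemma~\ref{l44} and $i>r'$ by Corollary~\ref{c41}~(1), then mimic the estimates of Corollary~\ref{c32}. The paper's own proof is just the sentence ``use Lemma~\ref{l44} for $i\leq r'$; for the rest, apply Corollary~\ref{c41}~(1), Theorem~\ref{t41}, and Proposition~\ref{p221}~(2) as in Corollary~\ref{c32}'', so you have simply written out the details it omits.
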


\begin{proof}
The equation $|\lambda_{i}|=|\omega_{i}|$ for $i=1,\ldots,r'$ follows from Lemma~\ref{l44}.
Similarly to the proof of Corollary~\ref{c32}, one can apply Corollary~\ref{c41}~(1), Theorem~\ref{t41}, and Proposition~\ref{p221}~(2) to show the rest of the lemma.
\end{proof}

Put $B \coloneqq \{\omega\in \Bbb{C}_{w} \mid |\omega|<|\omega_{r'+1}^{0}|\}.$ 
Since $B\cap \Lambda = \varnothing,$ the exponential function $e_{\phi}$ is injective on $B.$
By (\ref{f41}), we have $|e_{\phi}(\omega)|=|\omega|$ for $\omega\in B.$
This implies $e_{\phi}(B)\subset B.$
Put $C:=e_{\phi}(B).$
There is an inverse $\log_{\phi}:C\to B$ of $e_{\phi}$ defined by a power series with coefficients in $K$ and $e_{\phi}:B \rightleftarrows C:\log_{\phi}$ are inverse to each other.

\begin{lemma}[\text{cf. Lemma~\ref{l32}}]\label{l46}
\begin{enumerate}[\rm{(}1)]
\item We have $C\cap \phi[u^{n}]=B\cap \phi[u^{n}].$
\item We have the following maps which are inverse to each other
\[e_{\phi}:B\cap \mathcal{L}\rightleftarrows B\cap \phi[u^{n}]:\log_{\phi},\]
where \[\mathcal{L}\coloneqq \bigg\{ \sum_{i}a_{i} \cdot_{\psi} \omega_{i}\,\bigg|\,a_{i}\in A\text{ with }\deg(a_{i})<\deg(u^{n}) \bigg\}\]
is a set of representatives of all elements in $u^{-n}\Lambda/\Lambda.$
\item For any $\lambda\in B\cap \phi[u^{n}],$ we have $|\log_{\phi}(\lambda)|=|\lambda|.$
\end{enumerate}
\end{lemma}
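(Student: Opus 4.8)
The plan is to mimic the proof of the infinite-prime analogue, Lemma~\ref{l32}, replacing absolute-value estimates by valuation estimates via equation~(\ref{f41}). The three assertions are logically layered: (1) is the substantive claim, (2) is an immediate consequence of (1) together with the bijections already established, and (3) drops out of (2) and (\ref{f41}). So the work is concentrated in part (1).

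For part (1), the inclusion $C \cap \phi[u^n] \subseteq B \cap \phi[u^n]$ is immediate from $C = e_\phi(B) \subseteq B$, which was noted right before the statement. For the reverse inclusion, I would take $\lambda \in B \cap \phi[u^n]$ and exhibit a preimage inside $B$. Write $\lambda = e_\phi(\omega)$ for some $\omega \in u^{-n}\Lambda$ (using the surjectivity of $e_\phi$ and the isomorphism $\mathcal{E}_\phi$); expand $\omega = \sum_i a_i \cdot_\psi \omega_i$ in the chosen generating family with $\deg(a_i) < \deg(u^n)$, so $\omega \in \mathcal{L}$. If all $a_i = 0$ for $i > r'$, then $\omega \in \psi[u^n]$ has $w(\omega) \geq 0$ by Lemma~\ref{l41}~(3), so $|\omega| \leq 0 < |\omega_{r'+1}^0|$, i.e.\ $\omega \in B$, and we are done. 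If some $a_i \neq 0$ for $i > r'$, then by Lemma~\ref{l43}~(2) we have $|\omega| = \max_j |a_j \cdot_\psi \omega_j|$, and by Corollary~\ref{c41}~(1) the valuation $w(e_\phi(\omega))$ is obtained from $w(\omega)$ by subtracting the nonnegative quantities $w(\mu) - w(\omega)$ over $\mu \in \Lambda$ with $w(\mu) > w(\omega)$; in particular $w(e_\phi(\omega)) \leq w(\omega)$, equivalently $|e_\phi(\omega)| \geq |\omega|$. Since $|\lambda| = |e_\phi(\omega)| < |\omega_{r'+1}^0|$ by hypothesis, the comparison with Lemma~\ref{l45}-type estimates forces $|\omega| < |\omega_{r'+1}^0|$ as well: concretely, if $|\omega| \geq |\omega_{r'+1}^0|$, then some $\mu \in \Lambda \setminus \{0\}$ (for instance a suitable $\omega_j^0$) has $w(\mu) > w(\omega)$ strictly — wait, one must be careful here — rather, the cleanest route is to use that $e_\phi$ is injective on $B$ and bijective $\mathcal{L} \to \phi[u^n]$, so the unique $\omega \in \mathcal{L}$ with $e_\phi(\omega) = \lambda$ must be the one with the smallest $|\omega|$; then argue by contradiction that if this $\omega \notin B$, no representative lies in $B$, yet $\lambda = e_\phi(\omega') \in B$ for the genuine preimage $\omega' = \log_\phi(\lambda) \in B$, and $\omega' - \omega \in \Lambda$ has valuation $< 0$ forcing $|\omega| = |\omega'| < |\omega_{r'+1}^0|$ by the ultrametric inequality and Lemma~\ref{l43}~(1). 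This contradiction establishes $\omega \in B$, hence $\lambda \in e_\phi(B) = C$.

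For part (2): the map $e_\phi \colon \mathcal{L} \to \phi[u^n]$ is a bijection (it is the set-theoretic description of $\mathcal{E}_\phi$), and $e_\phi \colon B \to C$ is a bijection by construction with inverse $\log_\phi$. Restricting the first to the preimage of $B \cap \phi[u^n]$ and invoking part (1) to identify $C \cap \phi[u^n]$ with $B \cap \phi[u^n]$ yields the asserted mutually inverse pair $e_\phi \colon B \cap \mathcal{L} \rightleftarrows B \cap \phi[u^n] \colon \log_\phi$. For part (3): given $\lambda \in B \cap \phi[u^n]$, part (2) gives $\log_\phi(\lambda) \in B \cap \mathcal{L}$ with $e_\phi(\log_\phi(\lambda)) = \lambda$; since $\log_\phi(\lambda) \in B$, equation~(\ref{f41}) gives $w(1 - \log_\phi(\lambda)/\mu) = 0$ for every $\mu \in \Lambda \setminus \{0\}$ (as $w(\mu) < 0 \leq$ anything once we also observe $|\log_\phi(\lambda)| < |\omega_{r'+1}^0|$, so no $\mu$ with $w(\mu) \geq w(\log_\phi(\lambda))$ contributes), whence $w(e_\phi(\log_\phi(\lambda))) = w(\log_\phi(\lambda))$, i.e.\ $|\log_\phi(\lambda)| = |\lambda|$.

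The main obstacle I anticipate is the bookkeeping in part (1): one must cleanly separate the two cases according to whether the chosen preimage has a nonzero $\omega_i$-component for $i > r'$, and in the second case one needs the monotonicity $|e_\phi(\omega)| \geq |\omega|$ together with a careful use of the injectivity of $e_\phi$ on $B$ and its bijectivity on $\mathcal{L}$ to conclude that the unique representative in $\mathcal{L}$ already lies in $B$. The technical heart is Corollary~\ref{c41}~(1), which packages exactly the valuation formula needed; once that is invoked, the argument is a matter of assembling inclusions of index sets over $\Lambda$ and applying the ultrametric inequality, as in Lemma~\ref{l45}.
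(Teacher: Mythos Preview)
Your argument for (1) contains a correct idea that you abandon prematurely, and then replace with a circular one.

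The direct route you begin with is in fact complete: once you have, via Corollary~\ref{c41}~(1), that $|e_\phi(\omega)| \geq |\omega|$ for the $\mathcal{L}$-representative $\omega$ in the case where some $a_i \neq 0$ for $i > r'$, the chain $|\omega| \leq |e_\phi(\omega)| = |\lambda| < |\omega_{r'+1}^0|$ immediately gives $\omega \in B$, hence $\lambda = e_\phi(\omega) \in C$. There is nothing more to check; your ``wait, one must be careful here'' is unwarranted.

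The alternative you switch to is circular: you invoke $\omega' = \log_\phi(\lambda)$, but $\log_\phi$ is by definition the inverse of $e_\phi|_B$ with domain $C$, so writing $\log_\phi(\lambda)$ already presupposes $\lambda \in C$, which is precisely the claim under proof. Moreover, the conclusion ``$|\omega| = |\omega'|$'' you draw from $\omega - \omega' \in \Lambda$ is incorrect: if $\omega - \omega' \neq 0$ then $|\omega - \omega'| \geq |\omega_{r'+1}^0| > |\omega'|$, so the ultrametric gives $|\omega| = |\omega - \omega'| \geq |\omega_{r'+1}^0|$, which is not a contradiction.

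For comparison, the paper packages the same key inequality contrapositively and then counts: it shows $e_\phi(B^c \cap \mathcal{L}) \subset B^c \cap \phi[u^n]$ (where $B^c$ is the complement of $B$), deduces $\#(B^c \cap \mathcal{L}) \leq \#(B^c \cap \phi[u^n])$ by injectivity of $e_\phi$ on $\mathcal{L}$, hence $\#(B \cap \mathcal{L}) \geq \#(B \cap \phi[u^n])$, and combines this with the trivial chain $\#(B \cap \mathcal{L}) \leq \#(C \cap \phi[u^n]) \leq \#(B \cap \phi[u^n])$ to force equality. Your element-wise argument is arguably more direct once you realize it already terminates after the single inequality.

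Parts (2) and (3) are fine as outlined.
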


Following the strategy of the proof of (1), we can show Lemma~\ref{l46}~(1) alternatively.

\begin{proof}
(1) We know $C\cap\phi[u^{n}] \subset B\cap\phi[u^{n}],$ which implies $\#B\cap\phi[u^{n}] \geq \#C\cap\phi[u^{n}],$ where $\#B\cap\phi[u^{n}]$ denotes the cardinality of the set $B\cap\phi[u^{n}].$
We show \[\#C\cap\phi[u^{n}] \geq \#B\cap\mathcal{L} \geq \#B\cap\phi[u^{n}] \geq \#C\cap\phi[u^{n}].\]
As $e_{\phi}$ is injective on $\mathcal{L},$ we have $\#B\cap\mathcal{L} \leq \#C\cap\phi[u^{n}]$ and it remains to show $\#B\cap\mathcal{L} \geq \#B\cap\phi[u^{n}].$

Put $B^{c}:=\{\omega\in \Bbb{C}_{w} \mid |\omega|\geq |\omega_{r'+1}^{0}|\},$ which is complementary to $B$ in $\Bbb{C}_{w}.$
For any $\omega=\sum_{j}a_{j} \cdot_{\psi} \omega_{j}\in B^{c}\cap \mathcal{L},$ there exists $a_{j}\neq 0$ for some $j>r',$ for otherwise we have $|\omega|<0<|\omega_{r'+1}^{0}|$ by Lemma~\ref{l43}~(1). 
By Corollary~\ref{c41}~(1), we have 
\[|e_{\phi}(\omega)| \geq |\omega|\geq |\omega_{r'+1}^{0}|.\]
Hence $e_{\phi}(B^{c}\cap \mathcal{L})\subset B^{c}\cap \phi[u^{n}].$
As $e_{\phi}$ is injective on $\mathcal{L},$ we have $\# B^{c} \cap \mathcal{L}\leq \#B^{c} \cap \phi[u^{n}].$
This implies $\#B \cap \mathcal{L} \geq \#B \cap \phi[u^{n}],$ as desired.

(2) The map $e_{\phi}:B\cap\mathcal{L} \to B\cap\phi[u^{n}]$ is injective and is also surjective as $\#B\cap\mathcal{L}=\#B\cap\phi[u^{n}].$
Hence (2) follows.

(3) By (2), we have $\log_{\phi}(\lambda)\in B\cap\mathcal{L}$ and $e_{\phi}(\log_{\phi}(\lambda)) = \lambda.$ 
Hence (3) follows from Lemma~\ref{l44} and Corollary~\ref{c41}~(1).
\end{proof}

\begin{lemma}\label{l47}
Let $\{\lambda_{i}\}_{i=1,\ldots,r}$ be an SMB of $\phi[u^{n}].$
We have $w(\lambda_{i})\geq 0$ for $i\leq r'$ and $w(\lambda_{i})<0$ for $i>r'.$
\end{lemma}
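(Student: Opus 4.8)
The plan is to use the explicit description of an SMB of $\phi[u^{n}]$ provided by Theorem~\ref{t41}, which realizes such an SMB as $\{e_{\phi}(\omega_{i})\}_{i=1,\ldots,r}$, where $\{\omega_{i}\}_{i=1,\ldots,r'}$ is an SMB of $\psi[u^{n}]$ and $\omega_{r'+1},\ldots,\omega_{r}$ are roots of $\psi_{u^{n}}(X)-\omega_{i}^{0}$ for an SMB $\{\omega_{i}^{0}\}_{i=r'+1,\ldots,r}$ of $\Lambda$. Since the sequence $w(\lambda_{1})\geq\cdots\geq w(\lambda_{r})$ is an invariant of $\phi[u^{n}]$ by Proposition~\ref{p221}~(2), it suffices to verify the claim for this particular SMB; the sign pattern for a general SMB then follows because the multiset $\{w(\lambda_{i})\}$ is the same and, by the computation below, exactly $r'$ of these valuations are $\geq 0$ and $r-r'$ are $<0$.

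First I would handle the indices $i\leq r'$. Here $\omega_{i}\in\psi[u^{n}]$, so $w(\omega_{i})\geq 0$ by Lemma~\ref{l41}~(3) (or Lemma~\ref{l43}~(1)), and Lemma~\ref{l44} gives $w(\lambda_{i})=w(e_{\phi}(\omega_{i}))=w(\omega_{i})\geq 0$. Next, for $i>r'$, the element $\omega_{i}$ is a root of $\psi_{u^{n}}(X)-\omega_{i}^{0}$ with $\omega_{i}^{0}\in\Lambda\setminus\{0\}$, hence $w(\omega_{i})<0$ by Lemma~\ref{l41}~(2). Applying the valuation formula (\ref{f41}) for $e_{\phi}$: since $w(\omega_{i})<0$ while every $\mu\in\Lambda\setminus\{0\}$ also has $w(\mu)<0$, the sum $\sum_{w(\mu)\geq w(\omega_{i})}w(1-\omega_{i}/\mu)$ is over those $\mu$ with $w(\mu)\geq w(\omega_{i})$, and for each such term $w(1-\omega_{i}/\mu)=w(\omega_{i})-w(\mu)\leq 0$ when $w(\mu)>w(\omega_{i})$ and is $\geq 0$ only in the boundary case; more simply, using Corollary~\ref{c41}~(1) we get $w(e_{\phi}(\omega_{i}))=w(\omega_{i})+\sum_{w(\mu)>w(\omega_{i})}(w(\omega_{i})-w(\mu))$, a sum of terms each $\leq w(\omega_{i})<0$, so $w(\lambda_{i})=w(e_{\phi}(\omega_{i}))<0$.

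Finally I would translate back to an arbitrary SMB $\{\lambda_{i}\}_{i=1,\ldots,r}$: since $|\lambda_{i}|=|\omega_{i}|$ up to reordering is not literally needed—what is needed is only that the sorted valuation sequence is the invariant of $\phi[u^{n}]$—the SMB condition orders the $\lambda_{i}$ by decreasing valuation, and we have just exhibited one SMB whose first $r'$ members have valuation $\geq 0$ and whose last $r-r'$ members have valuation $<0$. Hence for \emph{any} SMB the same split holds: $w(\lambda_{i})\geq 0$ for $i\leq r'$ and $w(\lambda_{i})<0$ for $i>r'$. I do not expect a genuine obstacle here; the only mild subtlety is confirming that the $r'$ nonnegative valuations are exactly the $r'$ largest in the sequence, which is immediate since a nonnegative number exceeds any negative number, so the ordering forces the nonnegative ones into positions $1,\ldots,r'$.
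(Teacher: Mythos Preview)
Your proof is correct and takes a genuinely different route from the paper's.

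The paper argues directly with Newton polygons of $\phi_{u}(X)$ and the inductive construction of Corollary~\ref{c23}: it locates the vertex $P_{r'd}$ of the Newton polygon of $\phi_{u}(X)$ (using that $\phi$ has stable reduction), counts that exactly $q^{r'd}$ roots of $\phi_{u}$ have valuation $\geq 0$, and then inducts on $n$ by tracking Newton polygons of $\phi_{u}(X)-\lambda_{i,j-1}$. In contrast, you invoke Theorem~\ref{t41} to write down one explicit SMB as $\{e_{\phi}(\omega_{i})\}$ coming from the Tate uniformization, read off the signs of $w(e_{\phi}(\omega_{i}))$ from Lemmas~\ref{l41}, \ref{l44} and Corollary~\ref{c41}~(1), and transfer to an arbitrary SMB via the invariance in Proposition~\ref{p221}~(2). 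Your approach is shorter and more conceptual given that Theorem~\ref{t41} is already available; the paper's argument has the virtue of being self-contained at the level of $\phi$, without passing through $\psi$, $\Lambda$, or $e_{\phi}$. One small wording issue: in your sentence ``a sum of terms each $\leq w(\omega_{i})<0$'' what you actually need (and what is true) is that $w(\omega_{i})<0$ and each term $w(\omega_{i})-w(\mu)$ in the remaining sum is $<0$, so the total is $\leq w(\omega_{i})<0$; the conclusion is unaffected.
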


\begin{proof}
For a positive integer $j,$ let $\{\lambda_{i,j}\}_{i=1,\ldots,r}$ be an SMB of $\phi[u^{j}]$ as in Corollary~\ref{c23}.
By Proposition~\ref{p221}~(2), we have $w(\lambda_{i})=w(\lambda_{i,n})$ for all $i.$
It suffices to show $w(\lambda_{r',n})\geq 0$ and $w(\lambda_{r'+1,n})<0.$

We first show $w(\lambda_{r',1})\geq 0$ and $w(\lambda_{r'+1,1})<0.$
Put $d \coloneqq \deg(u),$ $u_{0} \coloneqq u,$ $\sum_{i=0}^{rd}u_{i}X^{q^{i}} \coloneqq \phi_{u}(X),$ and $P_{i} \coloneqq (q^{i},w(u_{i}))$ for $i=0,\ldots,rd.$
As $\phi$ has stable reduction, we have $w(u_{i})\geq 0$ for all $i,$ $w(u_{r'd})=0,$ and $w(u_{i})>0$ for all $i>r'd.$
Hence the point $P_{r'd}$ is a vertex of the Newton polygon of $\phi_{u}(X).$
The segments on the left (resp. right) of $P_{r'd}$ have slopes $\leq 0$ (resp. slopes $>0$).
Hence there are exactly $q^{r'd}$ roots with valuations $\geq 0.$ 
Here $0 \in \phi[u]$ is considered to have valuation $>0.$

We show $w(\lambda_{r',1})\geq 0$ and $w(\lambda_{r'+1,1}) < 0$ by induction.
By (\ref{f23}), we have $w(\lambda_{1,1})\geq 0.$
Fix a positive integer $k\leq r'$ and assume $w(\lambda_{i,1})\geq 0$ for $i<k.$ 
Then the elements $\lambda_{i,1}$ for $i<k$ generates an $A/u$-vector subspace of $\phi[u]$ containing $q^{(k-1)d}$ many elements.
Since $\phi$ has stable reduction, for any $a\in A,$ all coefficients of $\phi_{a}(X)$ have valuation $\geq 0.$ 
By the ultrametric inequality, we have $w(a\cdot_{\phi} \lambda_{i,1})$ for any $a\in A \mod u$ and $i < k.$ 
Hence all the elements in the vector subspace have valuations $\geq 0$ .
Since $q^{(k-1)d}<q^{r'd},$ there are elements in $\phi[u]\setminus \bigoplus_{i<k}(A/u)\cdot_{\phi} \lambda_{i,1}$ having valuation $\geq 0.$
By (\ref{f23}), we have $w(\lambda_{k,1})>0.$
For $k=r'+1,$ we have the same inductive hypothesis as above.
However, since $q^{(k-1)d}=q^{r'd},$ each element in $\phi[u]\setminus \bigoplus_{i<k}(A/u)\cdot_{\phi} \lambda_{i,1}$ has valuation $<0$ and hence $w(\lambda_{r'+1,1})<0.$

Next, we show $w(\lambda_{r',n})\geq 0$ (resp. $w(\lambda_{r'+1,n})<0$) by induction.
Assume $w(\lambda_{r',j-1})\geq 0$ (resp. $w(\lambda_{r'+1,j-1})<0$). 
By Corollary~\ref{c23}, the element $\lambda_{r',j}$ (resp. $\lambda_{r'+1,j}$) is a root of $\phi_{u}(X)-\lambda_{r',j-1}$ (resp. $\phi_{u}(X)-\lambda_{r'+1,j-1}$) having the largest valuation.
By the induction hypothesis and the valuations of the coefficients of $\phi_{u}(X),$ the left-most segment in the Newton polygon of $\phi_{u}(X)-\lambda_{r',j-1}$ (resp. $\phi_{u}(X)-\lambda_{r'+1,j-1}$) has slope $\leq 0$ (resp. $>0$).
Hence we have $w(\lambda_{r',j})\geq 0$ and $w(\lambda_{r'+1,j})<0.$
\end{proof}

Let $\{\lambda_{i}\}_{i=1,\ldots,r}$ denote an SMB of $\phi[u^{n}].$
Assume that the positive integer $n$ is large enough so that $|u^{n}|_{\infty}>|\omega_{r}^{0}|/|\omega_{r'+1}^{0}|.$
By Corollary~\ref{c42}~(3) and Lemma~\ref{l46}~(1), for each $i,$ we have $\lambda_{i}\in B\cap \phi[u^{n}] = C \cap \phi[u^{n}]$ and we put $\omega_{i}':=\log_{\phi}(\lambda_{i}).$

\begin{theorem}[\text{cf. Theorem~\ref{t32}}]\label{t42}
\begin{enumerate}[\rm{(}1)]
\item The family of elements $\{\omega_{i}'\}_{i=1,\ldots,r'}$ is an SMB of $\psi[u^{n}].$
\item The family of elements $\{u^{n} \cdot_{\psi} \omega_{i}'\}_{i=r'+1,\ldots,r}$ is an SMB of $\Lambda.$
\end{enumerate}
\end{theorem}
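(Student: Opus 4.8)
The plan is to mirror exactly the argument used for the infinite prime case (Theorem~\ref{t32}), but splitting into the two blocks $i\le r'$ and $i>r'$, using the finite‑prime analogues already in hand. First I would reduce (2) to the statement that $\{\omega_i'\}_{i=r'+1,\dots,r}$ is an SMB of $u^{-n}\Lambda$ in the same sense as Lemma~\ref{l33} from Section~\ref{s3}: since $|u^{n}\cdot_{\psi}\sum_i a_i\cdot_{\psi}\omega_i| = |u^{n}|_{\infty}\cdot|\sum_i a_i\cdot_{\psi}\omega_i|$ by Lemma~\ref{l20}~(2), Proposition~\ref{p212} transfers the SMB property from $u^{-n}\Lambda$ to $\Lambda$ after multiplying by $u^{n}$. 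So it suffices to prove: $\{\omega_i'\}_{i=1,\dots,r'}$ is an SMB of $\psi[u^{n}]$, and $\{\omega_i'\}_{i=r'+1,\dots,r}$ is an SMB of $u^{-n}\Lambda$. In fact both can be handled at once by showing that $\{\omega_i'\}_{i=1,\dots,r}$, viewed inside $u^{-n}\Lambda$, has the same behavior as the reference family $\{\omega_i\}$ constructed before Lemma~\ref{l43}, for which Theorem~\ref{t41} already says $\{e_\phi(\omega_i)\}$ is an SMB of $\phi[u^{n}]$.

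Next I would verify $A/u^{n}$-linear independence of $\omega_1',\dots,\omega_r'$: if $\sum_i a_i\omega_i'=0$ with some $a_i\not\equiv0\bmod u^{n}$, apply $e_\phi$, which is $A/u^{n}$-linear and sends $\omega_i'$ to $\lambda_i$, contradicting that $\{\lambda_i\}$ is an SMB of $\phi[u^{n}]$ (hence its elements are $A/u^{n}$-linearly independent). Separately, Lemma~\ref{l47} gives $w(\lambda_i)\ge0$ for $i\le r'$ and $w(\lambda_i)<0$ for $i>r'$; combined with Lemma~\ref{l46}~(3), $|\omega_i'|=|\lambda_i|$, so $w(\omega_i')\ge0$ for $i\le r'$ and $w(\omega_i')<0$ for $i>r'$. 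By Lemma~\ref{l41}~(3) this forces $\omega_i'\in\psi[u^{n}]$ for $i\le r'$, so the first family genuinely lies in $\psi[u^{n}]$, and the remaining $\omega_i'$ contribute to the $\Lambda$-part.

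Then comes the core: checking the minimality condition via the characterization in Proposition~\ref{p211}~(1) (equivalently Proposition~\ref{p221}~(1) on the $\phi[u^{n}]$ side). Fix $i$ and let $l_1\le\cdots\le l_r$ be the successive minima of $u^{-n}\Lambda$. One has $l_i\le|\omega_i'|$ trivially; suppose $l_i<|\omega_i'|=|\lambda_i|$. Since $|\lambda_i|<|\omega_{r'+1}^{0}|$ when $n$ is large (Corollary~\ref{c42}~(3)), and $l_i<|\lambda_i|<|\omega_{r'+1}^{0}|$, Proposition~\ref{p211}~(1) produces an SMB $\{\eta_j\}$ of $u^{-n}\Lambda$ with $|\eta_i|=l_i<|\omega_{r'+1}^{0}|$. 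For any $\eta$ with $|\eta|<|\omega_{r'+1}^{0}|$, equation (\ref{f41}) gives $|e_\phi(\eta)|=|\eta|$; hence $|e_\phi(\eta_i)|=l_i<|\lambda_i|$. On the other hand the reference elements built from $\{\eta_j\}$ exactly as before Lemma~\ref{l43} — namely the SMB part of $\{\eta_j\}$ for $\psi[u^{n}]$ together with the $u^{-n}\Lambda$-part — satisfy the hypotheses of Theorem~\ref{t41}, so $\{e_\phi(\eta_j)\}$ is an SMB of $\phi[u^{n}]$; by Proposition~\ref{p221}~(2) the associated sorted valuation sequence is an invariant of $\phi[u^{n}]$, contradicting $|e_\phi(\eta_i)|<|\lambda_i|$ (which would insert $e_\phi(\eta_i)$ "too low" relative to the invariant determined by the SMB $\{\lambda_j\}$). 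This closes the argument for large $n$.

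The main obstacle, as in Section~\ref{s3}, is that Theorem~\ref{t41} and Corollary~\ref{c42} presuppose $n$ large enough ($|u^{n}|_\infty>|\omega_r^{0}|/|\omega_{r'+1}^{0}|$), so I must finally remove this restriction: for arbitrary $n$, pick $n'\ge n$ with $|u^{n'}|_\infty$ large, take an SMB $\{\lambda_i'\}$ of $\phi[u^{n'}]$ with $u^{n'-n}\cdot_\phi\lambda_i'=\lambda_i$ (Proposition~\ref{p222}~(1), used $n'-n$ times), set $\omega_i''=\log_\phi(\lambda_i')$, apply the large‑$n'$ case to get SMBs of $\psi[u^{n'}]$ and $\Lambda$, and then descend: $u^{n'-n}\cdot_\psi\omega_i''$ is an SMB of $\psi[u^{n}]$ by the finite‑prime analogue of Proposition~\ref{p222}~(2) (or directly by comparing Newton polygons), while for $i>r'$ the $\Lambda$-part $u^{n'}\cdot_\psi\omega_i''=u^{n}\cdot_\psi(u^{n'-n}\cdot_\psi\omega_i'')=u^{n}\cdot_\psi\omega_i'$ is unchanged since $\log_\phi$ is compatible with the $u$-power maps on $B$. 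The bookkeeping of which $\log_\phi(\lambda_i)$ lands in $\psi[u^{n}]$ versus in the $u^{-n}\Lambda$-part — controlled by Lemma~\ref{l47} and Lemma~\ref{l46} — is the only genuinely finite‑prime-specific subtlety, and everything else is parallel to Theorem~\ref{t32}.
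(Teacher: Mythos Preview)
Your approach is broadly the same as the paper's, but two points need correction.

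First, the theorem is stated under the standing assumption (see the paragraph immediately preceding it) that $n$ is already large enough that $|u^{n}|_{\infty}>|\omega_{r}^{0}|/|\omega_{r'+1}^{0}|$; the very definition $\omega_{i}'=\log_{\phi}(\lambda_{i})$ requires $\lambda_{i}\in B$, which uses this hypothesis. Your final descent step to arbitrary $n$ is therefore unnecessary here (the general-$n$ statement appears only later, as Corollary~\ref{r41}).

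Second, and more substantively: for part~(2) you must establish genuine $A$-linear independence of $\omega_{r'+1}',\ldots,\omega_{r}'$, not merely $A/u^{n}$-linear independence. Your argument via $e_{\phi}$ only yields the latter: if $\sum_{i>r'}a_{i}\cdot_{\psi}\omega_{i}'=0$ with all $a_{i}\equiv 0\bmod u^{n}$ but some $a_{i}\ne 0$ in $A$, applying $e_{\phi}$ gives $0=0$ and no contradiction. The paper handles this by writing $a_{i}=b_{i}u^{m}$ with $m$ maximal, so that $\omega:=\sum_{i>r'}b_{i}\cdot_{\psi}\omega_{i}'$ lies in $\psi[u^{m}]$ while $u^{n}\cdot_{\psi}\omega\in\Lambda$; hence $u^{n}\cdot_{\psi}\omega\in\Lambda\cap\psi[u^{m-n}]=\{0\}$, giving $\omega\in\psi[u^{n}]$. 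Now part~(1) (already proved) expresses $\omega$ in the basis $\{\omega_{j}'\}_{j\le r'}$, and \emph{then} applying $e_{\phi}$ to $\sum_{i>r'}b_{i}\cdot_{\psi}\omega_{i}'-\sum_{i\le r'}b_{i}\cdot_{\psi}\omega_{i}'=0$ produces a nontrivial $A/u^{n}$-relation among all the $\lambda_{i}$, the desired contradiction. Relatedly, your framing ``SMB of $u^{-n}\Lambda$'' for the $i>r'$ block is imprecise: in the finite-prime case $u^{-n}\Lambda$ is not a free $A$-lattice (it contains the torsion submodule $\psi[u^{n}]$), so neither Definition~\ref{d21} nor Proposition~\ref{p211} applies to it directly. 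The paper accordingly treats parts~(1) and~(2) separately---part~(1) via a short direct check using Lemma~\ref{l44}, and for the minimality contradiction in part~(2) it pairs the already-established SMB $\{\omega_{j}'\}_{j\le r'}$ of $\psi[u^{n}]$ with a fresh SMB $\{\eta_{j}^{0}\}_{j>r'}$ of $\Lambda$ and invokes Theorem~\ref{t41} on that combined family.
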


\begin{proof}
(1) To check Definition~\ref{d11}~(1), we show that the elements $\omega_{i}'$ for $i\leq r'$ belong to $\psi[u^{n}]$ and are $A/u^{n}$-linearly independent.
By Lemma~\ref{l46}~(3) and Lemma~\ref{l47}, we have $w(\omega_{i}')=w(\lambda_{i})\geq 0$ for $i\leq r'.$
By Lemma~\ref{l41}~(3), this implies that $\omega_{i}'\in \psi[u^{n}]$ for $i\leq r'.$
Note that $\mathcal{E}_{\phi}: u^{-n}\Lambda/\Lambda \to \phi[u^{n}]$ is an $A/u^{n}$-module isomorphism induced by $e_{\phi}$ and $e_{\phi}(\omega_{i}')=\lambda_{i}.$
If $\sum_{i \leq r'}a_{i} \cdot_{\psi} \omega_{i}'=0$ with $a_{i}\in A \mod u^{n},$ then we have $\sum_{i \leq r'} a_{i} \cdot_{\phi} \lambda_{i} = 0.$
This implies $a_{i}\equiv 0\mod u^{n}$ and hence the desired linear independence.

As $\{\lambda_{i}\}_{i=1,\ldots,r}$ is an SMB of $\phi[u^{n}],$ we can straightforwardly check Definition~\ref{d11}~(2) using Lemma~\ref{l44}. 

(2) 
Similarly to (1), we can apply Lemma~\ref{l47}, \ref{l46}~(3), \ref{l41}~(3) to show $\omega_{i}'\notin \psi[u^{n}]$ such that $u^{n} \cdot_{\psi} \omega_{i}'$ for $i > r'$ belong to $\Lambda.$ 
We check the two dots in Proposition~\ref{p211}~(1).
Let us show that $\omega_{r'+1}',\ldots,\omega_{r}'$ are $A$-linearly independent first.
If there exist $a_{i}\in A$ such that $\sum_{i> r'}a_{i} \cdot_{\psi} \omega_{i}'=0,$ we can show $a_{i}\equiv 0\mod u^{n}$ for all $i$ similarly to (1).
Assume $a_{i}\neq 0$ for some $i.$
Let $m$ be the integer such that $u^{m}\mid a_{i}$ for all $i>r'$ and $u^{m+1}\nmid a_{i}$ for some $i.$
Then there exist $b_{i}\in A$ such that $a_{i}=b_{i}u^{m}$ for all $i>r'$ and $b_{i}\not\equiv 0\mod u$ for some $i.$
Hence $\sum_{i>r'}b_{i} \cdot_{\psi} \omega_{i}'$ is a root of $\psi_{u^{m}}(X)$ and we denote this root by $\omega.$
On the other hand, 
\[u^{n} \cdot_{\psi} \omega = \sum_{i>r'}b_{i}\cdot_{\psi} \left(u^{n} \cdot_{\psi} \omega_{i}'\right)\in \Lambda.\]
Since $\Lambda\cap \psi[u^{m}]=0,$ we have $u^{n} \cdot_{\psi} \omega=0$ and hence $\omega \in \psi[u^{n}].$
By (1), there exist $b_{i}\in A\mod u^{n}$ for $i\leq r'$ such that $\omega = \sum_{i\leq r'} b_{i} \cdot_{\psi} \omega_{i}'.$
This equality implies 
\[0 = e_{\phi}\bigg(\sum_{i>r'}b_{i} \cdot_{\psi} \omega_{i}' - \sum_{i\leq r'}b_{i} \cdot_{\psi} \omega_{i}'\bigg) = \sum_{i>r'} b_{i} \cdot_{\phi} \lambda_{i} - \sum_{i\leq r'}b_{i} \cdot_{\phi} \lambda_{i}.\]
As some $b_{i}\not\equiv 0 \mod u^{n},$ this is absurd.

Finally, we check the second dots in Proposition~\ref{p211}~(1).
Let $l_{r'+1}\leq \cdots\leq l_{r}$ denote the invariant of $\Lambda$ as in Proposition~\ref{p211}~(2).
Fix $i$ to be a positive integer satisfying $r' < i \leq r.$
It suffices to show $l_{i}=|u^{n} \cdot_{\psi} \omega_{i}'|.$
We have $l_{i}\leq |u^{n} \cdot_{\psi} \omega_{i}'|.$
Let us assume $l_{i}<|u^{n} \cdot_{\psi} \omega_{i}'|.$
Since $\lambda_{i} \in B \cap \phi[u^{n}],$ we have $|\omega_{i}'|=|\lambda_{i}|$ by Lemma~\ref{l46}~(3).
Hence $l_{i}/|u^{n}|_{\infty}<|\omega_{i}'|=|\lambda_{i}|<|\omega_{r'+1}^{0}|.$
By Proposition~\ref{p211}, there is an SMB $\{\eta_{j}^{0}\}_{j=r'+1,\ldots,r}$ of $\Lambda$ such that $|\eta_{i}^{0}|=l_{i}.$
Let $\eta_{j}$ be a root of $\psi_{u^{n}}(X)-\eta_{j}^{0}$ for all $j$ (cf. the definition of $\omega_{j}$ before Lemma~\ref{l43}). 
As $|\eta_{i}|=l_{i}/|u^{n}|_{\infty}<|\omega_{r'+1}^{0}|,$ we have $|e_{\phi}(\eta_{i})|=|\eta_{i}|$ by (\ref{f41}).
This implies 
\[|e_{\phi}(\eta_{i})|=|\eta_{i}|=l_{i}/|u^{n}|_{\infty}<|\omega_{i}'|=|\lambda_{i}|.\]
By Theorem~\ref{t41}, the elements $e_{\phi}(\omega_{j}')$ for $j=1,\ldots,r'$ and $e_{\phi}(\eta_{j})$ for $j=r'+1,\ldots,r$ form an SMB of $\phi[u^{n}].$
By Proposition~\ref{p221}~(2), this contradicts $|e_{\phi}(\eta_{i})|<|\lambda_{i}|.$ 
\end{proof}

\begin{proposition}[\text{cf. Proposition~\ref{p31}}]\label{p41}
If $n$ is large enough so that $|u^{n}|_{\infty}\geq |\omega_{r}^{0}|/|\omega_{r'+1}^{0}|,$ then we have 
\[K(u^{-n}\Lambda)=K(\phi[u^{n}]),\]
where $K(u^{-n}\Lambda)$ \text{\rm{(}}resp. $K(\phi[u^{n}])$\text{\rm{)}} is the extension of $K$ generated by all elements in $u^{-n}\Lambda$ \text{\rm{(}}resp. in $\phi[u^{n}]$\text{\rm{)}}.
\end{proposition}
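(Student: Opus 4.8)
The plan is to mirror the proof of Proposition~\ref{p31}, establishing the two inclusions separately. The two ingredients are Theorem~\ref{t42} and the fact that both $e_{\phi}$ and $\log_{\phi}$ are given by power series with coefficients in $K$: since $K(x)$ is complete, $e_{\phi}(x)\in K(x)$ for every $x\in K^{\mathrm{sep}}$ (as $e_{\phi}$ is entire), and $\log_{\phi}(y)\in K(y)$ for every $y\in C\cap K^{\mathrm{sep}}$.

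For the inclusion $K(\phi[u^{n}])\subseteq K(u^{-n}\Lambda)$: the isomorphism $\mathcal{E}_{\phi}\colon u^{-n}\Lambda/\Lambda\to\phi[u^{n}]$ is induced by $e_{\phi}$ and is surjective, so every $\lambda\in\phi[u^{n}]$ equals $e_{\phi}(\omega)$ for some $\omega\in u^{-n}\Lambda$; hence $K(\lambda)\subseteq K(\omega)\subseteq K(u^{-n}\Lambda)$, and taking the compositum over all such $\lambda$ gives the inclusion. This half uses nothing about the size of $n$.

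For the reverse inclusion $K(u^{-n}\Lambda)\subseteq K(\phi[u^{n}])$ I would argue as follows. Since $u^{-n}\Lambda$ is generated --- under the $\psi$-action and addition --- by $\Lambda$ together with any set of preimages in $u^{-n}\Lambda$ of an $A/u^{n}$-basis of $u^{-n}\Lambda/\Lambda$, it suffices to place such data inside $K(\phi[u^{n}])$. Fix an SMB $\{\lambda_{i}\}_{i=1,\ldots,r}$ of $\phi[u^{n}]$. Because $n$ meets the stated bound, Corollary~\ref{c42}~(3) together with Lemma~\ref{l46}~(1) gives $\lambda_{i}\in C\cap\phi[u^{n}]$ for all $i$, so $\omega_{i}'\coloneqq\log_{\phi}(\lambda_{i})$ is defined, lies in $K(\lambda_{i})\subseteq K(\phi[u^{n}])$, and $\{\omega_{i}'\bmod\Lambda\}_{i}$ is an $A/u^{n}$-basis of $u^{-n}\Lambda/\Lambda$ (Lemma~\ref{l46}~(2), since $\{\lambda_{i}\}_{i}$ is a basis of $\phi[u^{n}]$ and $\log_{\phi}$ inverts $\mathcal{E}_{\phi}$ on $C\cap\phi[u^{n}]$). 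It remains to check $\Lambda\subseteq K(\phi[u^{n}])$: by Theorem~\ref{t42}~(2), using Lemma~\ref{l47}, Lemma~\ref{l46}~(3) and Lemma~\ref{l41}~(3) to see $u^{n}\cdot_{\psi}\omega_{i}'\in\Lambda$ for $i>r'$, the family $\{u^{n}\cdot_{\psi}\omega_{i}'\}_{i=r'+1,\ldots,r}$ is an SMB --- hence an $A$-basis (Proposition~\ref{p212}) --- of $\Lambda$, and each $\omega_{i}'$ already lies in $K(\phi[u^{n}])$. Consequently $u^{-n}\Lambda=\{\sum_{i}a_{i}\cdot_{\psi}\omega_{i}'+\mu\mid a_{i}\in A,\ \mu\in\Lambda\}\subseteq K(\phi[u^{n}])$, which is the desired inclusion. (Theorem~\ref{t42}~(1) identifies the first $r'$ lifts with a basis of $\psi[u^{n}]$; it clarifies the picture but is not logically indispensable here.)

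The main obstacle is organizational rather than conceptual: one must invoke both parts of Theorem~\ref{t42} to separate $u^{-n}\Lambda/\Lambda$ into its $\psi[u^{n}]$-part and its $\Lambda$-part and then reassemble $u^{-n}\Lambda$ from those pieces together with $\Lambda$. The one technical point to double-check is that the $\lambda_{i}$ genuinely land in the disc of convergence $C$ of $\log_{\phi}$ --- this is exactly what the bound on $n$ buys, through Corollary~\ref{c42}~(3) --- so one should make sure the inequality on $n$ required there matches the one appearing in the statement.
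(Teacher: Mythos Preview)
Your proposal is correct and follows essentially the same route as the paper: both directions use that $e_{\phi}$ and $\log_{\phi}$ are power series over $K$, and the nontrivial inclusion $K(u^{-n}\Lambda)\subseteq K(\phi[u^{n}])$ is obtained by setting $\omega_{i}'=\log_{\phi}(\lambda_{i})$ for an SMB $\{\lambda_{i}\}$ and invoking Theorem~\ref{t42} to see that the $\omega_{i}'$ generate $u^{-n}\Lambda$ under the $\psi$-action. The only organizational difference is that the paper writes an arbitrary $\omega\in u^{-n}\Lambda$ directly as $\sum_{i}a_{i}\cdot_{\psi}\omega_{i}'$ (using both parts of Theorem~\ref{t42}), whereas you decompose via $u^{-n}\Lambda/\Lambda$ and $\Lambda$ separately; your remark that Theorem~\ref{t42}~(1) is not strictly needed is correct, and your final caveat about the strict-versus-weak inequality on $n$ is well spotted --- the paper's own proof in fact uses the strict inequality required by Corollary~\ref{c42}~(3) and Theorem~\ref{t42}.
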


\begin{proof}
Note that $e_{\phi}$ is given by a power series with coefficients in $K$ and it induces an isomorphism $\mathcal{E}_{\phi}:u^{-n}\Lambda/\Lambda\to \phi[u^{n}].$
Similarly to the proof of Proposition~\ref{p31}, one can show $K(\phi[u^{n}])\subset K(u^{-n}\Lambda).$

Note that $\log_{\phi}$ is given by a power series with coefficients in $K.$
For any $y\in C \cap K^{\mathrm{sep}},$ we have $\log_{\phi}(y)\in K(y).$
Let $\{\lambda_{i}\}_{i=1,\ldots,r}$ be an SMB of $\phi[u^{n}].$
As $|u^{n}|_{\infty} > |\omega_{r}^{0}|/|\omega_{r'+1}^{0}|,$ by Theorem~\ref{t42}, the families $\{\omega_{i}'\}_{i=1,\ldots,r'}$ and $\{u^{n} \cdot_{\psi} \omega_{i}'\}_{i=r'+1,\ldots,r}$ are respectively the SMB of $\psi[u^{n}]$ and $\Lambda,$ where $\omega_{i}' = \log_{\phi}(\lambda_{i}).$ 
Since $K(\omega_{i}')\subset K(\lambda_{i})$ for each $i,$ it suffices to show that $\omega_{i}'$ for all $i$ form a generating set of $u^{-n}\Lambda.$ 
For any $\omega \in u^{-n}\Lambda,$ it is a root of $\psi_{u^{n}}(X)-u^{n} \cdot_{\psi} \omega.$
Note $u^{n}\cdot_{\psi}\omega \in \Lambda.$
Since $\{u^{n} \cdot_{\psi} \omega_{i}'\}_{i=r'+1,\ldots,r}$ is an SMB of $\Lambda,$ we have $u^{n}\cdot_{\psi}\omega = \sum_{i>r'}a_{i}\cdot_{\psi}(u^{n}\cdot_{\psi}\omega_{i}')$ for some $a_{i} \in A.$
Hence $\sum_{i>r'}a_{i}\cdot_{\psi}\omega_{i}'$ is also a root of $\psi_{u^{n}}(X)-u^{n} \cdot_{\psi} \omega.$
Since $\{\omega_{i}'\}_{i=1,\cdots,r'}$ is an SMB of $\psi[u^{n}],$ we have $\sum_{i>r'}a_{i}\cdot_{\psi}\omega_{i}' - \omega = \sum_{i\leq r'}a_{i}\cdot_{\psi}\omega_{i}'$ for some $a_{i} \in A \mod u^{n}$ and the claim follows.
\end{proof}

Combining Corollary~\ref{c42}~(2), Theorem~\ref{t42}, and Propostion~\ref{p41}, we have

\begin{corollary}[\text{cf. Corollary~\ref{r32}}]\label{r41}
Let $l$ be a positive integer and $\{\eta_{i}\}_{i=1,\ldots,r}$ an SMB of $\phi[u^{l}].$
Let $\{\lambda_{i}\}_{i=1,\ldots,r}$ be an SMB of $\phi[u^{n}].$
If $n$ is large enough so that $|u^{n}|_{\infty} > |\eta_{r}|/|\eta_{r'+1}|,$ then we have
\begin{enumerate}[\rm{(}1)]
\item the family $\{\log_{\phi}(\lambda_{i})\}_{i=1,\ldots,r'}$ is an SMB of $\psi[u^{n}];$
\item the family $\{u^{n} \cdot_{\psi} \log_{\phi}(\lambda_{i})\}_{i=r'+1,\ldots,r}$ is an SMB of $\Lambda;$
\item $K(u^{-n}\Lambda)=K(\phi[u^{n}]).$
\end{enumerate}
\end{corollary}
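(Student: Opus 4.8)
The plan is to reduce the hypothesis of the corollary, which is phrased in terms of the SMB $\{\eta_{i}\}_{i=1,\ldots,r}$ of $\phi[u^{l}]$, to the hypothesis appearing in Theorem~\ref{t42} and Proposition~\ref{p41}, which is phrased in terms of the SMB $\{\omega_{i}^{0}\}_{i=r'+1,\ldots,r}$ of $\Lambda$. The key observation is that Corollary~\ref{c42}~(2) applies verbatim to $\{\eta_{i}\}_{i=1,\ldots,r}$, since the conclusion of that corollary holds for an SMB of $\phi[u^{m}]$ for \emph{every} positive integer $m$ with no further constraint; taking $m=l$ gives $|\eta_{r}|/|\eta_{r'+1}| \geq |\omega_{r}^{0}|/|\omega_{r'+1}^{0}|$. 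Combining this with the standing assumption $|u^{n}|_{\infty} > |\eta_{r}|/|\eta_{r'+1}|$ yields $|u^{n}|_{\infty} > |\omega_{r}^{0}|/|\omega_{r'+1}^{0}|$, which is precisely the inequality under which the elements $\omega_{i}' := \log_{\phi}(\lambda_{i})$ were defined just before Theorem~\ref{t42} and under which both Theorem~\ref{t42} and Proposition~\ref{p41} are valid.

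Granting this inequality, the three items follow immediately. For (1), Theorem~\ref{t42}~(1) states exactly that $\{\omega_{i}'\}_{i=1,\ldots,r'} = \{\log_{\phi}(\lambda_{i})\}_{i=1,\ldots,r'}$ is an SMB of $\psi[u^{n}]$. For (2), Theorem~\ref{t42}~(2) states that $\{u^{n}\cdot_{\psi}\omega_{i}'\}_{i=r'+1,\ldots,r} = \{u^{n}\cdot_{\psi}\log_{\phi}(\lambda_{i})\}_{i=r'+1,\ldots,r}$ is an SMB of $\Lambda$. For (3), since strict inequality certainly implies the weaker hypothesis $|u^{n}|_{\infty} \geq |\omega_{r}^{0}|/|\omega_{r'+1}^{0}|$ required by Proposition~\ref{p41}, that proposition applies and gives $K(u^{-n}\Lambda) = K(\phi[u^{n}])$.

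The only point requiring any care — and the main (mild) obstacle — is the translation of the hypothesis, i.e.\ checking that Corollary~\ref{c42}~(2) may legitimately be invoked for the auxiliary SMB $\{\eta_{i}\}$ of $\phi[u^{l}]$ rather than only for $\{\lambda_{i}\}$; this is unproblematic precisely because that corollary imposes no lower bound on the exponent. Beyond this bookkeeping the corollary is a direct repackaging of Corollary~\ref{c42}~(2), Theorem~\ref{t42}, and Proposition~\ref{p41}, so no new estimates are needed.
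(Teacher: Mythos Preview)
Your proposal is correct and follows exactly the same approach as the paper, which simply states that the corollary follows by ``Combining Corollary~\ref{c42}~(2), Theorem~\ref{t42}, and Proposition~\ref{p41}.'' Your explicit identification of the one nontrivial step---applying Corollary~\ref{c42}~(2) with the exponent $l$ to transfer the hypothesis on $\{\eta_{i}\}$ to the hypothesis on $\{\omega_{i}^{0}\}$---is precisely the intended reduction.
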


\begin{proposition}[\text{cf. Proposition~\ref{p32}}]\label{p42}
Assume $w(u)=0,$ i.e., $u$ is not divisible by the prime $w.$
Let $\{\lambda_{i}\}_{i=1,\ldots,r}$ be an SMB of $\phi[u^{n}].$
Then we have 
\[\bigg|\sum_{i}a_{i} \cdot_{\phi} \lambda_{i}\bigg|=\max_{i}\{|a_{i} \cdot_{\phi} \lambda_{i}|\}\]
for any $a_{i}\in A \mod u^{n}.$
\end{proposition}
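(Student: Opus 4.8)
The plan is to follow the proof of Proposition~\ref{p32}, replacing each ingredient by its finite-prime analogue developed in this section. As there, we may assume $\deg(a_{i}) < \deg(u^{n})$ for every $i$, since $a_{i} \cdot_{\phi} \lambda_{i}$ depends only on $a_{i} \bmod u^{n}$.

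First I would treat the case where $n$ is large enough that $|u^{n}|_{\infty} > |\omega_{r}^{0}|/|\omega_{r'+1}^{0}|$, so that Theorem~\ref{t42} applies. Writing $\omega_{i}' := \log_{\phi}(\lambda_{i})$, the family $\{\omega_{i}'\}_{i \le r'}$ is an SMB of $\psi[u^{n}]$, and for $i > r'$ the element $\omega_{i}'$ is a root of $\psi_{u^{n}}(X) - u^{n}\cdot_{\psi}\omega_{i}'$ with $\{u^{n}\cdot_{\psi}\omega_{i}'\}_{i>r'}$ an SMB of $\Lambda$; thus $\{\omega_{i}'\}_{i=1,\ldots,r}$ is a family of exactly the shape set up before Lemma~\ref{l43}, and $e_{\phi}(\omega_{i}') = \lambda_{i}$, hence $\sum_{i} a_{i} \cdot_{\phi} \lambda_{i} = e_{\phi}\big(\sum_{i} a_{i} \cdot_{\psi} \omega_{i}'\big)$. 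Now distinguish two cases. If $a_{i} \equiv 0 \bmod u^{n}$ for all $i > r'$, set $\omega := \sum_{i \le r'} a_{i} \cdot_{\psi} \omega_{i}' \in \psi[u^{n}]$: Lemma~\ref{l44} gives $|e_{\phi}(\omega)| = |\omega|$, the hypothesis $w(u) = 0$ together with Lemma~\ref{l43}~(2) gives $|\omega| = \max_{i \le r'}|a_{i}\cdot_{\psi}\omega_{i}'|$, and a second application of Lemma~\ref{l44} to each $a_{i}\cdot_{\psi}\omega_{i}' \in \psi[u^{n}]$ identifies $|a_{i}\cdot_{\psi}\omega_{i}'|$ with $|e_{\phi}(a_{i}\cdot_{\psi}\omega_{i}')| = |a_{i}\cdot_{\phi}\lambda_{i}|$. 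If instead some $a_{j} \not\equiv 0 \bmod u^{n}$ with $j > r'$, then by Lemma~\ref{l43}~(1) the maximum of $|a_{j}\cdot_{\psi}\omega_{j}'|$ over all $j$ is attained at some index $> r'$, so Corollary~\ref{c41}~(3) applies and gives $|e_{\phi}(\sum_{i} a_{i}\cdot_{\psi}\omega_{i}')| = \max_{j}|a_{j}\cdot_{\phi}\lambda_{j}|$ directly. Either way, $|\sum_{i} a_{i}\cdot_{\phi}\lambda_{i}| = \max_{i}|a_{i}\cdot_{\phi}\lambda_{i}|$.

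For arbitrary $n$, I would reduce to the previous case exactly as in the proof of Proposition~\ref{p32}: pick $n' \ge n$ with $|u^{n'}|_{\infty}$ large and, by Proposition~\ref{p222}~(1), an SMB $\{\lambda_{i}'\}_{i=1,\ldots,r}$ of $\phi[u^{n'}]$ with $u^{n'-n}\cdot_{\phi}\lambda_{i}' = \lambda_{i}$; writing $\omega_{i} := u^{n'-n}\cdot_{\psi}\log_{\phi}(\lambda_{i}')$ and invoking Theorem~\ref{t42} for $\phi[u^{n'}]$ together with the general Proposition~\ref{p222}~(2) applied to $\psi$, one checks that $\{\omega_{i}\}_{i \le r'}$ is an SMB of $\psi[u^{n}]$, $\{u^{n}\cdot_{\psi}\omega_{i}\}_{i>r'}$ is an SMB of $\Lambda$, and $\lambda_{i} = e_{\phi}(\omega_{i})$, so the family $\{\omega_{i}\}_{i=1,\ldots,r}$ is again of the shape before Lemma~\ref{l43} and the large-$n$ argument applies verbatim, yielding the equality for $\{\lambda_{i}\}$.

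The step I expect to be the main obstacle is the first case above — the \emph{$\psi$-torsion part}, where all the $\Lambda$-coefficients vanish. There $e_{\phi}$ is merely valuation-preserving (Lemma~\ref{l44}) rather than expanding as in Corollary~\ref{c41}~(1), so the desired equality reduces to the additivity statement for an \emph{arbitrary} SMB of $\psi[u^{n}]$; this is exactly Lemma~\ref{l43}~(2), which is available only when $w(u) = 0$, and this is why the hypothesis cannot be removed by the present argument. Everything else is a routine transcription of the proof of Proposition~\ref{p32}, provided one keeps track throughout of which elements of $u^{-n}\Lambda$ have valuation $\ge 0$ (those in $\psi[u^{n}]$) and which have valuation $< 0$.
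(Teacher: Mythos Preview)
Your proposal is correct and follows essentially the same route as the paper: split into the case where some $a_{i}$ with $i>r'$ is nonzero (handled by Corollary~\ref{c41}~(3)) and the $\psi$-torsion case (handled via Lemma~\ref{l44} and the fact that, since $w(u)=0$, every element of $\psi[u^{n}]$ has valuation $0$), then reduce general $n$ to large $n$ via Proposition~\ref{p222}~(1). The only difference is in the reduction step: the paper simply applies the large-$n'$ equality to the SMB $\{\lambda_{i}'\}$ with coefficients $a_{i}u^{n'-n}$, whereas you rebuild a family $\{\omega_{i}\}$ at level $n$ and rerun the argument there---this works but is more than you need.
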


\begin{proof}
Assume first that $n$ is large enough so that $|u^{n}|_{\infty}>|\omega_{r}^{0}|/|\omega_{r'+1}^{0}|.$
By Theorem~\ref{t42}, the families $\{\omega_{i}'\}_{i=1,\ldots,r'}$ and $\{u^{n} \cdot_{\psi} \omega_{i}'\}_{i=r'+1,\ldots,r}$ are respectively an SMB of $\psi[u^{n}]$ and $\Lambda,$ where $\omega_{i}'=\log_{\phi}(\lambda_{i}).$ 
Without loss of generality, we assume $\deg(a_{i})<\deg(u^{n}).$
Assume that $a_{i}$ is nonzero for some $i>r'.$ 
By Corollary~\ref{c41}~(3), we have 
\[\bigg|e_{\phi}\bigg(\sum_{i}a_{i} \cdot_{\psi} \omega_{i}'\bigg)\bigg| = \max_{i} \{|a_{i}\cdot_{\phi} e_{\phi}(\omega_{i}')|\}.\]
As $e_{\phi}(\sum_{i}a_{i} \cdot_{\psi} \omega_{i}')=\sum_{i}a_{i} \cdot_{\phi} \lambda_{i},$ the claim follows. 
If $a_{i}=0$ for all $i>r',$ then $\sum_{i\leq r'}a_{i} \cdot_{\psi} \omega_{i}'$ belongs to $\psi[u^{n}].$ 
By Lemma~\ref{l41}~(1), we have $|\sum_{i\leq r'}a_{i} \cdot_{\psi} \omega_{i}'|=0$ and $|a_{i} \cdot_{\psi} \omega_{i}'| = 0$ for all $i \leq r'.$
The desired equality follows from Lemma~\ref{l44}.
Similarly to the proof of Proposition~\ref{p32}, the case where $n$ is arbitrary follows from the case where $n$ is large enough.
\end{proof}

\begin{corollary}[cf. Corollary~\ref{l53}]\label{c43}
Let $u$ be a finite prime of $A$ not divisible by the prime $w.$
Let $\{\lambda_{i}\}_{i=1,\ldots,r}$ be an SMB of $\phi[u^{n}]$ so that $|\lambda_{1}| = \cdots = |\lambda_{s}| < |\lambda_{s+1}|$ for some positive integer $s.$  
\begin{enumerate}[\rm{(}1)]
\item 
The extension of $K$ generated by $\lambda_{1},\ldots,\lambda_{s}$ is unramified.
\item
For an element $\sigma$ in the ramification subgroup $\mathrm{Gal}(K(\phi[u^{n}])/K)_{0},$ we have $\sigma(\lambda_{j}) = \lambda_{j}$ for $j=1,\ldots,s.$ 
\end{enumerate}
\end{corollary}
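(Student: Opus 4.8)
The plan is to follow the proof of Corollary~\ref{l53} closely, replacing its final appeal to tameness (Lemma~\ref{l21}) by the fact that, for the good-reduction module $\psi$ with $w\nmid u$, the extension $K(\psi[u^{n}])/K$ is unramified. First I would dispose of (2): $\mathrm{Gal}(K(\phi[u^{n}])/K)_{0}$ is the inertia subgroup of $\mathrm{Gal}(K(\phi[u^{n}])/K)$, so it fixes pointwise the maximal unramified subextension of $K(\phi[u^{n}])/K$; once (1) shows $K(\lambda_{1},\ldots,\lambda_{s})/K$ is unramified, this field is contained in that subextension, whence $\sigma(\lambda_{j})=\lambda_{j}$ for $j\le s$ and every $\sigma\in\mathrm{Gal}(K(\phi[u^{n}])/K)_{0}$.

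For (1), I would first note that the hypothesis forces $s\le r'$: by Lemma~\ref{l47} we have $w(\lambda_{i})\ge0$ (so $|\lambda_{i}|\le0$) for $i\le r'$ while $w(\lambda_{i})<0$ (so $|\lambda_{i}|>0$) for $i>r'$, and since $|\lambda_{1}|\le\cdots\le|\lambda_{r}|$ the block of equal absolute values $|\lambda_{1}|=\cdots=|\lambda_{s}|$ cannot contain both index $r'$ and index $r'+1$, i.e.\ $s\le r'$. (With $w(u)=0$ one checks moreover that $w(\lambda_{i})=0$ for $i\le r'$, forcing $s=r'$, but only $s\le r'$ is used.) Next, choose $n'\ge n$ large enough that Theorem~\ref{t42} applies at level $n'$, i.e.\ $|u^{n'}|_{\infty}>|\omega_{r}^{0}|/|\omega_{r'+1}^{0}|$, and apply Proposition~\ref{p222}~(1) repeatedly to obtain an SMB $\{\lambda_{i}'\}_{i=1,\ldots,r}$ of $\phi[u^{n'}]$ with $u^{n'-n}\cdot_{\phi}\lambda_{i}'=\lambda_{i}$, so that $K(\lambda_{i})\subseteq K(\lambda_{i}')$ for every $i$. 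By Theorem~\ref{t42}~(1), $\log_{\phi}(\lambda_{i}')\in\psi[u^{n'}]$ for $i\le r'$; since $e_{\phi}$ is given by a power series over $K$ and $\lambda_{i}'=e_{\phi}(\log_{\phi}(\lambda_{i}'))$, we get $K(\lambda_{i}')\subseteq K(\log_{\phi}(\lambda_{i}'))\subseteq K(\psi[u^{n'}])$ for all $i\le r'$, and in particular, using $s\le r'$, $K(\lambda_{1},\ldots,\lambda_{s})\subseteq K(\psi[u^{n'}])$.

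It then remains to check that $K(\psi[u^{n'}])/K$ is unramified, which is where the one external input enters. Since $\psi$ has good reduction and $w(u)=0$, every nonzero element of $\psi[u^{n'}]$ has valuation $0$ by Lemma~\ref{l41}~(1), and the same holds for the nonzero difference of any two distinct elements of $\psi[u^{n'}]$, as $\psi[u^{n'}]$ is a group; hence reduction modulo the maximal ideal is injective on $\psi[u^{n'}]$ and lands in the $u^{n'}$-torsion $\bar\psi[u^{n'}]$ of the reduced rank-$r'$ module $\bar\psi$, which is \'etale because $w(u)=0$. Counting ($\#\psi[u^{n'}]=q^{r'n'\deg(u)}=\#\bar\psi[u^{n'}]$) makes this a bijection, compatible with the surjection $\mathrm{Gal}(K^{\mathrm{sep}}/K)\to\mathrm{Gal}(\overline{k}/k)$ onto the Galois group of the residue field; therefore the inertia subgroup of $\mathrm{Gal}(K^{\mathrm{sep}}/K)$ acts trivially on $\psi[u^{n'}]$, i.e.\ $K(\psi[u^{n'}])/K$ is unramified. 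As a subextension of an unramified extension is unramified, $K(\lambda_{1},\ldots,\lambda_{s})/K$ is unramified, proving (1). The main obstacle is thus not in the Drinfeld-module machinery of this section, which does all the work of descending to $\psi$, but in this last standard fact that $u$-power torsion of a good-reduction Drinfeld module with $w\nmid u$ generates an unramified extension; an alternative that avoids the passage to level $n'$ is to argue directly that the unique preimage $\mu_{i}\in\mathcal L$ of $\lambda_{i}$ under $e_{\phi}$ already lies in $\psi[u^{n}]$, using Corollary~\ref{c41}~(1) and Lemma~\ref{l43}~(1) to see that otherwise $w(\lambda_{i})=w(e_{\phi}(\mu_{i}))\le w(\mu_{i})<0$, contradicting Lemma~\ref{l47}.
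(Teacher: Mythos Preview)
Your proof is correct and follows essentially the same route as the paper's: lift to a sufficiently large level $n'$ via Proposition~\ref{p222}~(1), apply Theorem~\ref{t42} to see that the $\log_{\phi}(\lambda_{i}')$ for $i\le r'$ lie in $\psi[u^{n'}]$, and conclude that each $K(\lambda_{i})$ sits inside an extension generated by $u$-power torsion of the good-reduction module $\psi$, which is unramified since $w\nmid u$. The only cosmetic differences are that the paper descends one step further to $\psi[u^{n}]$ (via $u^{n'-n}\cdot_{\psi}\omega_{i}'$ and Proposition~\ref{p222}~(2)) and then cites \cite[Theorem~6.3.1]{Pap} (Takahashi) for the unramifiedness of $K(\psi[u^{n}])/K$, whereas you stay at level $n'$ and supply a direct reduction-mod-$\mathfrak{m}$ argument; both achieve the same end.
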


\begin{proof}
(2) straightforwardly follows from (1).
We show (1).
Notice $w(u) = 0.$
Following the proof of Lemma~\ref{l47}, we know that $s=r'$ and $0 = w(\lambda_{1}) = \cdots = w(\lambda_{r'}) > w(\lambda_{r'+1}).$
Let $n'$ be an integer satisfying $n'\geq n$ and $|u^{n'}|_{\infty} > |\lambda_{r}|/|\lambda_{r'+1}|.$
By Proposition~\ref{p222}~(1), we know that there exists an SMB $\{\lambda_{i}'\}_{i=1,\ldots,r}$ of $\phi[u^{n'}]$ such that $u^{n'-n}\cdot_{\phi}\lambda_{i}' = \lambda_{i}$ for all $i.$
By Theorem~\ref{t42}, with $\omega_{i}'=\log_{\phi}(\lambda_{i}')$ for all $i,$ we have that the families $\{\omega_{i}'\}_{i=1,\ldots,r'}$ and $\{u^{n'} \cdot_{\psi} \omega_{i}'\}_{i=r'+1,\ldots,r}$ are respectively an SMB of $\psi[u^{n'}]$ and an SMB of $\Lambda.$
As $e_{\phi}(u^{n'-n}\cdot_{\psi}\omega_{i}') = \lambda_{i},$ we have $K(\lambda_{i}) \subset K(u^{n'-n}\cdot_{\psi}\omega_{i}')$ for all $i.$
Note that $\{u^{n'-n}\cdot_{\psi}\omega_{i}'\}_{i=1,\ldots,r'}$ is an SMB of $\psi[u^{n}]$ by Proposition~\ref{p222}~(2). 
By \cite[Theorem~6.3.1]{Pap} (initially proved by Takahashi), the extension of $K$ generated by the elements in $\psi[u^{n}]$ is unramified.
The result follows.
\end{proof}

We assumed above that each Drinfeld module has stable reduction over $K.$
For a Drinfeld $A$-module $\phi$ over $K$ which does not have stable reduction, it turns out that $\phi$ is isomorphic to a Drinfeld module having stable reduction over an at worst tamely ramified subextension of $K(\phi[u])/K$ with $u$ not divisible by $w.$ 

\begin{proposition}\label{p43}
Let $\phi$ be a rank $r$ Drinfeld $A$-module over $K$ which does not necessarily have stable reduction.
Let $u$ be a finite prime of $A$ with $w \nmid u$.
Let $r'$ be the positive integer $\leq r$ so that $\phi$ is isomorphic to a Drinfeld module having stable reduction over some extension of $K$ and the reduction has rank $r' \leq r.$
Let $\{\lambda_{i}\}_{i=1,\ldots,r}$ be an SMB of $\phi[u].$
Then $b\phi b^{-1}$ has stable reduction over $K(\lambda_{1})$ for $b = \lambda_{1}^{-1}$ and the extension $K(\lambda_{1})/K$ is at worst tamely ramified.
\end{proposition}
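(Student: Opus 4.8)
The plan is to descend the whole situation to a tamely ramified extension over which $\phi$ already acquires stable reduction. First I would recall (cf. Remark~\ref{r22} and \cite[Section~7]{Drin}) that there exist a finite \emph{tamely ramified} extension $L/K$ and an element $b_{0}\in L^{\times}$ such that $\phi'\coloneqq b_{0}\phi b_{0}^{-1}$ has stable reduction of rank $r'$ over $L$, where $r'$ is the integer in the statement; indeed, if $\phi_{t}(X)=tX+\sum_{i\geq 1}g_{i}X^{q^{i}}$, then $b_{0}$ can be chosen of valuation $w(g_{r'})/(q^{r'}-1)$, whose denominator $q^{r'}-1$ is prime to $p$, so that $K(b_{0})/K$ is tame. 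By Remark~\ref{r22}, the family $\{b_{0}\lambda_{i}\}_{i=1,\ldots,r}$ is then an SMB of $\phi'[u]$.

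The crux is the equality $w(b_{0}\lambda_{1})=0$. Since $w\nmid u$, we have $w(u)=0$, and a short Newton-polygon argument for $\phi'_{u}(X)$ (as in the proof of Lemma~\ref{l47}, with $d=\deg(u)$) shows that among the $q^{rd}-1$ nonzero roots of $\phi'_{u}$ exactly $q^{r'd}-1$ have valuation $\geq 0$, and that these all have valuation $=0$ (the leftmost segment, from $(1,0)$ to $(q^{r'd},0)$, has slope $0$). Thus the set $V$ of valuation-$\geq 0$ elements of $\phi'[u]$ is an $A/u$-subspace of dimension $r'$ whose nonzero elements all have valuation $0$. By Lemma~\ref{l47}, the SMB elements $b_{0}\lambda_{1},\ldots,b_{0}\lambda_{r'}$ lie in $V$ and, being $A/u$-independent, form a basis of it, whereas $b_{0}\lambda_{r'+1},\ldots,b_{0}\lambda_{r}$ have valuation $<0$. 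Hence $w(b_{0}\lambda_{i})=0$ for $i\leq r'$ and $<0$ for $i>r'$; in particular $w(b_{0}\lambda_{1})=0$ and, in the notation of (F\ref{F2}), $|b_{0}\lambda_{1}|=\cdots=|b_{0}\lambda_{r'}|<|b_{0}\lambda_{r'+1}|$.

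For the first assertion I would set $c\coloneqq b_{0}\lambda_{1}$, so that $w(c)=0$ and $b\phi b^{-1}=\lambda_{1}^{-1}\phi\lambda_{1}=c^{-1}\phi' c$. Writing $\phi'_{t}(X)=tX+\sum_{i\geq 1}g_{i}'X^{q^{i}}$, the coefficients of $(c^{-1}\phi' c)_{t}(X)$ are the $g_{i}'c^{q^{i}-1}$, which have the same $w$-valuations as the $g_{i}'$ because $w(c)=0$; hence the stable-reduction-of-rank-$r'$ condition satisfied by $\phi'$ over $L$ still holds for $c^{-1}\phi' c$. Since $b\phi b^{-1}=\lambda_{1}^{-1}\phi\lambda_{1}$ already has its coefficients in $K(\lambda_{1})$, and that condition depends only on the valuations of those coefficients, which are the same over any complete field containing them, $b\phi b^{-1}$ has stable reduction (of rank $r'$) over $K(\lambda_{1})$.

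For the tameness I would apply Corollary~\ref{c43}~(1) to $\phi'$ over $L$; this is legitimate because $\phi'$ has stable reduction over $L$, the prime of $L$ above $w$ still does not divide $u$ (its valuation of $u$ being $e(L/K)\cdot w(u)=0$), and $|b_{0}\lambda_{1}|=\cdots=|b_{0}\lambda_{r'}|<|b_{0}\lambda_{r'+1}|$. It yields that $L(b_{0}\lambda_{1})/L=L(\lambda_{1})/L$ is unramified, so, $L/K$ being tamely ramified, $L(\lambda_{1})/K$ is tamely ramified, and therefore so is its subextension $K(\lambda_{1})/K$ (ramification indices divide, and separability is inherited from $\lambda_{1}\in K^{\mathrm{sep}}$). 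The main obstacle I anticipate is the second paragraph — pinning $w(b_{0}\lambda_{1})$ to the value $0$ rather than merely to a nonnegative value — which rests on the refined Newton-polygon count; the rest is bookkeeping with conjugation by a unit and with towers of tamely ramified extensions.
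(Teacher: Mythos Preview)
Your proof is correct and follows essentially the same route as the paper: pass to a tame extension over which $\phi$ acquires stable reduction, use the Newton polygon of $\phi'_{u}$ (with $w(u)=0$) to pin $w(b_{0}\lambda_{1})=0$, invoke Corollary~\ref{c43} to get $L(\lambda_{1})/L$ unramified, and then descend the tameness and the twist. Your treatment of why $\lambda_{1}^{-1}\phi\lambda_{1}$ has stable reduction over $K(\lambda_{1})$ (via $c=b_{0}\lambda_{1}$ being a unit) is in fact more explicit than the paper's.

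One small omission: your argument implicitly assumes $r'<r$, since both the inequality $|b_{0}\lambda_{r'}|<|b_{0}\lambda_{r'+1}|$ and the hypothesis of Corollary~\ref{c43} require an $(r'+1)$-st basis element. The paper handles the potential-good-reduction case $r'=r$ separately, replacing Corollary~\ref{c43} by the standard fact (Takahashi; \cite[Theorem~6.3.1]{Pap}) that torsion of a good-reduction Drinfeld module at a prime away from $w$ generates an unramified extension. You should add a sentence covering this case; the rest of your argument goes through unchanged.
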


\begin{proof}
Assume $r' < r.$
For $\phi_{t}(X) = tX + \sum_{i=1}^{r}a_{i}X^{q^{i}} \in K[X].$
Let $M$ be a tamely ramified extension of $K$ of degree $q^{r'}-1.$
Let $b$ be an element in $M$ with valuation $w(b) = \frac{w(a_{r'})}{q^{r'}-1}.$ 
Then $b \phi b^{-1}$ has stable reduction over $M.$
The family $\{b\lambda_{i}\}_{i=1,\ldots,r}$ is an SMB of $b\phi b^{-1}[u]$ (Remark~\ref{r22}).
By Corollary~\ref{c43}, the extension $M(\lambda_{1})/M = M(b\lambda_{1})/M$ is unramified.
Hence $M(\lambda_{1})/K$ is tamely ramified and its subextension $K(\lambda_{1})/K$ is at worst tamely ramified.

Following the proof of Lemma~\ref{l47}, we know that $w(b\lambda_{i}) = 0$ for $i=1,\ldots,r'.$
Hence $w(\lambda_{1}) = -\frac{w(a_{r'})}{q^{r'}-1}$ and we may take $b$ to be $\lambda_{1}^{-1}.$

As for the case $r' = r,$ for a tamely ramified extension $M/K$ of degree $q^{r}-1$ and $b\in M,$ 
we have that $b \phi b^{-1}$ has good reduction on $M.$
By \cite[Theorem~6.3.1]{Pap}, the extension $M(b\lambda_{1})/M$ is unramified.
The result for the case $r' = r$ follows similarly. 
\end{proof}

\section{Applications to rank 2 Drinfeld modules, infinite prime case}\label{s5}
Throughout this section, let $w$ be an infinite prime, $u$ a finite prime of $A$ having degree $d,$ and $n$ a positive integer.
Let $\phi$ be a rank $2$ Drinfeld $A$-module over $K$ determined by $\phi_{t}(X)=tX+a_{1}X^{q}+a_{2}X^{q^{2}}\in K[X].$
Let $\bm{j}$ denote the $j$-invariant $a_{1}^{q+1}/a_{2}$ of $\phi.$
Put $w_{0}\coloneqq w(t),$ $w_{1}\coloneqq w(a_{1}),$ and $w_{2}\coloneqq w(a_{2}).$ 
For each positive integer $j,$ let $\{\xi_{i,j}\}_{i=1,2}$ be an SMB of $\phi[t^{j}]$ obtained as in Corollary~\ref{c23}.

\subsection{The valuations of SMBs}\label{s51}
Our goal is to determine the valuations of elements of SMBs of the lattice $\Lambda$ and the module $\phi[u^{n}]$ in terms of $w_{0},$ $w_{1},$ and $w_{2}.$
If $w(\bm{j})<w_{0}q,$ let $m$ be the integer satisfying $w(\bm{j})\in (w_{0}q^{m+1},w_{0}q^{m}].$
By \cite[Lemma~2.1]{AH22}, we have \begin{equation} \begin{split}w(\xi_{1,n}) & = -\left(w_{0}(n-1)+\frac{w_{1}-w_{0}}{q-1}\right)\text{ for }n \geq 1\text{ and} \\
w(\xi_{2,n}) & = \begin{cases} -\frac{w_{2}+w_{1}(q^{n}-q-1)}{(q-1)q^{n}} & 0<n\leq m; \\
-\left(w_{0}(n-m)+\frac{w_{2}+w_{1}(q^{m}-q-1)}{(q-1)q^{m}}\right) & n\geq m.
\end{cases}
\end{split} \label{f51}
\end{equation}
Now the condition $|t^{n}| \geq |\xi_{2,n}|/|\xi_{1,n}|$ in Remark~\ref{r31} reads $-w_{0}n \geq -w(\xi_{2,n}) + w(\xi_{1,n}).$ 
For $n\geq m,$ this inequality is equivalent to \[-w_{0}n\geq -w_{0}(m-1) + \frac{w_{0}}{q-1}-\frac{w(\bm{j})}{(q-1)q^{m}}.\]
For any $n \geq m,$ the inequality $|t^{n}| \geq |\xi_{2,n}|/|\xi_{1,n}|$ holds.
If $w(\bm{j}) \geq w_{0}q,$ by \cite[Proposition~2.4]{AH22}, we have \begin{align} w(\xi_{1,n}) = w(\xi_{2,n}) = -\left(w_{0}(n-1)+\frac{w_{2}-w_{0}}{q^{2}-1}\right). \label{f52}\end{align}
Hence the condition $|t^{n}| \geq |\xi_{2,n}|/|\xi_{1,n}|$ is fulfilled for any positive integer $n.$
\begin{proposition}\label{p51}
Let $\{\omega_{i}\}_{i=1,2}$  be an SMB of $\Lambda$ and $\{\lambda_{i}\}_{i=1,2}$ an SMB of $\phi[u^{n}].$
\begin{enumerate}[\rm{(}1)]
\item If $w(\bm{j}) < w_{0}q$ and $m$ is the integer such that $w(\bm{j}) \in (w_{0}q^{m+1},w_{0}q^{m}],$ we have \begin{equation}\begin{split} w(\omega_{1}) & = w_{0} + \frac{w_{0}}{q-1} - \frac{w_{1}}{q-1} \\ 
w(\omega_{2}) & = w_{0}m + \frac{w(\bm{j})}{(q-1)q^{m}} - \frac{w_{1}}{q-1}.
\end{split} \nonumber\end{equation}
For $n\geq m/d,$ we have $|u^{n}| > |\omega_{2}|/|\omega_{1}|,$ $w(\lambda_{1})=w(\xi_{1,nd}),$ and $w(\lambda_{2})=w(\xi_{2,nd}).$
\item If $w(\bm{j}) \geq w_{0}q,$ we have \[w(\omega_{1})=w(\omega_{2})=w_{0}+\frac{w_{0}}{q^{2}-1}-\frac{w_{2}}{q^{2}-1}.\]
For $n\geq 1,$ we have $w(\lambda_{1})=w(\lambda_{2})=w(\xi_{1,nd})=w(\xi_{2,nd}).$
\end{enumerate}
\end{proposition}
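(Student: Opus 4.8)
The plan is to route everything through the lattice $\Lambda$ and to exploit that $t$ is itself a finite prime of $A$, of degree $1$, so that the already-known valuations (\ref{f51})--(\ref{f52}) of the $\xi_{i,j}$ feed into the machinery of Section~\ref{s3}. Throughout I will use the dictionary $|x| = q^{w(x)/w_{0}}$ between the absolute value in (F\ref{F1}) and the valuation $w$; since $w_{0} = w(t) < 0$, every inequality between absolute values reverses direction when rewritten via $w$, while $|t^{N}| = q^{N}$ and $|u^{n}| = q^{nd}$.

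First I would pin down $w(\omega_{1})$ and $w(\omega_{2})$ for an SMB $\{\omega_{i}\}$ of $\Lambda$. By the discussion around (\ref{f51})--(\ref{f52}), the hypothesis of Remark~\ref{r31} is verified for $\phi[t^{N}]$ once $N \ge m$ in case (1) (resp. for all $N \ge 1$ in case (2)), so Remark~\ref{r31} gives $|\xi_{i,N}|\cdot|t^{N}| = |\omega_{i}|$, i.e. $w(\omega_{i}) = w(\xi_{i,N}) + N w_{0}$ for every such $N$. Substituting (\ref{f51}) and eliminating $w_{2}$ through the $j$-invariant identity $w(\bm{j}) = (q+1)w_{1} - w_{2}$ — which collapses the numerator $w_{2} + w_{1}(q^{m} - q - 1)$ to $w_{1}q^{m} - w(\bm{j})$ — should produce the two closed forms for $w(\omega_{i})$ stated in (1); case (2) is the identical step with (\ref{f52}).

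Next I would establish the inequality $|u^{n}| > |\omega_{2}|/|\omega_{1}|$ for $n \ge m/d$. From the formulas just obtained, $w(\omega_{2}) - w(\omega_{1}) = w_{0}(m-1) + \frac{w(\bm{j}) - w_{0}q^{m}}{(q-1)q^{m}}$, and the hypothesis $w(\bm{j}) \in (w_{0}q^{m+1}, w_{0}q^{m}]$ confines the fractional term to $(w_{0}, 0]$, so $w(\omega_{2}) - w(\omega_{1}) > w_{0}m$ (and is $\le 0$, consistent with $|\omega_{1}| \le |\omega_{2}|$). Rewriting the target via $w/w_{0}$ turns it into $ndw_{0} < w(\omega_{2}) - w(\omega_{1})$, which for $nd \ge m$ follows from $ndw_{0} \le mw_{0} < w(\omega_{2}) - w(\omega_{1})$. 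In case (2) there is nothing to check: $w(\omega_{1}) = w(\omega_{2})$, so $|\omega_{2}|/|\omega_{1}| = 1 < |u^{n}|$ for every $n \ge 1$.

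Finally I would transfer the valuations to $\phi[u^{n}]$. With the inequality of the previous step, Corollary~\ref{c32}~(1) for the prime $u$ gives $|\lambda_{i}|\cdot|u^{n}| = |\omega_{i}|$, i.e. $w(\lambda_{i}) = w(\omega_{i}) - ndw_{0}$ (using $w(u^{n}) = ndw_{0}$). On the other hand the identity $w(\omega_{i}) = w(\xi_{i,N}) + N w_{0}$ from the first step holds for $N \ge m$ in case (1) (resp. all $N \ge 1$ in case (2)), so setting $N = nd$ — legitimate since $n \ge m/d$ in case (1) — yields $w(\xi_{i,nd}) = w(\omega_{i}) - ndw_{0} = w(\lambda_{i})$. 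In case (2) the equality $w(\omega_{1}) = w(\omega_{2})$ additionally forces $w(\lambda_{1}) = w(\lambda_{2})$, matching $w(\xi_{1,nd}) = w(\xi_{2,nd})$ from (\ref{f52}). The whole argument is bookkeeping; the two spots most likely to cause slips are the systematic sign handling forced by $w_{0} < 0$ (each passage between $|\cdot|$ and $w$ flips an inequality) and the verification in the third step that the bound on $w(\bm{j})$ is exactly what makes the threshold $n \ge m/d$ give a \emph{strict} inequality — but the substantive input, the precise comparison of SMBs of $\phi[u^{n}]$ with those of $\Lambda$, is already in place from Section~\ref{s3}, so I do not anticipate a real obstacle.
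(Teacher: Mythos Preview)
Your approach is correct and essentially the same as the paper's: both compute $w(\omega_{i})$ by applying Remark~\ref{r31}/Corollary~\ref{c32}~(1) with the degree-$1$ prime $t$ and the known valuations (\ref{f51})--(\ref{f52}), and then apply Corollary~\ref{c32}~(1) with the prime $u$ to obtain $w(\lambda_{i})$. The paper's proof is a two-sentence citation of these ingredients together with the verification of the threshold $|t^{N}| \geq |\xi_{2,N}|/|\xi_{1,N}|$ carried out just before the proposition; you have simply unpacked the same bookkeeping in more detail.
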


We note that the valuations $w(\omega_{1})$ and $w(\omega_{2})$ above have been obtained by Chen-Lee in \cite[Theorem~3.1 and Corollary 3.1]{CL}. 
One may also recover the rank $r=2$ case of Gekeler's formula \cite[Proposition~3.2]{GekP} (See also \cite[Proposition~5.5.8]{Pap}).

\begin{proof}
The claims of $w(\omega_{1})$ and $w(\omega_{2})$ follow from Remark~\ref{r31}, Corollary~\ref{c32}~(1), and the arguments before the proposition.
Then the claims of $w(\lambda_{1})$ and $w(\lambda_{2})$ are proved by Corollary~\ref{c32}~(1).
\end{proof}

\begin{remark}\label{r52}
Let $r$ be a positive integer and $\phi$ a rank $r$ Drinfeld $A$-module over $K$ such that $\phi_{t}(X) = tX+ a_{s}X^{q^{s}} + a_{r}X^{q^{r}} \in K[X].$
Here $s$ is an positive integer $<r.$
Let $\{\omega_{i}\}_{i=1,\ldots,r}$ be SMB of $\Lambda$ (associated to $\phi$) and $\{\lambda_{i}\}_{i=1,\ldots,r}$ an SMB of $\phi[u^{n}]$ for $u$ and $n$ as above.
Put \[\bm{j} \coloneqq a_{s}^{\frac{q^{r}-1}{q-1}} / a_{r}^{\frac{q^{s}-1}{q-1}}.\]
We obtain the following generalization of Proposition~\ref{p51}. 
Its proof is similar to \cite[Lemma~2.1]{AH22} and Proposition~\ref{p51}:
\begin{enumerate}
\item If $w(\bm{j}) < w_{0}q^{s}\frac{q^{r-s}-1}{q-1}$ and $m$ is the integer such that \[w(\bm{j}) \in \bigg(w_{0}q^{(m+1)s}\frac{q^{r-s}-1}{q-1},w_{0}q^{ms}\frac{q^{r-s}-1}{q-1}\bigg],\] we have 
\begin{align}w(\omega_{i}) & = w_{0} + \frac{w_{0}}{q^{s}-1} - \frac{w_{s}}{q^{s}-1}\text{ for }i=1,\ldots,s,\nonumber\\
w(\omega_{i}) & = w_{0}m + \frac{w(\bm{j})(q-1)}{q^{ms}(q^{s}-1)(q^{r-s}-1)} - \frac{w_{s}}{q^{s}-1}\text{ for }i=s+1,\ldots,r.\nonumber
\end{align}
For $n\geq m/d,$ we have $|u^{n}| > |\omega_{r}|/|\omega_{1}|,$ $w(\lambda_{i}) = - w_{0}nd + w(\omega_{i})$ for $i=1,\ldots,s,$ and $w(\lambda_{i}) = - w_{0}nd + w(\omega_{i})$ for $i=s+1,\ldots,r.$
\item 
If $w(\bm{j}) \geq w_{0}q^{s}\frac{q^{r-s}-1}{q-1},$ we have
\[w(\omega_{i}) = w_{0} + \frac{w_{0}}{q^{r}-1} - \frac{w_{r}}{q^{r}-1}\text{ for }i=1,\ldots,r.\]
For $i=1,\ldots,r$ and $n \geq 1,$ we have $w(\lambda_{i}) = -w_{0}nd + w(\omega_{i}).$
\end{enumerate}

\end{remark}

\subsection{The action of the wild ramification subgroup on the division points}\label{s52}
Let $K(\Lambda)$ (resp. $K(\phi[u^{n}])$) denote the extension of $K$ generated by all elements in $\Lambda$ (resp. in $\phi[u^{n}]$).
If $w(\bm{j})<w_{0}q$ and $m$ is the integer such that $w(\bm{j})\in (w_{0}q^{m+1},w_{0}q^{m}],$ then by Proposition~\ref{p51}~(1) and \ref{p31}, we have for any integer $n\geq m/d$ that (cf. \cite[Lemma~3.3]{AH22}) \begin{align}K(\phi[u^{n}]) = K(\Lambda) = K(\phi[t^{m}]).\label{f53}\end{align}
If $w(\bm{j}) \geq w_{0}q,$ then by Proposition~\ref{p51}~(2) and \ref{p31}, we have for any positive integer $n$ that (cf. \cite[Lemma~3.14]{AH22})
\begin{align}K(\phi[u^{n}]) = K(\Lambda) = K(\phi[t]). \label{f54}\end{align}

Put $G(\Lambda) \coloneqq \mathrm{Gal}(K(\Lambda)/K).$ 
Let $G(\Lambda)_{i}$ and $G(\Lambda)^{y}$ denote respectively the $i$-th lower and $y$-th higher ramification subgroups.
We are to study the action of the wild ramification subgroups $G(n)_{1}$ and $G(\Lambda)_{1}$ on the SMBs of $\phi[u^{n}]$ for $n$ to be large enough. 
Let us recall two lemmas.

\begin{lemma}[\text{\cite[Lemma~3.8]{AH22}}]\label{l51}
Assume $w(\bm{j})<w_{0}q$ and $p \nmid w(\bm{j}).$
Let $m$ be the integer satisfying $w(\bm{j}) \in (w_{0}q^{m+1},w_{0}q^{m}).$
Then we have the (Herbrand) $\psi$-function of the extension $K(\Lambda)/K$ to be 
\begin{equation} \psi_{K(\Lambda)/K}(y) = \begin{cases} y, & -1 \leq y \leq 0;\\
Ey, & 0 \leq y \leq r_{m};\\
q^{j}Ey + w(\bm{j})E\frac{q^{j}-1}{q-1} - w_{0}jEq^{m}, & \begin{split}&r_{m-j+1} \leq y \leq r_{m-j} \\ 
&\quad\text{for }j=1,\ldots,m-1;\end{split}\\
q^{m}Ey + w(\bm{j})E\frac{q^{m}-1}{q-1} - w_{0}mEq^{m}, & r_{1} \leq y,
\end{cases}\nonumber\end{equation}
where 
\begin{align}r_{n} \coloneqq \frac{-w(\bm{j})+w_{0}q^{n}}{q-1}\nonumber\end{align} 
for any positive integer $n \leq m$ and $E$ is some positive integer not divisible by $p.$
\end{lemma}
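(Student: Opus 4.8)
The plan is to reduce to the division field $K(\phi[t^{m}])$ and to compute its Herbrand $\psi$-function by climbing the tower of intermediate division fields, reading off each ramification step from a Newton polygon. First I would use Proposition~\ref{p51}~(1) together with Proposition~\ref{p31} to get $K(\Lambda)=K(\phi[t^{m}])$, which is (\ref{f53}); so it suffices to determine $\psi_{K(\phi[t^{m}])/K}$. Write $K_{j}:=K(\phi[t^{j}])$, so $K_{0}=K$ and $K_{m}=K(\Lambda)$, and note $K_{j}=K_{j-1}(\xi_{1,j},\xi_{2,j})$ since $t^{j-1}\cdot_{\phi}\xi_{i,j}$ ($i=1,2$) is a basis of $\phi[t]$ by Corollary~\ref{c23} and Proposition~\ref{p222}~(2). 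By the composition formula $\psi_{K_{m}/K}=\psi_{K_{m}/K_{m-1}}\circ\cdots\circ\psi_{K_{1}/K_{0}}$, the task reduces to computing $\psi$ of each single step $K_{j}/K_{j-1}$ and composing.

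For one step, $\xi_{i,j}$ is by Corollary~\ref{c23} a root of largest valuation of $\phi_{t}(X)-\xi_{i,j-1}=a_{2}X^{q^{2}}+a_{1}X^{q}+tX-\xi_{i,j-1}$, whose coefficient valuations are $w_{2},w_{1},w_{0}$ and whose constant-term valuation $w(\xi_{i,j-1})$ is given by (\ref{f51}). The hypothesis $w(\bm{j})<w_{0}q$ is exactly the inequality forcing the Newton polygon of $a_{2}X^{q^{2}-1}+a_{1}X^{q-1}+t$ to have two segments of distinct slopes, so $\phi[t]$ — and inductively each $\phi[t^{j}]$ — carries a two-step Galois-stable filtration whose rank-one graded piece lies in the $\xi_{1}$-direction and whose quotient lies in the $\xi_{2}$-direction. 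Tracking the Newton slopes of $\phi_{t}(X)-\xi_{i,j-1}$ then shows that the $\xi_{1}$-direction adds only ramification already contained in $K_{1}$ (at worst tame), while for each $j\le m$ the $\xi_{2}$-direction contributes a genuinely wildly ramified extension of degree $q$; here $p\nmid w(\bm{j})$ is used to exclude the degenerate situation in which the relevant Newton slope resolves into a smaller or unramified extension — cf. Example~\ref{e51}. For the $\psi$-function of this degree-$q$ wild step one can apply the computation of Appendix~\ref{sb} to the degree-$q^{2}$ additive polynomial $\phi_{t}(X)-\xi_{i,j-1}$: it is the identity up to the tame level, then has a single break past which the slope multiplies by $q$, and the break, transported back to $K$-numbering, is $r_{m-j+1}$.

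Composing the steps via the Herbrand formula, the tame parts accumulate into a single constant $E:=e_{\mathrm{tame}}(K(\Lambda)/K)$ (prime to $p$ by definition of the tame ramification index), each of the $m$ wild breaks multiplies the running slope by $q$, and the break locations telescope — with $r_{m}<r_{m-1}<\cdots<r_{1}$ because $w_{0}=w(t)<0$ — to give a piecewise-linear function with slopes $1,E,qE,q^{2}E,\dots,q^{m}E$ and breakpoints $0,r_{m},\dots,r_{1}$, matching the claim; the intercepts $w(\bm{j})E\frac{q^{j}-1}{q-1}-w_{0}jEq^{m}$ are then forced by continuity at the breakpoints, so they need no separate computation.

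The step I expect to be the main obstacle is the local analysis of the wild steps: showing that for each $j\le m$ the extension $K_{j}/K_{j-1}$ is wildly ramified of degree \emph{exactly} $q$ with the \emph{single} predicted break and no hidden extra ramification, and that the tame contributions really collapse into one $p$-free integer $E$. This is where both hypotheses are used — $w(\bm{j})<w_{0}q$ to force the two-slope (triangular) structure at every level, $p\nmid w(\bm{j})$ to keep each wild graded piece non-degenerate — and it requires the full Newton-polygon bookkeeping together with the Appendix~\ref{sb} computation, plus care with the lower-versus-upper numbering conversions inside the Herbrand composition.
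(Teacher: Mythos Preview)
The paper does not prove this lemma: it is stated with the citation \cite[Lemma~3.8]{AH22} and no proof environment follows. So there is no ``paper's own proof'' to compare against; the result is imported wholesale from the reference.

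That said, your sketch is headed in the right direction and is presumably close to what \cite{AH22} does: identify $K(\Lambda)=K(\phi[t^{m}])$ via (\ref{f53}), climb the tower $K=K_{0}\subset K_{1}\subset\cdots\subset K_{m}$, and read each wild step off a Newton polygon using the machinery of Appendix~\ref{sb}. Two places where your outline is thinner than it needs to be: first, the assertion that each step $K_{j}/K_{j-1}$ contributes \emph{exactly} a degree-$q$ wild piece in the $\xi_{2}$-direction (and nothing extra) is the crux and deserves a genuine argument, not just ``tracking the Newton slopes'' --- you should verify that the relevant polynomial over $K_{j-1}$ really satisfies the hypotheses of Proposition~\ref{c3p21} (or its degree-$q$ specialization) at each level, which is where $p\nmid w(\bm{j})$ enters concretely; second, your handling of $E$ is too casual --- saying ``the tame parts accumulate into a single constant'' glosses over the fact that tame ramification could in principle occur at several levels, and you should explain why the total tame index stays prime to $p$ and why it factors out uniformly in the final formula. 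The continuity argument for the intercepts is fine once the slopes and breakpoints are established.
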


\begin{lemma}[\text{\cite[Lemma~3.14]{AH22}}]\label{l52}
Assume $w(\bm{j}) \geq w_{0}q.$
Then the extension $K(\phi[t])/K$ is at worst tamely ramified.
\end{lemma}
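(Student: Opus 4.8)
The plan is to prove the stronger statement that the first lower ramification subgroup of $\mathrm{Gal}(K(\phi[t])/K)$ is trivial. Since $\phi_{t}(X) = tX + a_{1}X^{q} + a_{2}X^{q^{2}}$ has nonzero derivative $t$ it is separable, and $\phi[t] \subset K^{\mathrm{sep}}$ is $\mathrm{Gal}(K^{\mathrm{sep}}/K)$-stable, so $K(\phi[t])/K$ is finite Galois and the claim is precisely that this extension is at worst tamely ramified. The hypothesis $w(\bm{j}) \geq w_{0}q$ will be used only to guarantee that all nonzero $t$-division points have the same valuation.

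First I would fix an SMB $\{\lambda_{1},\lambda_{2}\}$ of $\phi[t]$ and record that $w(\lambda_{1}) = w(\lambda_{2})$. The SMB $\{\xi_{i,1}\}_{i=1,2}$ of $\phi[t]$ from Corollary~\ref{c23} satisfies $w(\xi_{1,1}) = w(\xi_{2,1})$ by (\ref{f52}), equivalently by Proposition~\ref{p51}(2) with $u = t$, $d = 1$, $n = 1$; and by Proposition~\ref{p221}(2) the valuation sequence of an SMB is an invariant of $\phi[t]$, so $w(\lambda_{1}) = w(\lambda_{2}) =: v$. In particular $v = \max\{\,w(\lambda) \mid \lambda \in \phi[t]\setminus\{0\}\,\}$, and $\{\lambda_{1},\lambda_{2}\}$ is an $A/t$-basis of $\phi[t]$ (Remark~\ref{r11}). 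If one wishes to bypass the uniformization here, the equality $w(\lambda_{1}) = w(\lambda_{2})$ is also immediate from the Newton polygon of $\phi_{t}(X)$: the inequality $w(\bm{j}) \geq w_{0}q$, i.e. $(q+1)w_{1} - w_{2} \geq qw_{0}$, says exactly that the vertex $(q,w_{1})$ does not lie below the segment joining $(1,w_{0})$ and $(q^{2},w_{2})$, so the polygon on $[1,q^{2}]$ is a single segment and every nonzero root of $\phi_{t}$ has valuation $-(w_{2}-w_{0})/(q^{2}-1)$.

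Then I would argue as in the proof of Lemma~\ref{l21}(2), but now exploiting that \emph{both} basis vectors sit at the maximal valuation. Let $w_{L}$ be the normalized valuation of $L \coloneqq K(\phi[t])$ and let $\sigma$ lie in $\mathrm{Gal}(L/K)_{1}$. For any $x \in L^{\times}$, writing $x = \pi_{L}^{m}u$ with $\pi_{L}$ a uniformizer and $u$ a unit and using $w_{L}(\sigma y - y) \geq 2$ for all $y \in \mathcal{O}_{L}$, one finds $\sigma x/x \in 1 + \mathfrak{m}_{L}$, hence $w_{L}(\sigma x - x) \geq w_{L}(x) + 1$. Apply this with $x = \lambda_{1}$: if $\sigma\lambda_{1} \neq \lambda_{1}$ then $\sigma\lambda_{1} - \lambda_{1}$ is a nonzero element of $\phi[t]$, so $w(\sigma\lambda_{1} - \lambda_{1}) \leq v = w(\lambda_{1})$, i.e. $w_{L}(\sigma\lambda_{1} - \lambda_{1}) \leq w_{L}(\lambda_{1})$, contradicting $w_{L}(\sigma\lambda_{1} - \lambda_{1}) \geq w_{L}(\lambda_{1}) + 1$; hence $\sigma\lambda_{1} = \lambda_{1}$, and likewise $\sigma\lambda_{2} = \lambda_{2}$ since $w(\lambda_{2}) = v$ as well. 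As $\{\lambda_{1},\lambda_{2}\}$ generates $\phi[t]$ over $A/t$ and $\sigma$ commutes with the $\phi$-action (the coefficients of $\phi_{a}$ lie in $K$), $\sigma$ fixes $\phi[t]$ pointwise, hence fixes $L$, so $\sigma = \mathrm{id}$. Therefore $\mathrm{Gal}(L/K)_{1} = \{1\}$ and $K(\phi[t])/K$ is at worst tamely ramified.

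I expect the only delicate point to be the step "$\sigma x/x \in 1 + \mathfrak{m}_{L}$ for every $x \in L^{\times}$", which is needed because the division points $\lambda_{i}$ have negative valuation and so are not in $\mathcal{O}_{L}$; this requires separating out the $\pi_{L}$-power, exactly the device already used in Lemma~\ref{l21}(2). Everything else is bookkeeping, the substantive input being the equality $w(\lambda_{1}) = w(\lambda_{2})$, which is the only place the hypothesis $w(\bm{j}) \geq w_{0}q$ is actually used.
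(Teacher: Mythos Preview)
Your argument is correct. The key observation---that under $w(\bm{j})\geq w_{0}q$ the Newton polygon of $\phi_{t}(X)$ over $[1,q^{2}]$ is a single segment, so every nonzero $t$-division point has the same valuation---immediately forces the wild inertia to act trivially, exactly as you show. The step $\sigma(x)/x\in 1+\mathfrak{m}_{L}$ for general $x\in L^{\times}$ is handled correctly: $1+\mathfrak{m}_{L}$ is a multiplicative group, so the factorization $x=\pi_{L}^{m}u$ works regardless of the sign of $m$.

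As for comparison: the paper does not give its own proof of this lemma; it is quoted from \cite[Lemma~3.14]{AH22}. Your argument is nonetheless very much in the spirit of the paper's own Lemma~\ref{l21}(2), which runs the same ``$w(\sigma(\omega)-\omega)>w(\omega)$ forces $\sigma(\omega)=\omega$'' trick on the lattice $\Lambda$. The paper's Corollary~\ref{l53} packages that idea for SMBs of $\phi[u^{n}]$, but is stated only for a \emph{proper} initial block $|\lambda_{1}|=\cdots=|\lambda_{s}|<|\lambda_{s+1}|$; your proof is the natural extension to the case $s=r$, carried out directly on $\phi[t]$ without passing through the uniformization. The payoff of your route is that it is entirely elementary (Newton polygon plus the definition of $G_{1}$) and does not invoke the lattice or $e_{\phi}$ at all.
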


In Lemma-Definition~\ref{da1}, the conductor of $\phi$ at $w$ is defined to be
\[\mathfrak{f}_{w}(\phi) \coloneqq \int_{0}^{\infty}\left(2-\mathrm{rank}_{A_{u}}T_{u}^{G^{y}}\right) dy.\]
In the next result, we calculate $\mathfrak{f}_{w}(\phi)$ explicitly.
This calculation generalizes the infinite prime case of \cite[Lemma-Definition~4.1]{AH22}.
\begin{lemma}\label{d51}
Assume that one of the following two cases happens
\begin{enumerate}[\rm{(C}1)]
\item $w(\bm{j})<w_{0}q$ and $p\nmid w(\bm{j});$
\item $w(\bm{j})\geq w_{0}q.$
\end{enumerate}
Let $G^{y}$ denote the $y$-th upper ramification subgroup of the Galois group $\mathrm{Gal}(K^{\mathrm{sep}}/K).$
Then we have \[\mathfrak{f}_{w}(\phi)=\begin{cases}\frac{-w(\bm{j})+w_{0}q}{q-1} & \text{if \rm{(C1)}\it{ }happens};\\
0 & \text{if \rm{(C2)}\it{ }happens}.\end{cases}\]
\end{lemma}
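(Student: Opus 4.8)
The plan is to evaluate the integral defining $\mathfrak{f}_w(\phi)$ by analysing the action of the upper ramification subgroups $G^y$ on the $u$-adic Tate module $T_u = T_u(\phi)$, a free $A_u$-module of rank $2$. The first step is to reduce to the group $G(\Lambda) = \mathrm{Gal}(K(\Lambda)/K)$: by Proposition~\ref{p51} together with Proposition~\ref{p31} (equivalently, by (\ref{f53}) in case (C1) and (\ref{f54}) in case (C2)) one has $K(\phi[u^n]) = K(\Lambda)$ for all $n$ large enough, so $\bigcup_n K(\phi[u^n]) = K(\Lambda)$ and the Galois action on $T_u$ factors as $\mathrm{Gal}(K^{\mathrm{sep}}/K) \twoheadrightarrow G(\Lambda) \hookrightarrow \mathrm{Aut}_{A_u}(T_u)$; moreover, by compatibility of the upper ramification filtration with this quotient (Herbrand), the image of $G^y$ is $G(\Lambda)^y$. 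Hence $T_u^{G^y} = T_u^{G(\Lambda)^y}$ for every $y$, and it suffices to work with the finite extension $K(\Lambda)/K$.

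Case (C2) is then immediate: by Lemma~\ref{l52} the extension $K(\Lambda) = K(\phi[t])$ is at worst tamely ramified, so $G(\Lambda)^y = 1$ for all $y > 0$; thus $T_u^{G^y} = T_u$ has rank $2$ and the integrand $2 - \mathrm{rank}_{A_u} T_u^{G^y}$ vanishes on $(0,\infty)$, giving $\mathfrak{f}_w(\phi) = 0$.

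For case (C1) I would show that the integrand equals $1$ on $(0, r_1]$ and $0$ elsewhere, where $r_1 = \frac{-w(\bm j) + w_0 q}{q-1}$ is as in Lemma~\ref{l51}. First, $w(\bm j) < w_0 q$ (with $w_0 < 0$) forces the integer $m$ of Lemma~\ref{l51} to satisfy $m \geq 1$, and then Proposition~\ref{p51}~(1) gives $w(\omega_1) \neq w(\omega_2)$, hence $w(\lambda_1) \neq w(\lambda_2)$, so every SMB $\{\lambda_i\}_{i=1,2}$ of $\phi[u^n]$ has $|\lambda_1| < |\lambda_2|$ once $n$ is large. Fix such an $n_0$ and, using Proposition~\ref{p222}~(1) and Corollary~\ref{c22}, a chain of SMBs $\{\lambda_{1,n},\lambda_{2,n}\}$ of $\phi[u^n]$ for $n \geq n_0$ that is compatible under $\phi_u$ and keeps $|\lambda_{1,n}| < |\lambda_{2,n}|$; the system $(\lambda_{1,n})_{n \geq n_0}$ determines a nonzero element $\widetilde\lambda_1 \in T_u$ spanning a rank-$1$ $A_u$-submodule $L_1$. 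By Corollary~\ref{l53}~(2), every element of $G(\Lambda)_1$ — and $G(\Lambda)^y \subseteq G(\Lambda)_1$ for $y > 0$ — fixes each $\lambda_{1,n}$, hence fixes $\widetilde\lambda_1$, so $L_1 \subseteq T_u^{G^y}$ for all $y > 0$. On the other hand, reading Lemma~\ref{l51}, the $\psi$-function of $K(\Lambda)/K$ attains its final constant slope exactly for $y \geq r_1$, so the largest upper ramification break of $K(\Lambda)/K$ is $r_1$ and $G(\Lambda)^y = 1$ precisely for $y > r_1$. For $0 < y \leq r_1$ the group $G(\Lambda)^y$ is therefore nontrivial, and since $G(\Lambda)$ acts faithfully on $T_u$ it cannot fix all of $T_u$; as $T_u^{G^y}$ is saturated in $T_u$ (if $ax$ is fixed with $a \neq 0$ then so is $x$) it is a free direct summand, and $L_1 \subseteq T_u^{G^y} \subsetneq T_u$ forces its rank to be exactly $1$ on $(0, r_1]$. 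For $y > r_1$ one has $T_u^{G^y} = T_u$ of rank $2$. Consequently
\[
\mathfrak{f}_w(\phi) = \int_0^{r_1}(2-1)\,dy + \int_{r_1}^{\infty}(2-2)\,dy = r_1 = \frac{-w(\bm j) + w_0 q}{q-1}.
\]

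I expect the main obstacle to be the bookkeeping in case (C1): extracting from the piecewise-linear $\psi$-function of Lemma~\ref{l51} that $r_1$ is the largest upper ramification break of $K(\Lambda)/K$ (so that $G(\Lambda)^y \neq 1$ exactly on $(0, r_1]$), and the passage from Corollary~\ref{l53}, which concerns individual $\phi[u^n]$, to a single $G(\Lambda)_1$-fixed generator of a rank-$1$ direct summand of $T_u$ — this uses the $\phi_u$-compatibility of the chosen SMBs and the saturation argument for the rank count. The remaining steps are formal.
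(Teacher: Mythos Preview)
Your proposal is correct and follows essentially the same approach as the paper: reduce to the finite extension $K(\Lambda)/K$ via (\ref{f53}) and (\ref{f54}), dispose of (C2) by Lemma~\ref{l52}, and in (C1) use Corollary~\ref{l53} to see that the wild inertia fixes the first SMB element while Lemma~\ref{l51} identifies $r_{1}$ as the top upper break. The only cosmetic difference is that the paper builds the compatible SMB system for all $n\geq 1$ directly from Corollary~\ref{c23} and argues concretely that a nontrivial $\sigma\in G(\Lambda)^{y}$ moves $\lambda_{2,n}$ (since $\lambda_{1,n},\lambda_{2,n}$ generate $K(\Lambda)$), whereas you start at some $n_{0}$ and replace this by the cleaner saturation/faithfulness argument for $T_{u}^{G^{y}}$; both yield $\mathrm{rank}_{A_{u}}T_{u}^{G^{y}}=1$ on $(0,r_{1}]$.
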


\begin{proof}
By Corollary~\ref{c23}, there is an SMB $\{\lambda_{i,n}\}_{i=1,2}$ of $\phi[u^{n}]$ for each integer $n\geq 1$ such that $u \cdot_{\phi} \lambda_{i,n+1}=\lambda_{i,n}$ for $i=1,2.$
Recall that $T_{u}$ is defined to be $\varprojlim_{n}\phi[u^{n}]$ using the morphisms $\phi_{u}:\phi[u^{n+1}]\to \phi[u^{n}]$ for all integers $n\geq 1.$
Hence the tuples $(\lambda_{1,n})_{n\geq 1}$ and $(\lambda_{2,n})_{n\geq 1}$ form an $A_{u}$-basis of $T_{u}.$

Assume (C1) happens.
By (\ref{f53}), the action of $G^{y}$ on $\phi[u^{n}]$ for any $n\geq m/d$ and any $y>0$ factors through $G(\Lambda)^{y}.$
Notice $G(\Lambda)_{1} = \bigcup_{y>0}G(\Lambda)^{y}.$ 
By Corollary~\ref{l53}, any element $\sigma \in G(\Lambda)^{y}$ for $y>0$ fixes $\lambda_{1,n}$ and fixes $u^{j} \cdot_{\phi} \lambda_{1,n} = \lambda_{1,n-j}$ for any positive integer $j < n.$
Hence $\sigma$ fixes $(\lambda_{1,n})_{n\geq 1}.$
As $\lambda_{1,n}$ and $\lambda_{2,n}$ generate $K(\phi[u^{n}])/K = K(\Lambda)/K$ for $n \geq m/d,$ we also have that if $\sigma$ is not the unit, then it nontrivially acts on $\lambda_{2,n}$ and hence nontrivially acts on $(\lambda_{2,n})_{n\geq 1}.$
Therefore Lemma~\ref{l51} implies $\mathrm{rank}_{A_{u}}T_{u}^{G^{y}}=1$ if $0 < y \leq r_{1}$ and $=2$ if $r_{1} < y.$ 
We have
\[\mathfrak{f}_{w}(\phi) = \int_{0}^{r_{1}}1dy = \frac{-w(\bm{j})+w_{0}q}{q-1}.\]

For the case (C2), by (\ref{f54}), the action of $G^{y}$ on $\phi[u^{n}]$ for any $n \geq 1$ and any $y>0$ factors through $G(\Lambda)^{y}.$
By Lemma~\ref{l52}, we have $G(\Lambda)^{y} = \{e\}$ if $y>0.$
The result for the case (C2) immediately follows.
\end{proof}

For an SMB $\{\lambda_{i,n}\}_{i=1,2}$ of $\phi[u^{n}]$ and an element $\sigma\in G(\Lambda)_{1}$ which is not the unit, we work out $\sigma(\lambda_{2,n})$ in the remainder of this subsection.

\begin{lemma}\label{l54}
Assume $w(\bm{j}) \in (w_{0}q^{m+1},w_{0}q^{m})$ for a positive integer $m.$ 
Let $n$ be an integer $\geq m/d$ and $\{\lambda_{i}\}_{i=1,2}$ an SMB of $\phi[u^{n}].$ 
Then we have 
\[w(t^{i} \cdot_{\phi} \lambda_{1}) = w(\xi_{1,nd-i}) \text{ and } w(t^{i} \cdot_{\phi} \lambda_{2}) = w(\xi_{2,nd-i})\text{ for }1 \leq i < nd.\]
\end{lemma}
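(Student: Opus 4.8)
The plan is to relate the SMB $\{\lambda_{i}\}_{i=1,2}$ of $\phi[u^{n}]$ to the distinguished SMBs $\{\xi_{i,j}\}_{i=1,2}$ of $\phi[t^{j}]$ from Corollary~\ref{c23}, and then transport the problem to the lattice side. First I would fix $n \geq m/d$ so that $nd \geq m$, and recall from Proposition~\ref{p51}~(1) that $w(\lambda_{1}) = w(\xi_{1,nd})$ and $w(\lambda_{2}) = w(\xi_{2,nd})$; moreover $|u^{n}| > |\omega_{2}|/|\omega_{1}|$, so the hypotheses of Theorem~\ref{t32} (via Corollary~\ref{r32}) are met. Thus there is an SMB $\{\omega_{1}',\omega_{2}'\}$ of $\Lambda$ with $e_{\phi}(\omega_{i}'/u^{n}) = \lambda_{i}$ for $i=1,2$. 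The key point is that $t^{i}\cdot_{\phi}\lambda_{j} = e_{\phi}(t^{i}\omega_{j}'/u^{n})$, so I would compute the valuations of these using Corollary~\ref{c31}~(1), which gives an explicit product formula for $|e_{\phi}(a\omega_{j}'/a')|$ when $\deg(a) < \deg(a')$. Here $a = t^{i}$ and $a' = u^{n}$, and $\deg(t^{i}) = i < nd = \deg(u^{n})$ by hypothesis, so the formula applies.

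The main computation is then to show that the right-hand side of Corollary~\ref{c31}~(1), namely $|t^{i}\omega_{j}'/u^{n}| \cdot \prod_{\mu \in \Lambda\setminus\{0\},\ |u^{n}\mu| < |t^{i}\omega_{j}'|} |t^{i}\omega_{j}'|/|u^{n}\mu|$, equals $|\xi_{j,nd-i}|$. For this I would use the explicit formulas (\ref{f51}) for $w(\xi_{1,nd-i})$ and $w(\xi_{2,nd-i})$ together with the known valuations $w(\omega_{1}) = w(\omega_{1}')$ and $w(\omega_{2}) = w(\omega_{2}')$ from Proposition~\ref{p51}~(1). An alternative and probably cleaner route: apply Corollary~\ref{c22} repeatedly. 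By Proposition~\ref{p222}~(2), $\{u\cdot_{\phi}\lambda_{1}, u\cdot_{\phi}\lambda_{2}\}$ is an SMB of $\phi[u^{n-1}]$, and iterating, $\{u^{k}\cdot_{\phi}\lambda_{1}, u^{k}\cdot_{\phi}\lambda_{2}\}$ is an SMB of $\phi[u^{n-k}]$ for $0 \leq k \leq n$. But I need to divide by $t$, not by $u$; so instead I would observe that $\{\xi_{i,nd}\}$ and $\{\lambda_{i}\}$ are both SMBs of $\phi[u^{n}] = \phi[t^{nd}]$ (using that $u \mid t^{?}$ is false—rather $\phi[u^{n}]$ and $\phi[t^{nd}]$ are different modules, so this identification fails). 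Hence I must stay with the lattice computation.

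So the cleanest approach: write $t^{i}\omega_{j}' \in \Lambda$ and note that $t^{i}\cdot_{\phi}\lambda_{j} = e_{\phi}(t^{i}\omega_{j}'/u^{n})$. On the other hand, by the same uniformization, $\xi_{j,nd-i} = e_{\phi}(\nu_{j}/t^{nd-i})$ where $\nu_{j}$ is an appropriate element of the lattice, and by Theorem~\ref{t32} / Corollary~\ref{c32} the valuation $w(\xi_{j,nd-i})$ is determined by the invariant of $\phi[t^{nd-i}]$, which Proposition~\ref{p51}~(1) computes. I would then simply check that Corollary~\ref{c31}~(1) applied to $|e_{\phi}(t^{i}\omega_{j}'/u^{n})|$ yields the same value as the formula (\ref{f51}) for $w(\xi_{j,nd-i})$: both reduce to $w(\omega_{j}) - w_{0}(nd-i)$ in the range where $nd - i \geq m$, and to the ``interior'' formula when $j = 2$ and $nd - i < m$—but $nd \geq m$ and we may further need $i$ small; in fact $w(\xi_{2,k})$ has two regimes depending on whether $k \lessgtr m$, and the product in Corollary~\ref{c31}~(1) counts exactly the lattice points $\mu$ with $|u^{n}\mu| < |t^{i}\omega_{2}'|$, whose number changes precisely at the threshold $i = nd - m$. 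The main obstacle will be bookkeeping this threshold: carefully matching the number of terms in the product of Corollary~\ref{c31}~(1) against the piecewise definition of $w(\xi_{2,\bullet})$ in (\ref{f51}), i.e. verifying that $|t^{i}\omega_{2}'| > |\omega_{1}'|$ iff $nd - i < m$ so that the two cases of (\ref{f51}) are reproduced correctly. Once that combinatorial matching is done, the equalities $w(t^{i}\cdot_{\phi}\lambda_{1}) = w(\xi_{1,nd-i})$ and $w(t^{i}\cdot_{\phi}\lambda_{2}) = w(\xi_{2,nd-i})$ follow directly.
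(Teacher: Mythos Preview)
Your approach is correct but takes a genuinely different route from the paper. The paper argues by direct induction on $i$: starting from $w(\lambda_{2})=w(\xi_{2,nd})$ it computes $w(t\cdot_{\phi}\lambda_{2}')$ for $\lambda_{2}'=t^{i-1}\cdot_{\phi}\lambda_{2}$ by comparing the three valuations $w(t\lambda_{2}')$, $w(a_{1}\lambda_{2}'^{\,q})$, $w(a_{2}\lambda_{2}'^{\,q^{2}})$ and checking which is strictly smallest, with a case split at $i=nd-m$; the argument for $\lambda_{1}$ is identical. No lattice input is used beyond the initial equality $w(\lambda_{i})=w(\xi_{i,nd})$.

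Your route through Theorem~\ref{t32} and Corollary~\ref{c31}(1) is more conceptual and avoids the step-by-step Newton-polygon analysis. One remark that would let you bypass the ``threshold bookkeeping'' you flag as the main obstacle: since $|u^{n}|=|t^{nd}|$, the product formula in Corollary~\ref{c31}(1) for $|e_{\phi}(t^{i}\omega_{j}'/u^{n})|$ is \emph{identical} to the one for $|e_{\phi}(\omega_{j}'/t^{nd-i})|$ (just divide numerator and denominator of every factor by $|t^{i}|$). But $\{e_{\phi}(\omega_{j}'/t^{nd-i})\}_{j=1,2}$ is an SMB of $\phi[t^{nd-i}]$ by Theorem~\ref{t31}, so Proposition~\ref{p221}(2) gives $|e_{\phi}(\omega_{j}'/t^{nd-i})|=|\xi_{j,nd-i}|$ immediately, with no case analysis. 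Either way your argument goes through; the paper's direct computation is more elementary but less illuminating, while your lattice approach explains \emph{why} the answer depends only on $nd-i$.
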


\begin{proof}
We show the result for $\lambda_{2}.$
The proof of the result for $\lambda_{1}$ is similar. 
By Proposition~\ref{p51}~(1), we have $w(\lambda_{2}) = w(\xi_{2,nd}).$
To know $w(t \cdot_{\phi} \lambda_{2}) = w(t\lambda_{2}+a_{1}\lambda_{2}^{q}+a_{2}\lambda_{2}^{q^{2}}),$ we calculate \begin{align}w(t\lambda_{2})-w(a_{1}\lambda_{2}^{q}) & = \frac{-w(\bm{j})+w_{0}q^{m}((q-1)(nd-m)+1)}{q^{m}},\nonumber\\
w(a_{1}\lambda_{2}^{q}) - w(a_{2}\lambda_{2}^{q^{2}}) & = \frac{w(\bm{j})(q^{m-1}-1)+w_{0}(q-1)(nd-m)q^{m}}{q^{m-1}}<0. 
\nonumber\end{align}
We have \[\begin{cases} w(t\lambda_{2}) - w(a_{1}\lambda_{2}^{q}) > 0 & nd = m;\\
w(t\lambda_{2}) - w(a_{1}\lambda_{2}^{q}) < 0 & nd > m.
\end{cases}\]
Hence we have \[w(t \cdot_{\phi} \lambda_{2}) = \begin{cases} w(a_{1}\lambda_{2}^{q}) = w(\xi_{2,m-1}) & nd = m;\\
w(t\lambda_{2}) = w(\xi_{2,nd-1}) & nd>m.
\end{cases}\]
We assume that the result for $i-1$ is valid.
Put $\lambda_{2}'\coloneqq t^{i-1} \cdot_{\phi} \lambda_{2}.$
If $i\leq nd - m,$ to know $w(t \cdot_{\phi} \lambda_{2}'),$ we calculate \begin{align}& w(t\lambda_{2}')-w(a_{1}\lambda_{2}^{\prime q}) = \frac{-w(\bm{j})+w_{0}q^{m}((q-1)(nd-i-m)+q)}{q^{m}}<0,\nonumber\\
& w(t\lambda_{2}') - w(a_{2}\lambda_{2}^{\prime q^{2}}) \nonumber\\
& \quad\quad = \frac{w(\bm{j})(q^{m}-q-1)+w_{0}q^{m}((q^{2}-1)(nd-i-m)+q^{2})}{q^{m}} < 0. \nonumber\end{align}
Hence we have $w(t \cdot_{\phi} \lambda_{2}') = w(t\lambda_{2}') = w(\xi_{2,nd-i}).$
Assume $i>nd-m.$ 
To know $w(t \cdot_{\phi} \lambda_{2}'),$ we calculate \begin{align} w(t\lambda_{2}') - w(a_{1}\lambda_{2}^{\prime q}) & = \frac{-w(\bm{j})+w_{0}q^{nd-i+1}}{q^{nd-i+1}} > 0,\nonumber\\
w(a_{1}\lambda_{2}^{\prime q}) - w(a_{2}\lambda_{2}^{\prime q^{2}}) & = \frac{w(\bm{j})(q^{nd-i}-1)}{q^{nd-i}} < 0. \nonumber\end{align}
Hence $w(t \cdot_{\phi} \lambda_{2}') = w(a_{1}\lambda_{2}^{\prime q}) = w(\xi_{2,nd-i})$ and the result for $\lambda_{2}$ follows.
\end{proof}

\begin{corollary}\label{c51}
Resume the assumptions in the lemma.
\begin{enumerate}[\rm{(}1)]
\item For any $a\in A$ with $\deg(a) < nd,$ we have \begin{align}
w(a\cdot_{\phi}\lambda_{1}) & = w(t^{\deg(a)}\cdot_{\phi}\lambda_{1}) = w(\xi_{1,nd - \deg(a)}) \label{f57}\\
w(a\cdot_{\phi}\lambda_{2}) & = w(t^{\deg(a)}\cdot_{\phi}\lambda_{2}) = w(\xi_{2,nd - \deg(a)}). \label{f58}
\end{align}
\item For $\lambda\in \phi[u^{n}]$ having valuation $\geq w(\xi_{1,nd-m+1}),$ there exists some $b \in A$ with $\deg(b)<m$ such that $b\cdot_{\phi} \lambda_{1} = \lambda.$
\end{enumerate}
\end{corollary}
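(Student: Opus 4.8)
The plan is to deduce part (1) directly from Lemma~\ref{l54} together with the additivity property of SMBs of $\phi[u^{n}]$ established in Proposition~\ref{p32} (the function $|-|$ here is the one in (F\ref{F1}), since $w$ is an infinite prime). First I would fix $a\in A$ with $\deg(a)<nd$ and write $a=\sum_{k} c_{k}t^{k}$ in the natural basis of $A$ over $\Bbb{F}_{q}$; since $\phi$ is an $A$-module action, $a\cdot_{\phi}\lambda_{i}=\sum_{k} c_{k}\cdot_{\phi}(t^{k}\cdot_{\phi}\lambda_{i})$. The key point is that Lemma~\ref{l54} gives $w(t^{k}\cdot_{\phi}\lambda_{i})=w(\xi_{i,nd-k})$, and by the explicit formulas (\ref{f51}) these valuations are \emph{strictly decreasing} in $k$ for $k=0,1,\ldots,nd-1$ (one checks: $w(\xi_{1,j})$ strictly increases with $j$, and likewise $w(\xi_{2,j})$, so $w(\xi_{i,nd-k})$ strictly decreases as $k$ increases). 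Hence among the terms $t^{k}\cdot_{\phi}\lambda_{i}$ appearing in $a\cdot_{\phi}\lambda_{i}$ with $c_{k}\neq 0$, the one with the top degree $k=\deg(a)$ has the strictly smallest valuation, i.e. the strictly largest absolute value. Applying Proposition~\ref{p32} (which says $|\sum_{i}a_{i}\cdot_{\phi}\lambda_{i}|=\max_{i}|a_{i}\cdot_{\phi}\lambda_{i}|$) — actually here more simply the ultrametric inequality suffices once we know the $\Bbb{F}_q$-scalars $c_k$ do not change valuations — we get $w(a\cdot_{\phi}\lambda_{i})=w(t^{\deg(a)}\cdot_{\phi}\lambda_{i})=w(\xi_{i,nd-\deg(a)})$, which is exactly (\ref{f57}) and (\ref{f58}). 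I would note that multiplication by the constant $c_k\in\Bbb{F}_q^{\times}$ preserves valuation (since $\phi_{c}(X)=cX$ for $c\in\Bbb{F}_q$ and $w(c)=0$), so only the powers of $t$ matter.

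For part (2), let $\lambda\in\phi[u^{n}]$ with $w(\lambda)\geq w(\xi_{1,nd-m+1})$. The idea is to use the successive-minimal property. Since $w(\xi_{1,nd-m+1})>w(\xi_{2,nd})=w(\lambda_2)$ (again by monotonicity of the $\xi$-valuations and the fact that $w(\lambda_1)<w(\lambda_2)$ when $w(\bm j)<w_0 q$ is strict, so there is a genuine gap), the condition forces $w(\lambda)>w(\lambda_2)$. By Definition~\ref{d12}~(2), $\lambda$ cannot be $A/u^{n}$-linearly independent from $\lambda_1$ alone is not quite the statement — rather, any $\lambda$ with $w(\lambda)>w(\lambda_2)$ must lie in $(A/u^{n})\cdot_{\phi}\lambda_1$; otherwise $\lambda_1,\lambda$ would be $A/u^n$-linearly independent with $|\lambda|<|\lambda_2|$, contradicting the minimality of $|\lambda_2|$ in the definition of an SMB. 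So $\lambda=b\cdot_{\phi}\lambda_1$ for some $b\in A\bmod u^{n}$; we may take $\deg(b)<nd$. Now apply part (1): $w(\lambda)=w(b\cdot_{\phi}\lambda_1)=w(\xi_{1,nd-\deg(b)})$. Combined with $w(\lambda)\geq w(\xi_{1,nd-m+1})$ and the strict monotonicity of $j\mapsto w(\xi_{1,j})$, this yields $nd-\deg(b)\geq nd-m+1$, i.e. $\deg(b)\leq m-1<m$, as required.

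I expect the main obstacle to be bookkeeping around the strict-versus-weak inequalities: one must be careful that under hypothesis $w(\bm j)\in(w_0q^{m+1},w_0q^m)$ (strict, as stated in Lemma~\ref{l54}) the relevant valuation gaps are genuinely strict, so that the ultrametric estimate in part (1) is an equality and the ``there is a gap above $w(\lambda_2)$'' argument in part (2) goes through. A secondary care point is the case $nd=m$ versus $nd>m$ distinction inherited from Lemma~\ref{l54}: when $\deg(a)$ is large enough that $nd-\deg(a)$ drops to the range $\leq m$, the formula for $w(\xi_{2,j})$ switches branches, but this is already absorbed into Lemma~\ref{l54}'s statement, so citing it uniformly is legitimate. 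Everything else is a direct unwinding of definitions plus the already-proved Proposition~\ref{p32} and Lemma~\ref{l54}.
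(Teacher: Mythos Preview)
Your argument for part~(1) is correct and is essentially the paper's proof: expand $a=\sum_k c_k t^k$, invoke Lemma~\ref{l54} for each $w(t^k\cdot_\phi\lambda_i)$, use the strict monotonicity of $j\mapsto w(\xi_{i,j})$ (the paper cites this as \cite[Proposition~2.2]{AH22}; you read it off (\ref{f51})), and conclude by the ultrametric inequality. Nothing more is needed there.

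In part~(2), however, there is a genuine gap. You claim that if $\lambda\notin(A/u^n)\cdot_\phi\lambda_1$ then $\lambda_1,\lambda$ are $A/u^n$-linearly independent. This implication is valid over a field but \emph{fails} over $A/u^n$ once $n>1$: for example $\lambda=u\cdot_\phi\lambda_2$ lies outside $(A/u^n)\cdot_\phi\lambda_1$, yet $u^{n-1}\cdot_\phi\lambda=0$ is a nontrivial dependence relation. Thus from $w(\lambda)>w(\lambda_2)$ and the minimality of $|\lambda_2|$ in Definition~\ref{d12}~(2) you may only conclude that $\{\lambda_1,\lambda\}$ is $A/u^n$-\emph{dependent}, which is strictly weaker than $\lambda\in(A/u^n)\cdot_\phi\lambda_1$.

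The paper fills this gap precisely with the tool you invoked and then set aside in part~(1), namely Proposition~\ref{p32}. Write $\lambda=b\cdot_\phi\lambda_1+b'\cdot_\phi\lambda_2$ with $\deg b,\deg b'<nd$; then $w(\lambda)=\min\{w(b\cdot_\phi\lambda_1),\,w(b'\cdot_\phi\lambda_2)\}$. If $b'\ne 0$, then by (\ref{f58}) and the monotonicity (\ref{f56}) one has $w(b'\cdot_\phi\lambda_2)=w(\xi_{2,nd-\deg b'})\le w(\xi_{2,nd})=w(\lambda_2)$, forcing $w(\lambda)\le w(\lambda_2)$, contrary to $w(\lambda)\ge w(\xi_{1,nd-m+1})>w(\lambda_2)$. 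Hence $b'=0$, and your concluding degree bound on $b$ then goes through verbatim. (Aside: your inequality ``$w(\lambda_1)<w(\lambda_2)$'' is a slip of the pen; the correct direction is $w(\lambda_1)>w(\lambda_2)$.)
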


\begin{proof}
By \cite[Proposition~2.2]{AH22}, we have \begin{align} & w(\xi_{1,j}) > w(\xi_{2,nd})\text{ for }j=nd,nd-1,\cdots,nd-m+1,\label{f55}\\ 
& w(\xi_{i,j+1}) > w(\xi_{i,j})\text{ for }i=1,2\text{ and positive integers }j < nd.\label{f56}\end{align}
For (1), by (\ref{f56}) and the lemma, we have $w(t^{\deg(a)}\cdot_{\phi}\lambda_{1}) < w(t^{i}\cdot_{\phi}\lambda_{1})$ for any positive integer $i<\deg(a).$
Hence the desired equality follows from the ultrametric inequality.
The equation for $\lambda_{2}$ follows in the same way.

For (2), by (\ref{f55}), we have $w(\lambda) \geq w(\xi_{1,nd-m+1}) > w(\xi_{2,nd}) = w(\lambda_{2}).$ 
As $\{\lambda_{i}\}_{i=1,2}$ is an SMB of $\phi[u^{n}],$ there exist $b,b'\in A\mod u^{n}$ such that $\lambda = b \cdot_{\phi} \lambda_{1} + b' \cdot_{\phi} \lambda_{2}.$ 
We may assume that $b$ and $b'$ have degree $< \deg(u^{n}) = nd.$
Assume conversely $b'\neq 0.$
By (\ref{f58}), we have \[w(b' \cdot_{\phi} \lambda_{2}) = w(t^{\deg(b')} \cdot_{\phi} \lambda_{2}) = w(\xi_{2,nd-\deg(b')}) \leq w(\lambda_{2}).\]
By Proposition~\ref{p32}, we have $w(\lambda) = \min \{w(b \cdot_{\phi} \lambda_{1}), w(b' \cdot_{\phi} \lambda_{2})\}.$
Hence $w(\lambda) \leq w(b' \cdot_{\phi} \lambda_{2}) \leq w(\lambda_{2}),$ a contradiction.
By (\ref{f57}), we have \[w(b \cdot_{\phi} \lambda_{1}) = w(t^{\deg(b)} \cdot_{\phi} \lambda_{1}) = w(\xi_{1,nd-\deg(b)}). \]
Then $w(b \cdot_{\phi} \lambda_{1}) \geq w(\xi_{1,nd-m+1})$ and (\ref{f56}) imply $\deg(b)<m.$
\end{proof}

\begin{remark}
Resume the assumption in the lemma.
\begin{enumerate}[\rm{(}1)]
\item The elements $t^{j}\cdot_{\phi} \lambda_{i}$ for $i=1,2$ and $0 \leq j < nd$ form an $\Bbb{F}_{q}$-basis of $\phi[u^{n}]$ as a vector space.
Indeed, by the lemma and \cite[Proposition~2.2]{AH22}, the valuations $w(t^{j}\cdot_{\phi} \lambda_{i})$ for all $i$ and $j$ are different from each other.
Hence all elements $t^{j} \cdot_{\phi} \lambda_{i}$ are $\Bbb{F}_{q}$-linearly independent and form a $2nd$-dimensional vector subspace of $\phi[u^{n}].$ 
Since $\phi[u^{n}]$ has dimension $2nd$ as an $\Bbb{F}_{q}$-vector space, the claim follows.
\item For a positive integer $j \leq n,$ let $\{\lambda_{i}'\}_{i=1,2}$ be an SMB of $\phi[u^{j}].$
By Corollary~\ref{c51}~(1), we have \[w(\lambda_{1}') = w(\xi_{1,jd})\text{ and }w(\lambda_{2}') = w(\xi_{2,jd}).\]
\end{enumerate}
\end{remark}

Under the assumptions in Lemma~\ref{l51}, we put $R_{i} \coloneqq \psi_{K(\Lambda)/K}(r_{i})$ for $i=1,\ldots,m$ and we have \[R_{i} = -w(\bm{j})E\frac{1}{q-1} - w_{0}Eq^{m}\left(m-i-\frac{1}{q-1}\right).\]

\begin{theorem}\label{t51}
Assume $w(\bm{j}) < w_{0}q$ and $p \nmid w(\bm{j}).$
Let $m$ be the integer such that $w(\bm{j}) \in (w_{0}q^{m+1}, w_{0}q^{m}).$
Let $n$ be an integer $\geq m/d$ and $\{\lambda_{i}\}_{i=1,2}$ an SMB of $\phi[u^{n}].$
Put $G(\Lambda) \coloneqq \mathrm{Gal}(K(\Lambda)/K).$
For a positive integer $i,$ let $A^{<i}$ denote the subgroup of $A$ consisting of elements with degree $<i.$
\begin{enumerate}[\rm{(}1)]
\item Any element in $G(\Lambda)_{1}$ fixes $\lambda_{1};$
\item 
The map \[g: G(\Lambda)_{1} \to A^{<m}\cdot_{\phi}\lambda_{1};\,\,\sigma\mapsto \sigma(\lambda_{2}) - \lambda_{2}\]
is an isomorphism.
\item 
Put $r_{i} \coloneqq \frac{-w(\bm{j}) + w_{0}q^{i}}{q-1}$ for $1 \leq i \leq m$ as in Lemma~\text{\rm{\ref{l51}}}.
Let $G(\Lambda)^{r_{i}}$ denote the $r_{i}$-th upper ramification subgroup of $G(\Lambda).$
Then for each $i=1,\ldots,m,$ the restriction \[g:G(\Lambda)^{r_{i}} \to A^{<i}\cdot_{\phi}\lambda_{1}\] is an isomorphism.
\end{enumerate}
\end{theorem}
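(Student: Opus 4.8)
The plan is to handle part (1) by a direct valuation comparison, to prove part (3) for every $i$ via an explicit lower‑numbering estimate read off from the Herbrand function of Lemma~\ref{l51}, and to recover part (2) as the case $i=m$ of (3). Since $n\geq m/d$ we have $K(\phi[u^{n}])=K(\Lambda)$ by (\ref{f53}), so $G(\Lambda)_{1}=\mathrm{Gal}(K(\phi[u^{n}])/K)_{1}$, and $K(\lambda_{1},\lambda_{2})=K(\Lambda)$ because $\{\lambda_{1},\lambda_{2}\}$ is an $A/u^{n}$‑basis of $\phi[u^{n}]$; also $K(\Lambda)/K$ is Galois by the $\mathrm{Gal}(K^{\mathrm{sep}}/K)$‑invariance of $\Lambda$. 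For (1): Proposition~\ref{p51}~(1) gives $w(\lambda_{1})=w(\xi_{1,nd})$ and $w(\lambda_{2})=w(\xi_{2,nd})$, and $w(\xi_{1,nd})>w(\xi_{2,nd})$ (by \cite[Proposition~2.2]{AH22}, or directly from (\ref{f51})), so $|\lambda_{1}|<|\lambda_{2}|$; then Corollary~\ref{l53}~(2) with $s=1$ yields $\sigma(\lambda_{1})=\lambda_{1}$ for every $\sigma\in G(\Lambda)_{1}$.

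The crux is computing $g(\sigma)$ for $\sigma\in G(\Lambda)^{r_{i}}$. Writing $\psi=\psi_{K(\Lambda)/K}$, the relation $G^{v}=G_{\psi(v)}$ identifies $G(\Lambda)^{r_{i}}$ with $G(\Lambda)_{R_{i}}$ where $R_{i}=\psi(r_{i})$ is a positive integer; from Lemma~\ref{l51}, $\psi$ has slope $q^{m}E$ for $y>r_{1}$ where $G(\Lambda)^{y}=\{1\}$, and since $\psi'(y)=|G(\Lambda)_{0}|/|G(\Lambda)^{y}|$ the ramification index is $e:=e(K(\Lambda)/K)=|G(\Lambda)_{0}|=q^{m}E$. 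An element of $G(\Lambda)_{R_{i}}$ acts trivially on $\mathcal{O}_{K(\Lambda)}/\mathfrak{m}^{R_{i}+1}$, and a short computation with $x=\pi^{k}v$ ($\pi$ a uniformizer, $v$ a unit) shows $w_{K(\Lambda)}(\sigma(x)-x)\geq w_{K(\Lambda)}(x)+R_{i}$ for all $x\in K(\Lambda)^{\times}$; taking $x=\lambda_{2}$ and dividing by $e$ gives
\[w(\sigma(\lambda_{2})-\lambda_{2})\geq w(\lambda_{2})+R_{i}/e.\]
Substituting the explicit value of $R_{i}$, $e=q^{m}E$, the formula $w(\lambda_{2})=w(\xi_{2,nd})$ from Proposition~\ref{p51}~(1) via (\ref{f51}), and $w_{2}=(q+1)w_{1}-w(\bm{j})$, a direct calculation yields exactly $w(\lambda_{2})+R_{i}/e=w(\xi_{1,nd-i+1})$. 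Hence $\lambda:=g(\sigma)\in\phi[u^{n}]$ satisfies $w(\lambda)\geq w(\xi_{1,nd-i+1})$, and the argument of Corollary~\ref{c51}~(2) applies: write $\lambda=b\cdot_{\phi}\lambda_{1}+b'\cdot_{\phi}\lambda_{2}$ with $\deg b,\deg b'<nd$; if $b'\neq 0$ then $w(b'\cdot_{\phi}\lambda_{2})\leq w(\lambda_{2})<w(\xi_{1,nd-i+1})$ by (\ref{f58}), (\ref{f55}), (\ref{f56}), contradicting Proposition~\ref{p32}; so $b'=0$, and $w(b\cdot_{\phi}\lambda_{1})=w(\xi_{1,nd-\deg b})\geq w(\xi_{1,nd-i+1})$ forces $\deg b<i$ by (\ref{f57}) and (\ref{f56}). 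Thus $g(\sigma)\in A^{<i}\cdot_{\phi}\lambda_{1}$.

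With the image located, $g$ is a group homomorphism: for $\sigma,\tau\in G(\Lambda)_{1}$ write $\tau(\lambda_{2})-\lambda_{2}=b_{\tau}\cdot_{\phi}\lambda_{1}$; since $\phi_{b_{\tau}}$ has coefficients in $K$ and $\sigma(\lambda_{1})=\lambda_{1}$, $g(\sigma\tau)=\sigma(\tau(\lambda_{2}))-\lambda_{2}=\sigma(\lambda_{2})+b_{\tau}\cdot_{\phi}\lambda_{1}-\lambda_{2}=g(\sigma)+g(\tau)$; and $g$ is injective since $g(\sigma)=0$ makes $\sigma$ fix $\lambda_{1},\lambda_{2}$, hence all of $K(\Lambda)$. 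Now $A^{<i}\cdot_{\phi}\lambda_{1}$ has order $q^{i}$ (the map $b\mapsto\phi_{b}(\lambda_{1})$ is $\Bbb{F}_{q}$‑linear and injective on $A^{<i}$, as $\lambda_{1}$ has exact order $u^{n}$ and $\deg b<i\leq m\leq nd$), while $|G(\Lambda)^{r_{i}}|=|G(\Lambda)_{0}|/(q^{m-i}E)=q^{i}$ because Lemma~\ref{l51} shows $\psi$ has slope $q^{m-i}E$ on $[r_{i+1},r_{i}]$ (with $r_{m+1}:=0$). An injective homomorphism between finite groups of equal order is an isomorphism, proving (3). Finally Lemma~\ref{l51} shows $\psi$ has the single slope $E$ on $[0,r_{m}]$, so the smallest positive upper ramification break of $K(\Lambda)/K$ is $r_{m}$ and $G(\Lambda)_{1}=G(\Lambda)^{0+}=G(\Lambda)^{r_{m}}$; thus (2) is the case $i=m$ of (3).

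I expect the main obstacle to be making the valuation estimate exact rather than merely sufficient: one must convert membership in the upper group $G(\Lambda)^{r_{i}}$ into the lower‑numbering bound through the explicit Herbrand function of Lemma~\ref{l51} (including the identifications $e=q^{m}E$ and $R_{i}=\psi(r_{i})$), and then verify that $w(\lambda_{2})+R_{i}/e$ equals $w(\xi_{1,nd-i+1})$ on the nose. It is precisely this equality — not just an inequality — that pins down $\deg b<i$ rather than the weaker $\deg b<m$, and so distinguishes the successive subgroups $G(\Lambda)^{r_{m}}\supseteq\cdots\supseteq G(\Lambda)^{r_{1}}$.
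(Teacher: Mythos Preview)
Your proof is correct and follows essentially the same route as the paper's: reduce (1) to Corollary~\ref{l53}, use the lower-numbering estimate $w_{\Lambda}(\sigma(\lambda_{2})-\lambda_{2})\geq R_{i}+w_{\Lambda}(\lambda_{2})=w_{\Lambda}(\xi_{1,nd-i+1})$ together with Corollary~\ref{c51} to land in $A^{<i}\cdot_{\phi}\lambda_{1}$, and finish by the cardinality count. The only differences are organizational: you prove (3) first and obtain (2) as the case $i=m$, and you read off $|G(\Lambda)^{r_{i}}|=q^{i}$ directly from the slopes of the $\psi$-function in Lemma~\ref{l51}, whereas the paper proves (2) first and invokes \cite[Theorem~3.9]{AH22} for $|G(\Lambda)_{1}|=q^{m}$.
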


\begin{proof}
(1) has been shown in Corollary~\ref{l53}.

(2) We show $\sigma(\lambda_{2}) - \lambda_{2} \in A^{<m}\cdot_{\phi}\lambda_{1}$ for an element $\sigma$ in $G(\Lambda)_{1} = G(\Lambda)_{R_{m}}.$ 
Clearly $\sigma(\lambda_{2}) - \lambda_{2} \in \phi[u^{n}].$ 
By Corollary~\ref{c51}~(2), an element of $\phi[u^{n}]$ having valuation $\geq w(\xi_{1,nd-m+1})$ belongs to the $\Bbb{F}_{q}$-vector space $A^{<m}\cdot_{\phi}\lambda_{1}.$
Hence it suffices to show $w(\sigma(\lambda_{2})-\lambda_{2}) \geq w(\xi_{1,nd-m+1}).$
By Proposition~\ref{p51}, we have $w(\lambda_{i}) = w(\xi_{i,nd}).$
Let $w_{\Lambda}$ denote the normalized valuation associated to $K(\Lambda).$ 
We have $w_{\Lambda} = Eq^{m}w.$
Consider \begin{align} w_{\Lambda}( \sigma(\lambda_{2}) - \lambda_{2} ) & = w_{\Lambda}(\sigma(\lambda_{2})\lambda_{2}^{-1}-1)+w_{\Lambda}(\lambda_{2})\nonumber\\
& \geq R_{m} + w_{\Lambda}(\lambda_{2}) \nonumber\\
& = -w(\bm{j})E\frac{1}{q-1} - w_{0}Eq^{m}\left(-\frac{1}{q-1}\right) \nonumber\\
& \quad\,-Eq^{m}\left(w_{0}(nd-m) + \frac{w_{1}}{q-1} - \frac{w(\bm{j})}{q^{m}(q-1)}\right) \nonumber\\
& = -Eq^{m}\left(w_{0}(nd-m) + \frac{w_{1}-w_{0}}{q-1}\right) = w_{\Lambda}(\xi_{1,nd-m+1}). \nonumber
\end{align}
Hence the image $\sigma(\lambda_{2}) - \lambda_{2}$ of $\sigma$ under the map $g$ belongs to $A^{<m}\cdot_{\phi}\lambda_{1}.$ 

Next, we show that $g$ is an isomorphism.
The map is injective since $\lambda_{1}$ and $\lambda_{2}$ generate $K(\Lambda)/K$ and $\sigma(\lambda_{1})=\lambda_{1}$ for any $\sigma \in G(\Lambda)_{1}.$
By \cite[Theorem~3.9]{AH22}, we know $G(\Lambda)_{1} \cong \Bbb{F}_{q}^{m}.$
As $q^{m}$ is also the cardinal of $A^{<m}\cdot_{\phi}\lambda_{1},$ the map is bijective.
It suffices to show that this map is a morphism.
For any $\sigma \in G(\Lambda)_{1},$ we have that $\sigma$ fixes $\lambda_{1}$ and $\sigma(\lambda_{2}) -\lambda_{2} = b \cdot_{\phi} \lambda_{1}$ for some $b \in A.$
Hence for any $\sigma',$ $\sigma \in G(\Lambda)_{1},$ we have \[\sigma'(\sigma(\lambda_{2})-\lambda_{2})=\sigma(\lambda_{2})-\lambda_{2}.\]
This implies \begin{align} \sigma'(\sigma(\lambda_{2}))-\lambda_{2} & = \sigma'(\sigma(\lambda_{2})) - \sigma'(\lambda_{2}) + \sigma'(\lambda_{2}) - \lambda_{2} \nonumber\\
& = \sigma'(\sigma(\lambda_{2})-\lambda_{2}) + \sigma'(\lambda_{2}) - \lambda_{2} \nonumber\\ 
& = \sigma(\lambda_{2}) - \lambda_{2} + \sigma'(\lambda_{2}) - \lambda_{2}, \nonumber\end{align}
which shows that the map is a morphism.

(3) 
Note $G(\Lambda)^{r_{i}} = G(\Lambda)_{R_{i}}.$
We show that $g:G(\Lambda)_{R_{i}} \to A^{<i}\cdot_{\phi}\lambda_{1}$ is an isomorphism for each $i = 1, \ldots, m.$
By Corollary~\ref{c51}~(1), (\ref{f55}), and (\ref{f56}), the vector space $A^{<i}\cdot_{\phi}\lambda_{1}$ consists of elements of $\phi[u^{n}]$ having valuations $\geq w(\xi_{1,nd-i+1}).$
For $i$ to be one of $1,\ldots,m$ and $\sigma$ to be a nontrivial element in $G(\Lambda)_{R_{i}},$ we have
\begin{align}w_{\Lambda}(\sigma(\lambda_{2}) - \lambda_{2}) & = w_{\Lambda}(\sigma(\lambda_{2})\lambda_{2}^{-1} - 1) + w_{\Lambda}(\lambda_{2})\nonumber\\
& \geq R_{i} + w_{\Lambda}(\lambda_{2}) \nonumber\\
& = -w(\bm{j})E\frac{1}{q-1} - w_{0}Eq^{m}\left(m-i-\frac{1}{q-1}\right) \nonumber\\
& \quad \,-Eq^{m}\left(w_{0}(nd-m) + \frac{w_{1}}{q-1} - \frac{w(\bm{j})}{q^{m}(q-1)}\right) \nonumber\\
& = -Eq^{m}\left(w_{0}(nd-i) + \frac{w_{1}-w_{0}}{q-1}\right) = w_{\Lambda}(\xi_{1,nd-i+1}). \nonumber
\end{align}
This implies that $g(G(\Lambda)_{R_{i}})\subset A^{<i}\cdot_{\phi}\lambda_{1}.$
As the cardinal of $G(\Lambda)_{R_{i}}$ and $A^{<i}\cdot_{\phi}\lambda_{1}$ are both $q^{i},$ the restriction
\[g: G(\Lambda)_{R_{i}} \to A^{<i}\cdot_{\phi}\lambda_{1}\]
is an isomorphism for each $i.$
\end{proof}

\begin{example}[With the help of T. Asayama and Y. Taguchi]\label{e51}
Let $C$ denote the Carlitz $\Bbb{F}_{q}[t]$-module over $\Bbb{F}_{q}(t)$ determined by $\phi_{t}(X) = tX + X^{q}.$
Put $T \coloneqq t^{2} + a$ for some $a \in \Bbb{F}_{q}.$
Let $\phi$ denote the Drinfeld $\Bbb{F}_{q}[T]$-module over $\Bbb{F}_{q}(t)$ determined by $\phi_{T}(X) = C_{t^{2}+a}(X).$
Let $K$ denote the completion of $\Bbb{F}_{q}(t)$ at the infinite prime and $w$ the associated normalized valuation so that $w(t) = -1.$
The $j$-invariant $\bm{j}$ of $\phi$ has valuation $w(\bm{j}) = -q(q+1) < w(t)q.$
However, by \cite[Theorem~7.1.13]{Pap} (initially given by Hayes), the extension $K(\phi[T^{n}]) = K(C[(t^{2}+a)^{n}])$ of $K$ for any $n\in\Bbb{Z}$ is tamely ramified.
By this example, to remove the condition $p \nmid w(\bm{j})$ might be hard.
\end{example}

\section{Application to a rank 2 Drinfeld module, finite prime case}\label{s6}
Let $w$ be a finite prime of $K.$
Throughout this section, let $u$ be a finite prime of $A$ having degree $d,$ and $n$ a positive integer.
Let $\phi$ be a rank $2$ Drinfeld $A$-module over $K$ determined by $\phi_{t}(X)=tX+a_{1}X^{q}+a_{2}X^{q^{2}} \in K[X].$
Let $\bm{j}$ denote the $j$-invariant $a_{1}^{q+1}/a_{2}.$

\subsection{The valuations of SMBs}\label{s61}
Throughout this subsection, assume that $\phi$ has bad reduction over $K,$ i.e., $\phi$ has stable reduction over $K$ and the reduction has rank $1.$ 
We have $w(a_{1})=0$ and $w(a_{2})>0$ such that $w(\bm{j})<0.$
Let $\{\xi_{i,n}\}_{i=1,2}$ be an SMB of $\phi[t^{n}]$ obtained as in Corollary~\ref{c23}.
By \cite[Proposition~2.5 and Lemma~A.1~(2)]{AH22}, we have\begin{align} w(\xi_{1,n}) & = \frac{w(t)}{(q-1)q^{n-1}} \nonumber\\
w(\xi_{2,n}) & = \frac{w(\bm{j})}{(q-1)q^{n}}. \nonumber\end{align}

\begin{proposition}[\text{cf. Proposition~\ref{p51}}]\label{p61}
Let $\{\omega_{1}\}$ be an SMB of $\psi[u^{n}],$ 
$\{\omega_{2}^{0}\}$ an SMB of $\Lambda,$ and $\{\lambda_{i}\}_{i=1,2}$ an SMB of $\phi[u^{n}].$
Then for any positive integer $n,$ we have 
\[w(\omega_{1}) = w(\lambda_{1}) = \frac{w(u)}{(q^{d}-1)q^{(n-1)d}},\,\,w(\omega_{2}^{0}) = \frac{w(\bm{j})}{q-1},\text{ and } w(\lambda_{2}) = \frac{w(\bm{j})}{(q-1)q^{nd}}.\]
\end{proposition}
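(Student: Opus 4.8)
The plan is to reduce the computation of the valuations of SMBs of $\psi[u^{n}]$, $\Lambda$, and $\phi[u^{n}]$ to the known valuations of the SMBs $\{\xi_{i,n}\}_{i=1,2}$ of $\phi[t^{n}]$ recalled just before the statement, by using the general structural results of Sections~\ref{s2} and \ref{s4}. First I would handle $\psi[u^{n}]$: since $\psi$ is a rank $1$ Drinfeld module having good reduction, all division points of $\psi$ generate an unramified extension of $K$ (by \cite[Theorem~6.3.1]{Pap}, as used in Corollary~\ref{c43}), and the valuation of an SMB element is governed by the Newton polygon of $\psi_{u^{n}}(X)$. Concretely, iterating Lemma~\ref{l41}~(1) and Corollary~\ref{c23} (applied to $\psi$ rather than $\phi$), the element $\omega_{1}$ of smallest valuation among nonzero $u^{n}$-division points of $\psi$ is obtained by repeatedly taking the largest-valuation root of $\psi_{u}(X) - (\cdot)$; since $w(u) \neq 0$ here is \emph{not} assumed, but rather $\psi$ has good reduction so $w$ of the leading coefficient of $\psi_{u}$ is $0$ while $w$ of the constant term is $w(u)$, the left-most Newton slope is $-w(u)/q^{d}$ at the first step, giving $w(\omega_{1,1}) = w(u)/((q^{d}-1))$ after accounting for the tame ramification, and then $w(\omega_{1,n}) = w(u)/((q^{d}-1)q^{(n-1)d})$ after $n$ iterations. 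This matches the claimed formula, and the parallel statement $w(\xi_{1,n}) = w(t)/((q-1)q^{n-1})$ is the $d=1$, $u=t$ case.

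Next I would treat $\Lambda$: by Proposition~\ref{p212} and the definition of SMB of a lattice, $w(\omega_{2}^{0})$ is simply the largest valuation among nonzero elements of $\Lambda$, which for a rank-$1$ lattice is $w$ of any generator. This is an invariant of $\phi$ intrinsic to the Tate uniformization, and one extracts it from the $t$-adic picture: applying Theorem~\ref{t42}~(2) (or Corollary~\ref{r41}~(2)) with $u=t$ and $n$ large shows $\{t^{n} \cdot_{\psi} \log_{\phi}(\xi_{2,n})\}$ is an SMB of $\Lambda$; combining Lemma~\ref{l46}~(3) ($w(\log_{\phi}(\xi_{2,n})) = w(\xi_{2,n})$) with Lemma~\ref{l20}~(2) ($w(t^{n}\cdot_{\psi}(-)) = q^{n}\cdot(-)$ since $r'=1$) gives $w(\omega_{2}^{0}) = q^{n}\cdot w(\xi_{2,n}) = q^{n}\cdot \frac{w(\bm{j})}{(q-1)q^{n}} = \frac{w(\bm{j})}{q-1}$, as claimed. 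One must check the hypothesis $|u^{n}|_{\infty} > |\eta_{r}|/|\eta_{r'+1}|$ of Corollary~\ref{r41} holds eventually; since the lattice has rank $1$, the index set $\{r'+1,\ldots,r\} = \{2\}$ is a singleton, so $|\eta_{r}|/|\eta_{r'+1}| = 1$ and the condition is automatic for all $n \geq 1$. That is the point that makes the rank-$2$/rank-$1$-reduction case especially clean.

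Finally, for $\phi[u^{n}]$ itself: by Corollary~\ref{c42}~(1) (whose hypothesis $|u^{n}|_{\infty} \geq |\omega_{r}^{0}|/|\omega_{r'+1}^{0}| = 1$ is again automatic), $|\lambda_{i}| = |\omega_{i}|$ where $\{\omega_{1},\omega_{2}\}$ is the preferred basis of $u^{-n}\Lambda/\Lambda$ from Theorem~\ref{t41}: so $w(\lambda_{1}) = w(\omega_{1})$, giving the first claim, and $w(\lambda_{2}) = w(\omega_{2})$ where $\omega_{2}$ is a root of $\psi_{u^{n}}(X) - \omega_{2}^{0}$. By Lemma~\ref{l41}~(2) the Newton polygon of $\psi_{u^{n}}(X)-\omega_{2}^{0}$ has a single slope $-w(\omega_{2}^{0})/q^{nd}$, so $w(\omega_{2}) = w(\omega_{2}^{0})/q^{nd} = \frac{w(\bm{j})}{(q-1)q^{nd}}$. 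Alternatively — and this is probably the cleanest write-up — one directly cites Proposition~\ref{p221}~(2) together with Corollary~\ref{c23} applied with $u$ in place of $t$: the SMB of $\phi[u^{n}]$ is built by iterating largest-valuation roots, and the recursion on Newton polygons is the same as the one yielding $w(\xi_{i,n})$ but with $q \rightsquigarrow q^{d}$ and $w(t) \rightsquigarrow w(u)$ at each step, so $w(\lambda_{1}) = \frac{w(u)}{(q^{d}-1)q^{(n-1)d}}$ and $w(\lambda_{2}) = \frac{w(\bm{j})}{(q-1)q^{nd}}$ by the same computation as in \cite[Proposition~2.5]{AH22}.

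\textbf{Main obstacle.} The only real subtlety is bookkeeping of ramification/normalizing factors: the $\xi$-formulas are stated for $\phi[t^{n}]$ over the base, so transporting them to $\phi[u^{n}]$ requires either (a) redoing the Newton-polygon recursion of \cite[Section~2]{AH22} verbatim with $(q,t)$ replaced by $(q^{d},u)$ — routine but one must confirm that the coefficient valuations $w(a_{i})$ of $\phi_{u}(X)$ behave as in the $\phi_{t}$ case (they do, since stable reduction of rank $1$ forces $w(\text{leading coeff of }\phi_{u}) = 0$, $w(\text{coeff of }X^{q^{d}}\text{ in }\phi_{u}) = 0$, the rest $\geq 0$, by the same argument as in Lemma~\ref{l47}) — or (b) going through the $\log_{\phi}$/Tate-uniformization route above, which isolates the content into the single identity $w(\omega_{2}^{0}) = \frac{w(\bm{j})}{q-1}$ plus two applications of Lemma~\ref{l20}~(2) and Lemma~\ref{l41}~(2). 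I would present route (b) as the main argument and remark that route (a) gives an independent check. There is no genuine difficulty beyond this; the rank-$1$ lattice makes all the "$n$ large enough" hypotheses vacuous.
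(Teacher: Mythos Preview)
Your overall route matches the paper's: note that the rank-$1$ lattice makes the ``$n$ large enough'' hypothesis of Section~\ref{s4} vacuous, then read off $w(\omega_{2}^{0})$ and $w(\lambda_{2})$ from the known $w(\xi_{2,n})$ via Corollary~\ref{c42}~(1), and get $w(\lambda_{1})=w(\omega_{1})$ from Lemma~\ref{l44}. That part is fine and essentially identical to what the paper does.

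The one genuine gap is in your computation of $w(\omega_{1})$ when $w\mid u$. You assert that the left-most Newton slope of $\psi_{u}(X)$ is $-w(u)/(q^{d}-1)$ (your ``$-w(u)/q^{d}$'' is a slip), but good reduction only tells you that the intermediate coefficients of $\psi_{u}$ have valuation $\geq 0$ and the leading one has valuation $0$; that is not enough to force the polygon from $(1,w(u))$ to $(q^{d},0)$ to be a single segment. If some intermediate coefficient had valuation $0$, the left-most slope would be steeper and your formula would fail. The paper closes this gap by passing to the Carlitz module: since $\psi_{t}(X)=tX+b_{1}X^{q}$ with $w(b_{1})=0$, choosing $b$ with $b^{q-1}=b_{1}$ gives $w(b)=0$ and $b\psi b^{-1}=C$, so the Newton polygon of $\psi_{u}$ agrees with that of $C_{u}$; then the explicit Carlitz formula \cite[Corollary~5.4.4]{Pap} yields $w(u_{i})=w(u)$ for $i=0,\ldots,d-1$, which does give a single segment. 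Your inductive step (the polygon of $\psi_{u}(X)-\omega_{1,i-1}$) then goes through as you wrote it.

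Two smaller remarks: your opening claim that $\psi[u^{n}]$ generates an unramified extension (citing \cite[Theorem~6.3.1]{Pap}) is only valid for $w\nmid u$ --- Takahashi's theorem needs $u$ away from the characteristic --- though you do not actually use this. And in route~(a) the appeal to Lemma~\ref{l47} gives you that $(q^{d},0)$ is a vertex of the Newton polygon of $\phi_{u}$, but again not that the left part is a single segment; so route~(a) has the same missing ingredient as route~(b) and is not an independent check until the Carlitz reduction is supplied.
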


\begin{proof}
Note that the condition ``$|u^{n}|_{\infty} > |\omega_{r}^{0}|/|\omega_{r'+1}^{0}|$'' in Section~\ref{s4} is trivial.
The results for $\omega_{2}^{0}$ and $\lambda_{2}$ follow from the value $w(\xi_{2,n})$ and Corollary~\ref{c42}~(1).

By Lemma~\ref{l44} and Proposition~\ref{p221}~(2), it remains to calculate $w(\omega_{1}).$
The case $w \nmid u$ is straightforward.
Assume $w \mid u.$
We have $\psi_{t}(X) = tX + b_{1}X^{q} \in 
K[X]$ such that the valuation of $b_{1}$ is $0.$
Let $K'$ denote the extension of $K$ generated by some $b \in K^{\mathrm{sep}}$ with $b^{q-1} = b_{1}.$
Then $C = b\psi b^{-1}$ as Drinfeld $A$-modules over $K'$ where $C$ denotes the Carlitz module.
Let $\{\eta_{1,j}\}$ be an SMB of $C[u^{j}]$ for each positive integer $j$ as in Corollary~\ref{c23}.
As $b\omega_{1}$ forms an SMB of $b\psi b^{-1}[u^{n}],$ we have $w(\omega_{1}) = w(\eta_{1,n})$ by Proposition~\ref{p221}~(2).

To calculate $w(\eta_{1,n}),$ we proceed by induction.
We first calculate $w(\eta_{1,1}).$  
Put $u_{0} \coloneqq u,$ $\sum_{i=0}^{d}u_{i}X^{q^{i}} \coloneqq C_{u}(X),$ and $P_{i}:=(q^{i},w(u_{i}))$ for $i=0,\ldots,d.$
By the explicit formula of $u_{i}$ in \cite[Corollary~5.4.4]{Pap} (initially given by Carlitz), we have $w(u_{i}) = w(u)$ for $i=0,\ldots,d-1.$
The Newton polygon of $C_{u}(X)$ is $P_{0}P_{d}$ having exactly one segment.
Hence we have $w(\eta_{1,1}) = \frac{w(u)}{q^{d}-1}.$ 

Assume $w(\eta_{1,i-1}) = \frac{w(u)}{(q^{d}-1)q^{(i-2)d}}.$
Put $Q_{i-1} \coloneqq (0,w(\eta_{1,i-1})).$
The Newton polygon of $C_{u}(X) - \eta_{1,i-1}$ is $Q_{i-1}P_{d}$ having exactly one segment.
Hence we have $w(\eta_{1,i}) = \frac{w(u)}{(q^{d}-1)q^{(i-1)d}},$ as desired.
\end{proof}

\subsection{The action of the wild ramification subgroup on the division points}\label{s62}
Assume $w \nmid u,$ i.e., $w(u)=0$ throughout this subsection. 
Assume that $\phi$ has bad reduction over $K.$
Let $L$ be the extension of $K$ generated by the elements in $\Lambda$.
For a positive integer $n,$ let $L_{n}$ denote the extension of $L$ generated by the elements in $u^{-n}\Lambda.$
As the condition ``$|u^{n}|_{\infty}>|\omega_{r}^{0}|/|\omega_{r'+1}^{0}|$'' in Section~\ref{s4} is fulfilled for any positive integer $n,$ by Proposition~\ref{p41}, we have $K(\phi[u^{n}]) = L_{n}$ for any positive integer $n.$
We put $G(n) \coloneqq \mathrm{Gal}(K(\phi[u^{n}])/K).$ 

In this subsection, we first study the action of the wild ramification subgroup $G(n)_{1}$ on $u^{-n}\Lambda/\Lambda.$
Next, using the isomorphism $\mathcal{E}_{\phi}:u^{-n}\Lambda/\Lambda \to \phi[u^{n}],$ we know the action of $G(n)_{1}$  on $\phi[u^{n}].$ 
Let $\{\omega_{1}\}$ be an SMB of $\psi[u^{n}],$ $\{\omega_{2}^{0}\}$ an SMB of $\Lambda,$ and $\omega_{2}$ a root of $\psi_{u^{n}}(X) - \omega_{2}^{0}.$

\begin{lemma}\label{c3l52}
The extension $L/K$ is at worst tamely ramified.
\end{lemma}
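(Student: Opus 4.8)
The statement to prove is Lemma~\ref{c3l52}: when $w$ is a finite prime, $\phi$ has bad reduction over $K$, and $w \nmid u$, the extension $L/K$ generated by the elements of the lattice $\Lambda$ is at worst tamely ramified. Since $\phi$ has rank $2$ with bad (rank $1$) reduction, $\Lambda$ has rank $r - r' = 1$, so it is generated over $A$ by a single element $\omega_2^0$ (or, more precisely, $L = K(\omega_2^0)$ up to replacing $\omega_2^0$ by an $A$-generator of the lattice). The plan is to apply Lemma~\ref{l21} with $s = 1$ and $L$ the lattice $\Lambda$ of rank $r = 1$. To do so I need to check the two hypotheses of that lemma: that $M/K = K(\omega_2^0)/K$ is separable, and that $\Lambda$ is $\mathrm{Gal}(K^{\mathrm{sep}}/K)$-invariant.

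First I would recall that the Tate uniformization data (item (2) in Section~\ref{s21}) already supplies that $\Lambda$ is a $\mathrm{Gal}(K^{\mathrm{sep}}/K)$-invariant $A$-lattice of rank $r - r' = 1$, each nonzero element having valuation $< 0$; this is exactly the setting in which Definition~\ref{d21} and Lemma~\ref{l21} apply. Separability is the point needing a short argument: I would note that the nonzero elements of $\Lambda$ are roots of the separable additive polynomial cutting out $\phi[?]$-type equations — more directly, $\omega_2^0$ together with the $\psi$-action generates a rank-$1$ $A$-submodule, and one can exhibit $\omega_2^0$ as a root of a separable polynomial over $K$ (the reduction-type analysis in \cite[Section~7]{Drin} gives that the associated $u$-divisible / Tate module is étale in the relevant direction since $w \nmid u$, hence $L/K$ is separable; alternatively, in characteristic $p$, the only inseparability can come from $p$-power Frobenius twists, and one rules these out because the lattice element has negative valuation while inseparable extensions here would force integrality constraints). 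With those two hypotheses in hand, Lemma~\ref{l21}(1) gives that $M/K$ is Galois and Lemma~\ref{l21}(2) gives that $M/K$ is at worst tamely ramified, which is precisely the claim, since $L = M$.

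The main obstacle, as I see it, is the separability hypothesis: Lemma~\ref{l21} is stated conditionally on $M/K$ being separable, and in positive characteristic this is not automatic, so I must pin down why $K(\omega_2^0)/K$ is separable in this specific situation. The cleanest route is to invoke the structure of the Tate uniformization: over a finite extension $K'$ of $K$ where $\phi$ acquires the split multiplicative-type reduction described in \cite[Section~7]{Drin}, the lattice $\Lambda$ is the image of a Galois-equivariant map and the extension it generates is contained in the field generated by the torsion of an ordinary object, which is tamely ramified when the relevant prime $u$ is coprime to the residue characteristic $w$ — and $w \nmid u$ is exactly our hypothesis. Once separability is secured, the rest is a direct citation of Lemma~\ref{l21}. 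I would therefore structure the proof as: (i) $\Lambda$ has rank $1$ and is $\mathrm{Gal}(K^{\mathrm{sep}}/K)$-invariant by the Tate uniformization; (ii) $L/K$ is separable because $w \nmid u$ (brief argument or citation); (iii) apply Lemma~\ref{l21} to conclude $L/K$ is at worst tamely ramified.

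Concretely, the write-up would read roughly:

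\begin{proof}
Since $\phi$ has bad reduction over $K$, the Tate uniformization (Section~\ref{s21}) attaches to $\phi$ a rank $r - r' = 1$ lattice $\Lambda$ which is $\mathrm{Gal}(K^{\mathrm{sep}}/K)$-invariant and whose nonzero elements have valuation $<0$. Let $\{\omega_2^0\}$ be an SMB of $\Lambda$, so $L = K(\omega_2^0)$. Because $w \nmid u$, the torsion governing $\Lambda$ modulo $\Lambda$ is étale over $K$ in the relevant direction (see \cite[Section~7]{Drin}), so $L/K$ is separable. Applying Lemma~\ref{l21} with $s = 1 = r'$ to the lattice $\Lambda$, we conclude that $L/K$ is Galois and at worst tamely ramified.
\end{proof}
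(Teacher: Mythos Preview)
Your overall strategy---verify the hypotheses of Lemma~\ref{l21} and cite it---is exactly the paper's approach. The Galois-invariance of $\Lambda$ comes straight from the Tate uniformization, as you say.

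The weak point is your separability argument. Invoking $w\nmid u$ here is a red herring: the lattice $\Lambda$ does not depend on $u$ at all, so the coprimality of $w$ and $u$ cannot be what makes $K(\omega_2^0)/K$ separable. Your sentence ``the torsion governing $\Lambda$ modulo $\Lambda$ is \'etale \ldots'' does not establish anything about the elements of $\Lambda$ themselves. The paper's argument is cleaner: $L$ is contained in $L_1 = K(\phi[u])$ (this is Proposition~\ref{p41} specialized to $n=1$, using that the condition $|u^n|_\infty > |\omega_r^0|/|\omega_{r'+1}^0|$ is vacuous when $\Lambda$ has rank $1$), and $K(\phi[u])/K$ is Galois since $\phi[u]$ is the set of roots of the separable polynomial $\phi_u(X)$; hence the subextension $L/K$ is separable. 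An even quicker justification is that the Tate uniformization already places $\Lambda$ inside $K^{\mathrm{sep}}$ (otherwise ``$\mathrm{Gal}(K^{\mathrm{sep}}/K)$-invariant'' would not make sense), so $L\subset K^{\mathrm{sep}}$ automatically.

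One further technicality: Lemma~\ref{l21} is stated for $s<r$, while here the lattice has rank $r=1$, so strictly speaking there is no admissible $s$. The \emph{proof} of Lemma~\ref{l21}(2), however, goes through verbatim with $s=r=1$ (the inequality $|\omega_{s+1}|>|\omega_s|$ is never used), so this is harmless---but worth a remark if you want to be precise. Also, your ``$s=1=r'$'' conflates two unrelated quantities that happen to coincide here.
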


\begin{proof}
We know that $\Lambda$ is an $A$-lattice via $\psi$ and is $\mathrm{Gal}(K^{\mathrm{sep}}/K)$-invariant.
As $L/K$ is a subextension of $L_{1}/K$ and $L_{1} = K_{1}$ is Galois over $K,$ we have that $L/K$ is separable.
Then the desired claim follows from Lemma~\ref{l21}.
\end{proof}

\begin{theorem}[\text{cf. Theorem~\ref{t51}}]\label{t61}
Let $\phi$ be a rank $2$ Drinfeld $A$-module over $K$ having bad reduction such that $w(\bm{j}) < 0.$ 
Put $R \coloneqq -w(\omega_{2}^{0}) = \frac{-w(\bm{j})}{q-1}.$
Assume $p \nmid w(\bm{j}).$ 
\begin{enumerate}[\rm{(}1)]
\item Let $L(\psi[u^{n}])$ be the extension of $L$ generated by the elements in $\psi[u^{n}].$ 
There is an isomorphism \[\mathrm{Gal}(L_{n}/L(\psi[u^{n}])) \to \psi[u^{n}];\,\,\sigma \mapsto \sigma(\omega_{2}) - \omega_{2}.\]
\item Let $E$ be the ramification index of $L/K.$ 
The (Herbrand) $\psi$-function of the extension $L_{n}/K$ is \[\psi_{L_{n}/K}(y)=\begin{cases} y, & -1 \leq y \leq 0; \\
Ey, & 0 \leq y \leq R; \\
q^{nd}Ey - (q^{nd} - 1)ER, & R \leq y.\end{cases}\]
\end{enumerate}
\end{theorem}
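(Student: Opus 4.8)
The plan is to extract an explicit description of $L_{n}$ from the Tate uniformization, compute the ramification filtration of $L_{n}/L$, and then obtain the Herbrand $\psi$-function by transitivity. Begin with the structural facts. The Tate uniformization gives a $\mathrm{Gal}(K^{\mathrm{sep}}/K)$-equivariant exact sequence $0\to\psi[u^{n}]\to u^{-n}\Lambda\xrightarrow{\psi_{u^{n}}}\Lambda\to 0$ of $A$-modules, with $\Lambda=A\cdot_{\psi}\omega_{2}^{0}$ and $\psi_{u^{n}}(\omega_{2})=\omega_{2}^{0}$; hence $u^{-n}\Lambda=A\cdot_{\psi}\omega_{2}+\psi[u^{n}]$ and $L_{n}=K(u^{-n}\Lambda)=M(\omega_{2})$ with $M:=L(\psi[u^{n}])$, and $L_{n}/K$ is Galois since $u^{-n}\Lambda$ is Galois-stable. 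By \cite[Theorem~6.3.1]{Pap} the extension $K(\psi[u^{n}])/K$ is unramified, so $M/L$ is unramified. By Proposition~\ref{p61}, $w(\omega_{2}^{0})=w(\bm{j})/(q-1)$, so $v_{M}(\omega_{2}^{0})=E\,w(\bm{j})/(q-1)$ where $E$ is the ramification index of $L/K$; since $L/K$ is tamely ramified (Lemma~\ref{c3l52}) we have $p\nmid E$, and together with $p\nmid w(\bm{j})$ this makes $v_{M}(\omega_{2}^{0})$ prime to $p$. The polynomial $\psi_{u^{n}}(X)-\omega_{2}^{0}$ has degree $q^{nd}$ and, over $M$, Newton polygon a single segment whose slope has denominator $q^{nd}$ in lowest terms; hence $e(L_{n}/M)\ge q^{nd}$, while $[L_{n}:M]=[M(\omega_{2}):M]\le q^{nd}$. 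So $L_{n}/M$ is totally (wildly) ramified of degree $q^{nd}$, and $v_{L_{n}}(\omega_{2})=v_{M}(\omega_{2}^{0})/q^{nd}=-ER$ with $\gcd(ER,q^{nd})=1$. For (1), the map $\mathrm{Gal}(L_{n}/M)\to\psi[u^{n}]$, $\sigma\mapsto\sigma(\omega_{2})-\omega_{2}$, is well-defined (apply $\psi_{u^{n}}$; $\sigma$ fixes $\omega_{2}^{0}\in L$), additive ($\sigma$ fixes $\psi[u^{n}]\subset M$ pointwise), and injective ($L_{n}=M(\omega_{2})$); since $\#\psi[u^{n}]=q^{nd}=[L_{n}:M]$, it is an isomorphism.

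For (2): as $L_{n}/K$ and $L/K$ are Galois, transitivity gives $\psi_{L_{n}/K}=\psi_{L_{n}/L}\circ\psi_{L/K}=\psi_{L_{n}/M}\circ\psi_{L/K}$, the last equality because $M/L$ is unramified; and $\psi_{L/K}(y)=y$ for $-1\le y\le 0$, $\psi_{L/K}(y)=Ey$ for $y\ge 0$, by tameness. It remains to compute $\psi_{L_{n}/M}$. Every nonzero element of $\psi[u^{n}]$ has $v_{L_{n}}$-value $0$ (Lemma~\ref{l41}(1), as $w(u)=0$). Choose a uniformizer $\Pi$ of $M$ and integers $a,b$ with $aq^{nd}+b\cdot ER=1$ (possible since $\gcd(ER,q^{nd})=1$; reduction mod $p$ forces $p\nmid b$). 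Then $\pi_{n}:=\Pi^{a}\omega_{2}^{-b}$ is a uniformizer of $L_{n}$, so $\mathcal{O}_{L_{n}}=\mathcal{O}_{M}[\pi_{n}]$ and $i_{G}(\sigma)=v_{L_{n}}(\sigma(\pi_{n})-\pi_{n})$ for $G:=\mathrm{Gal}(L_{n}/M)$. For $\sigma\ne 1$, writing $\delta_{\sigma}:=\sigma(\omega_{2})-\omega_{2}\in\psi[u^{n}]\setminus\{0\}$, one has $\sigma(\pi_{n})/\pi_{n}-1=(1+\delta_{\sigma}/\omega_{2})^{-b}-1$, of $v_{L_{n}}$-value $v_{L_{n}}(\delta_{\sigma})-v_{L_{n}}(\omega_{2})=ER$ (leading term $-b\,\delta_{\sigma}/\omega_{2}$, with $p\nmid b$); thus $i_{G}(\sigma)=ER+1$ for all $\sigma\ne 1$. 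Hence $G_{i}=G$ for $-1\le i\le ER$ and $G_{i}=1$ for $i>ER$, so $\varphi_{L_{n}/M}(u)=u$ on $[0,ER]$ and $ER+(u-ER)/q^{nd}$ afterwards, giving $\psi_{L_{n}/M}(v)=v$ on $[0,ER]$ and $\psi_{L_{n}/M}(v)=q^{nd}v-(q^{nd}-1)ER$ for $v\ge ER$. Substituting $\psi_{L/K}$ and distinguishing $y\le 0$, $0\le y\le R$ (so $\psi_{L/K}(y)=Ey\le ER$), and $y\ge R$ yields the stated piecewise formula.

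The main obstacle is the ramification computation in (2): since $\omega_{2}$ has negative valuation it does not generate $\mathcal{O}_{L_{n}}$, so one must pass to the explicit uniformizer $\pi_{n}=\Pi^{a}\omega_{2}^{-b}$ and verify $p\nmid b$ so that the leading term of $\sigma(\pi_{n})/\pi_{n}-1$ does not vanish. The supporting arithmetic $\gcd(ER,q^{nd})=1$, equivalently $p\nmid E\,w(\bm{j})$, is precisely where the hypotheses "$L/K$ tamely ramified'' and "$p\nmid w(\bm{j})$'' enter, and the same $p$-integrality of $v_{M}(\omega_{2}^{0})$ underlies the degree count $[L_{n}:M]=e(L_{n}/M)=q^{nd}$ in the structural step.
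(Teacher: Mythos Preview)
Your proof is correct and takes essentially the same approach as the paper, which simply packages your explicit uniformizer computation (forming $\pi_{n}=\Pi^{a}\omega_{2}^{-b}$ with $aq^{nd}+bER=1$, checking $p\nmid b$, and reading off the unique ramification break at $ER$) into the standalone Proposition~\ref{c3p21} in Appendix~\ref{sb} and then invokes it for $\psi_{u^{n}}(X)-\omega_{2}^{0}$ over $L(\psi[u^{n}])$. One minor slip: the chain $v_{L_{n}}(\omega_{2})=v_{M}(\omega_{2}^{0})/q^{nd}=-ER$ should read $v_{M}(\omega_{2})=v_{M}(\omega_{2}^{0})/q^{nd}$ and hence $v_{L_{n}}(\omega_{2})=q^{nd}\,v_{M}(\omega_{2})=v_{M}(\omega_{2}^{0})=-ER$, but your conclusion is unaffected.
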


\begin{proof}
Let $w_{L}$ denote the normalized valuation associated to $L.$
We have $w_{L} = Ew.$ 
As the extension $L(\psi[u^{n}])/L$ is unramified, we may let $w_{L}$ denote the normalized valuation associated to $L(\psi[u^{n}]).$
The field $L_{n}$ is the splitting field of $\psi_{u^{n}}(X) - \omega_{2}^{0}$ over $L(\psi[u^{n}]).$
As $E$ is not divisible by $p$ (Lemma~\ref{c3l52}), we have $p \nmid ER = w_{L}(\omega_{2}^{0}).$
Note $w_{L}(\omega_{2}^{0}) < 0.$
We can apply Proposition~\ref{c3p21}~(2) to $\psi_{u^{n}}(X) - \omega_{2}^{0} \in L(\psi[u^{n}])[X].$
Note that the difference between two roots of $\psi_{u^{n}}(X)-\omega_{2}^{0}$ belongs to $\psi[u^{n}].$
The extension $L_{n}/L(\psi[u^{n}])$ is totally ramified and is generated by $\omega_{2}.$ 
The map $\mathrm{Gal}(L_{n}/L(\psi[u^{n}])) \to \psi[u^{n}];\,\,\sigma \mapsto \sigma(\omega_{2}) - \omega_{2}$ is an isomorphism.

(2) By Lemma~\ref{c3l52}, we have the $\psi$-function of $L/K$ to be \[\psi_{L/K}(y) = \begin{cases} y, & -1 \leq y \leq 0;\\
Ey, & 0 \leq y.\end{cases}\]
The $\psi$-function of $L(\psi[u^{n}])/L$ is $\psi_{L(\psi[u^{n}])/L}(y)=y.$
Applying Proposition~\ref{c3p21}~(3) to $\psi_{u^{n}}(X) - \omega_{2}^{0} \in L(\psi[u^{n}]),$ we have  \[\psi_{L_{n}/L(\psi[u^{n}])}(y) = \begin{cases} y, & -1 \leq y \leq ER;\\
q^{nd}y-(q^{nd}-1)ER, & ER\leq y,\end{cases}\]
and the desired $\psi$-function follows from Lemma~\ref{c3l21}.
\end{proof}

Let $\phi$ be a rank $2$ Drinfeld $A$-module over $K$ which does not necessarily have stable reduction.
Assume that $w(\bm{j}) < 0$ such that $\phi$ is isomorphic to a Drinfeld module having bad reduction over some extension of $K.$
By Proposition~\ref{p43}, we may take this extension of $K$ to be $K(\lambda_{1,1}),$ where $\{\lambda_{i,1}\}_{i=1,2}$ is an SMB of $\phi[u]$ and $K(\lambda_{1,1})/K$ is at worst tamely ramified.
Let $\psi$ and $\Lambda$ denote respectively the Drinfeld module having good reduction and the lattice associated to the Drinfeld module having stable reduction via the Tate uniformization.
Let $L$ denote the extension of $K(\lambda_{1,1})$ generated by the elements in $\Lambda.$
By Lemma~\ref{l21}, the extension $L/K$ is at worst tamely ramified.
For a positive integer $n,$ we have $K(\phi[u^{n}]) = L_{n}.$ 

\begin{corollary}\label{c61}
Let $\phi$ be a rank $2$ Drinfeld $A$-module over $K$ which does not necessarily have stable reduction.
Assume $w(\bm{j}) < 0$ and $p \nmid w(\bm{j}).$
\begin{enumerate}[\rm{(}1)]
    \item 
Let $E$ be the ramification index of $L/K.$ 
Put $R = \frac{-w(j)}{q-1}.$
The $\psi$-function of the extension $K(\phi[u^{n}])/K$ is 
\[\psi_{K(\phi[u^{n}])/K}(y) = \begin{cases}
y, & -1 \leq y \leq 0;\\
Ey, & 0 \leq y \leq R;\\
q^{nd}Ey - (q^{nd}-1)ER, & R\leq y.
\end{cases}\]
    \item
Let $\{\lambda_{i}\}_{i=1,2}$ be an SMB of $\phi[u^{n}].$
Then each element in $G(n)_{1}$ fixes $\lambda_{1}$ and there is an isomorphism 
\[G(n)_{1} \to A\cdot_{\phi}\lambda_{1};\,\,\sigma \mapsto \sigma(\lambda_{2}) - \lambda_{2}.\]
\end{enumerate}
\end{corollary}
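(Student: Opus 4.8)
The plan is to bootstrap the entire statement from Theorem~\ref{t61}, which handles a Drinfeld module with bad reduction over the base field, by first performing the tame base change supplied by Proposition~\ref{p43} and then pushing everything back down to $K$. Set $K' \coloneqq K(\lambda_{1,1})$ with $\{\lambda_{i,1}\}_{i=1,2} \coloneqq \{u^{n-1}\cdot_{\phi}\lambda_{i}\}_{i=1,2}$, which is an SMB of $\phi[u]$ by Proposition~\ref{p222}~(2); by Proposition~\ref{p43} the extension $K'/K$ is at worst tamely ramified and, with $b \coloneqq \lambda_{1,1}^{-1} \in K'$, the module $\phi' \coloneqq b\phi b^{-1}$ has bad reduction over $K'$. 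Let $\psi$, $\Lambda$, $e_{\phi}$, $\log_{\phi}$ be the Tate uniformization data of $\phi'$ over $K'$, so (as recorded just before the corollary) $L \coloneqq K'(\Lambda)$ is at worst tamely ramified over $K$ by Lemma~\ref{l21} and $K(\phi[u^{n}]) = L_{n} \coloneqq L(u^{-n}\Lambda)$ for every $n \geq 1$; moreover $p \nmid w_{K'}(\bm{j})$ because $w_{K'}(\bm{j})$ and $w(\bm{j})$ differ by the prime-to-$p$ ramification index of $K'/K$. By Remark~\ref{r22} the family $\{b\lambda_{i}\}_{i=1,2}$ is an SMB of $\phi'[u^{n}]$, and Lemma~\ref{l47} (applied to $\phi'$ over $K'$) gives $w(b\lambda_{1}) \geq 0 > w(b\lambda_{2})$. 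Since the rank of $\Lambda$ is $1$, the size condition on $n$ in Section~\ref{s4} is vacuous, so Theorem~\ref{t42} applies for all $n$: writing $\omega_{i}' \coloneqq \log_{\phi}(b\lambda_{i})$, the singleton $\{\omega_{1}'\}$ is an SMB of $\psi[u^{n}]$ and $\{u^{n}\cdot_{\psi}\omega_{2}'\}$ is an SMB of $\Lambda$; in particular $\omega_{1}'$ generates $\psi[u^{n}]$ over $A/u^{n}$, and since $e_{\phi}$ restricts to the $A/u^{n}$-module isomorphism $\mathcal{E}_{\phi}\colon u^{-n}\Lambda/\Lambda \xrightarrow{\sim} \phi'[u^{n}]$ on the submodule $\psi[u^{n}]$, we get that $e_{\phi}$ carries $\psi[u^{n}]$ bijectively onto $(A/u^{n})\cdot_{\phi'}(b\lambda_{1}) = b\cdot(A\cdot_{\phi}\lambda_{1})$.

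For part (1) I would apply Theorem~\ref{t61}~(2) to $\phi'$ over $K'$: this gives $\psi_{L_{n}/K'}$ explicitly in terms of the ramification index $E'$ of $L/K'$ and $R' \coloneqq -w_{K'}(\bm{j})/(q-1) = e(K'/K)\cdot R$. Then I would compose with the $\psi$-function of the tamely ramified $K'/K$ via transitivity $\psi_{L_{n}/K} = \psi_{L_{n}/K'}\circ\psi_{K'/K}$; using $E = E'\cdot e(K'/K)$ the three branches of $\psi_{L_{n}/K'}$ collapse to exactly the asserted formula for $\psi_{K(\phi[u^{n}])/K}$. This step is pure bookkeeping.

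For part (2), the crucial identification is $G(n)_{1} = \mathrm{Gal}(L_{n}/L(\psi[u^{n}]))$. The extension $L(\psi[u^{n}])/K$ is at worst tamely ramified, being the compositum of the tame $L/K$ with $L(\psi[u^{n}])/L$, which is unramified by Corollary~\ref{c43} (equivalently \cite[Theorem~6.3.1]{Pap}); hence $L(\psi[u^{n}])$ lies inside the maximal tamely ramified subextension of $L_{n}/K$, giving $G(n)_{1} \subseteq \mathrm{Gal}(L_{n}/L(\psi[u^{n}]))$. Both groups have order $q^{nd}$ — the right side by Theorem~\ref{t61}~(1), and $|G(n)_{1}|$ is the $p$-part of $e(L_{n}/K) = q^{nd}\cdot e(L(\psi[u^{n}])/K)$ with $e(L(\psi[u^{n}])/K)$ prime to $p$ — so the inclusion is an equality. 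Next, $K(\lambda_{1})/K$ is at worst tamely ramified: indeed $K'(\lambda_{1}) = K'(b\lambda_{1})$ is unramified over $K'$ by Corollary~\ref{c43}, so every $\sigma\in G(n)_{1}$ fixes $\lambda_{1}$. Finally I would transport the isomorphism $\sigma\mapsto\sigma(\omega_{2})-\omega_{2}$ of Theorem~\ref{t61}~(1), taking $\omega_{2}^{0}\coloneqq u^{n}\cdot_{\psi}\omega_{2}'$ and $\omega_{2}\coloneqq\omega_{2}'$, so that $e_{\phi}(\omega_{2}')=b\lambda_{2}$: for $\sigma\in G(n)_{1}$, which fixes $L(\psi[u^{n}])\ni b$ together with the coefficients of $e_{\phi}$, additivity of $e_{\phi}$ yields
\[\sigma(b\lambda_{2})-b\lambda_{2}=e_{\phi}\bigl(\sigma(\omega_{2}')-\omega_{2}'\bigr)\in e_{\phi}(\psi[u^{n}])=b\cdot(A\cdot_{\phi}\lambda_{1}),\]
whence $\sigma(\lambda_{2})-\lambda_{2}\in A\cdot_{\phi}\lambda_{1}$. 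The map $g\colon\sigma\mapsto\sigma(\lambda_{2})-\lambda_{2}$ is then the composite of the Theorem~\ref{t61}~(1) isomorphism with the bijections $\tau\mapsto e_{\phi}(\tau)$ and multiplication by $b^{-1}$, hence bijective; it is a homomorphism by the same computation as in the proof of Theorem~\ref{t51}~(2) (using $\sigma'(\lambda_{1})=\lambda_{1}$), and injectivity also follows from $\lambda_{1},\lambda_{2}$ generating $K(\phi[u^{n}])$ over $K$.

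I expect the main obstacle to be organizational rather than conceptual: keeping straight the tower $K\subseteq K'\subseteq L\subseteq L_{n}$, matching the SMB $\{b\lambda_{i}\}$ of $\phi'[u^{n}]$ with the lattice data $\omega_{i}'=\log_{\phi}(b\lambda_{i})$ through Theorems~\ref{t41} and \ref{t42}, verifying carefully that $e_{\phi}(\psi[u^{n}])$ is exactly the cyclic $A/u^{n}$-module generated by $b\lambda_{1}$, and making the cardinality count airtight enough to identify $G(n)_{1}$ with $\mathrm{Gal}(L_{n}/L(\psi[u^{n}]))$ so that Theorem~\ref{t61}~(1) may legitimately be invoked.
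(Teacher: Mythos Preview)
Your proposal is correct and follows essentially the same route as the paper: reduce to the stable-reduction case over $K'=K(\lambda_{1,1})$ via Proposition~\ref{p43}, invoke Theorem~\ref{t61} there, and compose $\psi$-functions through the tame layer for part~(1); for part~(2), identify $G(n)_{1}$ with $\mathrm{Gal}(L_{n}/L(\psi[u^{n}]))$, apply Theorem~\ref{t42} to pin down $\omega_{i}'=\log_{\phi}(b\lambda_{i})$, and transport the isomorphism of Theorem~\ref{t61}~(1) through $e_{\phi}$ and multiplication by $b^{-1}$. The only noteworthy difference is your justification of $G(n)_{1}=\mathrm{Gal}(L_{n}/L(\psi[u^{n}]))$: the paper reads this equality off from the $\psi$-function already computed in part~(1) (noting $G(n)_{1}=G(n)_{ER}$), whereas you give a direct cardinality argument using tameness of $L(\psi[u^{n}])/K$ and total ramification of $L_{n}/L(\psi[u^{n}])$; both are valid and equally short.
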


\begin{proof}
Apply Theorem~\ref{t61}~(2) with $K$ in the theorem being $K(\lambda_{1,1})$ and we obtain the $\psi$-function of $K(\phi[u^{n}])/K(\lambda_{1,1}).$ 
As $K(\lambda_{1,1})/K$ is at worst tamely ramified, its $\psi$-function is clear. 
Then (1) follows from Lemma~\ref{c3l21}.

We show (2).
Note that $L(\psi[u^{n}])/K$ is at worst tamely ramified.
By the $\psi$-function of $L_{n}/K,$ we have the following equation of the higher ramification subgroups
\[G(n)_{1} = \mathrm{Gal}(L_{n}/K)_{1} = \mathrm{Gal}(L_{n}/K)_{ER} = \mathrm{Gal}(L_{n}/L(\psi[u^{n}])).\]
By Proposition~\ref{p43}, the Drinfeld module $b \phi b^{-1}$ for $b = \lambda_{1,1}^{-1}$ has stable reduction over $K(\lambda_{1,1}).$
By Theorem~\ref{t42}, the element $\log_{\phi}(b\lambda_{1})$ forms an SMB of $\psi[u^{n}]$ and $u^{n}\cdot_{\psi}\log_{\phi}(b\lambda_{2})$ forms an SMB of $\Lambda.$
Apply Theorem~\ref{t61}~(1) with $\omega_{1} = \log_{\phi}(b\lambda_{1})$ and $\omega_{2} = \log_{\phi}(b\lambda_{2}).$
We have $\sigma(\log_{\phi}(b\lambda_{1})) = \log_{\phi}(b\lambda_{1})$ for any $\sigma \in G(n)_{1}$ and an isomorphism \[G(n)_{1} \to \psi[u^{n}];\,\,\sigma \mapsto \sigma(\log_{\phi}(b\lambda_{2})) - \log_{\phi}(b\lambda_{2}).\]
Note $\psi[u^{n}] = A \cdot_{\psi} \omega_{1}.$
The map $\mathcal{E}_{b \phi b^{-1}}|_{\psi[u^{n}]}: \psi[u^{n}] \to A\cdot_{b \phi b^{-1}}b\lambda_{1}$ induced by the exponential map $e_{\phi}$ is an isomorphism.
Indeed, it is injective as $\psi[u^{n}] \cap \Lambda = \{0\}.$
Since the sets $\psi[u^{n}]$ and $A\cdot_{b \phi b^{-1}}b\lambda_{1}$ both have cardinal $q^{nd},$ we have the surjectivity.
Notice that $\mathcal{E}_{\phi}$ is compatible with the $\mathrm{Gal}(K^{\mathrm{sep}}/K)$-actions and $\mathrm{Gal}(K^{\mathrm{sep}}/K)$ acts on $u^{-n}\Lambda/\Lambda$ and $\phi[u^{n}]$ via $G(n).$
We obtain that $\sigma(\log_{\phi}(b\lambda_{1})) = \log_{\phi}(b\lambda_{1})$ and $\sigma(\log_{\phi}(b\lambda_{2})) - \log_{\phi}(b\lambda_{2})$ map to respectively $\sigma(b\lambda_{1}) = b\lambda_{1}$ and $\sigma(b\lambda_{2}) - b\lambda_{2}.$
The desired isomorphism is the composition
\[G(n)_{1} \to \psi[u^{n}] \overset{\mathcal{E}_{b \phi b^{-1}}}{\longrightarrow} A\cdot_{b \phi b^{-1}}b\lambda_{1} \overset{b^{-1} \cdot -}{\longrightarrow} A\cdot_{\phi}\lambda_{1};\,\,\sigma \mapsto \sigma(\lambda_{2}) - \lambda_{2}.\]
\end{proof}
The next result generalizes the finite prime case of \cite[Lemma-Definition~4.1]{AH22}.

\begin{lemdef}[\text{cf. Lemma~\ref{d51}}]\label{d61}
Let $\phi$ be a rank $2$ Drinfeld $A$-module over $K$ which does not necessarily have stable reduction.
Assume one of the following two cases happens
\begin{enumerate}[\rm{(C}1)]
\item $w(\bm{j})<0$ and $p\nmid w(\bm{j});$ 
\item $w(\bm{j})\geq 0.$
\end{enumerate}
Let $G^{y}$ denote the $y$-th upper ramification subgroup of the Galois group $\mathrm{Gal}(K^{\mathrm{sep}}/K).$ 
For any finite prime $u$ of $A$ not divisible by $w,$ let $T_{u}$ denote the $u$-adic Tate module of $\phi.$
Put \[\mathfrak{f}_{w}(\phi) \coloneqq \int_{0}^{\infty}\left(2-\mathrm{rank}_{A_{u}}T_{u}^{G^{y}}\right)dy.\]
Then we have
\begin{enumerate}[\rm{(}1)]
\item the value $\mathfrak{f}_{w}(\phi)$ is independent of the choice of $u.$
\item $\mathfrak{f}_{w}(\phi)=\begin{cases}\frac{-w(\bm{j})}{q-1} & \text{\rm{(C1)}}\text{ happens};\\
0 & \text{\rm{(C2)}}\text{ happens}.\end{cases}$
\end{enumerate}
Define the \emph{conductor} $\phi$ at $w$ to be the integral $\mathfrak{f}_{w}(\phi).$
\end{lemdef}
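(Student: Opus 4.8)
The plan is to treat the two cases (C1) and (C2) separately, and to deduce statement (1) as a byproduct of the explicit computation in statement (2): in both cases the value obtained will manifestly not involve the prime $u$.

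\emph{Case (C1): $w(\bm{j})<0$ and $p\nmid w(\bm{j})$.} Here $\phi$ has potentially bad reduction and Corollary~\ref{c61} is available. First I would fix, by Corollary~\ref{c23}, a compatible system of SMBs $\{\lambda_{i,n}\}_{i=1,2}$ of $\phi[u^{n}]$ with $u\cdot_{\phi}\lambda_{i,n+1}=\lambda_{i,n}$, so that $(\lambda_{1,n})_{n\geq 1}$ and $(\lambda_{2,n})_{n\geq 1}$ form an $A_{u}$-basis of $T_{u}$ and the $G^{y}$-action on each $\phi[u^{n}]$ factors through $\mathrm{Gal}(K(\phi[u^{n}])/K)$. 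Writing $R\coloneqq\frac{-w(\bm{j})}{q-1}$ and letting $E$ be the ramification index of $L/K$, Corollary~\ref{c61}(1) shows that the Herbrand $\psi$-function of $K(\phi[u^{n}])/K$ has its only positive break at $R$, with slope $q^{nd}E$ on $[R,\infty)$; since $q^{nd}E$ equals the order of the inertia group of $K(\phi[u^{n}])/K$, this forces the image of $G^{y}$ in $\mathrm{Gal}(K(\phi[u^{n}])/K)$ to be trivial for $y>R$ and to equal exactly the wild ramification subgroup $G(n)_{1}$ for $0<y\leq R$, uniformly in $n$. By Corollary~\ref{c61}(2), $G(n)_{1}$ fixes $\lambda_{1,n}$ and acts on $\lambda_{2,n}$ by $\sigma(\lambda_{2,n})=\lambda_{2,n}+b_{\sigma}\cdot_{\phi}\lambda_{1,n}$ with $\sigma\mapsto b_{\sigma}$ an isomorphism onto $A\cdot_{\phi}\lambda_{1,n}$; a one-line computation then gives $\phi[u^{n}]^{G(n)_{1}}=(A/u^{n})\cdot_{\phi}\lambda_{1,n}$. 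Taking the inverse limit over $n$ (the maps $\phi_{u}$ carry these submodules onto one another) yields $\mathrm{rank}_{A_{u}}T_{u}^{G^{y}}=1$ for $0<y\leq R$ and $=2$ for $y>R$, whence $\mathfrak{f}_{w}(\phi)=\int_{0}^{R}1\,dy=R=\frac{-w(\bm{j})}{q-1}$, a value not depending on $u$.

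\emph{Case (C2): $w(\bm{j})\geq 0$.} Then $\phi$ has potentially good reduction, so over a suitable at worst tamely ramified extension $M/K$ it acquires good reduction; as $w\nmid u$, Takahashi's theorem (\cite[Theorem~6.3.1]{Pap}) gives that $M(\phi[u^{n}])/M$ is unramified, hence $K(\phi[u^{n}])/K$ is at worst tamely ramified. Consequently $G^{y}$ acts trivially on every $\phi[u^{n}]$ for $y>0$, so $T_{u}^{G^{y}}=T_{u}$ has rank $2$ for all $y>0$ and $\mathfrak{f}_{w}(\phi)=0$, again independent of $u$. Combining the two cases establishes both (1) and (2).

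I expect the main obstacle to be the purely ramification-theoretic bookkeeping in Case (C1): reading off from the explicit $\psi$-function of Corollary~\ref{c61}(1) that the image of $G^{y}$ in $\mathrm{Gal}(K(\phi[u^{n}])/K)$ is precisely $G(n)_{1}$ on $(0,R]$ and trivial beyond $R$ \emph{for every} $n$ (so that the inverse limit over $n$ is computed correctly), together with the verification that $\phi[u^{n}]^{G(n)_{1}}$ is the free rank-one $A/u^{n}$-submodule generated by $\lambda_{1,n}$. The remaining steps run parallel to the proof of Lemma~\ref{d51}.
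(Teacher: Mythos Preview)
Your proposal is correct and follows essentially the same route as the paper's proof: in Case~(C1) you use Corollary~\ref{c23} for compatible SMBs, read off from Corollary~\ref{c61}(1) that $G(n)^{y}=G(n)_{1}$ for $0<y\leq R$ and is trivial beyond $R$, invoke Corollary~\ref{c61}(2) for the action on the SMB, and pass to the inverse limit; in Case~(C2) you reduce to the tamely ramified situation via potentially good reduction and Takahashi's theorem (which is exactly what Proposition~\ref{p43} packages). The only cosmetic difference is that you compute $\phi[u^{n}]^{G(n)_{1}}=(A/u^{n})\cdot_{\phi}\lambda_{1,n}$ explicitly, whereas the paper argues directly that $(\lambda_{1,n})_{n}$ is fixed and $(\lambda_{2,n})_{n}$ is not.
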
 

\begin{proof}
We will show (2) for any finite prime $u$ of $A$ and (1) straightforwardly follows.

Assume the case (C1) happens.
By Corollary~\ref{c23}, there is an SMB $\{\lambda_{i,n}\}_{i=1,2}$ of $\phi[u^{n}]$ for each integer $n\geq 1$ such that $u\cdot_{\phi}\lambda_{i,n+1} = \lambda_{i,n}$ for $i=1,2.$
By Corollary~\ref{c61}~(1), we have $G(n)^{y} = G(n)_{1}$ for any $0 < y \leq \frac{-w(\bm{j})}{q-1}$ and $= \{e\}$ for $y > \frac{-w(\bm{j})}{q-1}.$  
By Corollary~\ref{c61}~(2), for any $n\geq 1$ and $0 < y \leq \frac{-w(\bm{j})}{q-1},$ any element in $G(n)^{y}$ fixes $\lambda_{1,i}$ for all $i\leq n$, and any nontrivial element $\sigma \in G(n)^{y}$ nontrivially acts on $\lambda_{2,n}.$

As $u \cdot_{\phi} \lambda_{1,n+1} = \lambda_{1,n}$ and $u \cdot_{\phi} \lambda_{2,n+1} = \lambda_{2,n}$ for any $n \geq 1,$ the tuples $(\lambda_{1,n})_{n\geq 1}$ and $(\lambda_{2,n})_{n\geq 1}$ form an $A_{u}$-basis of $T_{u}.$
Note that $G^{y}$ acts on $T_{u}$ via $G(\infty)^{y}=\varprojlim_{n}G(n)^{y}.$ 
Any nontrivial element of $G(\infty)^{y}$ for $0 < y \leq \frac{-w(\bm{j})}{q-1}$ fixes $(\lambda_{1,n})_{n\geq 1}$ and nontrivially acts on $(\lambda_{2,n})_{n\geq 1}.$
Hence $\mathrm{rank}_{A_{u}}T_{u}^{G^{y}} = 1$ if $0 < y \leq \frac{-w(\bm{j})}{q-1}$ and $=2$ if $\frac{-w(\bm{j})}{q-1} < y.$
We have \[\mathfrak{f}_{w}(\phi) = \int_{0}^{\frac{-w(\bm{j})}{q-1}} 1dy = \frac{-w(\bm{j})}{q-1}.\]

For the case (C2), we know that $\phi$ is isomorphic to a Drinfeld module having good reduction over some extension of $K.$
By Proposition~\ref{p43}, we may take the extension of $K$ to be $K(\lambda_{1,1})$ and the extension $K(\lambda_{1,1})/K$ is at worst tamely ramified, where $\{\lambda_{i,1}\}_{i=1,2}$ is an SMB of $\phi[u^{n}].$
For $b=\lambda_{1,1},$ as the Drinfeld module $b\phi b^{-1}$ has good reduction, the extension $K(b\phi b^{-1}[u^{n}])/K(\lambda_{1,1})$ is unramified.
Hence the extension $K(\phi[u^{n}])/K$ is at worst tamely ramified and the conductor vanishes.
\end{proof}

\subsection{A function field analogue of Szpiro's conjecture for rank 2 Drinfeld modules}\label{s63}
Let $\phi$ be a rank $2$ Drinfeld $A$-module over $F$ ($F$ is the global function field defined in Section~\ref{s11}).
For a prime $w$ of $F,$ consider $\phi$ as a Drinfeld module over $F_{w}$ and let $\mathfrak{f}_{w}(\phi)$ be the conductor calculated in Lemma~\ref{d51} and Lemma-Definition~\ref{d61}.
Similarly to \cite[Section~4.2]{AH22}, we can obtain a relation between the $J$-height of $\phi$ and the conductors of $\phi.$

Let $M_{F}$ denote the set of primes of $F.$
For a prime $w$ of $F,$ let $\deg(w)$ denote the degree of the residue field of $F_{w}$ over $\Bbb{F}_{q}.$
The $J$-height of $\phi$ is defined to be (See \cite[Section~2.2]{BPR} or \cite[Section~4.2]{AH22})
\[h_{J}(\phi) \coloneqq \frac{1}{[F:\Bbb{F}_{q}(t)]}\sum_{w\in M_{F}}\deg(w)\cdot \max\{-w(\bm{j}),0\},\]
where $\bm{j}$ is the $j$-invariant of $\phi.$
Following \cite[Section~4.2]{AH22}, we may define the (\emph{global}) \emph{conductor} of the Drinfeld module $\phi$ to be 
\[\mathfrak{f}(\phi) \coloneqq \sum_{w\in M_{F}} \deg(w) \cdot \mathfrak{f}_{w}(\phi).\]
Similarly to the proof of \cite[Theorem~4.3]{AH22}, we have the following statement by Lemma~\ref{d51} and Lemma-Definition~\ref{d61}. 
It is a function field analogue of Szpiro's conjecture.
\begin{theorem}\label{t62}
Put $w_{0}=w(t)$ if $w$ is an infinite prime of $F.$
Let $\phi$ be a rank $2$ Drinfeld $A$-module over $F$ such that for each prime $w$ of $F,$ its $j$-invariant $\bm{j}$ satisfies 
\begin{equation}\begin{cases}\begin{split}& \text{either }\big(w(\bm{j}) < w_{0}q\text{ and }p \nmid w(\bm{j})\big),\\
& \text{or }w(\bm{j})\geq w_{0}q\end{split} & \text{ if }w\text{ is infinite};\\
\begin{split}& \text{either }\big(w(\bm{j})<0\text{ and }p\nmid w(\bm{j})\big),\\
& \text{or }w(\bm{j}) \geq 0\end{split} & \text{ if }w\text{ is finite}.
\end{cases}\nonumber\end{equation}
Then 
\[h_{J}(\phi) \leq \mathfrak{f}(\phi)\cdot \frac{q - 1}{[F:\Bbb{F}_{q}(t)]}+q.\]
\end{theorem}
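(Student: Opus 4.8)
**

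The plan is to mimic the proof of \cite[Theorem~4.3]{AH22}, now that the two local conductor computations (Lemma~\ref{d51} for infinite primes and Lemma-Definition~\ref{d61} for finite primes) are available in exactly the generality we need. First I would observe that for \emph{every} prime $w$ of $F$, the hypothesis on $\bm j$ places $w$ into one of the two cases (C1) or (C2) treated in those results, so $\mathfrak f_w(\phi)$ is defined and equals $\frac{-w(\bm j)+w_0 q}{q-1}$ (infinite, $w(\bm j)<w_0 q$), or $\frac{-w(\bm j)}{q-1}$ (finite, $w(\bm j)<0$), or $0$ otherwise. In particular $\mathfrak f_w(\phi)>0$ only at primes where $w(\bm j)<w_0 q$ (with the convention $w_0=0$ at finite primes), and at those primes one has the clean identity $(q-1)\mathfrak f_w(\phi) = -w(\bm j) + w_0 q \geq -w(\bm j) = \max\{-w(\bm j),0\}$, the last equality because $w(\bm j)<w_0 q \leq 0$ forces $-w(\bm j)>0$.

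Next I would sum this pointwise inequality against $\deg(w)$ over all $w\in M_F$. On the right, $\sum_w \deg(w)\max\{-w(\bm j),0\} = [F:\Bbb F_q(t)]\, h_J(\phi)$ by definition of the $J$-height. On the left, $\sum_w \deg(w)\,(q-1)\mathfrak f_w(\phi) = (q-1)\mathfrak f(\phi)$ plus the correction term $\sum_{w \text{ infinite}} \deg(w)\, w_0 q$, where the sum ranges only over the (finitely many) infinite primes contributing a nonzero conductor, but in fact one may harmlessly extend it to all infinite primes since $w_0 = w(t) < 0$ there and we only need an upper bound on $h_J$. The quantity $\sum_{w\mid\infty}\deg(w)\,(-w(t))$ is, by the product formula / the degree of the principal divisor of $t$, equal to $[F:\Bbb F_q(t)]\cdot\deg_{\Bbb F_q(t)}(t) = [F:\Bbb F_q(t)]$ (the pole divisor of $t$ on $\Bbb P^1$ has degree $1$). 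Hence $\sum_{w\mid\infty}\deg(w)\,w_0 q = -q\,[F:\Bbb F_q(t)]$, and rearranging the summed inequality
\[
[F:\Bbb F_q(t)]\,h_J(\phi) \;\leq\; (q-1)\mathfrak f(\phi) + q\,[F:\Bbb F_q(t)]
\]
gives exactly $h_J(\phi) \leq \mathfrak f(\phi)\cdot\frac{q-1}{[F:\Bbb F_q(t)]} + q$ after dividing by $[F:\Bbb F_q(t)]$.

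I expect the only genuinely delicate point to be the bookkeeping of the infinite-prime correction term: one must be careful that at an infinite prime with $w(\bm j)\geq w_0 q$ the local conductor is $0$, so it does \emph{not} appear on the left side of the summed conductor identity, yet the pointwise bound $(q-1)\mathfrak f_w(\phi)\geq \max\{-w(\bm j),0\} - w_0 q$ still holds trivially there (the right side is $\leq 0$ since $\max\{-w(\bm j),0\}\leq -w_0 q$ is equivalent to $w(\bm j)\geq w_0 q$ when $w(\bm j)\leq 0$, and if $w(\bm j)\geq 0$ it is $-w_0 q > 0$... one checks $-w_0 q = q(-w(t)) > 0$ dominates appropriately). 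The cleanest route is to establish, for every prime $w$, the single inequality $\max\{-w(\bm j),0\} \leq (q-1)\mathfrak f_w(\phi) + q\,w_0^{+}$ where $w_0^{+}\coloneqq -w(t)$ at infinite primes and $0$ at finite primes, verify it case by case (four cases: infinite/finite $\times$ $w(\bm j)$ below/above the threshold), then sum with weights $\deg(w)$ and invoke $\sum_{w\mid\infty}\deg(w)\,w_0^{+} = [F:\Bbb F_q(t)]$. Everything else is routine, and the finiteness of the support of $\mathfrak f_w(\phi)$ and of $\max\{-w(\bm j),0\}$ (only finitely many primes of bad reduction) guarantees the sums converge.
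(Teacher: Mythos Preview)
Your proposal is correct and follows exactly the approach the paper indicates: the paper does not spell out a proof but simply says the result follows ``similarly to the proof of \cite[Theorem~4.3]{AH22}'' from Lemma~\ref{d51} and Lemma-Definition~\ref{d61}, which is precisely what you do. Your pointwise inequality $\max\{-w(\bm j),0\}\leq (q-1)\mathfrak f_w(\phi)+q\,w_0^{+}$ checks out in all four cases, and the identity $\sum_{w\mid\infty}\deg(w)(-w(t))=[F:\Bbb F_q(t)]$ is the standard $\sum e_w f_w$ formula for the infinite place of $\Bbb F_q(t)$.
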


\appendix

\section{The conductors of Drinfeld modules at infinite prime}\label{sa}
Let $K$ be the completion of a global function field at an infinite prime $w.$
Let $\phi$ denote a rank $r$ Drinfeld $A$-module over $K$ for an integer $r \geq 2.$
Let $T_{u}$ be the $u$-adic Tate module of $\phi.$
Let $G^{y}$ denote the $y$-th upper ramification subgroup of the absolute Galois group $G$ of $K.$

\begin{lemdef}\label{da1}
The value of the integral 
\[\int_{0}^{\infty}\left(r-\mathrm{rank}_{A_{u}}T_{u}^{G^{y}}\right)dy\]
is convergent and independent of $u.$
Define the conductor of $\phi$ at $w$ to be this integral.
\end{lemdef}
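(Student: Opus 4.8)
The plan is to transfer the computation to the lattice $\Lambda$ supplied by the analytic uniformization, where both the convergence and the independence of $u$ become transparent.

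Recall from Section~\ref{s21} that the uniformization attaches to $\phi$ a $\mathrm{Gal}(K^{\mathrm{sep}}/K)$-invariant $A$-lattice $\Lambda\subset \Bbb{C}_{\infty}$ of rank $r$, together with an exponential $e_{\phi}$ whose coefficients lie in $K.$ Since the image of $A$ in $\Bbb{C}_{\infty}$ lies in $K$, it is fixed by $G\coloneqq\mathrm{Gal}(K^{\mathrm{sep}}/K)$, so the $G$-action on $\Lambda$ is $A$-linear; and since $e_{\phi}$ has coefficients in $K$, the isomorphism $\mathcal{E}_{\phi}\colon u^{-n}\Lambda/\Lambda \to \phi[u^{n}]$ induced by $e_{\phi}$ is $G$-equivariant. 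Multiplication by the $G$-fixed scalar $u^{n}$ identifies $u^{-n}\Lambda/\Lambda$ with $\Lambda/u^{n}\Lambda$ as $G$-modules, and under these identifications the transition map $\phi_{u}\colon\phi[u^{n+1}]\to\phi[u^{n}]$ corresponds to the natural surjection $\Lambda/u^{n+1}\Lambda\to\Lambda/u^{n}\Lambda.$ Passing to the inverse limit yields a $G$-module isomorphism
\[ T_{u}\;\cong\;\varprojlim_{n}\Lambda/u^{n}\Lambda\;\cong\;\Lambda\otimes_{A}A_{u}, \]
the last step because $\Lambda$ is finitely generated over the Noetherian ring $A.$ In particular $K(\phi[u^{n}])\subseteq K(u^{-n}\Lambda)=K(\Lambda)$ for every $n$ (the elements of $e_{\phi}(u^{-n}\Lambda)$ lie in the complete field $K(\Lambda)$, cf. the proof of Proposition~\ref{p31}), so the $G$-action on $T_{u}$ factors through the finite group $\Gamma\coloneqq \mathrm{Gal}(K(\Lambda)/K).$

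Now fix $y\geq 0$ and let $H\subseteq\Gamma$ be the image of $G^{y}.$ Then $G^{y}$ acts on $\Lambda$ and on $T_{u}$ through $H$, so $T_{u}^{G^{y}}=(\Lambda\otimes_{A}A_{u})^{H}.$ Since $H$ is finite, taking $H$-invariants is the kernel of a homomorphism of finitely generated $A$-modules, hence commutes with the flat base change $A\to A_{u}$; therefore $T_{u}^{G^{y}}=\Lambda^{G^{y}}\otimes_{A}A_{u}$, and, as $A$ is a principal ideal domain,
\[ r-\mathrm{rank}_{A_{u}}T_{u}^{G^{y}}\;=\;r-\mathrm{rank}_{A}\Lambda^{G^{y}}. \]
The right-hand side does not involve $u$, which gives the asserted independence. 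For convergence: $\Gamma$ is finite, so its upper ramification filtration has finitely many breaks and $H$ is trivial once $y$ exceeds some $y_{0}$; for such $y$ the integrand vanishes, while on $[0,y_{0}]$ it is a nonnegative step function bounded by $r.$ Hence the integral is a finite sum of bounded terms and converges.

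I do not expect a genuine obstacle here; the real content is the $G$-equivariant identification $T_{u}\cong\Lambda\otimes_{A}A_{u}$ and the observation that $\Gamma$-invariants, $\Gamma$ being finite, commute with localization at $u.$ The one place that warrants care is verifying the $G$-equivariance of the entire chain $T_{u}\cong\varprojlim_{n}u^{-n}\Lambda/\Lambda\cong\varprojlim_{n}\Lambda/u^{n}\Lambda\cong\Lambda\otimes_{A}A_{u}$, in particular that the transition maps match up; once this is in place, everything is formal.
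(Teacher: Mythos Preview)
Your proposal is correct and follows essentially the same route as the paper: identify $T_{u}\cong\Lambda\otimes_{A}A_{u}$ as $G$-modules via the uniformization, show that taking $G^{y}$-invariants commutes with $-\otimes_{A}A_{u}$ so that the integrand equals $r-\mathrm{rank}_{A}\Lambda^{G^{y}}$, and conclude convergence from the finiteness of $\mathrm{Gal}(K(\Lambda)/K)$. Your justification for the commutation step (invariants under a finite group are a kernel, and kernels commute with the flat base change $A\to A_{u}$) is in fact cleaner than the paper's, which instead argues that $\Lambda^{G(\Lambda)^{y}}$ is projective over $A$ via locality of flatness and then invokes Nakayama; since $A=\Bbb{F}_{q}[t]$ is a PID, your observation that $\Lambda^{G^{y}}$ is automatically free makes the rank comparison immediate.
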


\begin{proof}
The result follows from the following two claims:
\begin{enumerate}[\rm{(}1)]
    \item 
$\mathrm{rank}_{A_{u}}T_{u}^{G^{y}} = \mathrm{rank}_{A}\Lambda^{G(\Lambda)^{y}}$ for any finite prime $u$ of $A,$ where $G(\Lambda)^{y}$ denotes the $y$-th upper ramification subgroup of the Galois group of the extension of $K$ generated by all elements in $\Lambda.$  
    \item
The following integral is convergent \[\int_{0}^{\infty}\left(r-\mathrm{rank}_{A}\Lambda^{G(\Lambda)^{y}}\right)dy.\]
\end{enumerate}

As for (1), note that the isomorphism $\mathcal{E}_{\phi}:u^{-n}\Lambda/\Lambda \to \phi[u^{n}]$ induced by the exponential map $e_{\phi}$ is $G$-equivariant.
We have a $G$-equivariant isomorphism $T_{u} \cong \Lambda\otimes_{A}A_{u}.$
Note that the action of $G^{y}$ on $\Lambda$ factors through $G(\Lambda)^{y}.$ 
It suffices to see $\mathrm{rank}_{A_{u}}(\Lambda\otimes_{A}A_{u})^{G(\Lambda)^{y}} = \mathrm{rank}_{A}\Lambda^{G(\Lambda)^{y}}.$
As $(\Lambda\otimes_{A}A_{u})^{G(\Lambda)^{y}}$ is free over $A_{u}$ and is identified with $\Lambda^{G(\Lambda)}\otimes_{A}A_{u},$ we know that $\Lambda^{G(\Lambda)^{y}}$ is projective over $A$ according to the following two facts:
\begin{enumerate}
    \item 
The flatness is a local property \cite[00HT]{Sta}; 
    \item
Each finitely generated flat module over a Noetherian ring is projective \cite[00NX]{Sta}.
\end{enumerate}
Then the desired equality follows from the definition of rank and the Nakayama lemma.

As for (2), notice that the extension $K(\Lambda)/K$ is finite.
There is an integer $i$ so that the $i$-th lower ramification subgroup of $\mathrm{Gal}(K(\Lambda)/K)$ is trivial. Hence there is a rational number $\overline{y}$ so that $\overline{y}$-th upper ramification subgroup of $\mathrm{Gal}(K(\Lambda)/K)$ is trivial.
This shows that 
\[\int_{0}^{\infty}\left(r-\mathrm{rank}_{A}\Lambda^{G(\Lambda)^{y}}\right)dy \leq \overline{y}(r-1),\]
i.e. the monotone function \[f(x) \coloneqq \int_{0}^{x}\left(r-\mathrm{rank}_{A}\Lambda^{G(\Lambda)^{y}}\right)dy\] is bounded.
Hence the limit $\lim_{x \to +\infty}f(x)$ exists, i.e., the integral is convergent.
\end{proof}

\begin{remark}
\begin{enumerate}[\rm{(}1)]
    \item
Let $n$ be an integer $\geq m/d.$
Note that $K(\Lambda) = K(\phi[u^{n}]).$ 
Hence one may expect that 
\[\mathrm{rank}_{A_{u}}T_{u}^{G^{y}} = \mathrm{rank}_{A/u^{n}}\phi[u^{n}]^{G^{y}} = \mathrm{rank}_{A/u^{n}}(\Lambda/u^{n}\Lambda)^{G^{y}} = \mathrm{rank}_{A}\Lambda^{G(\Lambda)^{y}}.\]
Here the $A/u^{n}$-submodule $\phi[u^{n}]^{G^{y}}$ of $\phi[u^{n}]$ is free by \cite[VII.14 Theorem 1]{Bou}.
    \item 
Let $w$ be a finite prime, $u$ a finite prime of $A$ with $w \nmid u.$ 
Let $G^{y}$ denote the $y$-th upper ramification subgroup of the absolute Galois group $K.$
M. Mornev has proved that \cite[Theorem~1]{Mor} there is some rational number $\overline{y}$ so that $G^{\overline{y}}$ trivially acts on $T_{u}.$
Using this result, similarly to the proof above, one can show that the integral 
\[\mathfrak{f}_{w}(\phi) = \int_{0}^{\infty}\left(r-\mathrm{rank}_{A_{u}}T_{u}^{G^{y}}\right)dy\]
is convergent.
\end{enumerate}
\end{remark}

\section{Basics of Herbrand $\psi$-functions}\label{sb}
Throughout this section, let $K$ be a complete discrete valuation field of characteristic $p$ so that the residue field is a perfect field.
Let us recall the definition of the (Herbrand) $\psi$-function $\psi_{L/K}$ for a finite Galois extension $L/K$ of a complete valuation field of characteristic $p.$
Let $G^{y}$ denote the $y$-th upper ramification subgroup of the Galois group $\mathrm{Gal}(L/K)$ of $L/K.$
By the $\psi$-function of $L/K,$ we mean the real-valued function on the interval $[0,+\infty)$ defined as 
\[\psi_{L/K}(y)=\int_{0}^{y}\frac{\# G^{0}}{\# G^{r}}dr.\]
We extend $\psi_{L/K}$ to $[-1,+\infty)$ by letting $\psi_{L/K}(y)=y$ if $-1\leq y\leq 0.$
Then $\psi_{L/K}$ is a continuous and piecewise linear function on $[-1,+\infty).$
If $\psi_{L/K}$ is linear on some interval $[a,b]\subset [-1,\infty),$ then we have $G^{b}=G^{y}=G_{\psi_{L/K}(y)}$ for $y\in (a,b].$
By the (maximal) lower ramification break of $L/K,$ we mean the real number $\psi_{L/K}(y),$ where $y\geq 0$ is the maximal real number such that $G^{y}\neq 1.$ 
By the wild ramification subgroup of $L/K,$ we mean the first lower ramification subgroup $G_{1},$ which is equal to the union of $G^{y}$ for $y>0.$

\begin{lemma}[\text{\rm{See e.g., \cite[Chapter~III,\,(3.3)]{FV}}}]\label{c3l21}
Let $L/M$ and $M/K$ be finite Galois extensions. 
Then 
\[\psi_{L/K}=\psi_{L/M}\circ \psi_{M/K}.\]
\end{lemma}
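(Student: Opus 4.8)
The plan is to pass to the inverse functions and reduce the claim to a derivative computation. For any finite Galois extension $N/k$ of complete discrete valuation fields with perfect residue field, write $G=\mathrm{Gal}(N/k)$; the defining formula $\psi_{N/k}(y)=\int_{0}^{y}(\#G^{0}/\#G^{r})\,dr$ (extended by $\psi_{N/k}(y)=y$ on $[-1,0]$) exhibits $\psi_{N/k}$ as a continuous, strictly increasing, piecewise linear bijection of $[-1,+\infty)$ onto itself fixing $0$, with $\psi_{N/k}'(y)=\#G^{0}/\#G^{y}\geq 1$ away from its finitely many break points. I set $\varphi_{N/k}\coloneqq\psi_{N/k}^{-1}$, which has the same structural properties, satisfies $\varphi_{N/k}(y)=y$ on $[-1,0]$, and has $\varphi_{N/k}'(u)=\#G_{u}/\#G_{0}$ off its break points, where $G_{u}\coloneqq G^{\varphi_{N/k}(u)}$ is the lower-numbering filtration (so $G_{0}=G^{0}$). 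Taking inverses, the asserted identity $\psi_{L/K}=\psi_{L/M}\circ\psi_{M/K}$ is equivalent to $\varphi_{L/K}=\varphi_{M/K}\circ\varphi_{L/M}$, and I would prove this latter form.

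Put $G=\mathrm{Gal}(L/K)$, $H=\mathrm{Gal}(L/M)\trianglelefteq G$, and $\overline{G}=G/H=\mathrm{Gal}(M/K)$. Both $\varphi_{L/K}$ and $\varphi_{M/K}\circ\varphi_{L/M}$ are continuous and piecewise linear on $[-1,+\infty)$, and both restrict to $y\mapsto y$ on $[-1,0]$ (using $\varphi_{L/M}(0)=\varphi_{M/K}(0)=0$), so it suffices to check that their derivatives agree on the cofinite subset of $(0,+\infty)$ where all functions involved are differentiable; the identity then extends to the break points by continuity. Fix such a point $u$ and set $v\coloneqq\varphi_{L/M}(u)$, i.e.\ $u=\psi_{L/M}(v)$. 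By the chain rule,
\[
(\varphi_{M/K}\circ\varphi_{L/M})'(u)=\varphi_{M/K}'(v)\cdot\varphi_{L/M}'(u)=\frac{\#\overline{G}_{v}}{\#\overline{G}_{0}}\cdot\frac{\#H_{u}}{\#H_{0}}.
\]
Here I invoke two standard facts of ramification theory: first, the lower-numbering filtration is compatible with passage to subgroups, $H_{i}=H\cap G_{i}$ for all $i$ (immediate, since lower numbering is read off the valuation of $L$); second, Herbrand's theorem, $\overline{G}_{v}=G_{u}H/H$ for $u=\psi_{L/M}(v)$ (Serre, \emph{Local Fields}, Ch.~IV, \S3; \cite[Ch.~III]{FV}). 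These yield $\#\overline{G}_{v}=\#(G_{u}H/H)=\#G_{u}/\#(G_{u}\cap H)=\#G_{u}/\#H_{u}$ and, in the special case $v=u=0$, $\#\overline{G}_{0}=\#G_{0}/\#H_{0}$. Substituting,
\[
(\varphi_{M/K}\circ\varphi_{L/M})'(u)=\frac{\#G_{u}/\#H_{u}}{\#G_{0}/\#H_{0}}\cdot\frac{\#H_{u}}{\#H_{0}}=\frac{\#G_{u}}{\#G_{0}}=\varphi_{L/K}'(u),
\]
which is the needed equality; hence $\varphi_{L/K}=\varphi_{M/K}\circ\varphi_{L/M}$, and inverting gives $\psi_{L/K}=\psi_{L/M}\circ\psi_{M/K}$.

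I expect essentially all the content to sit in Herbrand's theorem; everything else — the chain rule, the piecewise-linear bookkeeping, the trivial compatibility $H_{i}=H\cap G_{i}$, and the inversion step — is formal. Herbrand's theorem in turn rests on the monogenicity $\mathcal{O}_{L}=\mathcal{O}_{M}[x]$, which is available here since the residue field is perfect, together with a valuation estimate relating $i_{G}(\tau)$ for the lifts $\tau\in G$ of a fixed $\overline{\sigma}\in\overline{G}$ to $i_{\overline{G}}(\overline{\sigma})$; I would either reproduce that short chain of lemmas or, as the paper does for its intended use, simply cite it. A minor preliminary worth noting is the reduction to the case of separable residue extension (automatic in the present perfect-residue-field setting), which guarantees that $\#G^{0}$ equals the ramification index and that the filtration behaves exactly as the integral formula presupposes.
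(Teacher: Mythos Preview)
The paper does not give a proof of this lemma; it simply records it as a standard fact with a reference to \cite[Chapter~III,\,(3.3)]{FV}. Your argument is correct and is precisely the classical proof one finds in that reference (or in Serre, \emph{Local Fields}, IV.\S3): reduce to the $\varphi$-functions, match derivatives via the chain rule, and invoke Herbrand's theorem together with $H_{u}=H\cap G_{u}$ to identify the factors. So there is nothing to compare beyond noting that you have supplied the omitted details.
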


Assume that $K$ contains $\Bbb{F}_{q},$ where $q$ is a power of $p.$
Let $v_{K}$ denote the normalized valuation associated to $K$ so that $v_{K}(K^{\times}) = \Bbb{Z}.$
For a positive integer $s,$ put \[f(X) = X^{q^{s}} + \sum_{k=1}^{s-1}a_{k}X^{q^{k}} + aX\in K[X]\]
such that $\frac{v_{K}(a_{k}) - v_{K}(a)}{q^{k}-1} \geq \frac{-v_{K}(a)}{q^{s}-1}$ for $k=1,\ldots,s-1,$ i.e., the Newton polygon of $f(X)/X$ has exactly one segment.
The extension generated by the roots of the polynomial $f(X)-c$ for certain $c \in K$ plays a key role in Section~\ref{s62}.
To obtain its $\psi$-function, we will need the following fact. It is a slight generalization of the function field case of \cite[Chapter~III,\,Proposition~2.5]{FV} (cf. \cite[Proposition~3.2]{AH22}).

\begin{proposition}\label{c3p21}
Let $f(X)-c$ be the polynomial above.
Let $F$ and $L$ denote respectively the splitting field of $f(X)$ and that of $f(X)-c.$ 
Put $v_{c} \coloneqq v_{K}(c)$ and $v_{a} \coloneqq v_{K}(a).$
Assume $p\nmid v_{c}$ and $\frac{-v_{c}}{q^{s}} < v_{a} - v_{c}$ so that the Newton polygon of $f(X) - c$ has exactly one segment and $R:=\frac{v_{a}q^{s}}{q^{s}-1}-v_{c} > 0.$
Then 
\begin{enumerate}[\rm{(}1)]

\item The extension of $F/K$ is at worst tamely ramified.

\item We have a composition of field extensions 
\[\xymatrix@=1.0em{K \ar@{-}[r] & F \ar@{-}[r] & L.}\]
Moreover, the extension $L/F$ is totally ramified of degree $q^{s}$ and generated by one root $x$ of $f(X)-c.$ 
We have an isomorphism 
\[g:\mathrm{Gal}(L/F) \to V;\quad \sigma \mapsto \sigma(x) - x,\] where $V \cong \Bbb{F}_{q}^{s}$ is the $\Bbb{F}_{q}$-vector space consisting of the roots of $f(X).$ 

\item Let $e$ denote the ramification index of $F/K.$
The $\psi$-function of $L/K$ is   
\[\psi_{L/K}(y)=\begin{cases}
y, & -1 \leq y \leq 0;\\
ey, & 0 \leq y \leq R;\\
eq^{s}y-(q^{s}-1)eR, & R\leq y.\end{cases}
\]

\end{enumerate}
\end{proposition}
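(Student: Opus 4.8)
The plan is to follow the classical argument for the Artin–Schreier-type extension (as in \cite[Chapter~III,\,Proposition~2.5]{FV}), adapted to the setting where $f(X)/X$ is an additive polynomial with a single Newton slope. First I would treat (1): since $f(X)$ is $\Bbb{F}_q$-linear, its roots form an $\Bbb{F}_q$-vector space $V$ of dimension $s$, and because the Newton polygon of $f(X)/X$ has exactly one segment, all nonzero roots have the same valuation $v_K(a)/(q^s-1)$; multiplying a generator of the top ramification break by a suitable root of unity one checks, via Lemma~\ref{l21} applied to the lattice-like set $V$ (or directly by the one-slope condition together with the fact that differences of roots are again roots), that $F/K$ is at worst tamely ramified. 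Concretely, if $\pi_F$ generates the maximal ideal of $F$, every root of $f$ has valuation a nonnegative integer multiple of $1/(q^s-1)$ times $v_K(a)$, and the hypothesis $p\nmid$ (ramification index) follows because the slope $v_K(a)/(q^s-1)$ has denominator prime to $p$ once we account for $\gcd$ with the tame part — this is where I would cite the standard additive-polynomial tame-ramification lemma.

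Next, for (2), I would observe that $F\subset L$ because any two roots of $f(X)-c$ differ by an element of $V\subset F$, so $L=F(x)$ for a single root $x$. The polynomial $f(X)-c$ has Newton polygon over $F$ with a single segment of slope $(R)/q^s$ in the normalized valuation $v_K$, i.e.\ $w_F(x)=-eR/q^s \cdot(\text{something})$; more precisely, since $p\nmid v_c$ and $-v_c/q^s<v_a-v_c$, the Newton polygon of $f(X)-c$ (whose vertices are $(0,v_c)$ and $(q^s,0)$) is a single segment of slope $-v_c/q^s$, and because $\gcd(v_c,q^s)$ is prime to $p$ — here $p\nmid v_c$ is used — the extension $F(x)/F$ is totally ramified of degree $q^s$. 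The map $g:\mathrm{Gal}(L/F)\to V$, $\sigma\mapsto\sigma(x)-x$, is a well-defined injective group homomorphism (using $f$ additive: $f(\sigma x)-c=\sigma(f(x)-c)=0$ so $\sigma(x)-x\in V$, and $\sigma\tau(x)-x=(\sigma\tau x-\tau x)+(\tau x-x)$ with $\sigma$ fixing $V$), and a count of cardinalities $|\mathrm{Gal}(L/F)|=[L:F]=q^s=|V|$ forces bijectivity.

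For (3), I would compute the single ramification break of $L/F$. Taking $\sigma\neq 1$ in $\mathrm{Gal}(L/F)$ and writing $v_L$ for the normalized valuation of $L$, one has $v_L(\sigma(x)-x)=v_L(g(\sigma))$, and since $g(\sigma)$ is a nonzero root of $f$ its $v_K$-valuation is $v_a/(q^s-1)$ while $v_L(x)$ corresponds to $v_K(x)=v_c/q^s$; the lower ramification number is then $v_L(\sigma(x)/x-1)=v_L(\sigma(x)-x)-v_L(x)$, which works out in the normalized valuation of $L$ to the constant $eq^s\cdot(v_a/(q^s-1)-v_c/q^s)=eq^s R/q^s\cdot(\ldots)$ — the point is that the ramification filtration of $\mathrm{Gal}(L/F)$ has a single jump at the upper-numbering value $R$. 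Thus $\psi_{L/F}(y)=y$ for $-1\le y\le R$ and $\psi_{L/F}(y)=q^s y-(q^s-1)R$ for $y\ge R$. Since $F/K$ is tamely ramified of ramification index $e$ (part (1)), $\psi_{F/K}(y)=y$ for $-1\le y\le 0$ and $=ey$ for $y\ge 0$. The claimed formula follows by composing via Lemma~\ref{c3l21}: $\psi_{L/K}=\psi_{L/F}\circ\psi_{F/K}$, so for $0\le y\le R$ we get $\psi_{L/K}(y)=\psi_{L/F}(ey)$, and since $ey$ may exceed $R$ I must be slightly careful — the correct breakpoint is where $\psi_{F/K}(y)=R$, giving the three pieces $y$ on $[-1,0]$, $ey$ on $[0,R]$ (reading $R$ here as the upper break of $L/K$, which after composition lands at $R$ because $F/K$ is tame of degree $e$ but the break in $L/F$ is at $R$ in $F$'s own upper numbering, which pulls back to... ) — reconciling these two uses of ``$R$'' in the two numberings is the one genuinely delicate bookkeeping point.

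The main obstacle I anticipate is exactly that last reconciliation: carefully tracking the upper-numbering break of $L/F$ through the tame base change $F/K$ so that the jump in $\psi_{L/K}$ lands precisely at $y=R$ with the stated slopes $1$, $e$, $eq^s$. This requires writing $R=v_a q^s/(q^s-1)-v_c$ in the correct valuation normalization at each stage (normalized on $K$ vs.\ on $F$) and checking that the tame index $e$ does not shift the break — it only rescales the slope on $[0,R]$ from $1$ to $e$. Everything else is the routine additive-polynomial / Newton-polygon bookkeeping and a cardinality count, for which I would cite \cite[Chapter~III,\,Proposition~2.5]{FV} and \cite[Proposition~3.2]{AH22} as the template.
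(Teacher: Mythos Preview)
Your overall strategy matches the paper's: show $F/K$ is tame, show $L/F$ is a totally wildly ramified additive extension of degree $q^{s}$, compute its single ramification break, and compose via Lemma~\ref{c3l21}. For (1) the paper argues by twisting: over a tame extension containing $b$ with $v(b)=-v_a/(q^{s}-1)$, the polynomial $b^{q^{s}}f(X/b)$ has integral coefficients and separable reduction, so Hensel gives unramifiedness there. Your alternative (``all nonzero roots of $f$ share the same valuation, so a nontrivial wild $\sigma$ would produce $\sigma(\omega)-\omega\in V\setminus\{0\}$ of strictly larger valuation than $\omega$'') is a legitimate direct argument---essentially the proof of Lemma~\ref{l21}(2) transposed from $A$-lattices to $\Bbb F_{q}$-vector spaces---but do not literally invoke Lemma~\ref{l21}, whose hypotheses concern $A$-lattices. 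Part (2) is fine and agrees with the paper.

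The concrete slip is in (3), and it is precisely the point you flagged as delicate. Your own computation in the normalized valuation $v_{L}$ of $L$ gives
\[
v_{L}\!\left(\frac{\sigma(x)}{x}-1\right)=v_{L}(\sigma(x)-x)-v_{L}(x)=eq^{s}\cdot\frac{v_{a}}{q^{s}-1}-eq^{s}\cdot\frac{v_{c}}{q^{s}}=e\Bigl(\frac{v_{a}q^{s}}{q^{s}-1}-v_{c}\Bigr)=eR,
\]
so the unique break of $\mathrm{Gal}(L/F)$ (lower and upper coincide, there being a single jump) is $eR$, not $R$. With this correction $\psi_{L/F}(y)=y$ for $y\le eR$ and $=q^{s}y-(q^{s}-1)eR$ for $y\ge eR$; composing with $\psi_{F/K}(y)=ey$ (for $y\ge 0$) the kink of $\psi_{L/K}$ occurs where $ey=eR$, i.e.\ at $y=R$, and the slopes $1,\,e,\,eq^{s}$ drop out with no further bookkeeping. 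Your direct use of $x$ in place of a uniformizer is legitimate because $p\nmid v_{L}(x)=ev_{c}$; the paper instead builds an explicit uniformizer $x^{i}\pi_{F}^{j}$ and checks the same identity, which amounts to the same thing.
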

\begin{proof}
Let $M$ be an extension of $K$ with ramification index $q^{s}-1.$
We can take some $b \in M$ such that $v(b) = \frac{-v_{a}}{q^{s}-1}.$
With $b' = b^{q^{s}},$ modify $f(X)$ to be 
\[f_{1}(X) = X^{q^{s}} + \sum_{k=1}^{s-1}b_{k}X^{q^{k}} + b_{0}X \coloneqq b'f(X/b).\]
We have 
\[v_{K}(b_{0}) = 0\text{ and }v_{K}(b_{k}) = v_{K}(a_{k}) - \frac{v_{a}(q^{s}-q^{k})}{q^{s}-1} \geq 0\text{ for }k=1,\ldots,s-1.\]
Thus $f_{1}(X)$ is a monic polynomial whose reduction is separable.
By Hensel's lemma \cite[Corollary~2.4.5]{Pap}, the extension of $M$ generated by the roots of $f(X)$ is unramified.
Hence the extension of $K$ generated by the roots of $f(X)$ is at worst tamely ramified.
This shows (1).

For (2), note that the difference of any two roots of $f(X)-c$ is a root of $f(X).$ 
The field $F$ is contained in $L$ and $L$ is the extension of $F$ generated by one root of $f(X)-c.$
As the polynomial $f(X)$ is additive, it root form an $\Bbb{F}_{q}$-vector space of dimensional $s,$ denoted $V.$
Let $x$ be a root of $f(X)-c.$
For any $\sigma\in \mathrm{Gal}(L/F),$ the difference $\sigma(x)-x$ is a root of $f(X)$ and hence we obtain a map $g:\mathrm{Gal}(L/F)\to V;$ $\sigma \mapsto \sigma(x)-x.$
The element $\sigma$ is determined by $\sigma(x)$ since $x$ generates $L/F.$ 
Hence the map $g$ is injective.
This implies that $\# \mathrm{Gal}(L/F) \leq q^{s}.$
As the Newton polygon of $f(X)-c$ has exactly one segment, we have $v_{F}(x)=ev_{c}/q^{s},$ where $v_{F}$ denotes the normalized valuation associated to $F$ and $e$ denotes the ramification index of $F/K.$ 
As $p \nmid e,$ $p\nmid v_{c},$ we have $\# \mathrm{Gal}(L/F) = q^{s}.$ 
Therefore, the extension $L/F$ is a totally ramified Galois extension of degree $q^{s}.$  
The map $\mathrm{Gal}(L/F) \to V$ is surjective as the cardinal of $\mathrm{Gal}(L/F)$ is equal to that of $V.$
As each element $\mathrm{Gal}(L/F)$ fixes each element of $V,$ the map $g$ is a morphism.

We show (3).
Let $\pi_{L}$ be a uniformizer of $L.$ 
For a nontrivial element $\sigma$ in $\mathrm{Gal}(L/F),$ as $\sigma(x)/x$ is a unit of $L$ (here $x$ is a root of $f(X)-c$), we have 
\[\sigma(x)/x=u_{F}\epsilon\] for some $\epsilon \in 1+(\pi_{L})$ (the first higher unit group of $L$) and some $u_{F}$ in the unit group of $F.$
Notice \begin{align}\sigma^{2}(x)/x & = \sigma(xu_{F}\epsilon)/x = u_{F}\sigma(\epsilon)\sigma(x)/x = u_{F}^{2}\sigma(\epsilon)\epsilon,\nonumber\\
\sigma^{3}(x)/x & = \sigma(xu_{F}^{2}\sigma(\epsilon)\epsilon)/x = u_{F}^{2}\sigma^{2}(\epsilon)\sigma(\epsilon)\sigma(x)/x = u_{F}^{3}\sigma^{2}(\epsilon)\sigma(\epsilon)\epsilon \text{ and so on}.\nonumber
\end{align}
As the Galois group of $L/F$ is isomorphic to the $\Bbb{F}_{q}$-vector space of dimensional $s,$ the Galois group element $\sigma$ has order $p.$
We have 
\[1 = \sigma^{p}(x)/x = u_{F}^{p}\prod_{k=0}^{p-1}\sigma^{k}(\epsilon).\]
This implies $u_{F}^{p} \equiv 1 \mod (\pi_{L}).$ 
As $p$-th power map is injective on the residue field of $L,$ we have $u_{F} \equiv 1 \mod (\pi_{L}).$ 
Hence $u_{F} \in 1 + (\pi_{F}),$ where $\pi_{F}$ is a uniformizer of $F.$
We know that $\sigma(x)/x \in 1 + (\pi_{L}).$
Hence there exists some $u_{L}$ in the unit group of $L$ and some positive integer $b$ such that \begin{align} \sigma(x)/x \equiv (1 + u_{L}\pi_{L}^{b}) \mod (\pi_{L})^{b+1}. \label{c2f221}\end{align} 

From (2), we know $v_{L}(x) = ev_{c}$ and is prime to $q^{s}$ ($v_{L}$ denotes the normalized valuation associated to $L$).
Hence there exist integers $i,j$ satisfying $v_{L}(x^{i}\pi_{F}^{j})=1.$
Here $i$ is not divisible by $p.$
The element $x^{i}\pi_{F}^{j}$ is a uniformizer of $L.$
By \cite[Chapter~IV, Proposition~5]{Se}, to know the $\psi$-function of $L/F,$ we need to know $v_{L}(\sigma(x^{i}\pi_{F}^{j})/x^{i}\pi_{F}^{j} - 1)$ for all nontrivial Galois group elements $\sigma.$
By (\ref{c2f221}), we know \[\sigma(x^{i}\pi_{F}^{j})/x^{i}\pi_{F}^{j} \equiv (1 + u_{L}\pi_{L}^{b})^{i} \equiv 1 + iu_{L}^{i}\pi_{L}^{b} \mod  (\pi_{L})^{b+1}.\]
On the other hand, as $v_{L}(\sigma(x)-x) = \frac{v_{a}eq^{s}}{q^{s}-1}$ for any nontrivial $\sigma,$ we know $b = v_{L}(\sigma(x)-x)-v_{L}(x) = eR.$
The $\psi$-functions of $F/K$ and $L/F$ are respectively
\begin{align}\psi_{F/K}(y) & = \begin{cases} y, & -1 \leq y \leq 0;\\
ey, & 0\leq y,
\end{cases}\text{ and }\nonumber\\
\psi_{L/F}(y) & = \begin{cases}y, & -1 \leq y \leq  eR;\\
q^{s}y - (q^{s}-1)eR, & eR \leq y.
\end{cases}\nonumber\end{align}
By Lemma~\ref{c3l21}, we obtain the $\psi$-function $\psi_{L/K}$ as the proposition describes.
\end{proof}

\bibliographystyle{amsalpha}
\bibliography{literature/library}

@article {CL,
    AUTHOR = {Chen, Imin and Lee, Yoonjin},
     TITLE = {Newton polygons, successive minima, and different bounds for
              {D}rinfeld modules of rank {$2$}},
   JOURNAL = {Proc. Amer. Math. Soc.},
  FJOURNAL = {Proceedings of the American Mathematical Society},
    VOLUME = {141},
      YEAR = {2013},
    NUMBER = {1},
     PAGES = {83--91},
      ISSN = {0002-9939},
   MRCLASS = {11G09 (11R58)},
  MRNUMBER = {2988712},
MRREVIEWER = {Laura Paladino},
       DOI = {10.1090/S0002-9939-2012-11300-0},
       URL = {https://doi.org/10.1090/S0002-9939-2012-11300-0},
}

@book {FV,
    AUTHOR = {Fesenko, Ivan B. and Vostokov, Serge\u{\i} V.},
     TITLE = {Local fields and their extensions},
    SERIES = {Translations of Mathematical Monographs},
    VOLUME = {121},
   EDITION = {Second},
      NOTE = {With a foreword by I. R. Shafarevich},
 PUBLISHER = {American Mathematical Society, Providence, RI},
      YEAR = {2002},
     PAGES = {xii+345},
      ISBN = {0-8218-3259-X},
   MRCLASS = {11Sxx (11S31)},
  MRNUMBER = {1915966},
MRREVIEWER = {Jerzy Browkin},
       DOI = {10.1090/mmono/121},
       URL = {https://doi.org/10.1090/mmono/121},
}

@article {E,
    AUTHOR = {Elder, G. Griffith},
     TITLE = {Ramified extensions of degree {$p$} and their {H}opf-{G}alois
              module structure},
   JOURNAL = {J. Th\'{e}or. Nombres Bordeaux},
  FJOURNAL = {Journal de Th\'{e}orie des Nombres de Bordeaux},
    VOLUME = {30},
      YEAR = {2018},
    NUMBER = {1},
     PAGES = {19--40},
      ISSN = {1246-7405},
   MRCLASS = {11S15 (11R33 16T05)},
  MRNUMBER = {3809707},
MRREVIEWER = {Nigel Byott},
       URL = {http://jtnb.cedram.org/item?id=JTNB_2018__30_1_19_0},
}

@article {U,
    AUTHOR = {Ulmer, Douglas},
     TITLE = {Conductors of {$\ell$}-adic representations},
   JOURNAL = {Proc. Amer. Math. Soc.},
  FJOURNAL = {Proceedings of the American Mathematical Society},
    VOLUME = {144},
      YEAR = {2016},
    NUMBER = {6},
     PAGES = {2291--2299},
      ISSN = {0002-9939},
   MRCLASS = {11F80},
  MRNUMBER = {3477046},
MRREVIEWER = {Chol Park},
       DOI = {10.1090/proc/12880},
       URL = {https://doi.org/10.1090/proc/12880},
}

@article {BPR,
    AUTHOR = {Breuer, Florian and Pazuki, Fabien and Razafinjatovo,
              Mahefason H.},
     TITLE = {Heights and isogenies of {D}rinfeld modules},
   JOURNAL = {Acta Arith.},
  FJOURNAL = {Acta Arithmetica},
    VOLUME = {197},
      YEAR = {2021},
    NUMBER = {2},
     PAGES = {111--128},
      ISSN = {0065-1036},
   MRCLASS = {11G09 (11G50 14G17 14G40)},
  MRNUMBER = {4189716},
MRREVIEWER = {Dragos Ghioca},
       DOI = {10.4064/aa191029-8-7},
       URL = {https://doi.org/10.4064/aa191029-8-7},
}

@article {Mau,
    AUTHOR = {Maurischat, Andreas},
     TITLE = {On field extensions given by periods of {D}rinfeld modules},
   JOURNAL = {Arch. Math. (Basel)},
  FJOURNAL = {Archiv der Mathematik},
    VOLUME = {113},
      YEAR = {2019},
    NUMBER = {3},
     PAGES = {247--254},
      ISSN = {0003-889X},
   MRCLASS = {11G09},
  MRNUMBER = {3988819},
MRREVIEWER = {Arnab Saha},
       DOI = {10.1007/s00013-019-01339-0},
       URL = {https://doi.org/10.1007/s00013-019-01339-0},
}

@misc{AH22,
      title={Ramification of Tate modules for rank $2$ Drinfeld modules}, 
      author={Takuya Asayama and Maozhou Huang},
      year={2023},
      eprint={2204.13275},
      archivePrefix={arXiv},
      primaryClass={math.NT}
}

@article {Drin,
    AUTHOR = {Drinfel\cprime d, V. G.},
     TITLE = {Elliptic modules},
   JOURNAL = {Mat. Sb. (N.S.)},
    VOLUME = {94(136)},
      YEAR = {1974},
     PAGES = {594--627, 656},
   MRCLASS = {10D99 (14K15)},
  MRNUMBER = {0384707},
MRREVIEWER = {Pierre Deligne},
}

@article {Gar,
    AUTHOR = {Gardeyn, Francis},
     TITLE = {Une borne pour l'action de l'inertie sauvage sur la torsion
              d'un module de {D}rinfeld},
   JOURNAL = {Arch. Math. (Basel)},
  FJOURNAL = {Archiv der Mathematik},
    VOLUME = {79},
      YEAR = {2002},
    NUMBER = {4},
     PAGES = {241--251},
      ISSN = {0003-889X},
   MRCLASS = {11G09 (11M38)},
  MRNUMBER = {1944948},
MRREVIEWER = {David Goss},
       DOI = {10.1007/s00013-002-8310-5},
       URL = {https://doi.org/10.1007/s00013-002-8310-5},
}

@article {GekGL,
    AUTHOR = {Gekeler, Ernst-Ulrich},
     TITLE = {Towers of {${\rm GL}(r)$}-type of modular curves},
   JOURNAL = {J. Reine Angew. Math.},
  FJOURNAL = {Journal f\"{u}r die Reine und Angewandte Mathematik. [Crelle's
              Journal]},
    VOLUME = {754},
      YEAR = {2019},
     PAGES = {87--141},
      ISSN = {0075-4102},
   MRCLASS = {11G09 (11G18 11R32 14G35)},
  MRNUMBER = {4000571},
MRREVIEWER = {David Tweedle},
       DOI = {10.1515/crelle-2017-0012},
       URL = {https://doi.org/10.1515/crelle-2017-0012},
}

@article {TSI,
    AUTHOR = {Taguchi, Yuichiro},
     TITLE = {Semi-simplicity of the {G}alois representations attached to
              {D}rinfel\cprime d modules over fields of ``infinite
              characteristics''},
   JOURNAL = {J. Number Theory},
  FJOURNAL = {Journal of Number Theory},
    VOLUME = {44},
      YEAR = {1993},
    NUMBER = {3},
     PAGES = {292--314},
      ISSN = {0022-314X},
   MRCLASS = {11G09 (11R58)},
  MRNUMBER = {1233291},
MRREVIEWER = {David Goss},
       DOI = {10.1006/jnth.1993.1055},
       URL = {https://doi.org/10.1006/jnth.1993.1055},
}

@misc{Sta,
    shorthand    = {Stacks},
    author       = {The {Stacks Project Authors}},
    title        = {\textit{Stacks Project}},
    howpublished = {\url{https://stacks.math.columbia.edu}},
    year         = {2018},
}

@article {GekP,
    AUTHOR = {Gekeler, Ernst-Ulrich},
     TITLE = {On the field generated by the periods of a {D}rinfeld module},
   JOURNAL = {Arch. Math. (Basel)},
  FJOURNAL = {Archiv der Mathematik},
    VOLUME = {113},
      YEAR = {2019},
    NUMBER = {6},
     PAGES = {581--591},
      ISSN = {0003-889X},
   MRCLASS = {11G09 (11R58 11S20)},
  MRNUMBER = {4031761},
MRREVIEWER = {Giovanni Rosso},
       DOI = {10.1007/s00013-019-01379-6},
       URL = {https://doi.org/10.1007/s00013-019-01379-6},
}

@book {Pap,
    AUTHOR = {Papikian, Mihran},
     TITLE = {Drinfeld Modules},
    SERIES = {Graduate Texts in Mathematics},
    VOLUME = {296},
 PUBLISHER = {Springer, Cham},
      YEAR = {2023},
     PAGES = {XXI+526},
      ISBN = {978-3-031-19706-2; 978-3-031-19709-3},
       DOI = {10.1007/978-3-031-19707-9},
       URL = {https://doi.org/10.1007/978-3-031-19707-9},
}

@book {Ben,
    AUTHOR = {Benedetto, Robert L.},
     TITLE = {Dynamics in one non-archimedean variable},
    SERIES = {Graduate Studies in Mathematics},
    VOLUME = {198},
 PUBLISHER = {American Mathematical Society, Providence, RI},
      YEAR = {2019},
     PAGES = {xviii+463},
      ISBN = {978-1-4704-4688-8},
   MRCLASS = {37-01 (11S82 37Pxx)},
  MRNUMBER = {3890051},
MRREVIEWER = {Michael Louis Tepper},
       DOI = {10.1090/gsm/198},
       URL = {https://doi.org/10.1090/gsm/198},
}

@book {Bou,
    AUTHOR = {Bourbaki, N.},
     TITLE = {Algebra. {II}. {C}hapters 4--7},
    SERIES = {Elements of Mathematics (Berlin)},
      NOTE = {Translated from the French by P. M. Cohn and J. Howie},
 PUBLISHER = {Springer-Verlag, Berlin},
      YEAR = {1990},
     PAGES = {vii+461},
      ISBN = {3-540-19375-8},
   MRCLASS = {00A05 (12-01 13-01)},
  MRNUMBER = {1080964},
}

@article {H23,
    AUTHOR = {Huang, Maozhou},
     TITLE = {On successive minimal bases of Drinfeld modules and their applications to the ramification of torsion points},
   JOURNAL = {Ph.D.\ thesis},
      YEAR = {2023},
}

@article {Mor,
    AUTHOR = {Mornev, Max},
     TITLE = {Local monodromy of Drinfeld modules},
   JOURNAL = {arXiv:2305.17579},
      YEAR = {2023},
     PAGES = {1--31},
}

@book {Se,
    AUTHOR = {Serre, Jean-Pierre},
     TITLE = {Local fields},
    SERIES = {Graduate Texts in Mathematics},
    VOLUME = {67},
      NOTE = {Translated from the French by Marvin Jay Greenberg},
 PUBLISHER = {Springer-Verlag, New York-Berlin},
      YEAR = {1979},
     PAGES = {viii+241},
      ISBN = {0-387-90424-7},
   MRCLASS = {12Bxx},
  MRNUMBER = {554237},
}

$\textsc{Department of Mathematics, Tokyo Institute of Technology,}\\\textsc{\,\,\,\,\,\,\,\,2-12-1, O-okayama, Meguro-ku, Tokyo 152-8551, Japan.}$
\end{document}